\def\definetac{\newif\iftac}    % Can't define a \newif inside another \if!
\else\usepackage{amsthm}\fi
\definecolor{darkgreen}{rgb}{0,0.45,0} 
\let\ea\expandafter
\def\mdef#1#2{\ea\ea\ea\gdef\ea\ea\noexpand#1\ea{\ea\ensuremath\ea{#2}\xspace}}
\def\alwaysmath#1{\ea\ea\ea\global\ea\ea\ea\let\ea\ea\csname your@#1\endcsname\csname #1\endcsname
  \ea\def\csname #1\endcsname{\ensuremath{\csname your@#1\endcsname}\xspace}}
\DeclareRobustCommand\widecheck[1]{{\mathpalette\@widecheck{#1}}}
\def\@widecheck#1#2{%
    \setbox\z@\hbox{\m@th$#1#2$}%
    \setbox\tw@\hbox{\m@th$#1%
       \widehat{%
          \vrule\@width\z@\@height\ht\z@
          \vrule\@height\z@\@width\wd\z@}$}%
    \dp\tw@-\ht\z@
    \@tempdima\ht\z@ \advance\@tempdima2\ht\tw@ \divide\@tempdima\thr@@
    \setbox\tw@\hbox{%
       \raise\@tempdima\hbox{\scalebox{1}[-1]{\lower\@tempdima\box
\tw@}}}%
    {\ooalign{\box\tw@ \cr \box\z@}}}
\def\foreachletter#1#2#3{\foreachcount=#1
  \ea\loop\ea\ea\ea#3\@alph\foreachcount
  \advance\foreachcount by 1
  \ifnum\foreachcount<#2\repeat}
\def\foreachLetter#1#2#3{\foreachcount=#1
  \ea\loop\ea\ea\ea#3\@Alph\foreachcount
  \advance\foreachcount by 1
  \ifnum\foreachcount<#2\repeat}
\def\definescr#1{\ea\gdef\csname s#1\endcsname{\ensuremath{\mathscr{#1}}\xspace}}
\def\definecal#1{\ea\gdef\csname c#1\endcsname{\ensuremath{\mathcal{#1}}\xspace}}
\def\definebold#1{\ea\gdef\csname b#1\endcsname{\ensuremath{\mathbf{#1}}\xspace}}
\def\definebb#1{\ea\gdef\csname l#1\endcsname{\ensuremath{\mathbb{#1}}\xspace}}
\def\definefrak#1{\ea\gdef\csname f#1\endcsname{\ensuremath{\mathfrak{#1}}\xspace}}
\def\definebar#1{\ea\gdef\csname #1bar\endcsname{\ensuremath{\overline{#1}}\xspace}}
\def\definetil#1{\ea\gdef\csname #1til\endcsname{\ensuremath{\widetilde{#1}}\xspace}}
\def\definehat#1{\ea\gdef\csname #1hat\endcsname{\ensuremath{\widehat{#1}}\xspace}}
\def\definechk#1{\ea\gdef\csname #1chk\endcsname{\ensuremath{\widecheck{#1}}\xspace}}
\def\defineul#1{\ea\gdef\csname u#1\endcsname{\ensuremath{\underline{#1}}\xspace}}
\def\autofmt@n#1\autofmt@end{\mathrm{#1}}
\def\autofmt@b#1\autofmt@end{\mathbf{#1}}
\def\autofmt@l#1#2\autofmt@end{\mathbb{#1}\mathsf{#2}}
\def\autofmt@c#1#2\autofmt@end{\mathcal{#1}\mathit{#2}}
\def\autofmt@s#1#2\autofmt@end{\mathscr{#1}\mathit{#2}}
\def\autofmt@f#1\autofmt@end{\mathsf{#1}}
\def\autofmt@u#1\autofmt@end{\underline{\smash{\mathsf{#1}}}}
\def\autofmt@U#1\autofmt@end{\underline{\underline{\smash{\mathsf{#1}}}}}
\def\autofmt@h#1\autofmt@end{\widehat{#1}}
\def\autofmt@r#1\autofmt@end{\overline{#1}}
\def\autofmt@t#1\autofmt@end{\widetilde{#1}}
\def\autofmt@k#1\autofmt@end{\check{#1}}
\def\auto@drop#1{}
\def\autodef#1{\ea\ea\ea\@autodef\ea\ea\ea#1\ea\auto@drop\string#1\autodef@end}
\def\@autodef#1#2#3\autodef@end{%
  \ea\def\ea#1\ea{\ea\ensuremath\ea{\csname autofmt@#2\endcsname#3\autofmt@end}\xspace}}
\def\autodefs@end{blarg!}
\def\autodefs#1{\@autodefs#1\autodefs@end}
\def\@autodefs#1{\ifx#1\autodefs@end%
  \def\autodefs@next{}%
  \else%
  \def\autodefs@next{\autodef#1\@autodefs}%
  \fi\autodefs@next}
\DeclareSymbolFont{bbold}{U}{bbold}{m}{n}
\DeclareSymbolFontAlphabet{\mathbbb}{bbold}
\newcommand{\bbone}{\ensuremath{\mathbbb{1}}\xspace}
\mdef\delbar{\overline{\partial}}
\mdef\hf{\textstyle\frac12 }
\mdef\thrd{\textstyle\frac13 }
\mdef\qtr{\textstyle\frac14 }
\newcommand{\op}{^{\mathrm{op}}}
\newcommand{\coop}{^{\mathrm{coop}}}
\newcommand{\pushoutcorner}[1][dr]{\save*!/#1+1.2pc/#1:(1,-1)@^{|-}\restore}
\newcommand{\pullbackcorner}[1][dr]{\save*!/#1-1.2pc/#1:(-1,1)@^{|-}\restore}
\mdef\Id{\mathrm{Id}}
\mdef\id{\mathrm{id}}
\def\frc#1/#2.{\frac{#1}{#2}}   % \frc x^2+1 / x^2-1 .
\mdef\ten{\mathrel{\otimes}}
\mdef\sqten{\mathrel{\boxtimes}}
\DeclareRobustCommand\widecheck[1]{{\mathpalette\@widecheck{#1}}}
\def\@widecheck#1#2{%
    \setbox\z@\hbox{\m@th$#1#2$}%
    \setbox\tw@\hbox{\m@th$#1%
       \widehat{%
          \vrule\@width\z@\@height\ht\z@
          \vrule\@height\z@\@width\wd\z@}$}%
    \dp\tw@-\ht\z@
    \@tempdima\ht\z@ \advance\@tempdima2\ht\tw@ \divide\@tempdima\thr@@
    \setbox\tw@\hbox{%
       \raise\@tempdima\hbox{\scalebox{1}[-1]{\lower\@tempdima\box
\tw@}}}%
    {\ooalign{\box\tw@ \cr \box\z@}}}
\DeclareMathOperator\colim{colim}
\DeclareMathOperator\Aut{Aut}
\DeclareMathOperator\End{End}
\DeclareMathOperator\Hom{Hom}
\mdef\we{\overset{\sim}{\longrightarrow}}
\mdef\leftwe{\overset{\sim}{\longleftarrow}}
\let\xto\xrightarrow
\def\rightarrowtailfill@{\arrowfill@{\Yright\joinrel\relbar}\relbar\rightarrow}
\newcommand\xrightarrowtail[2][]{\ext@arrow 0055{\rightarrowtailfill@}{#1}{#2}}
\def\twoheadrightarrowfill@{\arrowfill@{\relbar\joinrel\relbar}\relbar\twoheadrightarrow}
\newcommand\xtwoheadrightarrow[2][]{\ext@arrow 0055{\twoheadrightarrowfill@}{#1}{#2}}
\def\slashedarrowfill@#1#2#3#4#5{%
  $\m@th\thickmuskip0mu\medmuskip\thickmuskip\thinmuskip\thickmuskip
   \relax#5#1\mkern-7mu%
   \cleaders\hbox{$#5\mkern-2mu#2\mkern-2mu$}\hfill
   \mathclap{#3}\mathclap{#2}%
   \cleaders\hbox{$#5\mkern-2mu#2\mkern-2mu$}\hfill
   \mkern-7mu#4$%
}
\def\rightslashedarrowfill@{%
  \slashedarrowfill@\relbar\relbar\mapstochar\rightarrow}
\newcommand\xslashedrightarrow[2][]{%
  \ext@arrow 0055{\rightslashedarrowfill@}{#1}{#2}}
\mdef\hto{\xslashedrightarrow{}}
\mdef\htoo{\xslashedrightarrow{\quad}}
\def\toiso{\xto{\smash{\raisebox{-.5mm}{$\scriptstyle\sim$}}}}
\long\def\my@drawfill#1#2;{%
\@skipfalse
\fill[#1,draw=none] #2;
\@skiptrue
\draw[#1,fill=none] #2;
}
\newif\if@skip
\newcommand{\skipit}[1]{\if@skip\else#1\fi}
\newcommand{\drawfill}[1][]{\my@drawfill{#1}}
\newif\ifhyperref
  \let\your@state\state
  \def\state#1{\gdef\currthmtype{#1}\your@state{#1}}
  \let\your@staterm\staterm
  \def\staterm#1{\gdef\currthmtype{#1}\your@staterm{#1}}
  \let\defthm\newtheorem
  \def\currthmtype{}
    \def\autoref#1{\ref*{label@name@#1}~\ref{#1}}
    \def\autoref#1{\ref{label@name@#1}~\ref{#1}}
    \let\old@label\label%
    \def\label#1{%
      {\let\your@currentlabel\@currentlabel%
        \edef\@currentlabel{\currthmtype}%
        \old@label{label@name@#1}}%
      \old@label{#1}}
    \def\defthm#1#2{%
      %% All types of theorems are number inside sections
      \newtheorem{#1}{#2}[section]%
      %% This command tells hyperref's \autoref what to call things
      \expandafter\def\csname #1autorefname\endcsname{#2}%
      %% This makes all the theorem counters actually the same counter
      \expandafter\let\csname c@#1\endcsname\c@thm}
    \def\defthm#1#2{\newtheorem{#1}[thm]{#2}}
\let\SK@label\label\fi
    \let\old@label\label
    \let\your@thm\@thm
    \def\@thm#1#2#3{\gdef\currthmtype{#3}\your@thm{#1}{#2}{#3}}
    \def\currthmtype{}
    \def\label#1{{\let\your@currentlabel\@currentlabel\def\@currentlabel%
        {\currthmtype~\your@currentlabel}%
        \SK@label{#1@}}\old@label{#1}}
    \def\autoref#1{\ref{#1@}}
\newtheorem{thm}{Theorem}[section]
\iftac\theoremstyle{plain}\else\theoremstyle{definition}\fi
\iftac\theoremstyle{plain}\else\theoremstyle{remark}\fi
\def\thmqedhere{\expandafter\csname\csname @currenvir\endcsname @qed\endcsname}
  \let\c@equation\c@subsection
  \let\c@equation\c@thm
\numberwithin{equation}{section}
\mdef\ep{\varepsilon}
\mdef\ph{\varphi}
\tikzset{lab/.style={auto,font=\scriptsize}} % arrow labels
\definecolor{fxnote}{rgb}{1.0000,0.0000,0.0000}
\colorlet{fxnotebg}{yellow}
\newcommand{\tr}{\ensuremath{\operatorname{tr}}}
\newcommand{\tw}{\ensuremath{\operatorname{tw}}}
\newcommand{\D}{\sD}
\newcommand{\E}{\sE}
\newcommand{\V}{\sV}
\def\ho{\mathscr{H}\!\mathit{o}\xspace}
\let\oldboxtimes\boxtimes
\def\boxtimes{\mathrel{\oldboxtimes}}
\newcommand{\fib}{\mathsf{fib}}
\newcommand{\cof}{\mathsf{cof}}
\def\ccsub{_{\mathrm{cc}}}
\def\pdh(#1,#2){\llbracket #1,#2\rrbracket}
\def\ldh(#1,#2){\llbracket #1,#2\rrbracket\ccsub}
\def\pend(#1){\pdh(#1,#1)}
\def\lend(#1){\ldh(#1,#1)}
\def\DTl#1#2#3#4#5#6#7{%
  \xymatrix@C=3pc{{#1} \ar[r]^-{#2} &
    {#3} \ar[r]^-{#4} &
    {#5} \ar[r]^-{#6} &
    {#7}
  }}
\newsavebox{\tvabox}
\savebox\tvabox{\hspace{1mm}\begin{tikzpicture}[>=latex',baseline={(0,-.18)}]
  \draw[->] (0,.1) -- +(1,0);
  \node at (.5,0) {$\scriptscriptstyle\bot$};
  \draw[->] (1,-.1) -- +(-1,0);
  \draw[->] (1,-.2) -- +(-1,0);
\end{tikzpicture}\hspace{1mm}}
\newcommand{\exx}{\mathrm{ex}}
\newcommand{\Mod}{\mathrm{Mod}}
\newcommand{\dia}{\mathrm{dia}}
\DeclareMathOperator{\Ext}{Ext}
\def\cSp{\ensuremath{\mathcal{S}\!\mathit{p}}\xspace}
\newcommand{\tcof}{\mathsf{tcof}}
\newcommand{\tfib}{\mathsf{tfib}}
\newcommand{\cok}{\mathrm{cok}}
\newcommand{\cube}[1]{\square^{#1}}
\newcommand{\cotr}[2]{\cube{#1},\mathrm{cotr}_{#2}}
\renewcommand{\tr}[2]{\cube{#1},\mathrm{tr}_{#2}}
\newcommand{\cotrmor}[1]{\mathrm{cotr}_{#1}}
\newcommand{\chunk}[1]{\mathrm{chunk}(\cube{{#1}})}
\title{Abstract cubical homotopy theory}
\author{Falk Beckert and Moritz Groth}
\date{\today}
\thanks{%
The first named author was supported by grant DFG~SPP~1786 from the German Science Foundation (DFG)
}
\begin{document}

\begin{abstract}
Triangulations and higher triangulations axiomatize the calculus of derived cokernels when applied to strings of composable morphisms. While there are no cubical versions of (higher) triangulations, in this paper we use coherent diagrams to develop some aspects of a rich cubical calculus. Applied to the models in the background, this enhances the typical examples of triangulated and tensor-triangulated categories.

The main players are the cardinality filtration of $n$-cubes, the induced interpolation between cocartesian and strongly cocartesian $n$-cubes, and the yoga of iterated cone constructions. In the stable case, the representation theories of chunks of $n$-cubes are related by compatible strong stable equivalences and admit a global form of Serre duality. As sample applications, we use these Serre equivalences to express colimits in terms of limits and to relate the abstract representation theories of chunks by infinite chains of adjunctions.

On a more abstract side, along the way we establish a general decomposition result for colimits, which specializes to the classical Bousfield--Kan formulas. We also include a short discussion of abstract formulas and their compatibility with morphisms, leading to the idea of universal formulas in monoidal homotopy theories.
\end{abstract}

\maketitle

\tableofcontents

\section{Motivation via higher triangulations}
\label{sec:motiv}

Since their introduction in the 1960's \cite{verdier:thesis,verdier:derived,puppe:stabil}, \emph{triangulated categories} have proved extremely useful in various areas of mathematics. One of their nice features is their ubiquity. Triangulated categories arise naturally in algebraic geometry as derived categories of schemes (\cite{verdier:thesis,verdier:derived} or \cite{huybrechts:fourier}), in representation theory as derived categories of algebras (see \cite{happel:triangulated} or \cite{angeleri-happel-krause:handbook}), in modular representation theory as stable module categories \cite{benson-rickard-carlson:thick-stmod}, in homotopy theory as homotopy categories of spectra or related (stable model) categories (\cite{vogt:boardman} or \cite{hovey:model}), and in algebraic analysis (\cite{schapira:analysts}). A nice survey on this ubiquity of triangulated categories can be found in \cite{hjr:triangulated}, while an additional nice survey from the tensor-triangulated perspective is in \cite{balmer:TTG}.

For reasons that will become clear in a moment, we revisit the main reasoning behind the axioms of triangulated categories. For concreteness, let us consider the case of the derived category $D(\cA)$ of an abelian category $\cA$. The basic idea is to try to capture at the level of $D(\cA)$ some shadows of the calculus of \emph{derived cokernel constructions} of chain maps --- as encoded by distinguished triangles. A first key axiom asks for the fact that all morphisms in $D(\cA)$ can be extended to distinguished triangles. And a second key axiom asks for the weak functoriality of this passage to distinguished triangles (for simplicity, we ignore the remaining axioms). 

Correspondingly, a crucial step in one of the proofs of the \emph{existence} of the classical Verdier triangulation on $D(\cA)$ (and similarly in other examples \cite{franke:adams,maltsiniotis:seminar,groth:ptstab}) consists of the following. Starting with a morphism $f\colon C\to D$ in $D(\cA)$, we can find a chain map $F\colon X\to Y$ in $\nCh(\cA)$ such that its image under the localization functor $\gamma\colon\nCh(\cA)\to D(\cA)$ is isomorphic to $f$. This allows us to apply functorial iterated derived cokernel constructions to $F$ and then to pass back to $D(\cA)$ in order to obtain a distinguished triangle extending $f$.

To put this more abstractly, let $[1]$ denote the partially ordered set $(0<1)$ considered as a category, and let us use exponential notation for categories of diagrams. Presented this way, the existence of triangles relies in an essential way on the fact that the forgetful functor
\begin{equation}\label{eq:dia-[1]}
\dia_{[1]}\colon D(\cA^{[1]})\to D(\cA)^{[1]}
\end{equation}
which sends a chain map to its image under the localization functor is \emph{essentially surjective}. Similarly, the weak functoriality of distinguished triangles in $D(\cA)$ is a consequence of \eqref{eq:dia-[1]} also being \emph{full} (and this functor is hence an \emph{epivalence} in the sense of Keller \cite{keller:epivalent}).

These nice properties of \eqref{eq:dia-[1]} extend to more general shapes. For instance, if we denote by $[n]$ the partially ordered set $(0<1<\ldots<n)$, then the functor
\begin{equation}\label{eq:dia-[n]}
\dia_{[n]}\colon D(\cA^{[n]})\to D(\cA)^{[n]}
\end{equation}
which sends a string of composable chain maps to its image under the localization functor is an epivalence (it is full and essentially surjective). And in fact, these properties of \eqref{eq:dia-[n]} in the case of $n=2$ yield a clean proof not only of the existence of octahedron diagrams in $D(\cA)$ but also of their weak functorial dependence on the pair of composable morphisms in $D(\cA)$ \cite{franke:adams,maltsiniotis:seminar,groth:ptstab}.

Triangulated categories encode aspects of the calculus of derived cokernels of morphisms and of pairs of composable morphisms. Going beyond this and taking into account longer strings of composable morphisms, there is the related idea of considering higher versions of octahedron diagrams. This idea goes back at least to \cite{beilinson:perverse} and was axiomatized by Maltsiniotis \cite{maltsiniotis:higher} in a notion of \emph{higher triangulation}. If one follows the above reasoning more systematically, then one can use the epivalences \eqref{eq:dia-[n]} to construct \emph{canonical higher triangulations} on $D(\cA)$. In fact, this works more generally for the values of strong, stable derivators \cite{maltsiniotis:higher,gst:Dynkin-A,groth:revisit} such as homotopy derivators of exact categories \cite{gillespie:exact,stovicek:exact-model}, stable cofibration categories \cite{schwede:p-order,lenz:derivators}, stable model categories \cite{hovey:model}, or stable $\infty$-categories \cite{HTT,HA,lenz:derivators} (hence, in particular, in the specific examples mentioned at the beginning of this introduction).

Now, the main goal of this paper and its sequels is to generalize the calculus of morphisms and finite strings of composable morphisms in a different direction. We want to study the calculus of \emph{morphisms, squares, cubes, and higher dimensional $n$-cubes} in abstract homotopy theories (with a certain focus on pointed or stable homotopy theories). This is partially motivated as an attempt to systematically study compatibility properties satisfied by enhancements of the typical monoidal, triangulated categories (aka.~tensor-triangulated categories) arising in nature. In fact, forming iterated pointwise tensor products of morphisms leads very naturally to representations of $n$-cubes. However, the cubical calculus is also interesting in Goodwillie calculus \cite{goodwillie:II,munson-volic} as well as in not necessarily monoidal or stable homotopy theories, and the ultimate goal of this project is to contribute to the understanding of this calculus.

Following the examples of triangulations and higher triangulations, a first attempt could consist of trying to axiomatize such a calculus at the level of cubical diagrams in $D(\cA)$. Let us denote by $\cube{n}, n\geq 0,$ the $n$-cube, which is to say the $n$-fold power of $[1]$. The previous situations suggest that, whatever the precise axioms of ``cubical triangulations'' would be, one would be led to consider the functor
\begin{equation}\label{eq:dia-n-cube}
\dia_{\cube{n}}\colon D(\cA^{\cube{n}})\to D(\cA)^{\cube{n}}
\end{equation}
which sends representations of $n$-cubes with values in chain complexes to the underlying representations in the derived category. The problem now is that, even in the case of vector spaces over a field and already for $n=2$, the functor \eqref{eq:dia-n-cube} is \emph{not} an epivalence. Hence, unlike the previous cases, the pattern of lifting diagrams against \eqref{eq:dia-n-cube} in order to then apply certain functorial constructions to them does not work. (See \S\ref{sec:coherent} for a more detailed discussion of this failure and of an attempt to partially sail around this problem.) As an upshot, in order to develop the intended calculus, we proceed differently and work with representations of $n$-cubes in chain complexes, considered as objects in derived categories $D(\cA^{\cube{n}})$ for $n\geq 0$. And to allow for additional flexibility for constructions, we work with the entire derivator of the abelian category
\[
\D_\cA\colon B\mapsto D(\cA^B),
\]
which encodes the derived categories of various $B$-shaped diagram categories. 

More generally, the main theme is to develop the calculus of $n$-cubes in derivators, pointed derivators, and mostly stable derivators (such as homotopy derivators of exact categories, stable cofibration categories, stable model categories, or stable $\infty$-categories). Consequently, this paper and the sequels belong to abstract representation theory \cite{gst:basic,gst:tree,gst:Dynkin-A,gst:acyclic}, the formal study of stability \cite{groth:ptstab,gps:mayer,gs:stable}, and to the formal study of the interaction of stability and monoidality with its applications to fairly general additivity of traces results \cite{gps:additivity,ps:linearity,ps:linearity-fp,gallauer:traces}. The search for good compatibility axioms to be imposed on monoidality and stability has already quite some history; it goes back at least to \cite{margolis:spectra} and was pushed significantly further in \cite{hps:axiomatic,may:additivity,keller-neeman:D4}. 

We now turn to a more detailed description of the content of this paper. 

\section{Introduction}
\label{sec:intro}

The key organizational tool for abstract cubical homotopy theory is the \emph{cardinality filtration} of $n$-cubes. The $n$-cube $\cube{n}$ can be realized as the power set of $\{1,\ldots,n\}$, and correspondingly it admits a filtration by the full subcategories $\cube{n}_{0\leq l}$ spanned by the subsets of cardinality at most $l$. The differences between the layers of this filtration are measured, using suggestive notation, by the \emph{chunks} $\cube{n}_{k\leq l}$, and the natural inclusions between these chunks are conveniently organized by means of \autoref{fig:intro}. In this paper we aim for an understanding of stable representations of these chunks $\cube{n}_{k\leq l}$ for arbitrary but fixed $n$, while in \cite{bg:parasimplicial} we investigate how the representation theories of the various chunks for the various $n$ assemble together. 
\begin{figure}
\begin{displaymath}
\xymatrix{
\emptyset\ar[r]&\cube{n}_{=0}\ar[r]&\cube{n}_{0\leq 1}\ar[r]&\ldots\ar[r]&\cube{n}_{0\leq n-1}\ar[r]&\cube{n}\\
&\emptyset\ar[r]\ar[u]&\cube{n}_{=1}\ar[r]\ar[u]&\ldots\ar[r]\ar[u]&\cube{n}_{1\leq n-1}\ar[r]\ar[u]&\cube{n}_{1\leq n}\ar[u]\\
&&\ldots\ar[u]\ar[r]&\ldots\ar[u]\ar[r]&\ldots\ar[u]\ar[r]&\ldots\ar[u]\\
&&&\ldots\ar[u]\ar[r]&\cube{n}_{=n-1}\ar[r]\ar[u]&\cube{n}_{n-1\leq n}\ar[u]\\
&&&&\emptyset\ar[r]\ar[u]&\cube{n}_{=n}\ar[u]\\
&&&&&\emptyset\ar[u]
}
\end{displaymath}
\caption{The cardinality filtration of the $n$-cube $\cube{n}$}
\label{fig:intro}
\end{figure}
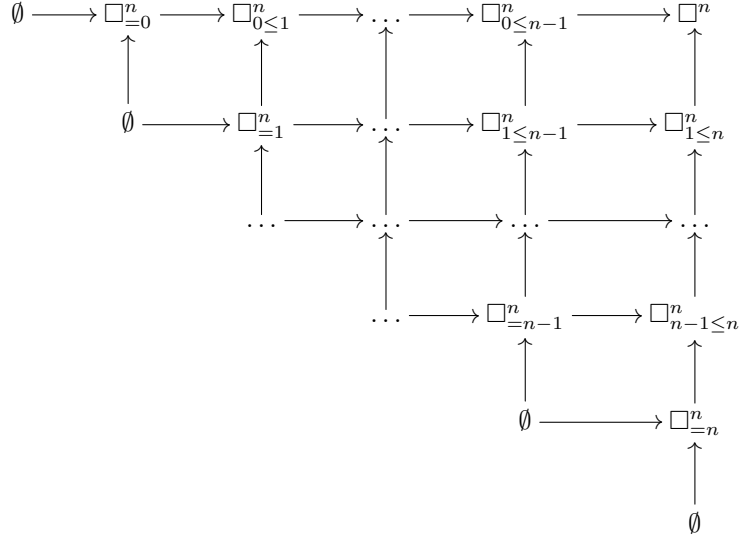

In the first part of this paper (consisting of \S\S\ref{sec:coherent}-\ref{sec:total}) we prepare the discussion of more general chunks by first turning to representations of the posets $\cube{n}_{0\leq k}$ and $\cube{n}_{k\leq n}$ in derivators \D (see the first row and the final column in \autoref{fig:intro}). Let us begin by the first row. We introduce \emph{$k$-cotruncated $n$-cubes} as the essential image of the fully faithful left Kan extension morphism $\D^{\cube{n}_{0\leq k}}\to\D^{\cube{n}}$. These classes of $n$-cubes interpolate between constant $n$-cubes (for $k=0$), strongly cocartesian $n$-cubes (for $k=1$), cocartesian $n$-cubes (for $k=n-1$), and generic $n$-cubes (for $k=n$). The class of $k$-cotruncated $n$-cubes can be characterized as those $n$-cubes for which all $(k+1)$-dimensional subcubes are cocartesian (\autoref{thm:cotr-filtration}). To study this interpolation more carefully, we pass to \emph{pointed} derivators and develop the yoga of iterated partial cones. Partial cones commute with each other (a variant of the 3-by-3-lemma), and $n$-fold cones are canonically isomorphic to total cofibers (\autoref{thm:total-cof}). As a consequence we conclude that all $(k+1)$-fold cones vanish on $k$-cotruncated $n$-cubes. 

The \emph{duality principle} from derivator theory immediately yields the picture dual to the one we just sketched. At the level of shapes in \autoref{fig:intro} this corresponds to the passage from the first row to the final column. We define \emph{$k$-truncated $n$-cubes} as the essential image of the right Kan extension morphism $\D^{\cube{n}_{k\leq n}}\to\D^{\cube{n}}$. The reason why we stress this duality principle is that for \emph{stable} derivators these two pictures fit together particularly nicely (see \cite{gps:mayer,gs:stable,gst:basic} for related facts). In the stable case, an $n$-cube $X\in\D^{\cube{n}}$ is $k$-cotruncated iff all $(k+1)$-fold cones of $X$ vanish iff all $(k+1)$-fold fibers of $X$ vanish iff $X$ is $(n-k)$-truncated. As a consequence we conclude that the posets $\cube{n}_{0\leq k}$ and $\cube{n}_{n-k\leq n}$ have the same abstract stable representation theories (\autoref{thm:sse-chunks-special-case}), i.e., they are \emph{strongly stably equivalent} in the sense of \cite{gst:basic}.

In the second part of this paper (consisting of \S\S\ref{sec:sse-cotruncated}-\ref{sec:global}) we focus on the stable picture, and this part relies in an essential way on the various calculational tools for iterated cones collected in \S\ref{sec:total}. One goal is to expand on the strong stable equivalences $\cube{n}_{0\leq k}\stackrel{s}{\sim}\cube{n}_{n-k\leq n}$. It turns out that the entire mirror symmetry of \autoref{fig:intro} can be realized at the level of abstract stable representations. As an incarnation of this \emph{chunk symmetry}, we show that for all $0\leq k\leq l\leq n$ the chunks $\cube{n}_{k\leq l}$ and $\cube{n}_{n-l\leq n-k}$ are strongly stably equivalent (\autoref{thm:sse-chunks}). In order to make precise in which sense these strong stable equivalences 
\begin{equation}\label{eq:intro-sse-chunk}
\Phi_{k,l}=\Phi_{k,l}^{(n)}\colon\D^{\cube{n}_{k\leq l}}\toiso\D^{\cube{n}_{n-l\leq n-k}}
\end{equation}
are compatible for the various chunks, it is convenient to organize the chunks themselves in a category $\chunk{n}$. This is the twisted morphism category of $[n]$, it agrees with the shape of \autoref{fig:intro}, and the chunk symmetry is simply induced by the self-duality of $[n]$. For a fixed stable derivator \D, the assignments
\[
R^{(n)}(\D)\colon\cube{n}_{k\leq l}\mapsto\D^{\cube{n}_{k\leq l}}\qquad\text{and}\qquad
(Z^{(n)})^\vee(\D)\colon\cube{n}_{k\leq l}\mapsto\D^{\cube{n}_{n-l\leq n-k}}
\]
encoding the abstract representation theories of the various chunks can be extended to pseudo-functors $\chunk{n}\to\cDER_{\mathrm{St},\exx}$ with values in the $2$-category of stable derivators and exact morphisms. (The notation for these pseudo-functors is motivated by their construction; see \S\ref{sec:global}.) We show in \autoref{thm:sse-chunk-global} that the strong stable equivalences \eqref{eq:intro-sse-chunk} can be assembled to a pseudo-natural equivalence
\begin{equation}\label{eq:intro-sse-chunk-pseudo}
\Phi=\Phi^{(n)}\colon R^{(n)}(\D)\toiso (Z^{(n)})^\vee(\D)\colon\chunk{n}\to\cDER_{\mathrm{St},\exx},
\end{equation}
and we also obtain variants of these equivalences.

An additional interesting symmetry comes from abstract or spectral \emph{Serre duality}. Stable derivators of representations of chunks admit Serre equivalences
\begin{equation}\label{eq:intro-Serre}
S_{k,l}=S_{k,l}^{(n)}\colon\D^{\cube{n}_{k\leq l}}\toiso\D^{\cube{n}_{k\leq l}}.
\end{equation}
Furthermore, similar to the case of the strong stable equivalences, we establish in \autoref{thm:Serre} that these Serre equivalences \eqref{eq:intro-Serre} assemble to suitable pseudo-natural equivalences
\[
S=S^{(n)}\colon R^{(n)}(\D)\toiso L^{(n)}(\D)\colon \chunk{n}\to\cDER_{\mathrm{St},\exx}.
\]
More specifically, this means that for every morphism $i\colon\cube{n}_{k',l'}\to\cube{n}_{k,l}$ in \autoref{fig:intro} the left and right Kan extension morphisms $i_!,i_\ast\colon\D^{\cube{n}_{k',l'}}\to\D^{\cube{n}_{k,l}}$ correspond to each other under conjugation by the Serre equivalences
\begin{equation}\label{eq:intro-Serre-conj}
S_{k,l}\circ i_\ast \cong i_!\circ S_{k',l'}.
\end{equation}

These equivalences \eqref{eq:intro-Serre} are examples of the more general concept of \emph{global Serre duality}, a notion we introduce formally and study more systematically in \cite{bg:global}. Here we content ourselves by indicating two directions of application, both of which deserve additional exploration. First, in our specific situation, the Serre equivalences \eqref{eq:intro-Serre} induce canonical isomorphisms
\begin{equation}\label{eq:intro-colim-lim}
\colim_{\cube{n}_{k\leq l}}\toiso\mathrm{lim}_{\cube{n}_{k\leq l}}\circ S_{k,l}^{(n)}\colon\D^{\cube{n}_{k\leq l}}\to\D,
\end{equation}
thereby again making precise that ``in stable land the distinction between (homotopy finite) colimits and limits is blurred''. We recall that by \cite{gs:stable} the defining feature of stability is that homotopy finite colimits are \emph{weighted limits}. In view of this result, the canonical isomorphisms \eqref{eq:intro-colim-lim} can be interpreted by saying that the Serre equivalences transform the \emph{weighted limit} back into an \emph{actual limit}, thereby making the ``blurring more direct''. The natural isomorphisms \eqref{eq:intro-Serre-conj} provide an additional incarnation of this principle for homotopy finite Kan extensions.

As a second application we note that for every morphism $i\colon\cube{n}_{k',l'}\to\cube{n}_{k,l}$ in $\chunk{n}$ the restriction morphism
\[
i^\ast\colon\D^{\cube{n}_{k\leq l}}\to\D^{\cube{n}_{k'\leq l'}}
\]
generates an infinite chain of adjunctions. Some chunks enjoy an abstract fractionally Calabi--Yau property, and in those cases the formulas for these iterated adjoints admit a particularly simple form. 

In this paper we also establish two results on derivators which are of independent interest and which we want to stress in this introduction. These results are certainly well-known to experts, but we could not find citable references containing them and hence include the results here. 

First, in \S\ref{sec:formulas} we show that the cubical calculus is compatible with exact morphisms of derivators. In fact, this turns out to be a special case of a more general (seemingly technical, but actually quite convenient) result concerning the compatibility of morphisms of derivators with canonical mates (\autoref{prop:mates-vs-morphisms}). Roughly speaking, this result says that morphisms of derivators enjoy a ``lax-oplax compatibility with formulas'', and this immediately specializes to the ``pseudo-compatibility'' of the cubical calculus with exact morphisms. More generally, applied to actions of monoidal derivators, this implies that ``formulas'' propagate from a monoidal derivator \V to \V-modules, thereby making precise the universality of ``formulas'' in monoidal homotopy theories. For example, by the universality of spectra \cite{heller:htpythies,heller:stable,franke:adams,cisinski:derived-kan,tabuada:universal-invariants,cisinski-tabuada:non-connective,cisinski-tabuada:non-commutative}, all ``exact formulas'' (such as \eqref{eq:intro-Serre-conj} and \eqref{eq:intro-colim-lim}) propagate from spectra to stable derivators.

Second, in \S\ref{sec:decompose} we stress the philosophy that a combination of homotopy finality arguments with the smoothness/properness of Grothendieck (op)fibrations leads to fairly general decomposition results for (co)limits. This abstract philosophy is formulated as a decomposition theorem (\autoref{thm:decompose}). We illustrate this theorem by showing that the classical Bousfield--Kan formulas for homotopy (co)limits \cite{bousfield-kan:htpy-limits} hold in derivators (\autoref{thm:BK-formulas}). As a further illustration we obtain a decomposition result related to exhaustive filtrations of categories (\autoref{thm:exhaustive-filt}). As a sample application of this latter result, and this was our original motivation for including these decomposition techniques in this paper, we obtain in \S\ref{sec:punctured} a fairly systematic proof of the inductive formula for colimits of punctured $n$-cubes.

In this paper we freely use the basic theory of derivators (see \cite{grothendieck:derivators,franke:adams,keller:epivalent} and \cite{cisinski:direct,cisinski:derived-kan,maltsiniotis:intro,maltsiniotis:k-theory}) and follow the conventions and notation from \cite{gst:tree,groth:revisit}. For a more detailed introduction to derivators we refer the reader to \cite{groth:ptstab}. For convenience, we conclude the introduction by the following list of conventions, notation, and key facts which will be used throughout this paper.

\begin{itemize}
\item We denote by $\cCat$ the $2$-category of small categories and by $\cCAT$ the $2$-category of not necessarily small categories.
\item We follow the convention of Heller \cite{heller:htpythies} and Franke \cite{franke:adams} that derivators are suitable $2$-functors $\D\colon\cCat\op\to\cCAT$ with domain $\cCat\op$, thereby modeling derivators on \emph{diagrams} in homotopy theories. There is an equivalent approach based on \emph{presheaves} and in that case the domain is the $2$-category $\cCat\coop$ (as used, for example, by Grothendieck \cite{grothendieck:derivators} and Cisinski \cite{cisinski:direct,cisinski:derived-kan}). Both conventions yield isomorphic $2$-categories of derivators.
\item We denote by $\bbone$ the terminal category, and we tacitly use the canonical isomorphisms $A\cong A^\bbone$ for $A\in\cCat$. The unique functors to terminal categories are denoted by $\pi=\pi_A\colon A\to\bbone$.
\item Given a derivator \D and $A\in\cCat$, we write $\D^A\colon B\mapsto\D(A\times B)$ for the derivator of coherent $A$-shaped diagrams in \D. For every functor $u\colon A\to B$ in $\cCat$, we denote by
\[
u^\ast\colon\D^B\to\D^A,\quad u_!\colon\D^A\to\D^B,\quad\text{and}\quad u_\ast\colon\D^A\to\D^B
\]
the corresponding precomposition, left Kan extension, and right Kan extension morphism. Kan extensions along functors $\pi=\pi_A\colon A\to\bbone$ are denoted by $\colim_A\colon\D^A\to\D$ and $\lim_A\colon\D^A\to\D$, respectively, and referred to as colimits and limits. Precomposition along $a\colon\bbone\to A$ is referred to as evaluation at $a$, and its effect on morphisms is denoted by $(f\colon X\to Y)\mapsto(f_a\colon X_a\to Y_a)$. 
\item In this paper we systematically use parametrized Kan extensions and restrictions, resulting in the use of morphisms of derivators instead of underlying functors. In particular, we write $X\in\D$ in case there is a category $A\in\cCat$ such that $X\in\D(A)$. The motivation is that we want to formulate the results in the language of abstract homotopy theories in contrast to homotopy categories. An interpretation in terms of actual categories can be obtained by evaluation at $\bbone$.  
\item Kan extensions along fully faithful functors are fully faithful and hence restrict to equivalences. It is convenient to distinguish these notationally. More generally, given a fully faithful morphism $F\colon\D\to\E$ of derivators we denote by $F^\simeq\colon\D\toiso\mathrm{Im}(F)$ the induced equivalence onto the essential image $\mathrm{Im}(F)\subseteq\E$.
\item Given a pointed derivator \D, $A\in\cCat$, and a subcategory $B\subseteq A$, we denote by $\D^{A,B}\subseteq\D^A$ the prederivator spanned by all $X\in\D^A$ such that $X|_B=0$. 
\item We denote by $\cDER$ the $2$-category of derivators, morphisms of derivators (pseudo-natural transformations) and natural transformations (modifications), and by $\cDER_{\mathrm{St},\exx}\subseteq\cDER$ the $2$-full subcategory spanned by stable derivators, exact morphisms and all natural transformations.
\item Finally, we denote by $y_{\cC}$ the prederivator represented by a category $\cC$, by $\D_\cA$ the derivator of an abelian category (defined on homotopy finite categories only), by $\ho_\cM$ the homotopy derivator of a Quillen model category $\cM$, and by $\ho_\cD$ the homotopy derivator of a bicomplete $\infty$-category $\cD$.
\end{itemize}

\section{Coherent versus incoherent $n$-cubes}
\label{sec:coherent}

In this section we stress the difference between coherent and incoherent $n$-cubes. Even in the case of a field and already in dimension two, the corresponding forgetful functor does not reflect isomorphism types. This illustrates that, in contrast to triangulations and higher triangulations, it is not possible to capture the cubical calculus at the level of incoherent diagrams.

\begin{notn}\label{notn:cubes}
Let $n\in\lN_{\geq 0}$ be a natural number. We denote by
\[
\cube{n}=\mathcal{P}(\lbrace 1,\ldots,n\rbrace)
\]
the power set of $\lbrace 1,\ldots,n\rbrace$ and refer to this poset as the \textbf{$n$-cube}. The $n$-cube is, of course, isomorphic to the $n$-th power $[1]^n$ of the poset $[1]=(0<1)$. Hence, elements of $\cube{n}$ can be described as subsets or as tuples. The correspondence is realized by the characteristic function and will be used tacitly without additional notation. Occasionally, it will be useful to emphasize notationally the set $\lbrace 1,\ldots,n\rbrace$ of which we form the power set, and in that case we also write
\begin{equation}\label{eq:cube-set}
\cube{\lbrace 1,\ldots,n\rbrace}=\cube{n}
\end{equation}
for the $n$-cube. 
\end{notn}

\begin{defn}
Given a derivator \D and $n\in\lN_{\geq 0}$, we refer to $\D^{\cube{n}}$ as the \textbf{derivator of (coherent) $n$-cubes} in \D.
\end{defn}

\begin{egs}\label{egs:shift}
The passage from categories, abelian categories, Quillen model categories, and $\infty$-categories to derivators is compatible with the formation of exponentials. Hence, given a bicomplete category $\cC$, a Grothendieck abelian category $\cA$, a Quillen model category $\cM$, or a bicomplete $\infty$-category $\cD$, for $n\geq 0$ there are equivalences of derivators
\[
y_{\cC}^{\cube{n}}\simeq y_{\cC^{\cube{n}}},\quad \D_\cA^{\cube{n}}\simeq\D_{\cA^{\cube{n}}},\quad \ho_{\cM}^{\cube{n}}\simeq\ho_{\cM^{\cube{n}}}, \quad\text{and}\quad \ho_{\cD}^{\cube{n}}\simeq\ho_{\cD^{\cube{n}}}.
\]
\end{egs}

Using derivators of more restricted types this extends to more general input data such as exact categories in the sense of Quillen \cite{quillen:k-theory} or cofibration categories (see \cite{keller:exact,cisinski:derivable}).

\begin{rmk}
For every prederivator \D and $B\in\cCat$ there is an underlying diagram functor
\[
\dia_B\colon\D(B)\to\D(\bbone)^B
\]
defined by $\dia_B(X)_b=X_b, b\in B$ \cite[\S1]{groth:ptstab}. These functors assemble to a strict underlying diagram morphism to the prederivator represented by the underlying category $\D(\bbone)$,
\begin{equation}\label{eq:dia-mor}
\dia\colon\D\to y_{\D(\bbone)}.
\end{equation}
The prederivator \D is (equivalent to) a represented prederivator if and only if the underlying diagram morphism \eqref{eq:dia-mor} is an equivalence.  
\end{rmk}

Thus, in the examples not arising from ordinary category theory (hence, the motivational examples for the theory of derivators), the underlying diagram morphisms fail to be equivalences. In order to illustrate this by specific examples related to $n$-cubes we include the following proposition. Therein, we denote by $\lZ$ the integers considered as a discrete category, so that $(-)^\lZ$ is the passage to the category of $\lZ$-graded objects.

\begin{prop}\label{prop:dia}
Let $\cA$ be a semisimple abelian category and let $B\in\cCat$. The underlying diagram functor
\[
\dia_B\colon D(\cA^B)\to D(\cA)^B
\]
is equivalent to the graded homology functor
\[
H_\ast\colon D(\cA^B)\to (\cA^B)^\lZ.
\]
\end{prop}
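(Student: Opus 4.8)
The plan is to reduce everything to the case $B=\bbone$ and then use semisimplicity. First I would recall that for a semisimple abelian category $\cA$ the derived category $D(\cA)$ is equivalent to the category $\cA^{\lZ}$ of $\lZ$-graded objects of $\cA$, via the total homology functor $X\mapsto H_\ast(X)=(H_n(X))_{n\in\lZ}$; this is the classical statement that over a semisimple abelian category every complex is (quasi-isomorphic to, hence isomorphic in $D(\cA)$ to) the direct sum of its shifted homology objects, with no nontrivial extensions to obstruct the splitting. The key observation is that this is completely functorial and compatible with the evaluation functors.

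Second, I would note that if $\cA$ is semisimple then so is the functor category $\cA^B$ for any $B\in\cCat$ (a short exact sequence in $\cA^B$ splits because it splits objectwise and the splitting can be chosen naturally — indeed $\cA^B$ has enough projectives/injectives that are also injective/projective, or more directly: the objectwise splittings assemble since $\Ext^1$ in $\cA^B$ vanishes, which follows from a spectral sequence or from semisimplicity of $\cA$ applied levelwise together with the fact that $\cA^B$-morphisms between semisimple objects are determined levelwise). Hence the previous paragraph applies to $\cA^B$ as well, giving an equivalence $H_\ast\colon D(\cA^B)\toiso(\cA^B)^{\lZ}$.

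Third, I would identify the underlying-diagram functor under these equivalences. The functor $\dia_B\colon D(\cA^B)\to D(\cA)^B$ sends a complex $X$ of $B$-diagrams to the $B$-diagram $b\mapsto X_b$ of complexes; precomposing with the equivalence $H_\ast\colon D(\cA^B)\toiso(\cA^B)^\lZ$ and postcomposing with the equivalence $D(\cA)^B\toiso((\cA)^\lZ)^B\cong(\cA^\lZ)^B$ induced by $H_\ast$ levelwise, one checks that $\dia_B$ becomes the canonical isomorphism $(\cA^B)^\lZ\cong(\cA^\lZ)^B$. This is because homology commutes with the exact evaluation functors $\mathrm{ev}_b\colon\cA^B\to\cA$, so $H_\ast(X)_b=H_\ast(X_b)$ — i.e. the square relating $\dia_B$ and the two homology equivalences commutes up to canonical natural isomorphism. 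Since $(\cA^\lZ)^B\cong(\cA^B)^\lZ$, this identifies $\dia_B$ with $H_\ast\colon D(\cA^B)\to(\cA^B)^\lZ$ up to the stated equivalences.

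The main obstacle is the second step: making precise and proving that $\cA^B$ is again semisimple, and more importantly that the homology decomposition in $D(\cA^B)$ is natural enough that it intertwines cleanly with the evaluation functors. Concretely, one must verify that the chosen splitting $X\simeq\bigoplus_n H_n(X)[n]$ in $D(\cA^B)$ restricts, under each $\mathrm{ev}_b$, to the corresponding splitting in $D(\cA)$ — equivalently, that one can make the decomposition functorial in $X$, which again comes down to the vanishing of the relevant $\Ext^1$-groups in $\cA^B$. Everything else (compatibility of homology with exact functors, the identification $(\cA^B)^\lZ\cong(\cA^\lZ)^B$) is formal.
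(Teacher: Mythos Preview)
Your second step is wrong, and the error is not a technicality: for a semisimple abelian category $\cA$ and a general $B\in\cCat$, the functor category $\cA^B$ is \emph{not} semisimple. Take $\cA=\Mod(k)$ for a field $k$ and $B=[1]$; then $\cA^B$ is the category of representations of the $A_2$-quiver, and the short exact sequence
\[
0\to(0\to k)\to(k\xrightarrow{\id}k)\to(k\to 0)\to 0
\]
does not split. Your justification (``the objectwise splittings assemble since $\Ext^1$ in $\cA^B$ vanishes'') is exactly what fails: objectwise splittings need not be natural, and $\Ext^1_{\cA^B}$ does not vanish. Consequently $H_\ast\colon D(\cA^B)\to(\cA^B)^\lZ$ is \emph{not} an equivalence in general, and you cannot precompose with its inverse as you do in step three. (The paper in fact uses this very failure later to show that $\dia_B$ is not an equivalence.)

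The paper's argument avoids this entirely. One simply writes down the commutative square
\[
\xymatrix{
D(\cA^B)\ar[r]^-{\dia_B}\ar[d]_-{H_\ast}&D(\cA)^B\ar[d]^-{H_\ast}\\
(\cA^B)^\lZ\ar[r]_-\sim&(\cA^\lZ)^B
}
\]
(commutativity is the formal fact you already noted: homology commutes with the exact evaluation functors). Since $\cA$ is semisimple, $H_\ast\colon D(\cA)\to\cA^\lZ$ is an equivalence, hence so is the \emph{right} vertical arrow. That alone identifies $\dia_B$ with the left vertical $H_\ast$ up to the displayed equivalence of targets --- no claim about the left vertical being an equivalence is needed or made. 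Your step three is essentially this square, but you should run it using only the right-hand equivalence; step two should be deleted.
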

\begin{proof}
Unraveling definitions it is immediate that the square
\begin{equation}\label{eq:dia-I}
\vcenter{
\xymatrix{
D(\cA^B)\ar[r]^-{\dia_B}\ar[d]_-{H_\ast}&D(\cA)^B\ar[d]^-{H_\ast}\\
(\cA^B)^\lZ\ar[r]_-\simeq&(\cA^\lZ)^B
}
}
\end{equation}
is commutative, where the bottom horizontal map is the categorical exponential law. Since $\cA$ is semisimple, the homology functor $H_\ast\colon D(\cA)\to\cA^\lZ$ is an equivalence of categories (\cite[\S III.2]{gelfand-manin:homological}) and hence so is the vertical morphism on the right.
\end{proof}

\begin{cor}\label{cor:dia}
Let $R$ be a semisimple ring, let $B$ be a category with finitely many objects, and let $RB$ be the category algebra. The underlying diagram functor
\[
\dia_B\colon D(\Mod(R)^B)\to D(R)^B
\]
is equivalent to the graded homology functor
\[
H_\ast\colon D(RB)\to \Mod(RB)^\lZ.
\]
\end{cor}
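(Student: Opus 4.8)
The plan is to deduce \autoref{cor:dia} from \autoref{prop:dia} by specializing to the abelian category $\cA = \Mod(R)$ and identifying the resulting diagram categories with module categories over the category algebra $RB$. First I would recall the standard equivalence $\Mod(R)^B \simeq \Mod(RB)$ between $B$-shaped diagrams of $R$-modules and modules over the category algebra $RB$; this requires $B$ to have finitely many objects so that $RB$ is a ring (with unit $\sum_{b} e_b$), and it is an equivalence of abelian categories, natural enough to pass to derived categories and to $\lZ$-graded objects. Under this equivalence $\Mod(R)^B$ corresponds to $\Mod(RB)$, hence $D(\Mod(R)^B) \simeq D(RB)$ and $(\Mod(R)^B)^\lZ \simeq \Mod(RB)^\lZ$.

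Next I would verify that $\cA = \Mod(R)$ is a semisimple abelian category when $R$ is a semisimple ring: by the Artin--Wedderburn theorem every $R$-module is projective (equivalently injective, equivalently a direct sum of simples), so every short exact sequence splits, which is exactly semisimplicity of $\Mod(R)$. This places us in the hypotheses of \autoref{prop:dia} with this choice of $\cA$ and the given $B$.

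Then I would simply transport the conclusion of \autoref{prop:dia} across the identifications above. \autoref{prop:dia} gives that $\dia_B \colon D(\Mod(R)^B) \to D(\Mod(R))^B$ is equivalent to the graded homology functor $H_\ast \colon D(\Mod(R)^B) \to (\Mod(R)^B)^\lZ$. Replacing the source $D(\Mod(R)^B)$ by $D(RB)$ via the equivalence of the first paragraph, replacing the target $(\Mod(R)^B)^\lZ$ by $\Mod(RB)^\lZ$, and noting that $D(\Mod(R))^B = D(R)^B$ by definition, yields precisely the asserted equivalence between $\dia_B \colon D(\Mod(R)^B) \to D(R)^B$ and $H_\ast \colon D(RB) \to \Mod(RB)^\lZ$. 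The only point requiring a little care — and the closest thing to an obstacle, though it is routine — is checking that the equivalence $\Mod(R)^B \simeq \Mod(RB)$ is compatible with the homology functors on both sides, i.e.\ that it intertwines $H_\ast$ computed pointwise in $\Mod(R)^B$ with $H_\ast$ computed in $\Mod(RB)$; this follows because the equivalence is exact and computed objectwise, so it commutes with the formation of chain-level homology. No further input beyond \autoref{prop:dia} and Artin--Wedderburn is needed.
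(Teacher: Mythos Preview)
Your proposal is correct and follows essentially the same approach as the paper: specialize \autoref{prop:dia} to $\cA=\Mod(R)$, invoke the equivalence $\Mod(R)^B\simeq\Mod(RB)$ (for $B$ with finitely many objects), and transport the conclusion across it. The paper organizes this as an extension of the commutative square from \autoref{prop:dia} by an additional square on the left coming from the equivalence, and justifies its commutativity by ``additivity of homology'' --- exactly the compatibility check you single out as the only nontrivial point.
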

\begin{proof}
The equivalence of categories $\Mod(RB)\toiso\Mod(R)^B$ allows us to extend diagram \eqref{eq:dia-I} to
\begin{equation}\label{eq:dia-II}
\vcenter{
\xymatrix{
D(RB)\ar[r]^-\simeq\ar[d]_-{H_\ast}\ar@{}[rd]|{\cong}&D(\Mod(R)^B)\ar[r]^-{\dia_B}\ar[d]_-{H_\ast}&D(\Mod(R))^B\ar[d]^-{H_\ast}_\simeq\\
\Mod(RB)^\lZ\ar[r]_-\simeq&(\Mod(R)^B)^\lZ\ar[r]_-\simeq&(\Mod(R)^\lZ)^B.
}
}
\end{equation}
The square on the left commutes up to a natural isomorphism by additivity of homology, and this diagram hence concludes the proof.
\end{proof}

\begin{rmk}
Since underlying diagram functors generalize graded homology functors, these functors tend to fail to be equivalences. But for certain shapes they are reasonably well-behaved: Derivators arising in nature (such as the ones in \autoref{egs:shift}) are \emph{strong}. Thus, in those cases, for every \emph{free} category $F$ and every $A\in\cCat$ the partial underlying diagram functor
\begin{equation}\label{eq:partial-dia}
\dia_{F,A}\colon\D(F\times A)\to\D(A)^F
\end{equation}
(which makes diagrams incoherent in the $F$-direction but remembers the coherence in the $A$-direction) is essentially surjective and full. In the stable case, this allows us to lift incoherent morphisms and chains of composable such morphisms to coherent diagrams and then to establish the \emph{existence} of distinguished triangles and higher triangles \cite{franke:adams,maltsiniotis:seminar,groth:ptstab,groth:revisit,gst:Dynkin-A}.
\end{rmk}

\begin{rmk}\label{rmk:strong-triang}
We note that, even for a semisimple abelian category $\cA$, the underlying diagram morphism
\[
\dia\colon\D_\cA\to y_{D(\cA)}
\]
is \emph{not} an equivalence of derivators. An indirect way to see this is as follows. If the morphism were an equivalence, then $D(\cA^B),B\in\cCat,$ would be triangulated and abelian, and hence semisimple. But at the level of algebras, this exponentiation specializes to the passage from a field to path algebras of quivers, group algebras and incidence algebras of posets. And clearly these algebras fail to be semisimple.
\end{rmk}

We also want to give a specific counter-example.

\begin{eg}
Let us consider a field $k$ and the poset $[1]=(0<1)$. In this case the underlying diagram functor is equivalent to $H_\ast\colon D(k[1])\to(\Mod(k[1]))^\lZ$ (\autoref{cor:dia}). While this functor is full and essentially surjective (\autoref{rmk:strong-triang}), it is not faithful. In fact, it kills
\begin{equation}\label{eq:evil-ext}
\Hom_{D(k[1])}((k\to 0),\Sigma(0\to k))\cong\Ext^1_{k[1]}((k\to 0),(0\to k))\cong k
\end{equation}
since in $\Mod(k[1])^\lZ$ we have $\Hom_{\Mod(k[1])^\lZ}((k\to 0),\Sigma(0\to k))\cong 0$. 
\end{eg}

\begin{rmk}
By means of Auslander--Reiten theory one can show that in a certain sense precisely the group \eqref{eq:evil-ext} is responsable for the lack of faithfulness of $H_\ast$ \cite[\S\S4-5]{happel:triangulated}. Moreover, this group is also the reason why the left derived cokernel
\[
L\mathrm{coker}\cong C\colon D(k[1])\to D(k)
\]
does not factor through the morphism category $D(k)^{[1]}$. (Recall that every abelian category $\cA$ admits a left derived cokernel functor given by the classical cone construction $L\mathrm{coker}\cong C\colon D(\cA^{[1]})\to D(\cA)$.)
\end{rmk}

\begin{rmk}
For derivators arising in nature (such as homotopy derivators of exact categories, abelian categories, model categories, or $\infty$-categories), the partial underlying diagram functors \eqref{eq:partial-dia} fail to be essentially surjective and full for more general shapes. In related situations obstruction theories studying the existence of lifts have been developed (see \cite{dwyer-kan:obstruction,dwyer-kan:realizing,dks:htpy-comm-real} and \cite{blanc-markl:htpy-ops,blanc-johnson-turner:htpy-ops}).
\end{rmk}

We again illustrate this by a specific example.

\begin{eg}\label{eg:field}
Let $\D_k$ be the derivator of a field and let $k\square$ be the path algebra of the commutative square $\square$. The functor $\dia_{\square,\bbone}\colon\D_k(\square)\to\D_k(\bbone)^\square$ is by \autoref{cor:dia} equivalent to the graded homology functor
\[
H_\ast\colon D(k\square)\to \Mod(k\square)^\lZ
\]
which is not an epivalence. In fact, otherwise it would preserve isomorphism types in the sense that $H_\ast(X)\cong H_\ast(Y)$ implies that there is an isomorphism $X\cong Y$, and this is impossible by the following example. Let us consider the following complexes which are concentrated in homological degree $0$ and $1$,
\[
\xymatrix{
0\ar[r]\ar[d]&0\ar[d]\ar@{}[dr]|{\to}&k\ar[r]\ar[d]&0\ar[d]&&0\ar[r]\ar[d]&0\ar[d]\ar@{}[dr]|{\to}&k\ar[r]\ar[d]&0\ar[d]\\
0\ar[r]&k&0\ar[r]&0&&k\ar[r]&k&k\ar[r]&0.
}
\]
Here, all structure maps of representations and all components of differentials are identities as soon as possible and they vanish otherwise. Both complexes have isomorphic homology which is concentrated in degree zero where it agrees with the indecomposable injective representation corresponding to the vertex $(0,0)\in\square$. However, these chain complexes are not quasi-isomorphic.
\end{eg}

\begin{rmk}
Note that this example is not in contradiction to the fact that $\dia_{[1]}$ is always an epivalence in strong derivators. For every derivator \D the functor $\dia_{\square}$ factors as 
\[
\dia_{\square}\colon\D(\square)\to\D([1])^{[1]}\to\D(\bbone)^\square.
\]
Here, the first functor is a diagram functor for the derivator $\D^{[1]}$ while the second one is the exponential
\[
(\dia_{[1]})^{[1]}\colon\D([1])^{[1]}\to\D(\bbone)^\square.
\]
Strong derivators are by definition closed under exponentials, and for such derivators the first functor is hence an epivalence. However, even for strong derivators, the second functor is not necessarily an epivalence, since epivalences are not closed under exponentials.
\end{rmk}

\begin{rmk}
As we have just seen, in general, incoherent cubes can not be lifted to coherent cubes (at least not uniquely up to isomorphism). However, in the case of strong, stable, monoidal derivators we can lift those cubes which arise as iterated pointwise products of morphisms, and this suggests that certain aspects of the cubical calculus can be encoded at the level of incoherent diagrams. An excellent illustration of this is provided by \cite{may:additivity}. In \emph{loc.~cit.}, May proposes very well-chosen axioms for monoidal, triangulated categories which capture an additivity result for Euler characteristics. (An extension to more general traces, however, is impossible \cite{ferrand:nonadd}.)
\end{rmk}

\section{Cocartesian and strongly cocartesian $n$-cubes}
\label{sec:strongly-cocart}

The main purpose of this section is to establish some basic notation related to coherent $n$-cubes in derivators and to collect first facts on (strongly) cocartesian $n$-cubes.

\begin{notn}\label{notn:source-punctured}
For every $n\in\lN_{\geq 0}$ and $0\leq k\leq n$ we denote by
\[
\iota_{\leq k}\colon\cube{n}_{\leq k}\to\cube{n}
\]
the inclusion of the full subposet spanned by all subsets with at most $k$ elements. We agree on the convention that $\iota_{\leq -1}\colon\cube{n}_{\leq -1}\to\cube{n}$ is the empty functor. We also use obvious variants of this notation such as
\[
\iota_{<k}\colon\cube{n}_{<k}\to\cube{n},\quad\iota_{=k}\colon\cube{n}_{= k}\to\cube{n},\quad\iota_{\geq k}\colon\cube{n}_{\geq k}\to\cube{n},\quad\mbox{and}\quad\iota_{>k}\colon\cube{n}_{>k}\to\cube{n}.
\]
As a special case there are the inclusions
\begin{equation}\label{eq:source-punctured}
\iota_{\leq 1}\colon\cube{n}_{\leq 1}\to\cube{n}\qquad\mbox{and}\qquad\iota_{\leq n-1}\colon\cube{n}_{\leq n-1}\to\cube{n}
\end{equation}
of the \textbf{source of valence $n$} and of the \textbf{punctured $n$-cube}, respectively (see \autoref{fig:cube-inclusions}).
\end{notn}

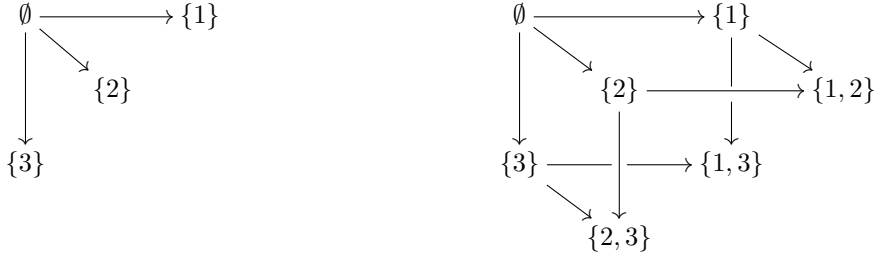
\begin{figure}
\xymatrix@-1pc{
&\emptyset\ar[rr]\ar[dr]\ar[dd]&& \lbrace 1\rbrace &&&&
\emptyset\ar[rr]\ar[dr]\ar[dd] && \lbrace 1\rbrace \ar[dr] \ar'[d][dd]\\
&& \lbrace 2 \rbrace && &&&
& \lbrace 2 \rbrace \ar[rr] \ar[dd] && \lbrace 1,2 \rbrace \\
&\lbrace 3 \rbrace && &&&&
\lbrace 3 \rbrace \ar[dr] \ar'[r][rr] && \lbrace 1,3 \rbrace\\
&& && {\phantom{\lbrace 1,2,3\rbrace}}&&&
& \lbrace 2,3 \rbrace && 
}
\caption{The source of valence three and the punctured $3$-cube.}
\label{fig:cube-inclusions}
\end{figure}

Coherent $n$-cubes in the essential image of the left Kan extensions along the functors in \eqref{eq:source-punctured} deserve particular names.

\begin{defn}\label{defn:cocart}
Let \D be a derivator, $n\geq 2$, and $X\in\D^{\cube{n}}$.
\begin{enumerate}
\item The $n$-cube $X$ is \textbf{cocartesian} if it lies in the essential image of $(\iota_{\leq n-1})_!$.
\item The $n$-cube $X$ is \textbf{ strongly cocartesian} if it lies in the essential image of $(\iota_{\leq 1})_!$.
\end{enumerate}
\end{defn}

There are dual notions of \textbf{(strongly) cartesian $n$-cubes}, and the duality principle allows us to focus on the notions in \autoref{defn:cocart}.

\begin{rmk}\label{rmk:cocart}
Let \D be a derivator, $n\geq 2,$ and $X\in\D^{\cube{n}}$.
\begin{enumerate}
\item A square is cocartesian if and only if it is strongly cocartesian.
\item Since $\iota_{\leq 1}\colon\cube{n}_{\leq 1}\to\cube{n}$ factors through $\iota_{\leq n-1}\colon\cube{n}_{\leq n-1}\to\cube{n}$, every strongly cocartesian $n$-cube is cocartesian. But from dimension three on there is no converse to this implication (\autoref{thm:cocart}).
\item Since left Kan extensions along fully faithful functors are fully faithful, we note that $X$ is cocartesian resp.~strongly cocartesian if and only if the adjunction counit
\[
\varepsilon\colon (\iota_{\leq n-1})_!\iota_{\leq n-1}^\ast X\to X\qquad\mbox{resp.}\qquad
\varepsilon\colon (\iota_{\leq 1})_!\iota_{\leq 1}^\ast X\to X
\]
is invertible. In the first case this amounts to saying that $X$ is cocartesian if and only if the canonical mate
\[
\colim_{\cube{n}_{\leq n-1}}\iota_{\leq n-1}^\ast X\to X_{\lbrace 1,\ldots,n\rbrace}
\]
is an isomorphism. (In \S\ref{sec:punctured} we extend to derivators a well-known inductive formula for (homotopy) colimits of punctured $n$-cubes.)
\end{enumerate}
\end{rmk}

\begin{notn}\label{notn:simpl}
Let $n\in\lN_{\geq 0}$ be a natural number. We denote by
\[
[n]=(0<1<\ldots<n)
\]
the finite ordinal. Moreover, for $0\leq j\leq k\leq n$ there is a unique monotone map
\[
i_{jk}\colon[1]\to[n]
\]
such that $0\mapsto j$ and $1\mapsto k$.
\end{notn}

We recall the following basic facts related to these notions (\cite[\S8]{gst:basic}).

\begin{thm}\label{thm:cocart}
Let \D be a derivator and let $n\geq 2$.
\begin{enumerate}
\item Let $X\in\D^{\cube{n}\times[1]}$ be such that the $n$-cube $X_0\in\D^{\cube{n}}$ is cocartesian. Then $X$ is cocartesian if and only if $X_1$ is cocartesian.
\item An $n$-cube $X\in\D^{\cube{n}}$ is strongly cocartesian if and only if all $k$-dimensional subcubes, $2\leq k\leq n,$ are cocartesian if and only if all subsquares are cocartesian.
\item Let $X\in\D^{\cube{n}\times[2]}$ be such that $i_{01}^\ast X$ is (strongly) cocartesian. Then $i_{12}^\ast X$ is (strongly) cocartesian if and only if $i_{02}^\ast X$ is (strongly) cocartesian.
\end{enumerate}
\end{thm}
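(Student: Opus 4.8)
The plan is to reduce all three statements to a single, well-understood principle: left Kan extensions along fully faithful functors are fully faithful, so the essential images that define (strong) cocartesianness can be detected by the invertibility of adjunction counits, and such invertibility is tested pointwise. Throughout I will use the characterizations in \autoref{rmk:cocart}(iii) and the standard derivator technology of canonical mates.

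For part (i), the category $\cube{n}\times[1]$ is isomorphic to $\cube{n+1}$ by identifying the $[1]$-coordinate with the $(n+1)$-st subset-membership bit. Under this isomorphism, $X_0$ and $X_1$ are the two parallel $n$-dimensional faces, and $\cube{n}_{\leq n-1}\subseteq\cube{n}$ becomes, on each face, the inclusion of the punctured sub-$n$-cube. First I would note that $X$ is cocartesian as an $(n+1)$-cube iff the counit $(\iota_{\leq n})_!\iota_{\leq n}^\ast X\to X$ is invertible, and since the only object not in $\cube{n+1}_{\leq n}$ is the terminal one, this is a condition at a single point. Then the key move is a homotopy-finality (or cofinal-subcategory) argument: the inclusion of the face $X_1$'s punctured $n$-cube, together with the already-cocartesian face $X_0$, generates a homotopy-final subcategory of $\cube{n+1}_{\leq n}$ over the terminal object exactly when $X_1$ is cocartesian, so the colimit over $\cube{n+1}_{\leq n}$ of $X$ can be computed from $X_1$-data iff $X_1$ is cocartesian. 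More cleanly, I would invoke the ``reduction'' lemma that says: if a cube is obtained by gluing two $n$-cubes along a common face and one of them is cocartesian, then the glued $(n+1)$-cube is cocartesian iff the other $n$-cube is. This is exactly the cube-pasting lemma, and I expect the proof to go through verifying the relevant canonical mate is an isomorphism by a Kan-extension pasting diagram, using that $\cube{n}_{\leq n-1}\hookrightarrow\cube{n}\hookrightarrow\cube{n+1}_{\leq n}$ and the other face combine correctly.

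For part (ii), I would argue by induction on $n$, the base case $n=2$ being \autoref{rmk:cocart}(i). For the inductive step, observe that $\iota_{\leq 1}\colon\cube{n}_{\leq 1}\to\cube{n}$ factors as $\cube{n}_{\leq 1}\hookrightarrow\cube{n}_{\leq n-1}\hookrightarrow\cube{n}$, and more usefully that left Kan extension to all of $\cube{n}$ can be built up one cardinality layer at a time. The statement ``all subsquares cocartesian $\Rightarrow$ strongly cocartesian'' is the substantive direction. I would slice $\cube{n}$ into the two parallel $(n-1)$-faces $X_0,X_1$ and their connecting $[1]$-direction; by induction each face is strongly cocartesian iff its subsquares are, and the remaining subsquares of $X$ are precisely those involving the connecting direction, which by a pasting argument force $X$ to be the left Kan extension of $X_0$ together with the edges into $X_1$'s initial object—i.e.\ strongly cocartesian—using part (i) repeatedly to propagate cocartesianness through all higher faces. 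The converse directions are formal: strongly cocartesian cubes restrict to strongly cocartesian (hence cocartesian) subcubes because the defining left Kan extension restricts compatibly, and cocartesian of all $k$-faces in particular gives cocartesian subsquares.

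For part (iii), I would exploit the pasting law for cocartesian cubes in the extra $[2]$-direction. Writing $X$ over $\cube{n}\times[2]$ and viewing the three restrictions $i_{01}^\ast X, i_{12}^\ast X, i_{02}^\ast X$ over $\cube{n}\times[1]$ as the three ``composable'' $(n+1)$-cubes obtained from the $[2]$-string, the claim is the cube-pasting lemma: given that $i_{01}^\ast X$ is (strongly) cocartesian, $i_{12}^\ast X$ is (strongly) cocartesian iff $i_{02}^\ast X$ is. In the non-strong case this is precisely part (i) applied inside $\cube{n}\times[2]\cong\cube{n+1}\times[1]$ appropriately reindexed—$i_{02}^\ast X$ is the cube obtained by gluing $i_{01}^\ast X$ and $i_{12}^\ast X$ along their common face $X_1$—so one cocartesian cube plus the pasted cube being cocartesian is equivalent to the other being cocartesian. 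In the strongly cocartesian case I would combine this with part (ii): strong cocartesianness is detected on subsquares, the subsquares of $i_{02}^\ast X$ are those of $i_{01}^\ast X$ and $i_{12}^\ast X$ together with ``diagonal'' squares built from $0\to2$ edges, and a $2$-out-of-$3$ argument on squares (the base case, \autoref{rmk:cocart}(i) combined with \autoref{thm:cocart}(iii) for $n=0$, i.e.\ the ordinary pasting law for cocartesian squares) upgrades the statement from squares to all of $X$.

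The main obstacle I anticipate is part (i): making the ``gluing two cubes along a face'' argument precise at the level of derivators requires identifying the correct sub-shapes of $\cube{n+1}_{\leq n}$, checking a homotopy-finality or a layered Kan-extension computation, and chasing the resulting canonical mate through a nontrivial pasting diagram. Parts (ii) and (iii) are then essentially bookkeeping built on part (i) plus induction, with (ii)'s ``subsquares suffice'' direction being the only other place demanding genuine care.
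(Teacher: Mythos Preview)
The paper's own proof is citation-only: it defers all three parts to results already established in \cite{gst:basic} (Thm.~8.11, Thm.~8.4, Cor.~8.12, Prop.~8.15). Your proposal therefore does more than the paper attempts, and the overall architecture you sketch---a gluing/pasting argument for (i), induction via face-slicing for (ii), reduction to (i) and (ii) plus the square case for (iii)---is the right one and matches what is carried out in \cite{gst:basic}.

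One sentence in your treatment of (i) needs correcting: you write that a certain inclusion ``generates a homotopy-final subcategory of $\cube{n+1}_{\leq n}$ over the terminal object exactly when $X_1$ is cocartesian.'' Homotopy finality is a purely combinatorial property of a functor between small categories and never depends on a diagram $X$ living over those categories, so this cannot be the mechanism. The correct version is the one you reach for immediately afterwards: there is an \emph{unconditional} shape-level decomposition of the punctured $(n{+}1)$-cube (essentially \autoref{cor:pun-cube}, which the paper proves later and independently of this theorem) exhibiting its colimit as a pushout built from $\colim_{\cube{n}_{\leq n-1}}(X_0\vert)$, $(X_0)_\infty$, and $\colim_{\cube{n}_{\leq n-1}}(X_1\vert)$; the hypothesis that $X_0$ is cocartesian makes the relevant leg an isomorphism, so the pushout collapses to $\colim_{\cube{n}_{\leq n-1}}(X_1\vert)$ and the comparison to $X_\infty$ becomes precisely the cocartesianness condition for $X_1$. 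With that fix your outline is sound.
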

\begin{proof}
The first statement is \cite[Thm.~8.11]{gst:basic} and the second one is a combination of \cite[Thm.~8.4]{gst:basic} and \cite[Cor.~8.12]{gst:basic}. The remaining statement is \cite[Prop.~8.15]{gst:basic}.
\end{proof}

Obviously, the classes of (strongly) cocartesian $n$-cubes should be invariant under the swapping of coordinates. A formal proof of this relies on \autoref{lem:he-swap-coor}, which we include in a generality which is also suitable for later applications.

\begin{notn}\label{notn:cotrunc}
Let $n\geq 2$ be a natural number.
\begin{enumerate}
\item For natural numbers $0\leq k\leq l\leq n,$ we denote by 
\begin{equation}\label{eq:cotrunc}
\iota_{l,k}\colon\cube{n}_{\leq k}\to\cube{n}_{\leq l}\qquad\text{and}\qquad\iota_{k,l}\colon\cube{n}_{\geq l}\to\cube{n}_{\geq k}
\end{equation}
the obvious inclusions. (The notation is only overloaded in the case of $l=k$ and then both functors are identities anyhow.)
\item For natural numbers $1\leq i,j\leq n,$ we denote by
\begin{equation}\label{eq:swap}
\sigma_{i,j}\colon\cube{n}\toiso\cube{n}
\end{equation}
the symmetry constraint which swaps the $i$-th and the $j$-th coordinates. We also use the same symbol for suitable restrictions, for instance to the subposets in \autoref{notn:source-punctured} such as
\[
\sigma_{i,j}\colon\cube{n}_{\leq k}\toiso\cube{n}_{\leq k}\qquad\mbox{for}\qquad 0\leq k\leq n.
\]
\end{enumerate}
\end{notn}

\begin{lem}\label{lem:he-swap-coor}
For natural numbers $n\geq 2,$ $0\leq k\leq l\leq n,$ and $1\leq i,j\leq n$ the following commutative squares are homotopy exact,
\[
\xymatrix{
\cube{n}_{\leq k}\ar[r]^-{\sigma_{i,j}}\ar[d]_-{\iota_{l,k}}\drtwocell\omit{\id}&\cube{n}_{\leq k}\ar[d]^-{\iota_{l,k}}&&
\cube{n}_{\geq l}\ar[r]^-{\sigma_{i,j}}\ar[d]_-{\iota_{k,l}}&\cube{n}_{\geq l}\ar[d]^-{\iota_{k,l}}\\
\cube{n}_{\leq l}\ar[r]_-{\sigma_{i,j}}&\cube{n}_{\leq l},&&
\cube{n}_{\geq k}\ar[r]_-{\sigma_{i,j}}&\cube{n}_{\geq k}.\ultwocell\omit{\id}
}
\]
\end{lem}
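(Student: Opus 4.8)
The plan is to deduce homotopy exactness of each square from a general criterion: a commutative square of functors with invertible $2$-cell is homotopy exact as soon as the relevant comma categories (or slice categories) are compared by an equivalence, or more simply, as soon as the square is a pullback in which one leg is a sieve or cosieve. Here both squares have the special feature that the horizontal maps $\sigma_{i,j}$ are \emph{isomorphisms of categories}, not merely equivalences; this should make the verification essentially formal. First I would recall the standard fact (see \cite{groth:ptstab} or \cite{gst:basic}) that a commutative square
\[
\xymatrix{
A\ar[r]^-{p}\ar[d]_-{f}&B\ar[d]^-{g}\\
C\ar[r]_-{q}&D
}
\]
with invertible mate is homotopy exact whenever, for each object $d\in D$, the induced functor on comma categories $(p/d)\to(q/gd)$ (or the appropriate slice variant) is a homotopy equivalence — and in particular whenever it is an isomorphism of categories.

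The key steps, in order, are then: (1) observe that in both squares the underlying diagram commutes strictly and the comparison $2$-cell is the identity, so the mate is automatically the identity $2$-cell composed with the canonical base-change maps; (2) for the left-hand square, note that $\sigma_{i,j}\colon\cube{n}_{\leq k}\toiso\cube{n}_{\leq k}$ and $\sigma_{i,j}\colon\cube{n}_{\leq l}\toiso\cube{n}_{\leq l}$ are inverse-to-themselves isomorphisms of posets restricting the ambient coordinate swap, and that $\iota_{l,k}$ is a (fully faithful) sieve inclusion which commutes with these swaps on the nose; (3) conclude that the square is a strict pullback of categories in which the vertical legs are identified by an isomorphism after conjugating by $\sigma_{i,j}$, hence the comparison functor on every relevant slice category is an isomorphism, so the square is homotopy exact; (4) repeat verbatim for the right-hand square, using that $\iota_{k,l}\colon\cube{n}_{\geq l}\to\cube{n}_{\geq k}$ is a cosieve inclusion and that the relevant exactness is the one detected by $\colim$ (equivalently, invoke the duality principle: the right square is the image of the left square under $(-)\op$ together with the poset anti-automorphism $S\mapsto\{1,\ldots,n\}\setminus S$, which interchanges $\cube{n}_{\leq k}$ with $\cube{n}_{\geq n-k}$ and intertwines $\sigma_{i,j}$ with itself).

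I expect the only real content to be bookkeeping: checking that $\sigma_{i,j}$ genuinely restricts to the subposets $\cube{n}_{\leq k}$ and $\cube{n}_{\geq l}$ (immediate, since swapping coordinates preserves cardinality of subsets) and that the two squares commute strictly (immediate from the definitions of $\iota_{l,k}$, $\iota_{k,l}$ as inclusions of full subposets). The potential subtlety — and the step I would be most careful about — is making sure the \emph{correct} notion of homotopy exactness is being invoked for each square: the left square should be phrased so that $(\iota_{l,k})_!$ and $\sigma_{i,j}^\ast$ commute (left Kan extension), while the right square concerns $(\iota_{k,l})_!$ and $\sigma_{i,j}^\ast$ as well, and one must check the mate is formed on the correct side. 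Since an isomorphism of categories is homotopy exact against any functor trivially, and since conjugating a homotopy exact square by isomorphisms on all four corners preserves homotopy exactness, the cleanest route is to simply observe that each square, after identifying the two copies of each poset via $\sigma_{i,j}$, becomes the identity square on $\iota_{l,k}$ (resp.\ $\iota_{k,l}$), which is trivially homotopy exact. This reduces the whole lemma to the remark that $\sigma_{i,j}$ is an isomorphism commuting with the inclusions, with no comma-category computation needed at all.
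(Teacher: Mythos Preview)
Your proposal is correct, and your ``cleanest route'' is a genuinely different and somewhat slicker argument than the one in the paper. The paper proceeds pointwise: for each $x\in\cube{n}_{\leq l}$ it pastes the square with the slice square at $x$, then exhibits an explicit isomorphism of slice categories
\[
\sigma_{i,j}\colon(\iota_{l,k}/x)\toiso(\iota_{l,k}/\sigma_{i,j}x),
\]
which is homotopy final, and concludes by functoriality of mates under pasting. Your approach instead exploits that $\sigma_{i,j}$ is an \emph{involutive isomorphism}: pasting the square with itself horizontally yields the identity square on $\iota_{l,k}$, whose mate is trivially invertible, so by functoriality of mates the square of the original mate is invertible, hence the original mate is invertible. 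Equivalently, both $(\iota_{l,k})_!\sigma_{i,j}^\ast$ and $\sigma_{i,j}^\ast(\iota_{l,k})_!$ are left adjoint to $(\sigma_{i,j}^{-1})^\ast\iota_{l,k}^\ast=\iota_{l,k}^\ast(\sigma_{i,j}^{-1})^\ast$, so uniqueness of adjoints gives the isomorphism directly. Either formulation avoids any comma-category computation.

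One caution: your earlier remark that the square is ``a pullback in which one leg is a sieve or cosieve'' is not quite the right invocation here, since \autoref{lem:opfibration} concerns opfibrations and the vertical legs $\iota_{l,k}$ are sieves (fibrations). The pullback-along-opfibrations criterion would instead apply with the horizontal isomorphism $\sigma_{i,j}$ playing the role of the opfibration, but at that point you are essentially back to the isomorphism argument. Since your ``cleanest route'' does not rely on this, the proof goes through.
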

\begin{proof}
We give a proof for the square on the left, the other case is similar. In order to show that the canonical mate $(\iota_{l,k})_!\sigma_{i,j}^\ast\to\sigma_{i,j}^\ast(\iota_{l,k})_!$ is an isomorphism, we consider its component at an arbitrary object $x\in\cube{n}_{\leq l}$. In the pasting on the left in the diagram
\[
\xymatrix{
(\iota_{l,k}/x)\ar[r]\ar[d]_-\pi\drtwocell\omit{}&\cube{n}_{\leq k}\ar[r]^-{\sigma_{i,j}}\ar[d]_-{\iota_{l,k}}\drtwocell\omit{\id}&\cube{n}_{\leq k}\ar[d]^-{\iota_{l,k}}\ar@{}[rrd]|{=}&&
(\iota_{l,k}/x)\ar[r]^-{\sigma_{i,j}}\ar[d]_-\pi\drtwocell\omit{\id}&(\iota_{l,k}/\sigma_{i,j}x)\ar[r]\ar[d]_-\pi\drtwocell\omit{}&\cube{n}_{\leq k}\ar[d]^-{\iota_{l,k}}\\
\bbone\ar[r]_-x&\cube{n}_{\leq l}\ar[r]_-{\sigma_{i,j}}&\cube{n}_{\leq l}&&
\bbone\ar[r]_-=&\bbone\ar[r]_-{\sigma_{i,j}x}&\cube{n}_{\leq l},
}
\]
the square on the left is a slice square and it hence remains to show that this pasting is homotopy exact. Since $\iota_{l,k}$ commutes with $\sigma_{i,j}$, there is an induced functor 
\[
\sigma_{i,j}\colon(\iota_{l,k}/x)\to(\iota_{l,k}/\sigma_{i,j}x)\colon (y, f\colon\iota_{l,k} y\to x)\mapsto (\sigma_{i,j} y,\sigma_{i,j}f\colon\iota_{l,k}\sigma_{i,j}y\to\sigma_{i,j}x),
\]
which clearly is an isomorphism and hence homotopy final. In the above pasting on the right the square on the right is a slice square, and we deduce that that pasting is homotopy exact. Since both pastings agree, this concludes the proof.
\end{proof}

Thus, in every derivator there are canonical isomorphism
\begin{equation}\label{eq:sign-rep}
(\iota_{l,k})_!\sigma_{i,j}^\ast\toiso\sigma_{i,j}^\ast(\iota_{l,k})_!\quad\mbox{and}\quad
\sigma_{i,j}^\ast(\iota_{k,l})_\ast\toiso(\iota_{k,l})_\ast\sigma_{i,j}^\ast.
\end{equation}

\begin{cor}\label{cor:swap-cocart}
Let \D be a derivator, let $n\geq 2$, and let $1\leq i,j\leq n$. The restriction morphism $\sigma_{i,j}^\ast\colon\D^{\cube{n}}\to\D^{\cube{n}}$ preserves and reflects cocartesian and cartesian $n$-cubes.
\end{cor}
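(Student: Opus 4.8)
The plan is to reduce the statement to the canonical isomorphisms \eqref{eq:sign-rep}, which encode that $\sigma_{i,j}^\ast$ commutes up to coherent isomorphism with the Kan extensions defining (co)cartesianness. Recall from \autoref{defn:cocart} that an $n$-cube $X\in\D^{\cube{n}}$ is cocartesian exactly when it lies in the essential image of $(\iota_{\leq n-1})_!\colon\D^{\cube{n}_{\leq n-1}}\to\D^{\cube{n}}$, and dually that $X$ is cartesian exactly when it lies in the essential image of $(\iota_{\geq 1})_\ast\colon\D^{\cube{n}_{\geq 1}}\to\D^{\cube{n}}$.

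First I would show that $\sigma_{i,j}^\ast$ \emph{preserves} cocartesian $n$-cubes. If $X$ is cocartesian, write $X\cong(\iota_{\leq n-1})_!Y$ for some $Y\in\D^{\cube{n}_{\leq n-1}}$. Applying $\sigma_{i,j}^\ast$ and using the first canonical isomorphism in \eqref{eq:sign-rep} with $k=n-1$ and $l=n$ (a consequence of \autoref{lem:he-swap-coor}), we obtain
\[
\sigma_{i,j}^\ast X\;\cong\;\sigma_{i,j}^\ast(\iota_{\leq n-1})_!Y\;\cong\;(\iota_{\leq n-1})_!\,\sigma_{i,j}^\ast Y,
\]
and since $\sigma_{i,j}^\ast Y\in\D^{\cube{n}_{\leq n-1}}$ this exhibits $\sigma_{i,j}^\ast X$ as cocartesian. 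For \emph{reflection} I would use that $\sigma_{i,j}$ is an involution of $\cube{n}$, so $\sigma_{i,j}^\ast\circ\sigma_{i,j}^\ast=\id_{\D^{\cube{n}}}$: if $\sigma_{i,j}^\ast X$ is cocartesian, then applying the preservation statement to it shows that $\sigma_{i,j}^\ast(\sigma_{i,j}^\ast X)=X$ is cocartesian.

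The cartesian case is handled in exactly the same manner, now using the second canonical isomorphism $\sigma_{i,j}^\ast(\iota_{\geq 1})_\ast\toiso(\iota_{\geq 1})_\ast\sigma_{i,j}^\ast$ of \eqref{eq:sign-rep} in place of the first; alternatively one can simply invoke the duality principle, passing to $\D\op$ and using that the complementation self-duality of $\cube{n}$ commutes with $\sigma_{i,j}$.

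I do not expect a genuine obstacle here: the homotopy-exactness input is precisely \autoref{lem:he-swap-coor}, which is already available, and the rest of the argument is formal. The only points requiring a little care are the identifications $\cube{n}_{\leq n}=\cube{n}=\cube{n}_{\geq 0}$ and the fact (noted in \autoref{notn:cotrunc}) that $\sigma_{i,j}$ restricts to the coordinate swap on every truncation layer, so that \eqref{eq:sign-rep} applies on the nose to the functors $\iota_{\leq n-1}$ and $\iota_{\geq 1}$.
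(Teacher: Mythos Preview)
Your proof is correct and rests on the same key input as the paper's, namely the canonical isomorphisms \eqref{eq:sign-rep} coming from \autoref{lem:he-swap-coor}. The packaging differs slightly: the paper uses the counit characterization of cocartesianness (\autoref{rmk:cocart}) and a single pasting/mate argument to show that $\varepsilon_X$ is invertible iff $\varepsilon_{\sigma_{i,j}^\ast X}$ is (using that $\sigma_{i,j}^\ast$ reflects isomorphisms), thereby obtaining preservation and reflection simultaneously, whereas you use the essential-image description for preservation and then the involution trick $\sigma_{i,j}^\ast\circ\sigma_{i,j}^\ast=\id$ for reflection. Both routes are equally short and formal; yours is perhaps marginally more elementary in that it avoids writing out the pasting explicitly, while the paper's is marginally more uniform in handling both directions at once.
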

\begin{proof}
Let $\iota=\iota_{\leq n-1}$ and let $X\in\D^{\cube{n}}$. By \autoref{rmk:cocart}, the $n$-cubes $X$ and $\sigma_{i,j}^\ast X$ are cocartesian if and only if the corresponding component of the adjunction counit $\varepsilon\colon \iota_!\iota^\ast\to\id$ is invertible. To relate theses two characterization, we consider the following pastings
\[
\xymatrix{
\cube{n}_{\leq n-1}\ar[r]^-\iota\ar[d]_-\iota\drtwocell\omit{\id}&\cube{n}\ar[r]^-{\sigma_{i,j}}\ar[d]_-=\drtwocell\omit{\id}&\cube{n}\ar[d]^-=\ar@{}[rrd]|{=}&&
\cube{n}_{\leq n-1}\ar[r]^-{\sigma_{i,j}}\ar[d]_-\iota\drtwocell\omit{\id}&\cube{n}_{\leq n-1}\ar[r]^-\iota\ar[d]^-\iota\drtwocell\omit{\id}&\cube{n}\ar[d]^-=\\
\cube{n}\ar[r]_-=&\cube{n}\ar[r]_-{\sigma_{i,j}}&\cube{n}&&
\cube{n}\ar[r]_-{\sigma_{i,j}}&\cube{n}\ar[r]_-=&\cube{n}
}
\]
which trivially agree. The functoriality of canonical mates with respect to pasting implies that
\[
\xymatrix{
\iota_!\sigma_{i,j}^\ast \iota^\ast\ar[r]^-\sim\ar[d]_-=&\sigma_{i,j}^\ast\iota_!\iota^\ast\ar[d]^-{\sigma_{i,j}^\ast\varepsilon}\\
\iota_!\iota^\ast\sigma_{i,j}^\ast\ar[r]_-{\varepsilon\sigma_{i,j}^\ast}&\sigma_{i,j}^\ast
}
\]
commutes. The horizontal morphism is invertible by \autoref{lem:he-swap-coor}, and we conclude the proof by observing that $\sigma_{i,j}^\ast$ reflects isomorphisms. The case of cartesian $n$-cubes is dual.
\end{proof}

\begin{cor}\label{cor:swap-cocart-II}
Let \D be a derivator, let $n\geq 2$, and let $1\leq i,j\leq n$. The restriction morphism $\sigma_{i,j}^\ast\colon\D^{\cube{n}}\to\D^{\cube{n}}$ preserves and reflects strongly cocartesian and strongly cartesian $n$-cubes.
\end{cor}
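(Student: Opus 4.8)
The plan is to repeat verbatim the argument used to prove \autoref{cor:swap-cocart}, substituting the inclusion $\iota=\iota_{\leq 1}\colon\cube{n}_{\leq 1}\to\cube{n}$ for $\iota_{\leq n-1}$. By \autoref{rmk:cocart}(3), an $n$-cube $X\in\D^{\cube{n}}$ is strongly cocartesian precisely when the component $\varepsilon_X$ of the counit $\varepsilon\colon\iota_!\iota^\ast\to\id$ is invertible, and likewise for $\sigma_{i,j}^\ast X$. Since by \autoref{notn:cotrunc}(2) the symmetry $\sigma_{i,j}$ restricts to an automorphism of $\cube{n}_{\leq 1}$, the two pastings appearing in the proof of \autoref{cor:swap-cocart} are available without change, and functoriality of canonical mates under pasting again produces a commutative square
\[
\xymatrix{
\iota_!\sigma_{i,j}^\ast\iota^\ast\ar[r]^-\sim\ar[d]_-=&\sigma_{i,j}^\ast\iota_!\iota^\ast\ar[d]^-{\sigma_{i,j}^\ast\varepsilon}\\
\iota_!\iota^\ast\sigma_{i,j}^\ast\ar[r]_-{\varepsilon\sigma_{i,j}^\ast}&\sigma_{i,j}^\ast,
}
\]
whose top arrow is the canonical isomorphism of \eqref{eq:sign-rep} for $(k,l)=(1,n)$, supplied by \autoref{lem:he-swap-coor}. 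Reading the square at an object $X$ and using that $\sigma_{i,j}^\ast$ reflects isomorphisms then yields ``$X$ strongly cocartesian $\iff$ $\sigma_{i,j}^\ast X$ strongly cocartesian''; the strongly cartesian case is dual, via the right-hand square of \autoref{lem:he-swap-coor} and the second isomorphism in \eqref{eq:sign-rep}.

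An equivalent, more combinatorial route is to invoke the characterization in \autoref{thm:cocart}(2): $X$ is strongly cocartesian iff every one of its $2$-dimensional subcubes is cocartesian. The automorphism $\sigma_{i,j}$ permutes the $2$-dimensional subcubes of $\cube{n}$ among themselves (it reshuffles which coordinate directions are free, together with the chosen values on the remaining ones, at worst transposing the two free directions), so the subsquares of $\sigma_{i,j}^\ast X$ coincide, up to the transpose automorphism of $\cube{2}$, with the subsquares of $X$; since transposing a square preserves cocartesianness by the $n=2$ instance of \autoref{cor:swap-cocart}, the equivalence follows, and reflection is automatic from $\sigma_{i,j}^{2}=\id$. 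I do not anticipate a genuine obstacle here: the only delicate point is the indexing in this second route --- checking that restriction of $\sigma_{i,j}^\ast X$ along a subsquare inclusion agrees with restriction of $X$ along the image subsquare, possibly precomposed with the coordinate flip of $\cube{2}$ --- and this is exactly the bookkeeping that the mate computation in the first route performs automatically, so I would present the first route as the proof.
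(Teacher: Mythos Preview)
Your proposal is correct and matches the paper's proof essentially verbatim: the paper likewise says the argument is parallel to \autoref{cor:swap-cocart} upon invoking \autoref{lem:he-swap-coor} with $k=1$ and $l=n$, and then offers as an alternative exactly your second route via \autoref{thm:cocart} and \autoref{cor:swap-cocart}. There is nothing to add.
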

\begin{proof}
Invoking \autoref{lem:he-swap-coor} in the case of $k=1$ and $l=n$ this time, the proof is completely parallel to the one of \autoref{cor:swap-cocart}. Alternatively, the result follows from \autoref{cor:swap-cocart} and \autoref{thm:cocart}.
\end{proof}

\begin{rmk}
Since canonical mates are compatible with respect to pasting, these results extend from $\sigma_{i,j}$ to more general symmetries induced by permutations.
\end{rmk}

\section{Decompositions of colimits}
\label{sec:decompose}

In this section we establish a general decomposition result for colimits in derivators (\autoref{thm:decompose}). As a special case, we deduce that the classical Bousfield--Kan formulas for homotopy colimits \cite{bousfield-kan:htpy-limits} hold in every derivator (\autoref{thm:BK-formulas}). As an additional application we obtain a decomposition result for suitable exhaustive filtrations of small categories (\autoref{thm:exhaustive-filt}). In \S\ref{sec:punctured} we specialize the latter result to more specific situations related to $n$-cubes.

\begin{notn}
Let $C\in\cCat$ and $F\colon C\to\cCat$. We denote by $\int_C F=\int_{c\in C} Fc\in\cCat$ the Grothendieck construction with objects the pairs $(c\in C,x\in Fc)$. A morphism $(f,g)\colon (c,x)\to(c',x')$ is a pair consisting of a morphism $f\colon c\to c'$ in $C$ and a morphism $g\colon (Ff)(x)\to x'$ in $Fc'$. We denote by 
\[
q=q_F\colon\int_C F\to C\colon (c,x)\mapsto c
\]
the canonical projection functor, which is a Grothendieck opfibration.

For every $c\in C$ there is a canonical inclusion functor $i_c\colon Fc\to\int_C F\colon x\mapsto (c,x)$ and which sends a morphism $g\colon x\to x'$ to $(\id_c,g)$. We note that $i_c$ induces an isomorphism of categories $Fc\toiso q^{-1}(c)$, as is witnessed by the pullback square
\begin{equation}\label{eq:opfibration}
\vcenter{
\xymatrix{
Fc\ar[r]^-{i_c}\ar[d]_-\pi\pullbackcorner&\int_CF\ar[d]^-q\\
\bbone\ar[r]_-c&C.
}
}
\end{equation}
\end{notn}

One nice feature of Grothendieck opfibrations is that left Kan extensions along them are given by `integration over fibers' in the following precise sense.

\begin{lem}\label{lem:opfibration}
For every Grothendieck opfibration $q\colon E\to B$ in $\cCat$ and every functor $f\colon B'\to B$ the pullback square
\[
\xymatrix{
E'\ar[r]\ar[d]\pullbackcorner\drtwocell\omit{\id}& E\ar[d]^-q\\
B'\ar[r]_-f&B
}
\]
is homotopy exact.
\end{lem}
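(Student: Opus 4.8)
The plan is to verify homotopy exactness pointwise, i.e., to check that for every object $b' \in B'$ the canonical mate
\[
\colim_{(b'/f^\ast q)} \to \colim_{f^{-1}(b')} \circ (\text{restriction})
\]
is an isomorphism, or rather — following the standard derivator-theoretic criterion — to show that the induced functor on comma categories is homotopy final. Concretely, writing $E' = B' \times_B E$ for the pullback with projections $q' \colon E' \to B'$ and $g \colon E' \to E$, I would fix $b' \in B'$ and compare the fiber $(q')^{-1}(b')$ with the comma category $(E/b')$ built from the pasting. The key observation is that since $q$ is a Grothendieck opfibration, so is the pullback $q'$ (opfibrations are stable under pullback), and for an opfibration the inclusion of the fiber $(q')^{-1}(b') \hookrightarrow (b'/q')$... — more precisely, one uses that for a Grothendieck opfibration the slice-type comma category over an object deformation-retracts onto the fiber.

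The cleanest route: recall the general criterion (from \cite{groth:ptstab} or \cite{gps:mayer}) that a square is homotopy exact iff all the induced functors between the relevant comma categories are homotopy final (equivalently, have contractible nerves after slicing). For the square in question, fixing $b' \in B'$, the comma category $(b' / q')$ where $q' \colon E' \to B'$ receives a functor from the fiber $(q')^{-1}(b') \cong f^{-1}(b') \hookrightarrow E'$ ... wait, I should instead set it up so that the relevant claim is: for each $e \in E$ with $q(e) = f(b')$, the comma category classifying factorizations is contractible. The heart of the matter is that opfibrations admit \emph{opcartesian lifts}, which gives the comma categories in question initial objects, hence contractible nerves. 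I would spell this out by choosing, for a morphism $\beta \colon f(b') \to q(e)$ in $B$... actually since we want the fiber over $b'$ we take $\beta = \id$, and the opcartesian lift construction shows the comma category $(e \downarrow g|_{(q')^{-1}(b')})$, or the dual, has an initial (resp.\ terminal) object given by the opcartesian lift, so its nerve is contractible.

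So the steps in order: (1) observe $q' \colon E' \to B'$ is again a Grothendieck opfibration, since opfibrations are closed under pullback; (2) reduce homotopy exactness of the square to checking, for each $b' \in B'$, that a certain comparison functor between a fiber and a comma category is homotopy final, using the standard pointwise criterion for homotopy exact squares in derivators; (3) identify the relevant comma category and exhibit an initial or terminal object in it built from an opcartesian lift of an identity morphism along $q'$ (the existence of such a lift is precisely the opfibration condition); (4) conclude that the comparison functor is homotopy final because a functor admitting an adjoint section — or more simply, whose relevant comma categories have terminal objects — is homotopy final. The main obstacle is step (3): getting the bookkeeping of the comma categories exactly right so that the opcartesian lift really does furnish the required terminal object, and in particular being careful about the variance (opfibration vs.\ fibration, initial vs.\ terminal) so that the direction matches the left Kan extension appearing in the mate. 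Once that combinatorial point is pinned down, the rest is the routine application of the comma-category criterion for homotopy exactness.
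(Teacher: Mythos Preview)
Your approach is correct: the key mechanism is precisely that opcartesian lifts furnish the adjoint needed to show the relevant comparison of comma categories is homotopy final. To make your step~(3) clean, argue as follows. For each $b'\in B'$, pasting with the slice square at $b'$ reduces the claim to showing that the induced functor $(q'/b')\to(q/f(b'))$ is homotopy final. Since opfibrations are stable under pullback, $q'$ is an opfibration, and for any opfibration $p\colon X\to Y$ the inclusion $p^{-1}(y)\hookrightarrow(p/y)$ admits a \emph{left} adjoint (send $(x,\alpha\colon p(x)\to y)$ to the target of an opcartesian lift of $\alpha$), hence is homotopy final. Applying this to both $q$ and $q'$ and using the isomorphism of fibers $(q')^{-1}(b')\cong q^{-1}(f(b'))$ coming from the pullback gives the conclusion by a two-out-of-three argument for homotopy final functors. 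This resolves the variance worry you flagged.

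As for comparison with the paper: the paper does not give a proof at all here but simply cites \cite[Prop.~1.24]{groth:ptstab}. What you have sketched is essentially the content of that cited proposition, so you have in fact supplied what the paper outsources.
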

\begin{proof}
This is \cite[Prop.~1.24]{groth:ptstab}.
\end{proof}

\begin{cor}\label{cor:opfibration}
Let \D be a derivator and let $q\colon E\to B$ be a Grothendieck opfibration in $\cCat$. For every $X\in\D^E$
\begin{enumerate}
\item there is a canonical isomorphism $\colim_Bq_!(X)\toiso\colim_E X$ and
\item for $b\in B$ there are canonical isomorphisms $\colim_{q^{-1}(b)}i_b^\ast X\toiso q_!(X)_b$, where $i_b\colon q^{-1}(b)\to E$ is the inclusion functor.
\end{enumerate}
\end{cor}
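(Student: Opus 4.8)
The plan is to derive both parts of \autoref{cor:opfibration} directly from \autoref{lem:opfibration} together with standard derivator formalism (composition of Kan extensions, homotopy exactness of slice squares, and the basic fact that $\pi_\bbone=\id$). For part (i), I would apply \autoref{lem:opfibration} to the functor $f=\pi_B\colon B\to\bbone$; strictly speaking $\pi_B$ is the structure functor to the terminal category, and the pullback of $q\colon E\to B$ along $\pi_B$ is just $q$ itself (up to the canonical identification $E\times_B B\cong E$). Rather than phrasing it that way, the cleaner route is: the triangle $\pi_E=\pi_B\circ q$ commutes, so $\colim_E=(\pi_E)_!\cong(\pi_B)_!\circ q_!=\colim_B\circ\,q_!$ by pseudo-functoriality of left Kan extensions (derivator axiom, composition of $(-)_!$). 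This already gives the canonical isomorphism $\colim_B q_!(X)\toiso\colim_E X$ without even invoking \autoref{lem:opfibration} — so I would present part (i) as an immediate instance of the compatibility of colimits with the projection functor $q$, and note that no opfibration hypothesis is needed here.

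For part (ii), I would fix $b\in B$ and use the pullback square \eqref{eq:opfibration} (with $c$ replaced by $b$, $C$ by $B$, $F c$ by $q^{-1}(b)$), namely
\[
\xymatrix{
q^{-1}(b)\ar[r]^-{i_b}\ar[d]_-\pi\pullbackcorner&E\ar[d]^-q\\
\bbone\ar[r]_-b&B.
}
\]
Since $q$ is a Grothendieck opfibration, this square is homotopy exact by \autoref{lem:opfibration} (applied to the functor $f=b\colon\bbone\to B$), which means precisely that the canonical mate $\pi_!\, i_b^\ast\to b^\ast q_!$ is an isomorphism. Evaluating and using $\pi_!=\colim_{q^{-1}(b)}$ and $b^\ast(-)=(-)_b$, this reads $\colim_{q^{-1}(b)}i_b^\ast X\toiso q_!(X)_b$, which is exactly the claim.

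The main thing to be careful about — rather than a genuine obstacle — is bookkeeping: in the statement of part (i) the category called $q^{-1}(b)$ in part (ii) is identified with the fiber $Fb$ via $i_b$, and one should make sure the homotopy exactness of \eqref{eq:opfibration} is being invoked in the direction that produces the mate $\pi_! i_b^\ast\Rightarrow b^\ast q_!$ (left Kan extension version), which is the content of \autoref{lem:opfibration} as stated. There is also a compatibility to keep straight between the two parts: combining (i) and (ii) recovers the "decomposition" $\colim_E X\cong\colim_{b\in B}\big(\colim_{q^{-1}(b)}i_b^\ast X\big)$, and if the paper wants that as a corollary it follows by substituting (ii) into (i). I expect the whole proof to be three or four lines, essentially just citing \autoref{lem:opfibration} and the pseudo-functoriality of $(-)_!$.

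\begin{proof}
For (i), note that $\pi_E=\pi_B\circ q\colon E\to\bbone$. Pseudo-functoriality of left Kan extensions yields a canonical isomorphism $(\pi_E)_!\cong(\pi_B)_!\circ q_!$, i.e.\ $\colim_E X\cong\colim_B q_!(X)$ naturally in $X\in\D^E$.

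For (ii), fix $b\in B$. Since $q$ is a Grothendieck opfibration, \autoref{lem:opfibration} applied to $b\colon\bbone\to B$ shows that the pullback square \eqref{eq:opfibration} is homotopy exact, so the canonical mate
\[
\pi_!\,i_b^\ast\too b^\ast q_!
\]
is an isomorphism. Using $\pi_!=\colim_{q^{-1}(b)}$ and $b^\ast(-)=(-)_b$, this is the asserted isomorphism $\colim_{q^{-1}(b)}i_b^\ast X\toiso q_!(X)_b$.
\end{proof}
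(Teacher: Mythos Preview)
Your proof is correct and essentially identical to the paper's: for (i) the paper invokes uniqueness of left adjoints applied to $\pi_E=\pi_B\circ q$ (your ``pseudo-functoriality of $(-)_!$'' is the same thing), and for (ii) the paper likewise specializes \autoref{lem:opfibration} to $f=b\colon\bbone\to B$ to obtain the canonical isomorphism $\colim_{q^{-1}(b)}i_b^\ast\toiso b^\ast q_!$.
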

\begin{proof}
The first statement is immediate from uniqueness of left adjoints applied to $\pi_E=\pi_B\circ q\colon E\to\bbone$. We obtain in every derivator \D a canonical isomorphism
\[
\colim_{q^{-1}(b)}i_b^\ast\toiso b^\ast q_!
\]
by specializing \autoref{lem:opfibration} to $f=b\colon\bbone\to B$ (hence essentially to diagrams of the form \eqref{eq:opfibration}).
\end{proof}

The Fubini theorem $\colim_A\colim_B\toiso\colim_{A\times B}$ can be considered as a special case of this result (the case of the `untwisted product').

\begin{rmk}\label{rmk:opfibration}
\autoref{cor:opfibration} makes more precise in which sense left Kan extension along a Grothendieck opfibration $q\colon E\to B$ is calculated by `integration over fibers'. One can calculate $\colim_E$ by first calculating the colimits over $q^{-1}(b),b\in B,$ and then forming a colimit over these. Given a derivator \D and $X\in\D^E$ we write $X\!\!\mid_{q^{-1}(b)}$ for $i_b^\ast X$, and, for simplicity, we refer to the conclusion of \autoref{cor:opfibration} by saying that there is a canonical isomorphism
\[
\colim_{b\in B}\colim_{q^{-1}(b)}\big(X\!\!\mid_{q^{-1}(b)}\!\!\big)\toiso\colim_E X.
\]
This is a bit vague in view of the necessary distinction between coherent and incoherent diagrams; but whenever we write such a formula, this implicitly means that there is coherent lift of the (partially) incoherent diagram
\[
b\mapsto\colim_{q^{-1}(b)}\big(X\!\!\mid_{q^{-1}(b)}\!\!\big).
\]
\end{rmk}

A combination of these integrations over fibers together with the concept of homotopy finality provides a rather rich supply of more general decomposition results for colimits, and we hence make the following definition.

\begin{defn}\label{defn:decompose}
A \textbf{left decomposition} of a small category $A$ is a triple $(C,F,u)$ consisting of
\begin{enumerate}
\item a small category $C$,
\item a functor $F\colon C\to\cCat$,
\item and a homotopy final functor $u\colon\int_C F\to A$.
\end{enumerate}
\end{defn}

\begin{notn}\label{notn:cocone}
The \textbf{cocone} $A^\rhd\in\cCat$ of a small category $A\in\cCat$ is obtained from $A$ by freely adjoining a new terminal object $\infty$. It is the join $A^\rhd=A*\bbone$ of $A$ and the terminal category $\bbone$, and there are hence fully faithful inclusions $i_A\colon A\to A^\rhd$ and $\infty\colon\bbone\to A^\rhd$ which are jointly surjective on objects. 

Correspondingly, given a derivator \D, associated to $i_A$ there is the fully faithful left Kan extension morphism $(i_A)_!\colon\D^A\to\D^{A^\rhd}$. The essential image of $(i_A)_!$ consists by definition of the colimiting cocones. A diagram $X\in\D^{A^\rhd}$ is a colimiting cocone if and only if the counit $\varepsilon\colon (i_A)_!i_A^\ast X\toiso X$ is invertible if and only if the canonical mate $\colim_X(X\!\!\mid_A)\toiso X_\infty$ is an isomorphism (see \cite[\S2]{groth:revisit} for more details).
\end{notn}

\begin{thm}[\textbf{Decomposition theorem}]\label{thm:decompose}
Let $A\in\cCat$, let $(C,F,u)$ be a left decomposition of $A$, and let \D be a derivator. There is a morphism of derivators
\[
d\colon\D^A\to\D^{C^\rhd}
\]
such that for every $X\in\D^A$
\begin{enumerate}
\item the cocone $d(X)\in\D^{C^\rhd}$ is colimiting, 
\item there is a canonical isomorphism $d(X)_\infty\toiso\colim_AX,$ and
\item there are canonical isomorphisms $\colim_{F(c)}(X\!\!\mid_{F(c)})\toiso d(X)_c, c\in C$, where $X\!\!\mid_{F(c)}$ is shorthand notation for $(i_c^\ast u^\ast)X$.
\end{enumerate}
\end{thm}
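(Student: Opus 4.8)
The plan is to construct the morphism $d$ by composing the two elementary operations already available: pulling back along the homotopy final functor $u$, and then left Kan extending along the Grothendieck opfibration $q=q_F\colon\int_C F\to C$. More precisely, I would first form $u^\ast\colon\D^A\to\D^{\int_C F}$ and then $q_!\colon\D^{\int_C F}\to\D^C$; finally I would postcompose with the fully faithful left Kan extension $(i_C)_!\colon\D^C\to\D^{C^\rhd}$ producing colimiting cocones (\autoref{notn:cocone}). Setting $d=(i_C)_!\circ q_!\circ u^\ast$ immediately gives statement~(i), since everything in the essential image of $(i_C)_!$ is a colimiting cocone by definition.

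For statement~(ii), I would compute $d(X)_\infty$ by combining the two descriptions of colimiting cocones in \autoref{notn:cocone}: the canonical mate gives $d(X)_\infty\cong\colim_C\big((i_C)_! q_! u^\ast X\!\mid_C\big)\cong\colim_C q_! u^\ast X$. Now \autoref{cor:opfibration}(i) applied to the opfibration $q$ yields $\colim_C q_! u^\ast X\cong\colim_{\int_C F} u^\ast X$, and homotopy finality of $u$ (part of the definition of a left decomposition, \autoref{defn:decompose}) gives $\colim_{\int_C F} u^\ast X\cong\colim_A X$. Chaining these canonical isomorphisms proves (ii). For statement~(iii), I would evaluate at an object $c\in C$: again using \autoref{notn:cocone}, $d(X)_c\cong (q_! u^\ast X)_c$, and then \autoref{cor:opfibration}(ii) identifies this with $\colim_{q^{-1}(c)} i_c^\ast u^\ast X$. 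Since the inclusion $i_c\colon Fc\to\int_C F$ induces an isomorphism $Fc\toiso q^{-1}(c)$ (the pullback square \eqref{eq:opfibration}), we may rewrite this colimit as $\colim_{F(c)}\big((i_c^\ast u^\ast)X\big)$, which is exactly the claimed formula with the shorthand $X\!\mid_{F(c)}=(i_c^\ast u^\ast)X$.

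I do not expect any genuine obstacle here: the theorem is essentially a bookkeeping assembly of \autoref{cor:opfibration} (integration over the fibers of an opfibration) and the homotopy finality built into \autoref{defn:decompose}, glued together via the standard recognition of colimiting cocones. The one point deserving care is the coherence issue flagged in \autoref{rmk:opfibration}: the formula in~(iii) must be read as asserting the existence of a coherent diagram over $C$ whose value at $c$ is $\colim_{F(c)}(X\!\mid_{F(c)})$, and this coherent lift is precisely $q_! u^\ast X\in\D^C$ — so working with the morphisms $q_!$ and $u^\ast$ of derivators (rather than their underlying functors) is exactly what makes the statement well-posed. A second minor point is that all the isomorphisms claimed are \emph{canonical}, which follows because each ingredient — the counit $\varepsilon\colon(i_C)_! i_C^\ast\to\id$, the canonical mates of \autoref{cor:opfibration}, and the homotopy finality isomorphism for $u$ — is canonical and natural, so their composite is too.
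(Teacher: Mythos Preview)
Your proof is correct and follows essentially the same route as the paper: you define $d=(i_C)_!\,q_!\,u^\ast$ and then verify (i)--(iii) using exactly the ingredients the paper uses, namely the colimiting-cocone description from \autoref{notn:cocone}, integration over fibers of the opfibration $q$ (\autoref{cor:opfibration}), and the homotopy finality of $u$. The only cosmetic difference is that the paper phrases each step as the homotopy exactness of an explicit square (a slice square for~(ii), and a pasting of the fully-faithful square for $i_C$ with the pullback square~\eqref{eq:opfibration} for~(iii)), whereas you cite \autoref{cor:opfibration} directly; these are the same arguments.
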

\begin{proof}
The homotopy finality of $u\colon\int_C F\to A$ is equivalent to the homotopy exactness of the square
\begin{equation}\label{eq:decomp-0}
\vcenter{
\xymatrix{
\int_C F\ar[r]^-u\ar[d]_-q&A\ar[dd]^-\pi\\
C\ar[d]_-\pi&\\
\bbone\ar[r]_-\id&\bbone.
}
}
\end{equation}
Thus, for $X\in\D^A$ and $Z=q_!u^\ast (X)\in\D^C$ there is a canonical isomorphism
\begin{equation}\label{eq:decomp-I}
\colim_C Z\toiso \colim_AX.
\end{equation}
Moreover, since $q$ is a Grothendieck opfibration, for every $c\in C$ the homotopy exactness of \eqref{eq:opfibration} yields canonical isomorphisms
\begin{equation}\label{eq:decomp-II}
\colim_{F(c)}(X\!\mid_{F(c)})\toiso Z_c,
\end{equation}
where $X\!\!\mid_{F(c)}$ is shorthand notation for $i_c^\ast u^\ast X$. To conclude the construction of $d\colon\D^A\to\D^{C^\rhd}$ we combine the above with the left Kan extension along the inclusion functor $i_C\colon C\to C^\rhd,$ i.e., we define the decomposition morphism as
\[
d=(i_C)_!q_!u^\ast\colon\D^A\to\D^{\int_CF}\to\D^C\to\D^{C^\rhd}.
\]
The homotopy exactness of the slice square
\[
\xymatrix{
C\ar[r]^-\id\ar[d]_-\pi\drtwocell\omit&C\ar[d]^-{i_C}\\
\bbone\ar[r]_-\infty&C^\rhd
}
\]
and the homotopy exactness of \eqref{eq:decomp-0} yield canonical isomorphisms
\[
\infty^\ast d=\infty^\ast(i_C)_!q_!u^\ast\cong \colim_Cq_!u^\ast\cong\colim_A.
\]
This settles the second property and the first one is obvious by definition of colimiting cocones (see \autoref{notn:cocone}). Since $i_C\colon C\to C^\rhd$ is fully faithful, the bottom square in 
\[
\xymatrix{
F(c)\ar[r]^-{i_c}\ar[d]_-\pi\pullbackcorner\drtwocell\omit{\id}&\int_C F\ar[d]^-q\\
\bbone\ar[r]_-c\ar[d]_-\id\drtwocell\omit{\id}&C\ar[d]^-{i_C}\\
\bbone\ar[r]_-c&C^\rhd
}
\]
is homotopy exact. This yields canonical isomorphism
\[
c^\ast d=c^\ast(i_C)_!q_!u^\ast\cong c^\ast q_!u^\ast\cong \colim_{F(c)} i_c^\ast u^\ast,
\]
thereby concluding the proof.
\end{proof}

\begin{rmk}
\begin{enumerate}
\item Formulas \eqref{eq:decomp-I} and \eqref{eq:decomp-II} jointly indicate in which sense a left decomposition of $A$ allows us to decompose the calculation of $\colim_A$ into intermediate steps. While this is made precise by the statement of \autoref{thm:decompose}, following \autoref{rmk:opfibration} we refer to this by saying that for $X\in\D^A$ there are canonical isomorphisms 
\begin{equation}\label{eq:vague}
\colim_{c\in C}\colim_{Fc}(X\!\!\mid_{Fc})\toiso\colim_AX.
\end{equation}
\item There is a dual notion of \textbf{right decompositions}, based on homotopy initial functors defined on Grothendieck constructions associated to presheaves of small categories, and there is a dual decomposition theorem.
\item Both versions of the decomposition theorem can be generalized to decompositions which are based on \emph{smooth} and \emph{proper} functors instead (see \cite{maltsiniotis:grothendieck,maltsiniotis:asphericity}).
\end{enumerate}
\end{rmk}

\begin{rmk}\label{rmk:htpy-final}
Let us recall that a functor $u\colon A\to B$ in $\cCat$ is homotopy final if and only if for every $b\in B$ the comma category $(b/u)$ has a weakly contractible nerve (see \cite[\S3]{gps:mayer} which relies on \cite{heller:htpythies,cisinski:presheaves}). Thus, in the situation of \autoref{thm:decompose}, we obtain a decomposition result for colimits whenever we can show that the slice categories $(a/q),a \in A,$ have weakly contractible nerves. This is, in particular, the case whenever $(a/q)$ can be connected to the terminal category $\bbone$ by a string of adjoint functors (possibly pointing in different directions). 
\end{rmk}

As a first application we note that the classical Bousfield--Kan formulas \cite{bousfield-kan:htpy-limits} are \emph{consequences} of having a derivator. In fact, we obtain this by specializing the two-sided simplicial bar resolutions for coends (\cite[Appendix~A]{gps:additivity}) to the case of \emph{dummy variables} in the sense of \cite{maclane}. We begin by recalling coends in derivators and the corresponding bar construction.

\begin{con}\label{con:two-sided-bar}
Let $A\in\cCat$ and let $\tw(A)$ be the twisted morphism category of $A$ with objects the morphisms $f\colon a\to b$ in $A$ and morphisms $f\to f'$ the two-sided factorizations of $f'$ through $f$,
\[
\xymatrix{
a\ar[r]^f&b\ar[d]\\
a'\ar[u]\ar[r]_-{f'}&b'.
}
\]
For every derivator \D the coend morphism is defined by
\[
\int^A=\colim_{\tw(A)\op}\circ(t\op,s\op)\colon\D^{A\op\times A}\to\D^{\tw(A)\op}\to\D,
\]
where $t$ and $s$ are the evaluation maps at targets and sources, respectively \cite[\S5]{gps:additivity}. We denote by $(\Delta/A)$ the category of simplices of $A$ (the category of elements of the nerve $NA$) with objects the strings $([n],a_0\to\ldots\to a_n)$ of composable morphisms in $A$. Forming the compositions $([n],a_0\to\ldots\to a_n)\mapsto (a_0\to a_n)$ defines a functor
\[
c\colon(\Delta/A)\op\to\tw(A)\op,
\]
which is known to be homotopy final (\cite[Appendix~A]{gps:additivity}). Finally, the functor
\[
p\colon(\Delta/A)\op\to\Delta\op\colon([n],a_0\to\ldots\to a_n)\mapsto[n]
\]
is a Grothendieck opfibration with discrete fibers, and the two-sided simplicial bar construction for coends in derivators is obtained by contemplating the homotopy exact square
\[
\xymatrix{
(\Delta/A)\op\ar[d]_-p\ar[r]^-c&\tw(A)\op\ar[dd]^-\pi\\
\Delta\op\ar[d]_-\pi&\\
\bbone\ar[r]_-\id&\bbone
}
\] 
\end{con}

\begin{prop}\label{prop:two-sided-bar}
Let \D be a derivator and $A\in\cCat$. There is a simplicial resolution morphism
\[
s\colon\D^{A\op\times A}\to\D^{\Delta\op}
\]
such that for every $X\in\D^{A\op\times A}$
\begin{enumerate}
\item there is a canonical isomorphism $\colim_{\Delta\op}sX\toiso\int^AX$ and
\item for every $[n]\in\Delta\op$ there are canonical isomorphisms
\[
(sX)_n\toiso\coprod_{(a_0\to\ldots \to a_n)\in (NA)_n}X(a_n,a_0).
\]
\end{enumerate}
\end{prop}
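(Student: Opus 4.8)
The plan is to take for $s$ exactly the composite of the morphisms assembled in \autoref{con:two-sided-bar}. Writing $Y\in\D^{\tw(A)\op}$ for the image of $X$ under the precomposition morphism $(t\op,s\op)$, we set
\[
s=p_!\circ c^\ast\circ(t\op,s\op)\colon\D^{A\op\times A}\to\D^{\tw(A)\op}\to\D^{(\Delta/A)\op}\to\D^{\Delta\op},
\]
so that $sX=p_!c^\ast Y$, and then read off both properties from the two homotopy exactness statements already recorded there, namely homotopy finality of $c$ and the fact that $p$ is a Grothendieck opfibration with discrete fibers.

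For the first property I would invoke the homotopy exact square displayed at the end of \autoref{con:two-sided-bar}. Its left column splits $\colim_{(\Delta/A)\op}$ as $\colim_{\Delta\op}\circ p_!$ (uniqueness of left adjoints applied to $\pi_{(\Delta/A)\op}=\pi_{\Delta\op}\circ p$), while homotopy finality of $c$ gives $\colim_{(\Delta/A)\op}\circ c^\ast\cong\colim_{\tw(A)\op}$. Composing, $\colim_{\Delta\op}sX=\colim_{\Delta\op}p_!c^\ast Y\cong\colim_{\tw(A)\op}Y$, and the right-hand side is by definition $\int^AX$.

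For the second property, fix $[n]\in\Delta\op$. Since $p$ is a Grothendieck opfibration, \autoref{cor:opfibration} applied to $p$ and the object $[n]$ yields a canonical isomorphism
\[
(sX)_n=[n]^\ast p_!c^\ast Y\cong\colim_{p^{-1}([n])}i_{[n]}^\ast c^\ast Y,
\]
where $i_{[n]}\colon p^{-1}([n])\to(\Delta/A)\op$ is the fiber inclusion. By construction $p^{-1}([n])$ is the discrete category on the set $(NA)_n$ of $n$-simplices of $A$, so this colimit is the corresponding coproduct. It then remains to evaluate $c^\ast Y$ at a simplex $(a_0\to\ldots\to a_n)$: by the definition of $c$ this is $Y$ at the composite morphism $a_0\to a_n\in\tw(A)\op$, and by the definition of $(t\op,s\op)$ that value is $X(a_n,a_0)$. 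Assembling, $(sX)_n\cong\coprod_{(a_0\to\ldots\to a_n)\in(NA)_n}X(a_n,a_0)$, as claimed.

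The only mildly delicate point, and the step I would flag as the real bookkeeping, is this last identification carried out at the level of derivators rather than underlying categories: one must check that the precomposition morphisms compose strictly enough for the fiberwise value to be literally $X(a_n,a_0)$, with the correct variances feeding the target of the composite into the $A\op$-slot and its source into the $A$-slot of $X$, and that the colimit over the discrete fiber is genuinely the set-indexed coproduct rather than merely isomorphic to one. None of this poses a genuine obstacle: the substantive content --- homotopy finality of $c$ and the ``integration over fibers'' for the opfibration $p$ --- has already been isolated in \autoref{con:two-sided-bar}, \autoref{lem:opfibration}, and \autoref{cor:opfibration}.
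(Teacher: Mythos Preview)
Your argument is correct and is exactly the approach the paper intends: the paper's own proof simply says that the result follows from the claims made in \autoref{con:two-sided-bar} (proved in \cite[Appendix~A]{gps:additivity}), and you have unpacked precisely those claims---homotopy finality of $c$ for part~(i) and the opfibration property of $p$ with discrete fibers for part~(ii)---in the way the construction dictates. The only difference is that you spell out the steps rather than deferring to the external reference.
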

\begin{proof}
This follows from the claims made in \autoref{con:two-sided-bar} and which are proved in \cite[Appendix~A]{gps:additivity}.
\end{proof}

\begin{lem}\label{lem:source-htpy-final}
For every $A\in\cCat$ the functor $s\op\colon\tw(A)\op\to A$ is homotopy final.
\end{lem}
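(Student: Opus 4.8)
The plan is to establish homotopy finality through the comma‑category criterion recalled in \autoref{rmk:htpy-final}: I will show that for every $a\in A$ the comma category $(a/s\op)$ has weakly contractible nerve, and I will do this by connecting $(a/s\op)$ to the terminal category by a short chain of adjunctions.

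First I would make $(a/s\op)$ explicit, using the conventions for $\tw(A)$ fixed in \autoref{con:two-sided-bar}. An object of $(a/s\op)$ is a pair consisting of an object $g$ of $\tw(A)\op$, i.e.\ a morphism $g\colon u\to v$ of $A$ with $s\op(g)=u$, together with a morphism $\phi\colon a\to u$; equivalently it is a string $a\xrightarrow{\phi}u\xrightarrow{g}v$ of composable morphisms of $A$. A morphism into $a\xrightarrow{\phi'}u'\xrightarrow{g'}v'$ is a pair $(\mu\colon u\to u',\ \nu\colon v'\to v)$ with $\mu\phi=\phi'$ and $\nu g'\mu=g$ (the arrow $\nu$ points "backwards" because of the $(-)\op$). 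Inside $\tw(A)\op$ sits the fibre $\cF_a:=(s\op)^{-1}(a)$, whose objects are exactly the morphisms $g\colon a\to v$ out of $a$ and whose morphisms $g\to g'$ are the morphisms $\nu\colon v'\to v$ with $\nu g'=g$. Hence $\cF_a\cong (a/A)\op$, and since $\id_a$ is an initial object of the coslice $(a/A)$, the nerve of $(a/A)$ — and therefore that of $\cF_a$ — is weakly contractible.

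The heart of the matter is then to show that the inclusion $\iota_a\colon\cF_a\to(a/s\op)$, which sends $g\colon a\to v$ to the string $a\xrightarrow{\id_a}a\xrightarrow{g}v$, admits a right adjoint $R_a$. I would define $R_a$ on objects by $R_a(g\colon u\to v,\ \phi\colon a\to u)=(g\phi\colon a\to v)$ and on morphisms by $R_a(\mu,\nu)=\nu$, and then check that the morphisms $\varepsilon_{(g,\phi)}=(\phi,\id_v)\colon(g\phi,\id_a)\to(g,\phi)$ assemble into a counit exhibiting $\iota_a\dashv R_a$; since $R_a\iota_a=\id_{\cF_a}$ the unit is the identity and both triangle identities hold on the nose. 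Conceptually, $\varepsilon_{(g,\phi)}$ is nothing but the cartesian lift of $\phi$, so this step amounts to the observation that $s\op$ is a Grothendieck fibration with fibre $(a/A)\op$ over $a$, which is worth recording. Because an adjoint pair of functors induces mutually inverse homotopy equivalences on nerves, we get $N(a/s\op)\simeq N\cF_a\simeq\ast$; as $a$ was arbitrary, \autoref{rmk:htpy-final} yields that $s\op$ is homotopy final.

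The only genuine difficulty is bookkeeping: keeping the variances straight across the two applications of $(-)\op$ — from $\tw(A)$ to $\tw(A)\op$, and then to the comma category — so that the formulas $R_a(g,\phi)=g\phi$ and $\varepsilon_{(g,\phi)}=(\phi,\id_v)$ typecheck and the triangle identities are satisfied. Nothing deeper is involved: once $(a/s\op)$ is described correctly the adjunction is forced, and the contractibility of $(a/A)$ is immediate from its initial object $\id_a$. One could alternatively verify directly that the square formed by $s\op$ together with the two functors to $\bbone$ is homotopy exact, but routing the argument through \autoref{rmk:htpy-final} keeps everything inside the toolkit already at hand.
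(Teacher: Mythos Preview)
Your proof is correct. The description of $(a/s\op)$ is accurate under the conventions of \autoref{con:two-sided-bar}, the functor $R_a$ is well-defined on morphisms (from $\mu\phi=\phi'$ and $\nu g'\mu=g$ one gets $\nu(g'\phi')=\nu g'\mu\phi=g\phi$), the pair $(\iota_a,R_a)$ is an honest adjunction with the counit you wrote down, and the fibre $\cF_a\cong(a/A)\op$ has a terminal object, so the chain of adjunctions in \autoref{rmk:htpy-final} gives contractibility of each comma category.

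As for comparison with the paper: the paper does not prove this lemma directly but simply defers to the proof of \cite[Lem.~5.4]{gps:additivity}. Your argument is a standard and self-contained way to establish the result, and is essentially the kind of argument one would expect to find behind that citation (the observation that $s\op$ is a Grothendieck fibration with contractible fibres, which you also noted, is another common way to phrase the same content). There is nothing to add or correct.
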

\begin{proof}
This follows from the proof of \cite[Lem.~5.4]{gps:additivity}
\end{proof}

We recall that we denote by $\pi_A\colon A\times B\to B$ the functor which projects away the category $A$.

\begin{cor}\label{cor:dummy}
For every derivator \D and $A\in\cCat$ there is a canonical isomorphism
$\int^A\circ\pi_{A\op}^\ast\cong\colim_A\colon\D^A\to\D,$
\[
\xymatrix{
\D^A\ar[r]^-{\pi_{A\op}^\ast}\ar[dr]_-{\colim_A}&\D^{A\op\times A}\ar[d]^-{\int^A}\\
&\D
}
\]
\end{cor}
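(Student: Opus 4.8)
The plan is to unwind the definition of the coend morphism $\int^A$ from \autoref{con:two-sided-bar} and to observe that precomposing with $\pi_{A\op}^\ast$ collapses the twisted-arrow resolution onto the source evaluation functor, which is homotopy final by \autoref{lem:source-htpy-final}.

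First I would recall that, by construction, $\int^A=\colim_{\tw(A)\op}\circ(t\op,s\op)^\ast\colon\D^{A\op\times A}\to\D^{\tw(A)\op}\to\D$, where $(t\op,s\op)\colon\tw(A)\op\to A\op\times A$ has as its two components the opposites of the target evaluation $t\colon\tw(A)\to A$ and of the source evaluation $s\colon\tw(A)\to A\op$. The observation I want to extract is that the second component of this functor is precisely $s\op\colon\tw(A)\op\to A$.

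Next I would use that $\pi_{A\op}\colon A\op\times A\to A$ is the projection onto the second factor, so that $\pi_{A\op}\circ(t\op,s\op)=s\op$. Pseudofunctoriality of $(-)^\ast$ then supplies a canonical isomorphism $(t\op,s\op)^\ast\circ\pi_{A\op}^\ast\cong(s\op)^\ast$ of morphisms $\D^A\to\D^{\tw(A)\op}$, whence
\[
\int^A\circ\pi_{A\op}^\ast\;\cong\;\colim_{\tw(A)\op}\circ(s\op)^\ast .
\]
Finally, \autoref{lem:source-htpy-final} asserts that $s\op\colon\tw(A)\op\to A$ is homotopy final; equivalently, the commutative square formed by $s\op$ on top, the two projections to $\bbone$ on the sides, and the identity of $\bbone$ on the bottom is homotopy exact, and its canonical mate is exactly the comparison $\colim_{\tw(A)\op}\circ(s\op)^\ast\toiso\colim_A$. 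Combining this with the previous display yields the claimed canonical isomorphism $\int^A\circ\pi_{A\op}^\ast\cong\colim_A$.

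I do not expect a genuine obstacle here: all the substance already sits in \autoref{lem:source-htpy-final} (and upstream in the bar-resolution analysis of \cite[Appendix~A]{gps:additivity}), and the corollary is then a formal matter of composing with the dummy projection and invoking the triangle identity $\pi_{A\op}\circ(t\op,s\op)=s\op$. The one point demanding a little care is bookkeeping the variances: one must check that it is the source evaluation, and not the target evaluation, that survives after projecting away the $A\op$-coordinate, and that this surviving functor indeed lands in $A$ rather than in $A\op$.
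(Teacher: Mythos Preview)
Your argument is correct and is essentially the same as the paper's: both rest on the identity $\pi_{A\op}\circ(t\op,s\op)=s\op$ together with the homotopy finality of $s\op$ from \autoref{lem:source-htpy-final}. The paper packages this into a single homotopy exact square and appeals to the definition of the coend, while you spell out the intermediate isomorphism $\int^A\circ\pi_{A\op}^\ast\cong\colim_{\tw(A)\op}\circ(s\op)^\ast$ explicitly; the content is identical.
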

\begin{proof}
By \autoref{lem:source-htpy-final} the square
\[
\xymatrix{
\tw(A)\op\ar[r]^-{(t\op,s\op)}\ar[d]_-\pi\drtwocell\omit{\id}&A\op\times A\ar[r]^-{\pi_{A\op}}&A\ar[d]^-\pi\\
\bbone\ar[rr]_-\id&&\bbone
}
\]
is homotopy exact and the result hence follows from the definition of coends in derivators (see \autoref{con:two-sided-bar}).
\end{proof}

This is a derivator version of the categorical result that coends with dummy variables are colimits. The following result is now an immediate consequence of earlier results but because of its importance we formulate it as a theorem.

\begin{thm}[\textbf{Bousfield--Kan formulas}]\label{thm:BK-formulas}
Let \D be a derivator and $A\in\cCat$. There is a simplicial resolution morphism
\[
s\colon\D^A\to\D^{\Delta\op}
\]
such that for every $X\in\D^A$
\begin{enumerate}
\item there is a canonical isomorphism $\colim_{\Delta\op}sX\toiso\colim_AX$ and
\item for every $[n]\in\Delta\op$ there are canonical isomorphisms
\[
(sX)_n\toiso\coprod_{(a_0\to\ldots \to a_n)\in (NA)_n}X_{a_0}.
\]
\end{enumerate}
\end{thm}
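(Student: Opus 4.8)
The plan is to obtain $s$ by feeding the two-sided simplicial bar resolution of \autoref{prop:two-sided-bar} a diagram whose contravariant variable is a dummy. Write $s_{\mathrm{bar}}\colon\D^{A\op\times A}\to\D^{\Delta\op}$ for the simplicial resolution morphism of \autoref{prop:two-sided-bar}, and set
\[
s=s_{\mathrm{bar}}\circ\pi_{A\op}^\ast\colon\D^A\xto{\pi_{A\op}^\ast}\D^{A\op\times A}\xto{s_{\mathrm{bar}}}\D^{\Delta\op}.
\]
Since $\pi_{A\op}^\ast$ and $s_{\mathrm{bar}}$ are morphisms of derivators, so is $s$, and it remains only to transport the two properties of $s_{\mathrm{bar}}$ across $\pi_{A\op}^\ast$.

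For property (1), given $X\in\D^A$ I would combine the canonical isomorphism $\colim_{\Delta\op}s_{\mathrm{bar}}Y\toiso\int^A Y$ of \autoref{prop:two-sided-bar}, with $Y=\pi_{A\op}^\ast X$, and the canonical isomorphism $\int^A\circ\pi_{A\op}^\ast\cong\colim_A$ of \autoref{cor:dummy}, obtaining
\[
\colim_{\Delta\op}sX=\colim_{\Delta\op}s_{\mathrm{bar}}(\pi_{A\op}^\ast X)\toiso\int^A\pi_{A\op}^\ast X\toiso\colim_A X.
\]
Naturality of the two constituent isomorphisms makes this a natural isomorphism of morphisms $\D^A\to\D$.

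For property (2), fix $[n]\in\Delta\op$. \autoref{prop:two-sided-bar} applied to $\pi_{A\op}^\ast X$ gives a canonical isomorphism
\[
(sX)_n\toiso\coprod_{(a_0\to\ldots\to a_n)\in(NA)_n}(\pi_{A\op}^\ast X)(a_n,a_0).
\]
Because $\pi_{A\op}^\ast$ is precomposition with the projection $\pi_{A\op}\colon A\op\times A\to A$ that forgets the $A\op$-coordinate, evaluating $\pi_{A\op}^\ast X$ at $(a_n,a_0)\in A\op\times A$ returns $X$ evaluated at $\pi_{A\op}(a_n,a_0)=a_0$, so $(\pi_{A\op}^\ast X)(a_n,a_0)\iso X_{a_0}$ canonically, and substituting gives the asserted formula.

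I do not expect a real obstacle here: the theorem is a bookkeeping assembly of \autoref{prop:two-sided-bar} and \autoref{cor:dummy}. The only point demanding care is in property (2), namely keeping straight that in an object $(a_n,a_0)$ of $A\op\times A$ the first entry is the target index and the second the source index, so that deleting the $A\op$-coordinate leaves $X_{a_0}$ rather than $X_{a_n}$; compatibility of these pointwise identifications with faces and degeneracies is automatic, being inherited from the construction of $s_{\mathrm{bar}}$ in \autoref{con:two-sided-bar} together with the $2$-functoriality of $\D$.
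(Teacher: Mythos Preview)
Your proof is correct and follows exactly the paper's approach: the paper's proof is the single line ``This follows from \autoref{prop:two-sided-bar} and \autoref{cor:dummy},'' and you have correctly unpacked what that means, including the bookkeeping of which coordinate $\pi_{A\op}$ discards.
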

\begin{proof}
This follows from \autoref{prop:two-sided-bar} and \autoref{cor:dummy}.
\end{proof}

Following the conventions for \eqref{eq:vague}, there is hence a canonical isomorphism
\[
\colim_{[n]\in\Delta\op}\big(\coprod_{a_0\to\ldots \to a_n}X_{a_0}\big)\toiso\colim_AX.
\]
In low dimensions this simplicial diagram looks like
\[
\xymatrix{
\cdots\quad\ar@{<-}@<1.5mm>[r] \ar@{<-}@<-1.5mm>[r] \ar[r] \ar@<3mm>[r] \ar@<-3mm>[r]&\displaystyle\coprod_{a_0\to a_1}X_{a_0}\ar@<1.5mm>[r] \ar@<-1.5mm>[r] \ar@{<-}[r] &
    \displaystyle\coprod_a^{\vphantom{a_n}} X_a,
}
\]
as we expect of a Bousfield--Kan formula \cite{bousfield-kan:htpy-limits}.

\begin{rmk}
Presenting the arguments in a slightly different way, we see that \autoref{thm:BK-formulas} and \autoref{prop:two-sided-bar} are special cases of the decomposition theorem (\autoref{thm:decompose}).
\end{rmk}

\begin{defn}
Let $F\colon\D\to\E$ be a morphism of derivators.
\begin{enumerate}
\item The colimit morphism $\colim_{\Delta\op}\colon\D^{\Delta\op}\to\D$ is the \textbf{geometric realization} morphism of \D.
\item The morphism $F$ \textbf{preserves geometric realizations} if $F$ preserves colimits of shape $\Delta\op$.
\end{enumerate}
\end{defn}

\begin{cor}
Let $F\colon\D\to\E$ be a morphism of derivators.
\begin{enumerate}
\item The morphism $F$ preserves all colimits if and only if $F$ preserves coproducts and geometric realizations.
\item A replete subcategory $\cC\subseteq\D(\bbone)$ is closed under all colimits if and only if it is closed under coproducts and geometric realizations.
\end{enumerate}
\end{cor}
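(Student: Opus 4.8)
The plan is to reduce both statements to the Bousfield--Kan formula (\autoref{thm:BK-formulas}), which writes an arbitrary colimit $\colim_A$ as the geometric realization of a simplicial object $sX$ whose $n$-th term is the coproduct $\coprod_{a_0\to\cdots\to a_n}X_{a_0}$ of fibrewise values. The ``only if'' directions of (i) and (ii) are immediate: coproducts are colimits over discrete categories and geometric realizations are colimits of shape $\Delta\op$, so a morphism (resp.\ a subcategory) that behaves well for all colimits behaves well for these particular ones. It therefore remains to prove the ``if'' directions.

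The crucial point for (i) is the following lemma: \emph{if $F$ preserves coproducts, then it commutes with the Bousfield--Kan resolution morphism}, i.e.\ the canonical comparison $s\circ F\to F\circ s$ of morphisms $\D^A\to\E^{\Delta\op}$ is an isomorphism (here and below we write $F$ also for the morphisms induced by $F$ on $\D^A$ and $\D^{\Delta\op}$, and $s$ for the resolution morphisms of both \D and \E). To prove it I would unwind the construction of $s$ from \autoref{con:two-sided-bar} and \autoref{cor:dummy}: up to restriction morphisms, with which any morphism of derivators commutes, $s$ is the left Kan extension $p_!$ along the Grothendieck opfibration $p\colon(\Delta/A)\op\to\Delta\op$, whose fibres are \emph{discrete}. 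To check that $F$ commutes with $p_!$ it suffices to check pointwise, so fix $[n]\in\Delta\op$; by \autoref{cor:opfibration} (the homotopy exactness of the defining pullback square \eqref{eq:opfibration} of the fibre $p^{-1}([n])$) there is a canonical isomorphism $[n]^\ast p_!\cong\colim_{p^{-1}([n])}i_{[n]}^\ast$, and since $p^{-1}([n])$ is discrete this colimit is a coproduct. As $F$ commutes with the restrictions $i_{[n]}^\ast$ and $[n]^\ast$ and preserves coproducts, the canonical comparison $p_!F\to Fp_!$ is a pointwise isomorphism, hence an isomorphism; this proves the lemma.

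Granting the lemma, the ``if'' direction of (i) follows from the chain of canonical isomorphisms
\[
F(\colim_AX)\;\cong\;F(\colim_{\Delta\op}sX)\;\cong\;\colim_{\Delta\op}F(sX)\;\cong\;\colim_{\Delta\op}s(FX)\;\cong\;\colim_A FX
\]
for $X\in\D^A$, where the outer isomorphisms are \autoref{thm:BK-formulas} applied to \D and to \E, the second uses that $F$ preserves geometric realizations, and the third is the lemma; by compatibility of canonical mates with composition this composite is the canonical comparison map, so $F$ preserves $\colim_A$. Since $A\in\cCat$ was arbitrary, $F$ preserves all colimits (and hence all left Kan extensions, by the pointwise formula). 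The ``if'' direction of (ii) is the same argument with $\cC$ in place of $F$: if $X\in\D^A$ satisfies $X_a\in\cC$ for all $a\in A$, then each $(sX)_n\cong\coprod_{a_0\to\cdots\to a_n}X_{a_0}$ lies in $\cC$ (closure under coproducts, together with repleteness), so $sX$ is pointwise in $\cC$, whence $\colim_AX\cong\colim_{\Delta\op}sX\in\cC$ by closure under geometric realizations.

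The only real obstacle is the lemma: verifying carefully that a coproduct-preserving morphism of derivators commutes with the Bousfield--Kan resolution morphism. This includes the bookkeeping needed to upgrade the pointwise comparison of the $(sX)_n$ to an honest isomorphism of simplicial objects in $\E^{\Delta\op}$, which is where compatibility of canonical mates with pasting enters. Everything else is a formal consequence of \autoref{thm:BK-formulas}.
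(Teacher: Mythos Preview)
Your proposal is correct and follows essentially the same route as the paper: both arguments factor $\colim_A$ through the homotopy final functor $(\Delta/A)\op\to A$ and the opfibration $p\colon(\Delta/A)\op\to\Delta\op$, then use discreteness of the fibres of $p$ to reduce preservation of $p_!$ to preservation of coproducts. The paper packages this slightly differently by invoking the general results \cite[Lem.~5.8]{groth:revisit} and \cite[Prop.~2.3]{groth:ptstab} (which phrase everything as ``$F$ preserves colimits of shape $X$'' and handle the mate--coherence bookkeeping once and for all), whereas you unwind the same logic concretely in terms of the resolution morphism $s$; the content is the same.
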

\begin{proof}
Let $F\colon\D\to\E$ be a morphism of derivators which preserves coproducts and geometric realizations, and let $A\in\cCat$. We begin by noting that
\[
s\op\circ c\colon(\Delta/A)\op\to\tw(A)\op\to A\colon (a_0\to\ldots\to a_n)\mapsto a_0
\]
is homotopy final as composition of the homotopy final functors $s\op$ (\autoref{lem:source-htpy-final}) and $c$ (\autoref{con:two-sided-bar}). By \cite[Lem.~5.8]{groth:revisit} it remains to show that $F$ preserves colimits of shape $(\Delta/A)\op$. By assumption we only have to verify that $F$ preserves left Kan extensions along $p\colon(\Delta/A)\op\to\Delta\op$. By \cite[Prop.~2.3]{groth:ptstab} this is the case as soon as $F$ preserves colimits of shape $(p/[n]),[n]\in\Delta\op$. Since $p$ is a discrete opfibration, the functors $p^{-1}([n])\to(p/[n])$ are homotopy final with domains given by discrete categories, and the claim follows from our assumption on $F$ and an additional application of \cite[Lem.~5.8]{groth:revisit}. The second statement is similar.
\end{proof}

We now specialize the decomposition theorem (\autoref{thm:decompose}) to suitable filtrations of small categories. 

\begin{defn}
For every $A\in\cCat$ we denote by $\mathrm{Sub}(A)\in\cCat$ the following small category.
\begin{enumerate}
\item Objects are subcategories $A'\to A$ of $A$.
\item Given two subcategories $A'\to A$ and $A''\to A$, a morphism from $A'$ to $A''$ is a functor $A'\to A''$ such that
\[
\xymatrix{
A'\ar[dr]\ar[rr]&&A''\ar[dl]\\
&A&
}
\]
commutes.
\end{enumerate}
The category $\mathrm{Sub}(A)$ is the \textbf{category of subcategories} of $A$. 
\end{defn}

\begin{defn}\label{defn:filtration}
A \textbf{filtration} of a small category $A$ is a pair $(C,F)$ consisting of
\begin{enumerate}
\item a small category $C$ and 
\item a functor $F\colon C\to\mathrm{Sub}(A).$
\end{enumerate}
A \textbf{filtration by full subcategories} is a filtration $(C,F)$ such that all $Fc, c\in C,$ are full subcategories of $C$. A filtration $(C,F)$ is \textbf{exhaustive} if $\bigcup_{c\in C} Fc=A$.
\end{defn}

\begin{rmk}\label{rmk:filtration}
Let $A\in\cCat$ and let $(F,C)$ be a filtration of $A$.
\begin{enumerate}
\item Morphisms in $\mathrm{Sub}(A)$ are necessarily faithful functors which are injective on objects. Hence, this applies to the functors $Ff\colon Fc\to Fc'$ in a filtration $F\colon C\to\mathrm{Sub}(A)$, thereby justifying the above terminology.
\item Note that whenever $(C,F)$ is a filtration by full subcategories, then all structure maps $Ff\colon Fc\to Fc'$ are fully faithful and injective on objects.
\end{enumerate}
\end{rmk}

\begin{con}\label{con:filt-decompose}
Let $A$ be a small category and let $(C,F)$ be a filtration of $A$. The inclusion functors $i_c=i_{Fc}\colon Fc\to A,c\in C,$ define a natural transformation to the constant diagram $\kappa_A\colon C\to\cCat\colon c\mapsto A$. The functoriality of the Grothendieck construction yields a functor $\int_C i\colon\int_C F\to \int _C \kappa_A=C\times A$, and by postcomposition with the projection $\pi\colon C\times A\to A$ we obtain
\begin{equation}\label{eq:filt-decompose}
u=u_{C,F}\colon \int_C F \to \int_C\kappa_A=C\times A\to A.
\end{equation}
\end{con}

\begin{defn}
A filtration $(C,F)$ of a small category $A$ is a \textbf{left filtration} if the resulting functor \eqref{eq:filt-decompose} is homotopy final, i.e., if $(C,F,u_{C,F})$ is a left decomposition of~$A$.
\end{defn}

\begin{thm}[\textbf{Decomposition by left filtrations}]\label{thm:filtration}
Let $A\in\cCat$, let $(C,F)$ be a left filtration of $A$, and let \D be a derivator. There is a decomposition morphism
\[
d\colon\D^A\to\D^C
\]
such that for every $X\in\D^A$
\begin{enumerate}
\item there is a canonical isomorphism $\colim_C dX\toiso \colim_AX$ and
\item for every $c\in C$ there is a canonical isomorphism
\[
(dX)_c\toiso\colim_{Fc}i_c^\ast X,
\]
where $i_c\colon Fc\to A$ is the inclusion functor.
\end{enumerate}
\end{thm}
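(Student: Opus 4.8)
The plan is to obtain this as a direct specialization of the decomposition theorem (\autoref{thm:decompose}). By definition, a left filtration $(C,F)$ of $A$ is one for which the triple $(C,F,u_{C,F})$, with $u_{C,F}\colon\int_C F\to C\times A\to A$ the functor from \eqref{eq:filt-decompose}, is a left decomposition of $A$. So I would simply feed this left decomposition into \autoref{thm:decompose}, obtaining a morphism $\tilde d=(i_C)_!q_!u_{C,F}^\ast\colon\D^A\to\D^{C^\rhd}$, and then define the desired decomposition morphism by restricting the cocone coordinate away, $d:=i_C^\ast\circ\tilde d\colon\D^A\to\D^C$; since $i_C\colon C\to C^\rhd$ is fully faithful this amounts to $d\cong q_!u_{C,F}^\ast$.

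With $d$ so defined, property (i) is immediate: \autoref{thm:decompose}(i)--(ii) tell us that $\tilde d(X)$ is a colimiting cocone with $\tilde d(X)_\infty\toiso\colim_A X$, and since $dX=\tilde d(X)|_C$ the defining property of colimiting cocones (see \autoref{notn:cocone}) gives $\colim_C(dX)\toiso\tilde d(X)_\infty\toiso\colim_A X$. For property (ii), \autoref{thm:decompose}(iii) supplies, for each $c\in C$, a canonical isomorphism $(dX)_c=c^\ast\tilde d(X)\toiso\colim_{F(c)}(\hat\imath_c^\ast u_{C,F}^\ast X)$, where $\hat\imath_c\colon Fc\to\int_C F$ is the fibre inclusion $x\mapsto(c,x)$. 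It then remains only to match $\hat\imath_c^\ast u_{C,F}^\ast X$ with $i_c^\ast X$ for the subcategory inclusion $i_c\colon Fc\to A$; unravelling \autoref{con:filt-decompose}, the functor $u_{C,F}=\pi\circ\int_C i$ carries $(c,x)$ to $i_c(x)$, so $u_{C,F}\circ\hat\imath_c=i_c$ and hence $\hat\imath_c^\ast u_{C,F}^\ast=i_c^\ast$.

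The main (and essentially only) obstacle is bookkeeping rather than mathematics: one has to disentangle the overloaded symbol $i_c$ --- fibre inclusion $Fc\to\int_C F$ on the one hand, subcategory inclusion $Fc\to A$ on the other --- and check their compatibility through $u_{C,F}$, which is exactly the identity $u_{C,F}\circ\hat\imath_c=i_c$ used above. Once this is in place the result is a formal consequence of \autoref{thm:decompose}, requiring no homotopy-exactness computations beyond those already carried out there.
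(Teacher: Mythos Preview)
Your proof is correct and follows the same approach as the paper, which simply observes that by definition of a left filtration the triple $(C,F,u_{C,F})$ is a left decomposition and invokes \autoref{thm:decompose}. Your version is in fact more careful than the paper's one-line argument, since you explicitly handle the passage from the target $\D^{C^\rhd}$ of \autoref{thm:decompose} to the target $\D^C$ stated here, and you disentangle the two meanings of $i_c$; both points are left implicit in the paper.
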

\begin{proof}
Since \eqref{eq:filt-decompose} is by assumption homotopy final, $(C,F,u_{C,F})$ is a left decomposition of $A$ and the result follows from \autoref{thm:decompose}.
\end{proof}

We now show that an important class of left filtrations can be constructed from functors to posets.

\begin{con}\label{con:exhaustive-filt}
Let $A\in\cCat,$ let $P$ be a small poset, and let $f\colon A\to P$ be a functor. For every $p\in P$ we consider the full subcategories 
\[
\iota_{\leq p}\colon A_{\leq p}=\{x\in A\mid f(x)\leq p\}\to A.
\]
For $p\leq p'$ in $P$ there is a fully faithful inclusion functor $\iota_{p',p}\colon A_{\leq p}\to A_{\leq p'}$, and this assignment defines a functor
\begin{equation}\label{eq:filt-posets}
F=F_f\colon P\to\mathrm{Sub}(A)\colon p\mapsto A_{\leq p}.
\end{equation}
$F$ is an exhaustive filtration by full subcategories.
\end{con}

\begin{thm}[\textbf{Decomposition by exhaustive filtrations}]\label{thm:exhaustive-filt}
Let $A\in\cCat$, let $P\in\cCat$ be a poset, and let $f\colon A\to P$ be a functor. The filtration \eqref{eq:filt-posets} is a left filtration of $A$, the \textbf{left filtration associated to $f$}. In particular, for every derivator \D there is a decomposition morphism $d\colon\D^A\to\D^P$ such that for every $X\in\D^A$
\begin{enumerate}
\item there is a canonical isomorphism $\colim_P dX\toiso \colim_AX$ and
\item for every $p\in P$ there is a canonical isomorphism
\[
(dX)_p\toiso\colim_{A_{\leq p}}\iota_{\leq p}^\ast X.
\]
\end{enumerate}
\end{thm}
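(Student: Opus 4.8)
The plan is to deduce everything from the already-established decomposition theorem for left filtrations (\autoref{thm:filtration}). Since \autoref{con:exhaustive-filt} records that $F_f\colon P\to\mathrm{Sub}(A)$ is an exhaustive filtration of $A$ by full subcategories, the only point to check is that it is a \emph{left} filtration, i.e.\ that the comparison functor
\[
u=u_{P,F_f}\colon\int_P F_f\to A
\]
of \autoref{con:filt-decompose} is homotopy final. Once this is known, applying \autoref{thm:filtration} with $C=P$ immediately produces the decomposition morphism $d\colon\D^A\to\D^P$ together with the canonical isomorphisms $\colim_P dX\toiso\colim_A X$ and $(dX)_p\toiso\colim_{A_{\leq p}}\iota_{\leq p}^\ast X$, since by construction $F_f(p)=A_{\leq p}$ and the inclusion $i_p$ is $\iota_{\leq p}$.

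To establish homotopy finality of $u$, I would first unravel the Grothendieck construction: an object of $\int_P F_f$ is a pair $(p,x)$ with $p\in P$ and $x\in A$ satisfying $f(x)\leq p$, and a morphism $(p,x)\to(p',x')$ is a pair consisting of the relation $p\leq p'$ and a morphism $g\colon x\to x'$ of $A$ — here one uses that each $A_{\leq p'}\subseteq A$ is a \emph{full} subcategory, cf.\ \autoref{rmk:filtration} — with $u$ forgetting the poset coordinate. By \autoref{rmk:htpy-final} it then suffices to show that for each $a\in A$ the comma category $(a/u)$ has a weakly contractible nerve, and the cleanest route is to exhibit an initial object. The natural candidate is $(f(a),a,\id_a)$; this is a legitimate object since $f(a)\leq f(a)$, and for an arbitrary object $(p,x,h\colon a\to x)$ of $(a/u)$ one checks there is exactly one morphism to it: applying $f$ to $h$ forces $f(a)\leq f(x)\leq p$, so the poset part of the morphism exists and is unique, while the triangle identity $g\circ\id_a=h$ forces the $A$-component to equal $h$, which is available in $A_{\leq p}$ by fullness.

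I do not anticipate a genuine obstacle; the two points requiring care are (a) that the relation $f(a)\leq p$ is automatic rather than an extra condition — this is exactly where functoriality of $f$ and the defining inequality of $A_{\leq p}$ enter — and (b) keeping track of the fact that morphisms in a Grothendieck construction over a poset carry a (necessarily unique) relation datum, so that the uniqueness half of ``initial object'' holds on the nose. Should one prefer it, an alternative to the initial-object argument is to observe that the forgetful functor $(a/u)\to(a/\id_A)$ admits a left adjoint sending $(x,h)$ to $(f(x),x,h)$, whence $N(a/u)\simeq N(a/\id_A)$, and the latter is contractible because $(a/\id_A)$ has the initial object $(a,\id_a)$. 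With homotopy finality of $u$ in hand, $(P,F_f)$ is by definition a left filtration of $A$, and all assertions of the theorem follow from \autoref{thm:filtration}.
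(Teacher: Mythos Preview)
Your proposal is correct. Both you and the paper reduce the theorem to \autoref{thm:filtration} by verifying that $u\colon\int_P F_f\to A$ is homotopy final, but the verifications differ. You argue pointwise via \autoref{rmk:htpy-final}, exhibiting $(f(a),a,\id_a)$ as an initial object of each slice $(a/u)$; this is a clean, self-contained check. The paper instead observes that since $P$ is a poset one has an identification $\int_P F_f=(f/P)$, and then obtains homotopy finality of $u$ by recognizing the relevant square as the pasting of a comma square (homotopy exact by \cite[Prop.~1.26]{groth:ptstab}) with a constant square. The paper's route is more structural---it explains \emph{why} the filtration is left, namely because it is a comma construction---and plugs directly into the calculus of homotopy exact squares used throughout the paper; your route is more elementary and avoids invoking the comma-square result. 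Both arguments use in the same essential way that $P$ is a poset (you need it to force $f(a)\leq p$ from the existence of $h\colon a\to x$; the paper needs it to identify $(f/p)$ with $A_{\leq p}$).
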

\begin{proof}
For every functor $f\colon A\to P$ we contemplate the following diagram
\begin{equation}\label{eq:}
\vcenter{
\xymatrix{
A_{\leq p}\ar[r]\ar[d]\pullbackcorner&(f/P)\ar[r]^-u\ar[d]_-q\drtwocell\omit{}&A\ar[d]^-f\\
\bbone\ar[r]_-p&P\ar[r]_-\id\ar[d]&P\ar[d]\\
&\bbone\ar[r]_-=&\bbone
}
}
\end{equation}
in $\cCat.$ The top square on the right is homotopy exact as a comma square (\cite[Prop.~1.26]{groth:ptstab}) and the functor $q$ is a Grothendieck opfibration. Since $P$ is a poset, the slice category $(f/p)$ can be identified with $A_{\leq p}$. Hence, for $F_f$ as in \eqref{eq:filt-posets} we obtain the identification $\int_PF_f=(f/P)$. The bottom square is homotopy exact as a constant square (alternatively, equivalences are homotopy final), and hence so is the vertical pasting of the two squares. But this is equivalent to the homotopy finality of $u$, which is to say that $(P,F_f)$ is a left filtration. The rest follows from \autoref{thm:filtration}.
\end{proof}

The decompositions of colimits (\autoref{thm:decompose}) and hence the formulas obtained in this section (such as \autoref{thm:BK-formulas}, \autoref{thm:filtration} or \autoref{thm:exhaustive-filt}) enjoy a certain compatibility with respect to morphisms of derivators, and we come back to this idea in \S\ref{sec:formulas}.

\section{Punctured $n$-cubes}
\label{sec:punctured}

In order to manipulate cocartesian $n$-cubes, by \autoref{rmk:cocart} it is convenient to have a better understanding of colimits of punctured $n$-cubes. In this short section we extend to derivators a well-known inductive formula for such colimits and illutrate it by some toy examples.

\begin{con}\label{con:prod-cocone}
Let $A,B\in\cCat$, and let $i_A\colon A\to A^\rhd,i_B\colon B\to B^\rhd$ be the fully faithful inclusion functors. The \emph{punctured product of the cocones} $A^\rhd$ and $B^\rhd$ is the full subcategory $(A^\rhd\times B^\rhd)-\{(\infty,\infty)\}$ obtained from $A^\rhd\times B^\rhd$ by removing the terminal object $(\infty,\infty)$. There are fully faithful inclusion functors
\[
\xymatrix{
A\times B\ar[r]^-{i_A}\ar[d]_-{i_B}&A^\rhd\times B\ar[d]^-{i_B}\\
A\times B^\rhd\ar[r]_-{i_A}&(A^\rhd\times B^\rhd)-\{(\infty,\infty)\}, 
}
\]
and this defines a diagram
\begin{equation}\label{eq:filt-prod-cocone}
F_{A,B}\colon\ulcorner\to\mathrm{Sub}((A^\rhd\times B^\rhd)-\{(\infty,\infty)\}).
\end{equation}
\end{con}

\begin{prop}[\textbf{Colimits of punctured products of cocones}]\label{prop:prod-cocone}
Let \D be a derivator and $A,B\in\cCat$. There is a morphism of derivators
\[
d\colon\D^{(A^\rhd\times B^\rhd)-\{(\infty,\infty)\}}\to\D^\square
\]
such that for every $X\in\D^{(A^\rhd\times B^\rhd)-\{(\infty,\infty)\}}$ the square $d(X)\in\D^\square$ is cocartesian with underlying diagram
\[
\xymatrix{
\colim_{A\times B} (X\!\!\mid_{A\times B})\ar[r]\ar[d]&\colim_B\infty_A^\ast X\ar[d]\\
\colim_A\infty_B^\ast X\ar[r]&\colim_{(A^\rhd\times B^\rhd)-\{(\infty,\infty)\}} X\pushoutcorner.
}
\]
\end{prop}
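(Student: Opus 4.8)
The plan is to deduce this from the decomposition theorem (\autoref{thm:decompose}). Write $P=(A^\rhd\times B^\rhd)-\{(\infty,\infty)\}$ for the punctured product of the two cocones, and recall from \autoref{con:prod-cocone} that $F_{A,B}\colon\ulcorner\to\mathrm{Sub}(P)$ sends the initial object of the span $\ulcorner$ to $A\times B$ and the two maximal objects to $A^\rhd\times B$ and to $A\times B^\rhd$. Since $\ulcorner^\rhd=\square$, feeding a left decomposition of $P$ built from $F_{A,B}$ into \autoref{thm:decompose} will produce precisely a morphism $d\colon\D^P\to\D^\square$, so the proof splits into two parts: verifying that $F_{A,B}$ is a \emph{left} filtration, and then rewriting the four corner values that \autoref{thm:decompose} supplies into the asserted form.

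For the first part, I would exhibit $F_{A,B}$ as the left filtration associated, via \autoref{con:exhaustive-filt}, to the map $f\colon P\to\ulcorner$ that sends each $(a,b)$ with $a\in A,\,b\in B$ to the initial object of $\ulcorner$, each $(\infty,b)$ to one maximal object, and each $(a,\infty)$ to the other. The only nontrivial point is that $f$ is monotone: a failure would be a morphism in $P$ out of some object $(\infty,b)$ into some object $(a',\infty)$ with $a'\in A$ (or the symmetric situation), but such a morphism entails a morphism $\infty\to a'$ in $A^\rhd$, which cannot exist since $\infty$ is a freely adjoined terminal object and $a'\neq\infty$. Granting this, one checks that the full subcategories $\{x\in P\mid f(x)\leq p\}$, $p\in\ulcorner$, are precisely $A\times B$, $A^\rhd\times B$, and $A\times B^\rhd$, so that $F_f=F_{A,B}$; by \autoref{thm:exhaustive-filt}, $F_{A,B}$ is then a left filtration of $P$.

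Now \autoref{thm:decompose} yields $d\colon\D^P\to\D^{\ulcorner^\rhd}=\D^\square$ such that, for every $X$, the square $d(X)$ is a colimiting cocone over $\ulcorner$ --- that is, a cocartesian square --- with terminal value $d(X)_\infty\cong\colim_P X$ and with the remaining three values $d(X)_c\cong\colim_{F_{A,B}(c)}(X\!\!\mid_{F_{A,B}(c)})$, the restrictions being along the inclusions $F_{A,B}(c)\hookrightarrow P$. This already identifies the corners $\colim_P X$ and $\colim_{A\times B}(X\!\!\mid_{A\times B})$. For the other two it suffices to match $\colim_{A^\rhd\times B}(X\!\!\mid_{A^\rhd\times B})$ with $\colim_B\infty_A^\ast X$, where $\infty_A\colon B\to P$ is $b\mapsto(\infty,b)$; the fourth corner is symmetric. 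This holds because $\infty\times\id_B\colon B\to A^\rhd\times B$ is homotopy final: by the criterion recalled in \autoref{rmk:htpy-final}, its comma category at any $(a,b)$ is isomorphic to $(b/B)$, which has an initial object and hence a weakly contractible nerve, so restriction along it computes $\colim_{A^\rhd\times B}$. Finally, after these identifications the four edges of $d(X)$ are the evident comparison maps; this is forced by the construction of $d$ in \autoref{thm:decompose} together with the functoriality of canonical mates under pasting, and is routine to verify.

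The one genuinely content-bearing step --- and hence the main obstacle, such as it is --- is showing that $F_{A,B}$ is a \emph{left} (not merely exhaustive) filtration, i.e.\ the monotonicity of $f$; but as indicated this reduces to the triviality that nothing maps out of a freely adjoined terminal object besides its identity, so I do not expect serious difficulty.
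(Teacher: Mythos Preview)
Your proposal is correct and follows essentially the same approach as the paper: define the functor $f\colon P\to\ulcorner$, identify $F_{A,B}$ with the associated filtration $F_f$, invoke \autoref{thm:exhaustive-filt} (hence \autoref{thm:decompose}), and simplify the two nontrivial corners via the homotopy finality of the terminal object $\infty\in A^\rhd$ (resp.\ $B^\rhd$). The paper's proof is terser---it does not spell out the monotonicity of $f$ or the comma-category computation---but the structure is identical.
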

\begin{proof}
There is a unique functor $f_{A,B}\colon (A^\rhd\times B^\rhd)-\{(\infty,\infty)\}\to\ulcorner$ determined by
\[
(a,b)\mapsto (0,0),\quad (\infty,b)\mapsto(1,0),\quad\text{and}\quad (a,\infty)\mapsto(0,1)
\]
for all $a\in A$ and $b\in B$. Sticking to the notation of \autoref{con:exhaustive-filt}, we have $F_{A,B}=F_{f_{A,B}}$ and the result follows immediately from \autoref{thm:exhaustive-filt} and the homotopy finality of final objects.
\end{proof}

The formula for colimits of punctured $n$-cubes follows immediately from a formula for colimits of punctured cylinders of cones. Let us establish the relevant notation. 

\begin{con}\label{con:gen-punctured}
Given a small category $A\in\cCat$, the \emph{punctured cylinder on the cocone} is the punctured product of the cocones $A^\rhd$ and $\bbone^\rhd=[1]$, i.e., the category 
\[
(A^\rhd\times[1])-\{(\infty,1)\}
\]
obtained from the cylinder $A^\rhd\times[1]$ by removing the final object $(\infty,1)$. There are fully faithful inclusion functors
\[
\xymatrix{
A\ar[r]^-{i_A}\ar[d]_-{i_0}&A^\rhd\ar[d]^-{i_0}\\
A\times [1]\ar[r]_-{i_A}&(A^\rhd\times [1])-\{(\infty,1)\}, 
}
\]
In particular, for $k=0,1$ there are fully faithful inclusion functors
\[
j_k\colon A\to (A^\rhd\times [1])-\{(\infty,1)\}\colon a\mapsto (a,k).
\]
\end{con}

\begin{cor}[\textbf{Colimits of punctured cylinders of cocones}]\label{cor:cyl-cocone}
Let \D be a derivator and $A,B\in\cCat$. There is a morphism of derivators
\[
d\colon\D^{(A^\rhd\times [1])-\{(\infty,1)\}}\to\D^\square
\]
such that for every $X\in\D^{(A^\rhd\times [1])-\{(\infty,1)\}}$ the square $d(X)\in\D^\square$ is cocartesian with underlying diagram
\[
\xymatrix{
\colim_A j_0^\ast X\ar[d]\ar[r]&X_{\infty,0}\ar[d]\\
\colim_A j_1^\ast X\ar[r]&\colim X.\pushoutcorner
}
\]
\end{cor}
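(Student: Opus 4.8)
The plan is to obtain the statement as the special case $B=\bbone$ of \autoref{prop:prod-cocone}.

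First I would spell out the identifications that make this specialization transparent. The cocone $\bbone^\rhd$ is the poset $[1]=(0<1)$ (as already recorded in \autoref{con:gen-punctured}), under which the freely adjoined terminal object $\infty$ corresponds to $1\in[1]$ and the inclusion $i_\bbone\colon\bbone\to\bbone^\rhd$ picks out $0\in[1]$. Hence $A^\rhd\times\bbone^\rhd$ is the cylinder $A^\rhd\times[1]$, the object $(\infty,\infty)$ is $(\infty,1)$, and the punctured product of cocones $(A^\rhd\times\bbone^\rhd)-\{(\infty,\infty)\}$ of \autoref{con:prod-cocone} coincides with the punctured cylinder $(A^\rhd\times[1])-\{(\infty,1)\}$ of \autoref{con:gen-punctured}. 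Under this identification the square of fully faithful inclusions defining $F_{A,\bbone}$ in \autoref{con:prod-cocone} is exactly the one appearing in \autoref{con:gen-punctured}: the inclusion of $A\times B=A\times\bbone$ becomes $j_0$, the inclusion of $A\times B^\rhd=A\times[1]$ becomes the displayed inclusion $A\times[1]\to(A^\rhd\times[1])-\{(\infty,1)\}$, the functor $\infty_A\colon B=\bbone\to(A^\rhd\times\bbone^\rhd)-\{(\infty,\infty)\}$ becomes the constant functor at $(\infty,0)$, and the functor $\infty_B\colon A\to(A^\rhd\times\bbone^\rhd)-\{(\infty,\infty)\}$, $a\mapsto(a,\infty)$, becomes $j_1$.

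Next I would apply \autoref{prop:prod-cocone} with this $B$ and transport its conclusion along these identifications. It produces a morphism $d\colon\D^{(A^\rhd\times[1])-\{(\infty,1)\}}\to\D^\square$ whose value on $X$ is a cocartesian square; rewriting its four corners turns the top-left vertex $\colim_{A\times B}(X\!\!\mid_{A\times B})$ into $\colim_A j_0^\ast X$, the bottom-left vertex $\colim_A\infty_B^\ast X$ into $\colim_A j_1^\ast X$, and the bottom-right vertex $\colim_{(A^\rhd\times B^\rhd)-\{(\infty,\infty)\}}X$ into $\colim_{(A^\rhd\times[1])-\{(\infty,1)\}}X=\colim X$. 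For the top-right vertex $\colim_B\infty_A^\ast X=\colim_\bbone\infty_A^\ast X$ I would use that $\colim_\bbone$ is canonically isomorphic to the identity morphism (left Kan extension along $\id_\bbone$, matching the convention $\bbone\cong\bbone^\bbone$), so that this corner is $\infty_A^\ast X=X_{\infty,0}$. The resulting square is precisely the one asserted, and its cocartesianness is inherited verbatim from \autoref{prop:prod-cocone}.

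There is no genuine obstacle here: the argument is the substitution $B=\bbone$ into \autoref{prop:prod-cocone}, and the only step requiring (routine) attention is the bookkeeping of the first paragraph, namely verifying that the filtration diagram $F_{A,\bbone}$ underlying the proof of \autoref{prop:prod-cocone} becomes literally the square of inclusions of \autoref{con:gen-punctured} once $\bbone^\rhd$ is identified with $[1]$.
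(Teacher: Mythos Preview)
Your proposal is correct and follows exactly the paper's approach: the paper's proof is the single sentence ``This is a special case of \autoref{prop:prod-cocone},'' and you have merely spelled out the identifications that make the specialization $B=\bbone$ transparent. Your bookkeeping is accurate (in particular $\infty_B$ becomes $j_1$ and $\infty_A$ becomes the point $(\infty,0)$), so there is nothing to add.
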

\begin{proof}
This is a special case of \autoref{prop:prod-cocone}.
\end{proof}

We now specialize further to the case that $A=\cube{n-1}_{\leq n-2}$ is a punctured $(n-1)$-cube. Its cocone is $\cube{n-1}$ and the punctured cylinder on $\cube{n-1}$ is the punctured $n$-cube $\cube{n}_{\leq n-1}$. In the following corollary we denote the object $(1,\ldots,1,0)\in\cube{n}_{\leq n-1}$ by $(\infty,0)$.

\begin{cor}[\textbf{Colimits of punctured $n$-cubes}]\label{cor:pun-cube}
For every derivator \D and $n\geq 2$ there is a morphism of derivators $d\colon\D^{\cube{n}_{\leq n-1}}\to\D^\square$ such that for every $X\in\D^{\cube{n}_{\leq n-1}}$ the square $d(X)\in\D^\square$ is cocartesian with underlying diagram
\[
\xymatrix{
\colim_{\cube{n-1}_{\leq n-2}} j_0^\ast X\ar[d]\ar[r]&X_{\infty,0}\ar[d]\\
\colim_{\cube{n-1}_{\leq n-2}} j_1^\ast X\ar[r]&\colim_{\cube{n}_{\leq n-1}} X.\pushoutcorner
}
\]
\end{cor}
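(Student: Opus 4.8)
\emph{Proof sketch.}
The plan is to obtain this as the special case $A=\cube{n-1}_{\leq n-2}$ of \autoref{cor:cyl-cocone}, so that the entire content of the proof reduces to identifying the categories appearing there with the cubical shapes in the statement.

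First I would identify the cocone of the punctured $(n-1)$-cube with the full $(n-1)$-cube, i.e.\ $(\cube{n-1}_{\leq n-2})^\rhd\cong\cube{n-1}$. Writing elements of $\cube{n-1}=\mathcal P(\{1,\ldots,n-1\})$ as subsets, the subposet $\cube{n-1}_{\leq n-2}$ is obtained by deleting the top element $\{1,\ldots,n-1\}$, which is terminal in the poset $\cube{n-1}$; since in a poset there is a unique morphism into a terminal object from each object, freely re-adjoining a terminal object recovers $\cube{n-1}$ on the nose, with the cone point $\infty$ corresponding to $\{1,\ldots,n-1\}$.

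Next I would identify the punctured cylinder on this cocone with the punctured $n$-cube. The poset isomorphism $\cube{n-1}\times[1]\toiso\cube{n}$ sending $(S,0)\mapsto S$ and $(S,1)\mapsto S\cup\{n\}$ (equivalently, the identification $[1]^{n-1}\times[1]=[1]^n$ treating the $[1]$-factor as the $n$-th coordinate) carries the object $(\infty,1)$ to $\{1,\ldots,n-1\}\cup\{n\}=\{1,\ldots,n\}$, the top element of $\cube{n}$. Removing it gives $(\cube{n-1}\times[1])-\{(\infty,1)\}\cong\cube{n}_{\leq n-1}$. Under this identification the inclusions $j_0,j_1\colon\cube{n-1}_{\leq n-2}\to(\cube{n-1}\times[1])-\{(\infty,1)\}$ become the two inclusions $\cube{n-1}_{\leq n-2}\to\cube{n}_{\leq n-1}$ inserting $0$, resp.\ $1$, in the last coordinate, while the object there denoted $(\infty,0)$ becomes $(1,\ldots,1,0)\in\cube{n}_{\leq n-1}$, consistently with the notation fixed before the statement.

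With these identifications in place, applying \autoref{cor:cyl-cocone} to $A=\cube{n-1}_{\leq n-2}$ yields the morphism $d\colon\D^{\cube{n}_{\leq n-1}}\to\D^\square$ and the asserted cocartesian square: the corners $\colim_A j_k^\ast X$ become $\colim_{\cube{n-1}_{\leq n-2}}j_k^\ast X$ for $k=0,1$, the corner $X_{\infty,0}$ is the evaluation $X_{(1,\ldots,1,0)}$, and the colimit corner $\colim X$ becomes $\colim_{\cube{n}_{\leq n-1}}X$. The only step requiring any care is the bookkeeping in the poset isomorphism $\cube{n-1}\times[1]\cong\cube{n}$ and the verification that it carries the removed object, the structure maps $j_0,j_1$, and the object $(\infty,0)$ to the claimed ones; this is entirely routine, and is the only (mild) obstacle in what is otherwise a direct appeal to \autoref{cor:cyl-cocone}.
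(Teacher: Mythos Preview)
Your proposal is correct and follows exactly the paper's approach: the paper's proof is the single sentence ``This is a special case of \autoref{cor:cyl-cocone},'' with the identifications $A=\cube{n-1}_{\leq n-2}$, $A^\rhd\cong\cube{n-1}$, and $(A^\rhd\times[1])-\{(\infty,1)\}\cong\cube{n}_{\leq n-1}$ spelled out in the paragraph preceding the statement. You have supplied precisely this argument with the bookkeeping made explicit.
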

\begin{proof}
This is a special case of \autoref{cor:cyl-cocone}.
\end{proof}

\begin{rmk}
\begin{enumerate}
\item \autoref{cor:pun-cube} yields an inductive procedure to calculate colimits of higher dimensional punctured $n$-cubes in terms of colimits of lower dimensional ones. If we specialize  to the degenerate case $n=2$, then the corollary reconfirms that we can compute a pushout as a pushout. 
\item Despite having been presented this way, the final coordinate in the punctured $n$-cube, of course, does not play a particular role (as one checks by invoking the restriction along the symmetries $\sigma_{i,j}$ from \autoref{notn:cotrunc}). 
\end{enumerate}
\end{rmk}

We illustrate this inductive recipe by the following two examples. 

\begin{eg}\label{eg:higher-susp}
For every pointed derivator \D and $n\geq 1$ there is a natural isomorphism
\[
\Sigma^{n-1}\cong \colim_{\cube{n}_{\leq n-1}}\circ\emptyset_\ast\colon\D\to\D.
\]
Moreover, the canonical isomorphisms \eqref{eq:sign-rep} induce the action of the symmetric group $\Sigma_{n-1}$ given by the signatures of the permutations,
\[
\Sigma_{n-1}\to\Aut(\Sigma^{n-1}x)\colon \sigma\mapsto\mathrm{sign}(\sigma)\id,\qquad x\in\D.
\]
\end{eg}
\begin{proof}
The inclusion $\emptyset=\iota_{\leq 0}\colon\bbone\to\cube{n}_{\leq n-1}, n\geq 2,$ of the initial object is a sieve. Hence, for every pointed derivator \D, the morphism $\emptyset_\ast\colon\D\to\D^{\cube{n}_{\leq n-1}}$ is right extension by zero objects (\cite[Prop.~3.6]{groth:ptstab}). For $n=2$ the claim follows by definition of suspension in pointed derivators. For $n\geq 3$ and $x\in\D$, we obtain by \autoref{cor:pun-cube} a cocartesian square
\[
\xymatrix{
\colim_{\cube{n-1}_{\leq n-2}}\emptyset_\ast x\ar[r]\ar[d]&0\ar[d]\\
0\ar[r]&\colim_{\cube{n}_{\leq n-1}}\emptyset_\ast x.\pushoutcorner
}
\]
By induction this suspension square induces the intended natural isomorphism. The symmetry $\sigma_{i,j}$ coming from the transposition $(ij)$ acts by the sign $-1$ (as it follows from \cite[Prop.~4.12]{groth:ptstab} by passing to parametrized Kan extensions), and the general case follows from the functoriality of canonical mates with respect to pasting.
\end{proof}

\begin{eg}
Let \D be derivator and let $n\geq 2$. Since $\cube{n}_{=1}$ is discrete on $n$ objects, there is an equivalence of derivators $\D^{\cube{n}_{=1}}\toiso\D\times\ldots\times\D$, and $(\iota_{=1})_!$ induces a morphism
\[
\D\times\ldots\times\D\simeq\D^{\cube{n}_{=1}}\to\D^{\cube{n}}.
\]
This morphism forms strongly cocartesian coproduct $n$-cubes (see \cite[\S4]{gst:tree}), and the inductive formula \autoref{cor:pun-cube} reduces to the calculation of finite coproducts. The canonical mates \eqref{eq:sign-rep} induce the symmetry constraints of the cocartesian monoidal structure.
\end{eg}

As a closely related example we also collect the following one dealing with iterated pushouts of sources of higher valence.

\begin{eg}\label{eg:colim-of-sources}
Let $\D$ be a derivator, let $n\geq 2$, and let $X\in\D^{\cube{n}_{\leq 1}}$ be a representation of the source of valence $n$ with underlying diagram looking like
\[
\xymatrix{
&&X_0\ar[rrd]\ar[rd]\ar[ld]\ar[lld]\ar@{}[d]|{\cdots}&&\\
X_1&X_2&\ldots&X_{n-1}&X_n.
}
\]
We denote the colimit of the coherent source $X$ (or, equivalently, the colimit of the punctured $n$-cube $(\iota_{n-1,1})_!X$) by
\[
\colim_{\cube{n}_{\leq 1}} X=X_1\cup_{X_0}X_2\cup_{X_0}\ldots\cup_{X_0}X_n.
\]
This notation is justified by the existence of canonical isomorphisms
\begin{align*}
\colim_{\cube{n}_{\leq 1}} X&=X_1\cup_{X_0}X_2\cup_{X_0}\ldots\cup_{X_0}X_n\\
&\cong X_1\cup_{X_0} \big(X_2\cup_{X_0}\ldots\cup_{X_0}X_n\big).
\end{align*}
In fact, this follows from \autoref{thm:cocart} applied to the strongly cocartesian $n$-cube $(\iota_{n,1})_!X$ together with the pasting property of cocartesian squares. For instance, in the case of $n=3$ this isomorphism is induced from `the diagonal square' in \eqref{eq:source-val-three} (diagonal in the $(2-3)$-direction).

\begin{equation}\label{eq:source-val-three}
\vcenter{
\xymatrix@-0.5pc{
X_0\ar[rr]\ar[dr]\ar[dd] && X_1 \ar[dr] \ar@{-->}'[d][dd]\\
& X_2 \ar[rr] \ar[dd] && X_1\cup_{X_0} X_2\ar[dd] \\
 X_3 \ar[dr] \ar@{-->}'[r][rr] && X_1\cup_{X_0} X_3\ar@{-->}[dr]\\
& X_2\cup_{X_0} X_3\ar[rr] &&\colim X 
}
}
\end{equation}

An alternative way to calculate this colimit, again using the pasting property, is as
\begin{align*}
\colim_{\cube{n}_{\leq 1}} X&\cong X_1\cup_{X_0} \big(X_2\cup_{X_0}\ldots\cup_{X_0}X_n\big)\\
&\cong \big(X_1\cup_{X_0} X_2)\cup_{X_2}\big(X_2\cup_{X_0}\ldots\cup_{X_0}X_n\big)\\
&\cong \big(X_1\cup_{X_0} X_2)\cup_{X_2}\big(X_2\cup_{X_0}X_3\big)\cup_{X_3}\ldots\cup_{X_{n-1}}
\big(X_{n-1}\cup_{X_0}X_n\big).
\end{align*}
Here, the last isomorphism follows from induction from the second one. For instance, again in the case of $n=3$, the second isomorphism is induced from the cocartesian square in the front of \eqref{eq:source-val-three}.

Moreover, the canonical mates $(\iota_{n,1})_!\sigma^\ast\toiso\sigma^\ast(\iota_{n,1})_!$ \eqref{eq:sign-rep} are invertible for every permutation $\sigma\in\Sigma_n$. In particular, by evaluation at $\infty\in\cube{n}$, this yields canonical isomorphisms
\[
\colim_{\cube{n}_{\leq 1}}\sigma^\ast X\toiso\colim_{\cube{n}_{\leq 1}}X,
\]
which, in the above notation, can be read as isomorphisms
\[
X_{\sigma(1)}\cup_{X_0}X_{\sigma(2)}\cup_{X_0}\ldots\cup_{X_0}X_{\sigma(n)}\toiso X_1\cup_{X_0}X_2\cup_{X_0}\ldots\cup_{X_0}X_n.
\] 
\end{eg}

\section{Cotruncated $n$-cubes}
\label{sec:cotruncated}

In this section we revisit the cardinality filtration of the $n$-cube $\cube{n}$ (see \autoref{fig:intro}). More specifically, we focus on left Kan extensions along functors in the first row,
\[
\xymatrix{
\emptyset\ar[r]&\cube{n}_{=0}\ar[r]&\cube{n}_{\leq 1}\ar[r]&\ldots\ar[r]&\cube{n}_{\leq n-1}\ar[r]&\cube{n}.
}
\]
By means of these left Kan extensions, at the level of coherent $n$-cubes in derivators we obtain an interpolation between cocartesian and strongly cocartesian $n$-cubes.

We keep using the notation as in \autoref{notn:source-punctured} and \autoref{notn:cotrunc}. In particular, for indices $n\geq 0$ and $-1\leq k\leq l\leq m\leq n$ we clearly have
\[
\iota_{m,l}\circ\iota_{l,k}=\iota_{m,k}\colon\cube{n}_{\leq k}\to\cube{n}_{\leq m}.
\]

\begin{defn}
Let \D be a derivator, $n\geq 0$, and $-1\leq k\leq l\leq n$. A coherent diagram $X\in\D^{\cube{n}_{\leq l}}$ is \textbf{$k$-cotruncated} if it lies in the essential image of
\[
(\iota_{l,k})_!\colon\D^{\cube{n}_{\leq k}}\to\D^{\cube{n}_{\leq l}}.
\]
\end{defn}

\begin{rmk}
Let \D be a derivator, $n\geq 0$, and $-1\leq l\leq k\leq n$.
\begin{enumerate}
\item Similarly, $X\in\D^{\cube{n}_{\geq l}}$ is \textbf{$k$-truncated} if it lies in the essential image of the corresponding right Kan extension. 
\item For $n\geq 2$, an $n$-cube is cocartesian if and only if it is $(n-1)$-cotruncated, and it is strongly cocartesian if and only if it is $1$-cotruncated.
\end{enumerate} 
\end{rmk}

In the case of proper $n$-cubes we use the following notation.

\begin{notn}
Let \D be a derivator, let $n\geq 0,$ and let $-1\leq k\leq n$. We denote the essential image of $(\iota_{\leq k})_!\colon\D^{\cube{n}_{\leq k}}\to\D^{\cube{n}}$ by
\[
\D^{\cotr{n}{k}}=\mathrm{essim}((\iota_{\leq k})_!\colon\D^{\cube{n}_{\leq k}}\to\D^{\cube{n}}).
\]
The left Kan extension yields an equivalence $(\iota_{\leq k})_!^\simeq\colon\D^{\cube{n}_{\leq k}}\toiso\D^{\cotr{n}{k}}$, showing that there is a \emph{derivator} $\D^{\cotr{n}{k}}$ of $k$-cotruncated $n$-cubes (and not merely a prederivator). Dually, we write
\[
\D^{\tr{n}{k}}=\mathrm{essim}((\iota_{\geq k})_\ast)\colon\D^{\cube{n}_{\geq k}}\to\D^{\cube{n}})
\]
for the derivator of $k$-truncated $n$-cubes.
\end{notn}

\begin{defn}\label{defn:cotr}
Let \D be a derivator, let $n\geq 0$, and $0\leq k\leq n$. The morphism of derivator
\[
\cotrmor{k}=(\iota_{\leq k})_!\iota_{\leq k}^\ast\colon\D^{\cube{n}}\to\D^{\cube{n}}
\]
is $k$-th \textbf{cotruncation morphism}.
\end{defn}

\begin{lem}\label{lem:cotrmor}
Let \D be a derivator, let $n\geq 0$, and $0\leq k\leq n$.
\begin{enumerate}
\item The morphisms $\cotrmor{k}$ are idempotent comonads on $\D^{\cube{n}}$ (the comultiplications $\cotrmor{k}\toiso\cotrmor{k}\circ\cotrmor{k}$ are invertible) with essential image $\D^{\cotr{n}{k}}$.
\item The counit $\varepsilon\colon\cotrmor{n}\toiso\id$ is an isomorphism. 
\item The morphism $\cotrmor{0}\colon\D^{\cube{n}}\to\D^{\cube{n}}$ sends $X$ to the constant $n$-cube on $X_\emptyset$.
\item The counit $\varepsilon\colon(\iota_{k,k-1})_!\iota_{k,k-1}^\ast\to\id$ induces a natural transformation 
\begin{equation}\label{eq:cotrmor}
\cotrmor{k-1}\to\cotrmor{k},\qquad 1\leq k\leq n.
\end{equation}
\end{enumerate}
\end{lem}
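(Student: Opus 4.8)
The plan is to deduce all four statements from the formal behaviour of Kan extensions along fully faithful functors, supplemented by two explicit descriptions of the poset $\cube{n}_{\leq k}$ at the extreme values $k=n$ and $k=0$.

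For (i), recall that $\iota_{\leq k}\colon\cube{n}_{\leq k}\to\cube{n}$ is fully faithful, so $(\iota_{\leq k})_!$ is fully faithful; equivalently the unit $\eta\colon\id\to\iota_{\leq k}^\ast(\iota_{\leq k})_!$ of the adjunction $(\iota_{\leq k})_!\dashv\iota_{\leq k}^\ast$ is invertible. The standard yoga of an adjunction with fully faithful left adjoint then makes $\cotrmor{k}=(\iota_{\leq k})_!\iota_{\leq k}^\ast$ into a comonad on $\D^{\cube{n}}$ whose counit is the adjunction counit $\varepsilon$ and whose comultiplication is $(\iota_{\leq k})_!\,\eta\,\iota_{\leq k}^\ast$; since $\eta$ is invertible, so is the comultiplication, i.e.\ the comonad is idempotent. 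An object $X$ lies in the essential image of $\cotrmor{k}$ if and only if $\varepsilon_X$ is invertible, and, using invertibility of $\eta$ once more, one checks in the usual way that this class coincides with the essential image of $(\iota_{\leq k})_!$, namely $\D^{\cotr{n}{k}}$.

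For (ii), the subposet $\cube{n}_{\leq n}$ is all of $\cube{n}$ and $\iota_{\leq n}$ is the identity functor, whence $\cotrmor{n}=\id$ with $\varepsilon$ the identity $2$-cell. For (iii), $\cube{n}_{\leq 0}$ is the one-object poset on the initial object $\emptyset\in\cube{n}$, so $\iota_{\leq 0}^\ast$ is evaluation at $\emptyset$. Since $\emptyset$ is initial, $\iota_{\leq 0}$ is left adjoint to the projection $\pi\colon\cube{n}\to\bbone$, and therefore $(\iota_{\leq 0})_!\cong\pi^\ast$ is the constant-diagram morphism. (Equivalently, the comma categories $(\iota_{\leq 0}/x)$ are terminal, so the pointwise formula for $(\iota_{\leq 0})_!$ returns the value $X_\emptyset$ at every vertex.) Hence $\cotrmor{0}(X)\cong\pi^\ast(X_\emptyset)$ is the constant $n$-cube on $X_\emptyset$.

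For (iv), use the factorization $\iota_{\leq k-1}=\iota_{\leq k}\circ\iota_{k,k-1}$, which yields $(\iota_{\leq k-1})_!\cong(\iota_{\leq k})_!(\iota_{k,k-1})_!$ and $\iota_{\leq k-1}^\ast\cong\iota_{k,k-1}^\ast\iota_{\leq k}^\ast$, hence
\[
\cotrmor{k-1}\;\cong\;(\iota_{\leq k})_!\,\big((\iota_{k,k-1})_!\iota_{k,k-1}^\ast\big)\,\iota_{\leq k}^\ast .
\]
Whiskering the adjunction counit $\varepsilon\colon(\iota_{k,k-1})_!\iota_{k,k-1}^\ast\to\id$ by $(\iota_{\leq k})_!$ on the left and $\iota_{\leq k}^\ast$ on the right produces the desired natural transformation $\cotrmor{k-1}\to(\iota_{\leq k})_!\iota_{\leq k}^\ast=\cotrmor{k}$ of \eqref{eq:cotrmor}. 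None of these steps is difficult; the only points requiring a little care are the idempotent-comonad bookkeeping in (i) and, in (iii), the identification of left Kan extension along the inclusion of the initial object with the constant-diagram morphism.
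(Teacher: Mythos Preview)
Your proof is correct and follows essentially the same approach as the paper's: both deduce (i) from the invertibility of the unit $\eta$ for a fully faithful left adjoint, treat (ii) as trivial, obtain (iii) from the pointwise formula (your adjunction $\iota_{\leq 0}\dashv\pi$ is a slight refinement but amounts to the same thing), and derive (iv) by factoring $\iota_{\leq k-1}=\iota_{\leq k}\circ\iota_{k,k-1}$ and whiskering the counit. The only difference is that you spell out a bit more of the idempotent-comonad bookkeeping and the essential-image identification in (i), which the paper leaves implicit.
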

\begin{proof}
For every $k$ there is an adjunction
\[
((\iota_{\leq k})_!,\iota_{\leq k}^\ast)\colon\D^{\cube{n}_{\leq k}}\rightleftarrows\D^{\cube{n}}
\]
with a fully faithful left adjoint. Hence the unit $\eta$ is an isomorphism and so is the comultiplication of the resulting comonad. The second statement is obvious and the third one is immediate from the pointwise formula for left Kan extensions (axiom (Der4)). For the final statement, we note that $\iota_{\leq k-1}$ factors as
\[
\iota_{\leq k-1}=\iota_{\leq k}\circ\iota_{k,k-1}\colon\cube{n}_{\leq k-1}\to\cube{n}_{\leq k}\to\cube{n},
\]
and we obtain $\cotrmor{k-1}\cong (\iota_{\leq k})_!(\iota_{k,k-1})_!\iota_{k,k-1}^\ast\iota_{\leq k}^\ast\to 
(\iota_{\leq k})_!\iota_{\leq k}^\ast=\cotrmor{k}$.
\end{proof}

The main goal of this section is to establish \autoref{thm:cotr-filtration}, thereby generalizing \autoref{thm:cocart} to $k$-cotruncated $n$-cubes. As a preparation we recall the following generalization of Franke's detection result for (co)cartesian squares (\cite[Prop.~1.4.5]{franke:adams}). Despite its technical character, it allows us in many situations to detect colimiting cocones in larger diagrams.

\begin{lem}\label{lem:detection-general}
Let \D be a derivator, let $C\in\cCat$, and let $u\colon A\to B$ and $v\colon C^\rhd\to B$ be functors. Suppose that there is a full subcategory $B'\subseteq B$ such that
\begin{enumerate}
\item $u(A)\subseteq B'$,
\item $v(C)\subseteq B'$ and $v(\infty)\notin B'$, and
\item the functor $C\to (B'/v(\infty))$ induced by $v$ is a right adjoint.
\end{enumerate}
Then for any $X\in\D(A)$ the cocone $v^\ast u_!(X)\in\D(C^\rhd)$ is colimiting.
\end{lem}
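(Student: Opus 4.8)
The strategy is to reduce the statement to a single homotopy-exactness claim about an appropriate square, namely that the cocone $v^\ast u_!(X)$ being colimiting is equivalent to the canonical mate $\colim_C\big(v^\ast u_!(X)\big|_C\big)\to (v^\ast u_!(X))_\infty$ being an isomorphism. Unwinding the definitions, $(v^\ast u_!(X))_\infty = v(\infty)^\ast u_!(X)$, which by the pointwise formula for left Kan extensions (more precisely, by contemplating the slice square for $u$ at the object $v(\infty)$) is $\colim_{(u/v(\infty))} (\text{pr})^\ast X$, where $\mathrm{pr}\colon (u/v(\infty))\to A$ is the canonical projection. Similarly, $v^\ast u_!(X)\big|_C = (v\circ i_C)^\ast u_!(X)$ with $v\circ i_C$ factoring through $B'$ by hypothesis (ii), and restricting to $C$ the colimit $\colim_C$ of this diagram should be computable by first passing through $(u/v(\infty))$. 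So the whole statement will come down to showing that a certain functor between slice-type categories is homotopy final.

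\textbf{Key steps, in order.} First, I would set up the comma category $(u/v(\infty))$ and the slice square expressing $v(\infty)^\ast u_! \cong \colim_{(u/v(\infty))}\circ\,\mathrm{pr}^\ast$. Second, since $u(A)\subseteq B'$ by (i), every object of $(u/v(\infty))$ is a pair $(a, f\colon u(a)\to v(\infty))$ with $u(a)\in B'$, so there is a canonical functor $w\colon (u/v(\infty))\to (B'/v(\infty))$ sending $(a,f)\mapsto (u(a), f)$; this factors $\mathrm{pr}$-compatibly. Third, the functor $C\to (B'/v(\infty))$ induced by $v$ is a right adjoint by (iii); call its left adjoint $L\colon (B'/v(\infty))\to C$. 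The composite $L\circ w\colon (u/v(\infty))\to C$ is the functor I want to prove homotopy final. Fourth, to establish homotopy finality I would use that a functor with a right adjoint is homotopy final (a right adjoint being a left-adjoint-having functor in the reverse direction is always both homotopy initial and final — actually the relevant fact is that left or right adjoints, and more generally functors admitting an adjoint on the appropriate side, are homotopy cofinal); and that $w$ itself, being a Grothendieck-fibration-like projection recording the image under $u$, should be homotopy final because its fibers over $(b', g)\in (B'/v(\infty))$ are of the form $(u_{b'}/\ast)$-type comma categories with terminal or initial objects — here one uses that $u$ is just a functor and the fiber is $\{a\in A : u(a) = b'\text{ (over the structure map)}\}$, which needs a more careful finality argument, perhaps via \autoref{rmk:htpy-final} checking that the relevant comma categories are weakly contractible.

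\textbf{Anticipated main obstacle.} The delicate point is the third condition's precise force: hypothesis (iii) gives that $v$ induces a \emph{right adjoint} $C\to (B'/v(\infty))$, hence this functor is homotopy final (right adjoints are). But what I actually need is the homotopy finality of a functor \emph{into} $C$, namely from $(u/v(\infty))$. So the real work is: (a) show $w\colon (u/v(\infty))\to (B'/v(\infty))$ is homotopy final — this should follow by identifying its comma categories $(w/(b',g))$ with comma categories of the original functor $u$ and invoking that these are filtered-like or have terminal objects when $u$ is arbitrary; in fact $(w/(b',g)) \simeq (u/b')$ which is exactly the kind of category whose finality is governed by the structure of $u$ near $b'$, and here one must be careful since $u$ need not itself be any kind of fibration — the cleanest route is to observe that $w$ is a Grothendieck \emph{op}fibration (or the pullback of one) and combine this with the fact that its fibers have the right connectivity — and then (b) transport homotopy finality across the adjunction: if $w$ is homotopy final and the induced $C\to (B'/v(\infty))$ is a right adjoint (hence homotopy final, with the adjunction giving a two-sided inverse at the level of nerves up to homotopy), deduce that $L\circ w$ is homotopy final by the two-out-of-three property of homotopy final functors. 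Composing everything, the cocone $v^\ast u_!(X)$ is colimiting, which is the claim.
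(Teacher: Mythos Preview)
The paper does not prove this lemma itself, citing \cite[Lemma~4.5]{gps:mayer}. Your attempt has a genuine gap. The core problem is directional: you try to build a homotopy \emph{final} functor $(u/v(\infty))\to C$ as $L\circ w$, where $L$ is the left adjoint of the functor $r\colon C\to (B'/v(\infty))$ supplied by hypothesis~(iii). But a left adjoint is homotopy \emph{initial}, not homotopy final; your parenthetical hedge that adjoints are ``always both homotopy initial and final'' is simply false, and there is no two-out-of-three principle that would rescue the composite. Independently, your claimed identification $((b',g)/w)\simeq (u/b')$ is wrong: an object of $((b',g)/w)$ is a triple $(a,\,f\colon u(a)\to v(\infty),\,h\colon b'\to u(a))$ with $fh=g$, and the morphism $f$ is genuine extra data not determined by $(a,h)$, so $w$ need not be homotopy final either.

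The argument in \cite{gps:mayer} runs in the opposite direction. Factor $u=i\circ u'$ through the full inclusion $i\colon B'\hookrightarrow B$, which is possible by (i). The pointwise formula for $i_!$ at $v(\infty)$ gives $(u_!X)_{v(\infty)}\cong\colim_{(B'/v(\infty))}p^\ast u'_!X$. Now hypothesis (iii) says $r\colon C\to (B'/v(\infty))$ is a right adjoint, hence homotopy final, so this colimit is $\colim_C (p\circ r)^\ast u'_!X=\colim_C (v|_C)^\ast u'_!X$; and since $i$ is fully faithful, $u'_!X\cong (u_!X)|_{B'}$, so the last expression is $\colim_C(v^\ast u_!X)|_C$. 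One then checks that the resulting isomorphism is the canonical comparison map. The manoeuvre you missed is that factoring through $B'$ replaces the slice $(u/v(\infty))$ by $(B'/v(\infty))$, which is precisely the category into which (iii) hands you a homotopy final functor \emph{from} $C$; there is no need to manufacture one going the other way.
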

\begin{proof}
This is \cite[Lemma~4.5]{gps:mayer}.
\end{proof}

Often it is useful to analyze Kan extensions along fully faithful functors by a factorization into intermediate steps, each of them adding one object at a time. For those intermediate steps the following refinement of the detection result is convenient (as will be illustrated in the proof of \autoref{thm:cotr-filtration}).

\begin{prop}\label{prop:add-colimiting-cocone}
Let \D be a derivator, let $C\in\cCat,$ and let $u\colon A\to B$ be a fully faithful functor between small categories such that the following two conditions are satisfied.
\begin{enumerate}
\item The complement $B-u(A)$ consists of precisely one object $b_0$. 
\item There is a small category $C$ and a homotopy exact square
\begin{equation}\label{eq:add-colimiting-cocone}
\vcenter{
\xymatrix{
C\ar[d]_-{i_C}\ar[r]^-j\drtwocell\omit{\id}&A\ar[d]^-u\\
C^\rhd\ar[r]_-k&B
}
}
\end{equation} 
such that $k$ satisfies $k(\infty)=b_0$.
\end{enumerate}
The morphism $u_!\colon\D^A\to\D^B$ is fully faithful and induces an equivalence onto the full subderivator $\D^{B,\exx}\subseteq\D^B$ spanned by all $X\in\D^B$ such that $k^\ast(X)\in\D^{C^\rhd}$ is a colimiting cocone. The equivalence $\D^A\toiso\D^{B,\exx}$ is pseudo-natural with respect to morphisms of derivators which preserve colimits of shape $C$.
\end{prop}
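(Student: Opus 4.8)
The plan is to leverage that, since $u$ is fully faithful, the morphism $u_!\colon\D^A\to\D^B$ is fully faithful and hence restricts to an equivalence $(u_!)^\simeq\colon\D^A\toiso\mathrm{Im}(u_!)$ onto its essential image; everything then reduces to identifying $\mathrm{Im}(u_!)$ with $\D^{B,\exx}$. One inclusion is immediate: if $X\cong u_!Y$ for some $Y\in\D^A$, then the homotopy exactness of \eqref{eq:add-colimiting-cocone} — i.e.\ the invertibility of its canonical mate $\alpha\colon(i_C)_!j^\ast\toiso k^\ast u_!$ — gives $k^\ast X\cong k^\ast u_!Y\cong(i_C)_!(j^\ast Y)$, which lies in the essential image of $(i_C)_!$ and is therefore a colimiting cocone by \autoref{notn:cocone}. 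Thus $\mathrm{Im}(u_!)\subseteq\D^{B,\exx}$.

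For the reverse inclusion, fix $X\in\D^{B,\exx}$ and check, pointwise (axiom (Der2)), that the counit $\varepsilon\colon u_!u^\ast X\to X$ is invertible. At any object of $u(A)$ this is automatic: the triangle identity $u^\ast\varepsilon\circ\eta u^\ast=\id_{u^\ast}$ together with the invertibility of the unit $\eta$ (full faithfulness of $u$) forces $u^\ast\varepsilon$, hence $\varepsilon$ restricted to $u(A)$, to be invertible. By the first hypothesis the only remaining object is $b_0=k(\infty)$, so it suffices to see that $\varepsilon_{b_0}=\infty^\ast(k^\ast\varepsilon_X)$ is invertible. Here I would establish the mate-compatibility identity
\[
k^\ast\varepsilon_X\circ\alpha_{u^\ast X}=\varepsilon^{C^\rhd}_{k^\ast X}\colon(i_C)_!\,i_C^\ast k^\ast X\longrightarrow k^\ast X,
\]
where $\varepsilon^{C^\rhd}$ is the counit of $((i_C)_!,i_C^\ast)$ and we have used $i_C^\ast k^\ast=(ki_C)^\ast=(uj)^\ast=j^\ast u^\ast$. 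This identity is a short triangle-identity chase: expand $\alpha_{u^\ast X}$ as the unit of $(u_!,u^\ast)$ followed by $\varepsilon^{C^\rhd}$, apply naturality of $\varepsilon^{C^\rhd}$ to move $k^\ast\varepsilon_X$ past it, and collapse the remaining composite of a unit and a counit of $(u_!,u^\ast)$ to the identity. Since $X\in\D^{B,\exx}$ means precisely that $k^\ast X$ is a colimiting cocone, i.e.\ that $\varepsilon^{C^\rhd}_{k^\ast X}$ is invertible (\autoref{notn:cocone}), and $\alpha_{u^\ast X}$ is invertible, the identity shows $k^\ast\varepsilon_X$ is invertible, hence so is $\varepsilon_{b_0}$. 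Therefore $\mathrm{Im}(u_!)=\D^{B,\exx}$; in particular $\D^{B,\exx}$ is a derivator and $u_!$ restricts to an equivalence $(u_!)^\simeq\colon\D^A\toiso\D^{B,\exx}$.

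It remains to prove the pseudo-naturality. Let $F\colon\D\to\E$ be a morphism of derivators preserving colimits of shape $C$. Since morphisms of derivators commute with restrictions up to canonical isomorphism, pseudo-naturality amounts to (a) $F$ mapping $\D^{B,\exx}$ into $\E^{B,\exx}$, and (b) the canonical comparison $u_!F\to Fu_!$ being invertible. For (a): by the pointwise criterion for preservation of left Kan extensions (\cite[Prop.~2.3]{groth:ptstab}), together with the observation that the relevant slice categories $(i_C/c)$, $c\in C^\rhd$, are either $C$ itself (for $c=\infty$) or have a terminal object, $F$ preserves left Kan extension along $i_C$, hence sends colimiting cocones of shape $C^\rhd$ to colimiting cocones; applying this to $k^\ast(FX)\cong F(k^\ast X)$ yields $FX\in\E^{B,\exx}$ whenever $X\in\D^{B,\exx}$. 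For (b): evaluating the invertible mate $\alpha$ at $\infty\in C^\rhd$ gives, using $\infty^\ast(i_C)_!\cong\colim_C$ and $k(\infty)=b_0$, a natural isomorphism $\colim_C j^\ast\cong b_0^\ast u_!$, so $F$ commutes with $b_0^\ast u_!$; at each remaining object $b=u(a)$ the morphism $b^\ast u_!$ is naturally isomorphic to the restriction $a^\ast$ (full faithfulness of $u$) and commutes with $F$ trivially. By (Der2) the comparison $u_!F\to Fu_!$ is then invertible, which is exactly the asserted pseudo-naturality.

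The step I expect to be the main obstacle is the mate-compatibility identity in the second paragraph: it is only a triangle-identity chase, but it must be assembled carefully (tracking which adjunction unit/counit and which Beck--Chevalley mate is being pasted), since both the pointwise invertibility of $\varepsilon$ and the isomorphism $\colim_C j^\ast\cong b_0^\ast u_!$ used for pseudo-naturality are extracted from the homotopy exactness of \eqref{eq:add-colimiting-cocone} through it.
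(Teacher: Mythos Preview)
Your argument is correct and is essentially the paper's proof, only unpacked. The ``mate-compatibility identity'' $k^\ast\varepsilon_X\circ\alpha_{u^\ast X}=\varepsilon^{C^\rhd}_{k^\ast X}$ that you single out as the crux is exactly what the paper obtains, but it does so by the functoriality of canonical mates with respect to pasting: one writes two equal pastings
\[
\xymatrix{
C\ar[r]^-j\ar[d]_-{i_C}\drtwocell\omit{\id}&A\ar[r]^-u\ar[d]_-u\drtwocell\omit{\id}&B\ar[d]^-\id \ar@{}[rrd]|{=}&&
C\ar[r]^-{i_C}\ar[d]_-{i_C}\drtwocell\omit{\id}&C^\rhd\ar[r]^-k\ar[d]_-\id\drtwocell\omit{\id}&B\ar[d]^-\id\\
C^\rhd\ar[r]_-k&B\ar[r]_-\id&B &&
C^\rhd\ar[r]_-\id&C^\rhd\ar[r]_-k&B
}
\]
and reads off that the mate of the left pasting, namely $k^\ast\varepsilon\circ\alpha_{u^\ast}$, equals the mate of the right pasting, namely $\varepsilon^{C^\rhd}_{k^\ast}$. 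This replaces your triangle-identity chase by a single invocation of the pasting lemma for mates; the content is identical. For the pseudo-naturality the paper simply cites \cite[Lem.~3.7 and Prop.~3.9]{groth:revisit}, whose proofs amount to your explicit pointwise verification.
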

\begin{proof}
Since $u\colon A\to B$ is fully faithful, the same is true for $u_!\colon\D^A\to\D^B$, and $X\in\D^B$ lie in the essential image of $u_!$ if and only if $\varepsilon\colon u_!u^\ast(X)\to X$ is an isomorphism. By \cite[Lem.~1.21]{groth:ptstab} this is the case if and only if the component $\varepsilon_{b_0}$ is invertible. To re-express this differently, let us consider the pasting on the left in  
\[
\xymatrix{
C\ar[r]^-j\ar[d]_-{i_C}\drtwocell\omit{\id}&A\ar[r]^-u\ar[d]_-u\drtwocell\omit{\id}&B\ar[d]^-\id \ar@{}[rrd]|{=}&& 
C\ar[r]^-{i_C}\ar[d]_-{i_C}\drtwocell\omit{\id}&C^\rhd\ar[r]^-k\ar[d]_-\id\drtwocell\omit{\id}&B\ar[d]^-\id\\
C^\rhd\ar[r]_-k&B\ar[r]_-\id&B && 
C^\rhd\ar[r]_-\id&C^\rhd\ar[r]_-k&B.
}
\]
The functoriality of mates with respect to pasting and the homotopy exactness of the square to the very left imply that $X$ lies in the essential image of $u_!$ if and only if the canonical mate associated to the pasting on the left is an isomorphism on $X$. Since the above two pastings agree, the functoriality of mates implies that this is the case if and only  if $k^\ast X$ lies in the essential image of $(i_C)_!\colon\D^C\to\D^{C^\rhd}$ which is to say that $k^\ast X$ is a colimiting cocone. It follows from \cite[Lemma~3.7 and Prop.~3.9]{groth:revisit} that a morphism of derivators preserves left Kan extensions along $u$ if and only if it preserves colimits of shape $C$, thereby establishing the intended pseudo-naturality.
\end{proof}

\begin{rmk}\label{rmk:add-colimiting-cocone}
In the situation of \autoref{prop:add-colimiting-cocone}, the square \eqref{eq:add-colimiting-cocone} is homotopy exact as soon as the induced functor $r\colon C\to(u/b_0)$ is homotopy final (such as a right adjoint). In fact, this follows from the functoriality of mates with respect to pastings and \cite[Lemma~2.12]{groth:revisit} applied to the following situation
\[
\xymatrix{
C\ar[r]^-\id\ar[d]_-\pi\drtwocell\omit&C\ar[d]_-{i_C}\ar[r]^-j\drtwocell\omit{\id}&A\ar[d]^-u\ar@{}[rrd]|{=}&&
C\ar[r]^-r\ar[d]_-\pi\drtwocell\omit{\id}&(u/b_0)\ar[r]^-p\ar[d]_-\pi\drtwocell\omit&A\ar[d]^-u\\
\bbone\ar[r]_-\infty&C^\rhd\ar[r]_-k&B&&
\bbone\ar[r]_-\id&\bbone\ar[r]_-{b_0}&B.
}
\]
\end{rmk}

With this preparation we can now attack the proof of \autoref{thm:cotr-filtration}. The proof is an adaptation of the proof of \cite[Thm.~8.4]{gst:basic}, and the following is the key step.

\begin{notn}
For $n\geq 0,$ $-1\leq k\leq n,$ and $x\in\cube{n}_{\leq k}$ we denote by $d(x)$ the cardinality of the subset $x\subseteq\{1,\ldots,n\}.$ Moreover, for $x\subseteq y\subseteq\{1,\ldots,n\}$ let $[x,y]\subseteq\cube{n}_{\leq k}$
be the closed interval between $x,y$ in $\cube{n}_{\leq k}$, i.e., we set
\[
[x,y]=\{w\in\cube{n}_{\leq k}\mid x\leq w\leq y\}\subseteq\cube{n}_{\leq k},
\]
and we denote the above inclusion by $j_{[x,y]}\colon[x,y]\to\cube{n}_{\leq k}$. If for $x\leq y\in\cube{n}_{\leq k}$ we write $d(x,y)=d(y)-d(x)$ for the cardinality of the complement $y-x$, then there is a preferred isomorphism
\[
\cube{d(x,y)}\cong[x,y].
\]
In fact, invoking our notational convention \eqref{eq:cube-set}, the inclusion $y-x\subseteq\{1,\ldots,n\}$ induces a monomorphism of posets
\[
\cube{d(x,y)}=\cube{y-x}\to\cube{\{1,\ldots,n\}}=\cube{n},
\]
and its image factorization gives the preferred isomorphism. By abuse of notation we also denote this monomorphism of posets by $j_{[x,y]}\colon\cube{d(x,y)}\to\cube{n}$.
\end{notn}

\begin{con}\label{con:filtration}
Let $n\geq 2$ and let $1\leq k< l\leq n$ be natural numbers. We consider $x\leq y\in\cube{n}_{\leq l}$ with $k<d(x,y)\leq l$ (thereby avoiding that $x$ and $y$ both lie in $\cube{n}_{\leq k}$). Associated to this, there are the following full subposets of~$\cube{n}_{\leq l}$.
\begin{enumerate}
\item The poset $A_1\subseteq\cube{n}_{\leq l}$ is obtained from $\cube{n}_{\leq k}$ by adding the objects of the $d(x)$-cube $[\emptyset,x]$.
\item The poset $A_2$ contains $A_1$ and also the punctured $d(x,y)$-cube $[x,y]-\{y\}$.
\item Finally, $A_3$ is obtained from $A_2$ by adding the object $y$.
\end{enumerate}
These full subcategories come with inclusion functors which allow us to factor the inclusion $\iota_{l,k}\colon\cube{n}_{\leq k}\to\cube{n}_{\leq l}$ as 
\begin{equation}\label{eq:factor-inc}
\iota_{l,k}\colon\cube{n}_{\leq k}\stackrel{j_1}{\to}A_1\stackrel{j_2}{\to}A_2\stackrel{j_3}{\to}A_3\stackrel{j_4}{\to}\cube{n}_{\leq l}.
\end{equation}
(We note that $j_1$ and $j_2$ are potentially identities but that this is impossible for $j_3$.) In particular, for every derivator \D there is a left Kan extension morphism
\begin{equation}\label{eq:key}
(j_3)_!\colon\D^{A_2}\to\D^{A_3},
\end{equation}
and associated to $j_3$ there is the following commutative square
\begin{equation}\label{eq:key-he}
\vcenter{
\xymatrix{
[x,y]-\{y\}\ar[r]\ar[d]&A_2\ar[d]^-{j_3}\\
[x,y]\ar[r]_-{j_{[x,y]}}&A_3.
}
}
\end{equation}
\end{con}

\begin{lem}\label{lem:filtration}
Let $n\geq 2$, $1\leq k< l\leq n,$ and $x\leq y\in\cube{n}_{\leq l}$ with $k<d(x,y)\leq l.$ For every derivator \D and $j_3\colon A_2\to A_3$ as in \eqref{eq:factor-inc}, the morphism $(j_3)_!\colon\D^{A_2}\to\D^{A_3}$ is fully faithful and the essential image consists precisely of those $X\in\D^{A_3}$ such that $j_{[x,y]}^\ast(X)$ is cocartesian.
\end{lem}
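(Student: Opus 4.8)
The plan is to deduce the statement directly from \autoref{prop:add-colimiting-cocone}, applied with $A=A_2$, $B=A_3$ and $u=j_3\colon A_2\to A_3$. By construction $A_3$ is obtained from $A_2$ by adjoining the single object $y$, so the complement $A_3-j_3(A_2)$ consists of precisely one object $b_0=y$. For the auxiliary category I take $C=[x,y]-\{y\}$, whose cocone is $C^\rhd=[x,y]$ since $y$ is terminal in the interval $[x,y]$, and for $k\colon C^\rhd\to B$ I take the inclusion $j_{[x,y]}\colon[x,y]\to A_3$, so that $k(\infty)=y=b_0$. With these choices the square required in \autoref{prop:add-colimiting-cocone} is exactly the square \eqref{eq:key-he}. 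Under the preferred isomorphism $\cube{d(x,y)}\cong[x,y]$ the inclusion $C\to C^\rhd$ corresponds to $\iota_{\leq d(x,y)-1}\colon\cube{d(x,y)}_{\leq d(x,y)-1}\to\cube{d(x,y)}$ (note $d(x,y)\geq2$ since $k\geq1$), so a diagram $k^\ast X\in\D^{C^\rhd}$ is a colimiting cocone (\autoref{notn:cocone}) if and only if $j_{[x,y]}^\ast X$ is cocartesian (\autoref{defn:cocart}); once the hypotheses of \autoref{prop:add-colimiting-cocone} are verified, its conclusion therefore yields precisely the assertion of the lemma (even with a pseudo-naturality refinement).

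It thus remains to check that the square \eqref{eq:key-he} is homotopy exact. By \autoref{rmk:add-colimiting-cocone} it suffices to show that the induced functor $r\colon[x,y]-\{y\}\to(j_3/y)$ is homotopy final. As everything in sight is a poset, $(j_3/y)$ is the full subposet of $A_2$ spanned by the objects $a$ with $a\leq y$, and $r$ is the evident inclusion; by the criterion recalled in \autoref{rmk:htpy-final} I must show that for each such $a$ the comma category $(a/r)=\{w\in[x,y]-\{y\}\mid a\leq w\}$ has weakly contractible nerve. Since $a\leq y$ and $x\leq y$ we have $a\cup x\leq y$, and $\{w\in[x,y]-\{y\}\mid a\leq w\}=\{w\mid a\cup x\leq w\leq y,\ w\neq y\}$; provided $a\cup x\neq y$, this poset has the initial object $a\cup x$ and hence a contractible nerve.

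The one substantial point is therefore the combinatorial claim that $a\cup x\neq y$ for every $a\in A_2$ with $a\leq y$. Writing $A_2=\cube{n}_{\leq k}\cup[\emptyset,x]\cup([x,y]-\{y\})$, I distinguish three cases. If $a\in[x,y]-\{y\}$ then $a\cup x=a\neq y$. If $a\in[\emptyset,x]$ then $a\cup x=x$, and $x\neq y$ since $d(x,y)>k\geq1$. If $a\in\cube{n}_{\leq k}$ then $d(a\cup x)\leq d(a)+d(x)\leq k+d(x)<d(x)+d(x,y)=d(y)$, using the running hypothesis $k<d(x,y)$, so again $a\cup x\neq y$; moreover $d(a\cup x)<d(y)\leq l$, so $a\cup x\in[x,y]-\{y\}\subseteq\cube{n}_{\leq l}$ as needed. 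This proves that $r$ is homotopy final, and with it the lemma. The main obstacle of the argument is exactly this bookkeeping with cardinalities, which is where the hypothesis $k<d(x,y)$ enters decisively; the rest is a formal application of \autoref{prop:add-colimiting-cocone} and \autoref{rmk:add-colimiting-cocone}.
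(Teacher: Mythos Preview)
Your proof is correct and follows essentially the same approach as the paper: both reduce to \autoref{prop:add-colimiting-cocone} via \autoref{rmk:add-colimiting-cocone}, identify $(j_3/y)$ with $\{a\in A_2\mid a\leq y\}$, and verify the key combinatorial claim that $a\cup x\neq y$ for all such $a$. The only cosmetic difference is that the paper phrases the conclusion as ``$[x,y]-\{y\}\to(j_3/y)$ is a right adjoint with left adjoint $a\mapsto a\cup x$'', while you check directly that each comma category $(a/r)$ has the initial object $a\cup x$; in a poset these are the same statement.
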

\begin{proof}
By \autoref{prop:add-colimiting-cocone} it suffices to show that \eqref{eq:key-he} is homotopy exact, and this can be reduced to showing that the functor $[x,y]-\{y\}\to(j_3/y)$ is a right adjoint (\autoref{rmk:add-colimiting-cocone}). We note that $(j_3/y)$ can be identified with $[\emptyset,y]\cap A_2$, and it remains to show that $i\colon [x,y]-\{y\}\to [\emptyset,y]\cap A_2$ is a right adjoint. We claim that 
\[
l\colon[\emptyset,y]\cap A_2\to[x,y]-\{y\}\colon z\mapsto x\cup z
\]
is well-defined and left adjoint to $i$. Let us recall from \autoref{con:filtration} that the category $[\emptyset,y]\cap A_2$ contains three types of objects, namely those in $[\emptyset,y]\cap\cube{n}_{\leq k}$, those in $[\emptyset,x],$ and those in $[x,y]-\{y\}$. To show that $l$ is well-defined amounts to showing that $x\cup z$ is different from $y$ in all three cases. The last two cases being trivial, let us consider $z\in[\emptyset,y]\cap\cube{n}_{\leq k}$ and show that $y-x$ is not contained in $x\cup z$. In fact, $(x\cup z)\cap(y-x)\subseteq z\cap(y-x)\subseteq z$ implies $|(x\cup z)\cap(y-x)|\leq k$ while $|y-x|\geq k+1$ showing that $x\cup z$ does not contain $y-x$ and thereby $x\cup z\neq y$. Since $\id\leq il$ and $li=\id$, we indeed obtain the desired adjunction.
\end{proof}

\begin{thm}\label{thm:cotr-filtration}
Let \D be a derivator, $n\geq 2$, and $1\leq k\leq l\leq n$. The following are equivalent for a restricted $n$-cube $X\in\D^{\cube{n}_{\leq l}}$.
\begin{enumerate}
\item The restricted $n$-cube $X$ is $k$-cotruncated.
\item All $m$-subcubes of $X$, $k<m\leq l$, are cocartesian.
\item All $(k+1)$-subcubes of $X$ are cocartesian.
\end{enumerate}
\end{thm}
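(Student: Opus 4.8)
The plan is to show (i) $\Leftrightarrow$ (ii) and (ii) $\Leftrightarrow$ (iii). Recall first that in the degenerate case $k=l$ conditions (ii) and (iii) are vacuous and $(\iota_{l,k})_!$ is the identity, so all three statements hold; hence assume $1\leq k<l\leq n$, which is the setting of \autoref{con:filtration} and \autoref{lem:filtration}. The implication (ii) $\Rightarrow$ (iii) is trivial. For (iii) $\Rightarrow$ (ii) I would induct on the dimension $m$ of the subcube, $k+1\leq m\leq l$, the case $m=k+1$ being the hypothesis. Given an $m$-subcube $[a,b]\cong\cube{m}$ with $m\geq k+2$, pick $i\in b\setminus a$ and split $[a,b]\cong\cube{m-1}\times[1]$ recording membership of $i$; the two facets $[a,b\setminus\{i\}]$ and $[a\cup\{i\},b]$ are $(m-1)$-subcubes of $X$, hence cocartesian by the inductive hypothesis (note $m-1\geq k+1\geq 2$). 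The first assertion of \autoref{thm:cocart}, applied to $j_{[a,b]}^\ast X\in\D^{\cube{m-1}\times[1]}$ whose $0$-slice is cocartesian, then shows $j_{[a,b]}^\ast X$ is cocartesian iff its $1$-slice is --- which it is.

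For (i) $\Rightarrow$ (ii), fix an $m$-subcube $[a,b]\subseteq\cube{n}_{\leq l}$ with $k<m=d(a,b)\leq l$. Applying \autoref{con:filtration} with $(x,y)=(a,b)$ produces the factorization $\iota_{l,k}=j_4 j_3 j_2 j_1$ through $A_1,A_2,A_3$, so that $(\iota_{l,k})_!\cong(j_4)_!(j_3)_!(j_2)_!(j_1)_!$. If $X\cong(\iota_{l,k})_!\bar X$ is $k$-cotruncated, then restricting along the fully faithful $j_4$ gives $j_4^\ast X\cong(j_3)_!(j_2)_!(j_1)_!\bar X$, which lies in the essential image of $(j_3)_!$; by \autoref{lem:filtration} this forces $j_{[a,b]}^\ast X=j_{[a,b]}^\ast j_4^\ast X$ to be cocartesian. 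As $[a,b]$ was arbitrary, (ii) follows.

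For (ii) $\Rightarrow$ (i), I would realize $\cube{n}_{\leq l}$ as built from $\cube{n}_{\leq k}$ by adjoining the missing subsets $y_1,\dots,y_N$ one at a time in order of non-decreasing cardinality, giving fully faithful inclusions $u_i\colon B_{i-1}\into B_i$ with $B_0=\cube{n}_{\leq k}$, $B_N=\cube{n}_{\leq l}$, and $B_i=B_{i-1}\cup\{y_i\}$. Writing $m_i=d(y_i)>k$, every proper subset of $y_i$ already lies in $B_{i-1}$ (those of cardinality $\leq k$ lie in $\cube{n}_{\leq k}$; those of cardinality strictly between $k$ and $m_i$ occur among $y_1,\dots,y_{i-1}$), so the comma category $(u_i/y_i)$ is precisely the punctured cube $[\emptyset,y_i]-\{y_i\}$. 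Taking $C=[\emptyset,y_i]-\{y_i\}$, $C^\rhd=[\emptyset,y_i]$ with its inclusion into $B_i$, and $r=\id$, \autoref{rmk:add-colimiting-cocone} shows the square \eqref{eq:add-colimiting-cocone} is homotopy exact, so \autoref{prop:add-colimiting-cocone} identifies the essential image of $(u_i)_!$ with those diagrams on $B_i$ whose restriction to $[\emptyset,y_i]\cong\cube{m_i}$ is cocartesian. Composing the fully faithful morphisms $(u_i)_!$ along the chain, $X$ is $k$-cotruncated iff $j_{[\emptyset,y_i]}^\ast X$ is cocartesian for every $i$, i.e., iff every subcube of $X$ based at $\emptyset$ of dimension in $(k,l]$ is cocartesian, which is a special case of (ii).

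The step I expect to demand the most care is (ii) $\Rightarrow$ (i): the identification $(u_i/y_i)=[\emptyset,y_i]-\{y_i\}$ hinges on adjoining the subsets in cardinality order, and one must check that the composite of the fully faithful left Kan extensions along the chain genuinely decouples ``$X$ is $k$-cotruncated'' into the individual cocartesianness conditions on the subcubes $[\emptyset,y_i]$. By contrast, once \autoref{lem:filtration} is granted, the direction (i) $\Rightarrow$ (ii) is essentially formal.
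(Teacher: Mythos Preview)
Your proof is correct and follows the paper's strategy closely: (i)~$\Rightarrow$~(ii) is identical (factor $\iota_{l,k}$ as in \autoref{con:filtration} and invoke \autoref{lem:filtration}), and (ii)~$\Leftrightarrow$~(iii) spells out the induction on $m$ that the paper leaves implicit as ``immediate from the first statement of \autoref{thm:cocart}.''

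The only genuine organizational difference is in (ii)~$\Rightarrow$~(i). The paper first reduces by induction on $l-k$ to the case $l=k+1$ and then, for each $y\in\cube{n}_{=k+1}$, verifies directly via pasted slice squares that the counit at $y$ is invertible precisely when $j_{[\emptyset,y]}^\ast X$ is cocartesian. You instead build $\cube{n}_{\leq l}$ from $\cube{n}_{\leq k}$ by adjoining \emph{all} missing subsets one at a time in non-decreasing cardinality, and apply \autoref{prop:add-colimiting-cocone} (via \autoref{rmk:add-colimiting-cocone} with $r=\id$) at each step. Both approaches rest on the same homotopy-exactness check, but yours avoids the outer induction on $l-k$ and yields along the way the pleasant intermediate characterization that $X$ is $k$-cotruncated iff the subcubes $[\emptyset,y]$ with $k<d(y)\leq l$ are cocartesian.
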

\begin{proof}
We begin with the equivalence of the first two statements. Let $X\in\D^{\cube{n}_{\leq l}}$ be $k$-cotruncated, i.e., $X$ lies in the essential image of $(\iota_{l,k})_!\colon\D^{\cube{n}_{\leq k}}\to\D^{\cube{n}_{\leq l}}$, and let $[x,y]\subseteq\cube{n}_{\leq l}$ be a subcube of dimension $m=d(x,y) \ge k+1$. In order to show that $j_{[x,y]}^\ast X\in\D^{\cube{m}}$ is cocartesian we consider the factorization of $\iota_{l,k}$ as in \eqref{eq:factor-inc}. Consequently, the left Kan extension morphism $(\iota_{l,k})_!$ factors as
\[
(\iota_{l,k})_!\colon\D^{\cube{n}_{\leq k}}\stackrel{(j_1)_!}{\to}\D^{A_1}\stackrel{(j_2)_!}{\to}\D^{A_2}\stackrel{(j_3)_!}{\to}\D^{A_3}\stackrel{(j_4)_!}{\to}\D^{\cube{n}_{\leq l}}.
\]
The fully faithfulness of $j_4$ and our assumption on $X$ imply that $\varepsilon\colon(j_4)_!j_4^\ast X\to X$ is invertible. And it hence remains to show that for $Y=j_4^\ast X$ the $m$-cube $j_{[x,y]}^\ast Y$ is cocartesian. But since $Y$ lies in the essential image of $(j_3)_!$ this is immediate from \autoref{lem:filtration}.

For the converse direction, a finite induction implies that it is enough to consider the case that $l=k+1$. Hence, let us consider $X\in\D^{\cube{n}_{\leq k+1}}$ such that all $(k+1)$-subcubes of $X$ are cocartesian, and our aim is to show that $X$ lies in the essential image of $(\iota_{k+1,k})_!$. Since $\iota_{k+1,k}$ is fully faithful, this is the case if and only if the components of the adjunction counit $\varepsilon_y\colon\big((\iota_{k+1,k})_!\iota_{k+1,k}^\ast X\big)_y\to X_y$ are invertible for all $y\in\cube{n}_{=k+1}$. To reformulate this for a fixed such $y$, we consider the following pasting
\begin{equation}
\vcenter{
\xymatrix{
[\emptyset,y]-\{y\}\ar[r]\ar[d]_-\pi\drtwocell\omit{}&\cube{n}_{\leq k}\ar[r]^-{\iota_{k+1,k}}\ar[d]_-{\iota_{k+1,k}}\drtwocell\omit{\id}&\cube{n}_{\leq k+1}\ar[d]^-=\\
\bbone\ar[r]_y&\cube{n}_{\leq k+1}\ar[r]_-=&\cube{n}_{\leq k+1}
}
}
\end{equation}
in which the square on the left is a slice square. The functoriality of canonical mates with pasting and the homotopy exactness of slice squares imply that $\varepsilon_y$ is invertible if and only if the canonical mate of the above pasting is invertible at $X\in\D^{\cube{n}_{\leq k+1}}$. This pasting agrees with the pasting
\begin{equation}
\vcenter{\xymatrix{
[\emptyset,y]-\{y\}\ar[r]^-=\ar[d]_-\pi\drtwocell\omit{}&[\emptyset,y]-\{y\}\ar[r]\ar[d]\drtwocell\omit{\id}&[\emptyset,y]\ar[d]_-=\ar[r]^-{j_{[\emptyset,y]}}\drtwocell\omit{\id}&\cube{n}_{\leq k+1}\ar[d]^-=\\
\bbone\ar[r]_-y&[\emptyset,y]\ar[r]_-=&[\emptyset,y]\ar[r]_-{j_{[\emptyset,y]}}&\cube{n}_{\leq k+1}
}}
\end{equation}
in which the square to the left is again a slice square. We conclude this part by observing that the canonical mate of this pasting is invertible on $X\in\D^{\cube{n}_{\leq k+1}}$ if and only if the $(k+1)$-cube $j_{[\emptyset,y]}^\ast X$ is cocartesian.

Finally, the equivalence of $(ii)$ and $(iii)$ is immediate from the first statement of \autoref{thm:cocart}.
\end{proof}

\begin{cor}\label{cor:cotr-filtration}
Let \D be a derivator, $n\geq 2$, and $1\leq k\leq n$. The following are equivalent for an $n$-cube $X\in\D^{\cube{n}}$.
\begin{enumerate}
\item The $n$-cube $X$ is $k$-cotruncated.
\item All $m$-subcubes of $X,$ $k<m\leq n,$ are cocartesian.
\item All $(k+1)$-subcubes of $X$ are cocartesian.
\end{enumerate}
\end{cor}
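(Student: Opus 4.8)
The plan is to deduce this immediately from \autoref{thm:cotr-filtration} by specializing to $l=n$. The point is purely bookkeeping: since $\cube{n}_{\leq n}=\cube{n}$, the inclusion $\iota_{n,k}\colon\cube{n}_{\leq k}\to\cube{n}_{\leq n}$ of \autoref{notn:cotrunc} is literally the inclusion $\iota_{\leq k}\colon\cube{n}_{\leq k}\to\cube{n}$ of \autoref{notn:source-punctured}. Hence the essential image of $(\iota_{n,k})_!$ agrees with that of $(\iota_{\leq k})_!$, so a proper $n$-cube $X\in\D^{\cube{n}}$ is $k$-cotruncated in the sense used in this corollary exactly when it is $k$-cotruncated as a restricted $n$-cube over $\cube{n}_{\leq n}$ in the sense of \autoref{thm:cotr-filtration}. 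Likewise, for $X\in\D^{\cube{n}}$ the closed intervals $[x,y]$ with $k<d(x,y)\leq n$ are exactly the $m$-subcubes with $k<m\leq n$, so conditions (ii) and (iii) here match conditions (ii) and (iii) of \autoref{thm:cotr-filtration} verbatim.

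With these identifications in place, the equivalence of (i), (ii) and (iii) is precisely the $l=n$ instance of \autoref{thm:cotr-filtration}, and nothing further needs to be proved. I do not expect any real obstacle: the only thing to keep in mind is the harmless degenerate case $k=n$, in which all three conditions hold vacuously (there are no $(n+1)$-subcubes of an $n$-cube, and $(\iota_{\leq n})_!$ is the identity), matching the behaviour of \autoref{thm:cotr-filtration} at the boundary $l=k$. Alternatively one could simply re-run the argument of \autoref{thm:cotr-filtration} directly with $\cube{n}$ in place of $\cube{n}_{\leq l}$ --- invoking \autoref{lem:filtration}, \autoref{prop:add-colimiting-cocone}, and the slice-square computation of the counit components at objects $y\in\cube{n}_{=k+1}$ --- but this would be strictly redundant given the theorem.
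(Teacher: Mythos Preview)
Your proposal is correct and takes essentially the same approach as the paper, which simply states that this is a special case of \autoref{thm:cotr-filtration}. The extra bookkeeping you spell out (the identification $\iota_{n,k}=\iota_{\leq k}$ and the vacuous boundary case $k=n$) is accurate but not needed beyond the one-line invocation.
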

\begin{proof}
This is a special case of \autoref{thm:cotr-filtration}.
\end{proof}

\begin{conv}\label{conv:filtration}
Let \D be a derivator and $n\geq 2$. By \autoref{defn:cocart}, an $n$-cube $X\in\D^{\cube{n}}$ is referred to as being cocartesian if it lies in the essential image of the left Kan extension morphism
\[
(\iota_{\leq n-1})_!\colon\D^{\cube{n}_{\leq n-1}}\to\D^{\cube{n}}.
\]
We extend this terminology to the degenerate cases of $n=1$ and $n=0$. Thus, we say that a morphism $X\in\D^{[1]}$ (or a $1$-cube) is cocartesian if it is an isomorphism and that an object $X\in\D$ (a $0$-cube) is cocartesian if it is initial.
\end{conv}

The point of this convention is that we now obtain the following more complete description of the filtration of derivators of coherent $n$-cubes.

\begin{rmk}\label{rmk:filtration}
Let \D be a derivator and $n\geq 0$. The derivator $\D^{\cube{n}}$ is filtered by the essential images of the left Kan extension morphisms
\[
(\iota_{\leq k})_!\colon\D^{\cube{n}_{\leq k}}\to\D^{\cube{n}},\qquad -1\leq k\leq n.
\]
In the case of $k=n$ we obtain all $n$-cubes, for $k=n-1$ the cocartesian $n$-cubes, for $k=1$ the strongly cocartesian ones, for $k=0$ the constant $n$-cubes (all structure maps are invertible by \autoref{lem:cotrmor}), and for $k=-1$ the initial $n$-cubes. By means of \autoref{conv:filtration}, this can be reformulated by saying that for every fixed $-1\leq k\leq n$ the respective essential image consists precisely of those $n$-cubes such that all $(k+1)$-subcubes are cocartesian.
\end{rmk}

In \S\ref{sec:total} we discuss a relation between these derivators and the vanishing of iterated cone constructions in the case of pointed and stable derivators. We conclude this section by a short discussion of closure properties.

\begin{cor}\label{cor:swap-filt}
Let \D be a derivator, $n\geq 2$, $1\leq i,j\leq n,$ and $1\leq k<l\leq n$. A restricted coherent $n$-cube $X\in\D^{\cube{n}_{\leq l}}$ is $k$-cotruncated if and only if this is the case for $\sigma_{i,j}^\ast X$. In particular, in the case of $l=n$, we obtain that $X\in\D^{\cotr{n}{k}}$ if and only if $\sigma_{i,j}^\ast X\in\D^{\cotr{n}{k}}.$
\end{cor}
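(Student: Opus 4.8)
The plan is to reduce the corollary to the homotopy exactness already recorded in \autoref{lem:he-swap-coor}, rather than to argue through the characterization of \autoref{thm:cotr-filtration}. Recall that a restricted $n$-cube $X\in\D^{\cube{n}_{\leq l}}$ is $k$-cotruncated exactly when it lies in the essential image of the fully faithful morphism $(\iota_{l,k})_!\colon\D^{\cube{n}_{\leq k}}\to\D^{\cube{n}_{\leq l}}$, and essential images are closed under isomorphism. So it suffices to show that the restriction morphism $\sigma_{i,j}^\ast$ intertwines the two relevant left Kan extensions up to a canonical isomorphism.

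First I would note that, writing $\sigma_{i,j}$ also for the induced symmetry of $\cube{n}_{\leq k}$, the identity $\iota_{l,k}\circ\sigma_{i,j}=\sigma_{i,j}\circ\iota_{l,k}$ of functors $\cube{n}_{\leq k}\to\cube{n}_{\leq l}$ holds on the nose; this is precisely the left-hand commutative square of \autoref{lem:he-swap-coor}. That lemma then supplies the canonical isomorphism \eqref{eq:sign-rep}, i.e.\ $(\iota_{l,k})_!\circ\sigma_{i,j}^\ast\toiso\sigma_{i,j}^\ast\circ(\iota_{l,k})_!$. Consequently, if $X\cong(\iota_{l,k})_!(Y)$ for some $Y$, then $\sigma_{i,j}^\ast X\cong(\iota_{l,k})_!(\sigma_{i,j}^\ast Y)$ again lies in the essential image, so $\sigma_{i,j}^\ast X$ is $k$-cotruncated. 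For the reverse implication I would invoke that $\sigma_{i,j}$ is a strict involution of $\cube{n}_{\leq l}$, whence $\sigma_{i,j}^\ast\circ\sigma_{i,j}^\ast=\id$ by $2$-functoriality of \D; applying the implication just proved to $\sigma_{i,j}^\ast X$ then recovers $X=\sigma_{i,j}^\ast(\sigma_{i,j}^\ast X)$ as $k$-cotruncated. For the final sentence about $l=n$, I would simply observe that $\cube{n}_{\leq n}=\cube{n}$ and $\iota_{n,k}=\iota_{\leq k}$, so that $\D^{\cotr{n}{k}}$ is by definition the essential image of $(\iota_{\leq k})_!$ and membership in it is exactly $k$-cotruncation for $l=n$; the claim then follows from the first assertion.

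There is essentially no hard step: the entire content sits in \autoref{lem:he-swap-coor}, and the rest is the formal remark that an invertible morphism of derivators intertwining a fully faithful Kan extension with itself preserves and reflects its essential image. If one preferred a proof more closely modeled on that of \autoref{cor:swap-cocart}, one could instead compare the adjunction counits $\varepsilon\colon(\iota_{l,k})_!\iota_{l,k}^\ast\to\id$ before and after applying $\sigma_{i,j}^\ast$ using the functoriality of canonical mates with respect to pasting; this produces the same commuting square relating $\varepsilon_{\sigma_{i,j}^\ast X}$ and $\sigma_{i,j}^\ast\varepsilon_X$ whose horizontal legs are the isomorphisms \eqref{eq:sign-rep}, and the conclusion follows since $\sigma_{i,j}^\ast$ reflects isomorphisms. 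The only point requiring a moment's care there is the bookkeeping that makes the two pastings literally agree, exactly as in \autoref{cor:swap-cocart}.
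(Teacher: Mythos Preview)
Your proof is correct, but it takes a genuinely different route from the paper. The paper's one-line proof reads ``immediate from \autoref{cor:swap-cocart} and \autoref{thm:cotr-filtration}'': it invokes the characterization of $k$-cotruncatedness through cocartesianness of all $(k+1)$-subcubes, and then uses that $\sigma_{i,j}^\ast$ preserves and reflects cocartesian cubes. You instead go back directly to \autoref{lem:he-swap-coor} and use the intertwining isomorphism $(\iota_{l,k})_!\,\sigma_{i,j}^\ast\toiso\sigma_{i,j}^\ast\,(\iota_{l,k})_!$ together with the fact that $\sigma_{i,j}^\ast$ is an involution; this is the same mechanism that proves \autoref{cor:swap-cocart} itself, just applied with general $k\leq l$ in place of $(n-1,n)$. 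Your argument is thus more self-contained and avoids the detour through the subcube characterization of \autoref{thm:cotr-filtration}, while the paper's approach has the advantage of reusing results already stated. The alternative you sketch at the end, comparing the two counits via pastings, is essentially the argument of \autoref{cor:swap-cocart} verbatim with $\iota_{l,k}$ in place of $\iota_{\leq n-1}$, and would also work.
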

\begin{proof}
This is immediate from \autoref{cor:swap-cocart} and \autoref{thm:cotr-filtration}.
\end{proof}

\begin{prop}\label{prop:cancel-filt}
Let \D be a derivator, $n\geq 2$, $1\leq k\leq n$, and let $X\in\D^{\cube{n-1}\times[2]}$ with $\iota_{01}^\ast X\in\D^{\cotr{n}{k}}$. Then $\iota_{12}^\ast X\in\D^{\cotr{n}{k}}$ if and only if $\iota_{02}^\ast X\in\D^{\cotr{n}{k}}$.
\end{prop}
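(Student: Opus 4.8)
The plan is to reduce the statement to a comparison of the $(k+1)$-dimensional subcubes of $\iota_{12}^\ast X$ and $\iota_{02}^\ast X$ and then to invoke the two-out-of-three property for cocartesian cubes. First I would dispose of the trivial case $k=n$, where $\D^{\cotr{n}{n}}=\D^{\cube{n}}$ by \autoref{lem:cotrmor}, so from now on $1\leq k\leq n-1$. By \autoref{cor:cotr-filtration}, an $n$-cube $Y\in\D^{\cube{n}}$ lies in $\D^{\cotr{n}{k}}$ if and only if every $(k+1)$-subcube of $Y$ is cocartesian, so it suffices to show that the collections of $(k+1)$-subcubes of $\iota_{12}^\ast X$ and of $\iota_{02}^\ast X$ agree up to the replacements sanctioned by the hypothesis on $\iota_{01}^\ast X$.

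Next I would set up the bookkeeping. Using the splitting $\cube{n}=\cube{n-1}\times[1]$ along the last coordinate, every $(k+1)$-subcube $[x,y]\subseteq\cube{n}$ is of exactly one of two types: a \emph{horizontal} one, lying in $\cube{n-1}\times\{\varepsilon\}$ for some $\varepsilon\in\{0,1\}$, in which case it is a $(k+1)$-subcube $D\subseteq\cube{n-1}$ placed at level $\varepsilon$; or a \emph{mixed} one, of the form $C'\times[1]$ for a $k$-subcube $C'\subseteq\cube{n-1}$. For $0\leq i<j\leq 2$ one reads off that the horizontal $(k+1)$-subcube of $\iota_{ij}^\ast X$ at $D$ in the level-$0$, resp.\ level-$1$, face is $X|_{D\times\{i\}}$, resp.\ $X|_{D\times\{j\}}$, while its mixed $(k+1)$-subcube at $C'$ is the $ij$-slice of $X|_{C'\times[2]}\in\D^{\cube{k}\times[2]}$.

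Now I would feed in the hypothesis $\iota_{01}^\ast X\in\D^{\cotr{n}{k}}$. By \autoref{cor:cotr-filtration} this says precisely that all $X|_{D\times\{0\}}$ and all $X|_{D\times\{1\}}$ are cocartesian, and that the $01$-slice of $X|_{C'\times[2]}$ is cocartesian for every $k$-subcube $C'\subseteq\cube{n-1}$. Consequently: in both $\iota_{02}^\ast X$ and $\iota_{12}^\ast X$ every horizontal $(k+1)$-subcube in the level-$0$ face (namely $X|_{D\times\{0\}}$, resp.\ $X|_{D\times\{1\}}$) is automatically cocartesian; the horizontal $(k+1)$-subcubes in the level-$1$ faces of the two slices coincide, both being the diagrams $X|_{D\times\{2\}}$; and for the mixed subcubes, applying \autoref{thm:cocart}(3) to $X|_{C'\times[2]}\in\D^{\cube{k}\times[2]}$ — legitimate since its $01$-slice is cocartesian, using for $k=1$ the elementary pasting lemma for cocartesian squares in place of \autoref{thm:cocart}(3) — shows that the mixed subcube of $\iota_{12}^\ast X$ at $C'$ is cocartesian if and only if that of $\iota_{02}^\ast X$ at $C'$ is. Matching the conditions of \autoref{cor:cotr-filtration} for $\iota_{02}^\ast X$ and for $\iota_{12}^\ast X$ term by term now gives that $\iota_{02}^\ast X$ is $k$-cotruncated if and only if $\iota_{12}^\ast X$ is.

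The main obstacle is organizational rather than conceptual: getting the dictionary between $(k+1)$-subcubes of $\cube{n}$ and restrictions of $X$ along $\cube{n-1}\times\{i,j\}\hookrightarrow\cube{n-1}\times[2]$ exactly right, and observing the harmless asymmetry that the level-$0$ faces of $\iota_{02}^\ast X$ and $\iota_{12}^\ast X$ are genuinely different diagrams which the hypothesis on $\iota_{01}^\ast X$ nonetheless forces to be cocartesian. Once this dictionary is fixed, the only substantive input is the two-out-of-three property \autoref{thm:cocart}(3).
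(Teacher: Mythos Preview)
Your proposal is correct and follows essentially the same approach as the paper: reduce via \autoref{cor:cotr-filtration} to the $(k+1)$-subcubes, case-split according to whether a subcube lies in a face $\cube{n-1}\times\{\varepsilon\}$ or has the mixed form $C'\times[1]$, and invoke the two-out-of-three property \autoref{thm:cocart}(3) for the mixed ones. Your write-up is in fact a bit more careful than the paper's (you handle both directions simultaneously and flag the $k=1$ edge case for \autoref{thm:cocart}(3)), but the argument is the same.
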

\begin{proof}
As of the position of a $(k+1)$-cube in $\cube{n}=\cube{n-1}\times[1]$ there are the three cases that it lies in $\cube{n-1}\times\{0\},$ in $\cube{n-1}\times\{1\}$, or is the form $\cube{k}\times[1]$ for a suitable $k$-subcube of $\cube{n-1}$. Let us assume that also $\iota_{12}^\ast X\in\D^{\cotr{n}{k}}$ and let us in turn consider the three types of $(k+1)$-cubes in $\iota_{02}^\ast X$. In the first case it lies in $\iota_{01}^\ast X$, in the second case in $\iota_{12}^\ast X$, and in the remaining case it is the composition of a $(k+1)$-cube in each of them. By \autoref{cor:cotr-filtration} all these four $(k+1)$-cubes are cocartesian, hence so is the composition of the latter two (\autoref{thm:cocart}). Invoking \autoref{cor:cotr-filtration} again we deduce that $\iota_{02}^\ast X$ lies in $\D^{\cotr{n}{k}}$. The converse implication is similar.
\end{proof}

\section{Iterated cones and total cofibers}
\label{sec:total}

By \autoref{cor:cotr-filtration} an $n$-cube is cotruncated if and only if suitable subcubes are cocartesian. In this and the following section we study these obstructions more systematically in pointed and stable derivators. Total cofibers in pointed derivators are obstructions against cocartesianness and hence against cotruncatedness, and these obstructions are complete in the stable case. We show that total cofibers are canonically isomorphic to $n$-fold cone constructions (\autoref{thm:total-cof}), hence in the stable case total cofibers are $n$-fold suspensions of total fibers. This leads to a symmetric characterization of cotruncated $n$-cubes in stable derivators (see \S\ref{sec:sse-cotruncated}).

To begin with, we recall the following special case of the construction of canonical comparison maps between colimiting cocones and generic cocones \cite[\S2]{groth:revisit}.

\begin{con}
For every $n\geq 1$, the $n$-cube $\cube{n}$ is the cocone of the punctured $n$-cube $\cube{n}_{\leq n-1}$ with canonical inclusion
\[
\iota_{\leq n-1}\colon\cube{n}_{\leq n-1}\to\cube{n}=(\cube{n}_{\leq n-1})^\rhd.
\]
The cocone $(\cube{n})^\rhd=\cube{n}\ast\bbone$ is obtained from $\cube{n}$ by adjoining a new terminal object $\infty+1$, and this category corepresents morphisms of cocones on punctured cubes. Note that in this category there is a unique morphism $\infty\to\infty+1$ from the former terminal object to the new terminal object. The category $(\cube{n})^\rhd$ comes with fully faithful source and target inclusion functors
\begin{equation}\label{eq:s-t}
s\colon\cube{n}\to (\cube{n})^\rhd\qquad\text{and}\qquad t\colon\cube{n}\to (\cube{n})^\rhd.
\end{equation}
Both functors are the identity on the punctured cube and they are respectively determined by the additional relations $s(\infty)=\infty$ and $t(\infty)=\infty+1$. Given a prederivator \D and $X\in\D^{(\cube{n})^\rhd}$, we refer to $s^\ast X$ and $t^\ast X$ as the \textbf{source $n$-cube} and \textbf{target $n$-cube}, respectively.
\end{con}

Left Kan extension along $s$ simply adds isomorphisms $s_!(X)_\infty\toiso s_!(X)_{\infty+1}$, while left Kan extension along $t$ is described in the following proposition.

\begin{prop}\label{prop:cocart-cocone}
Let \D be a derivator, let $n\geq 1$, and let $s,t\colon\cube{n}\to (\cube{n})^\rhd$ be as in \eqref{eq:s-t}.
\begin{enumerate}
\item The morphism $t_!\colon\D^{\cube{n}}\to\D^{(\cube{n})^\rhd}$ is fully faithful and $Y\in\D^{(\cube{n})^\rhd}$ lies in the essential image of~$t_!$ if and only if the source cube $s^\ast Y$ is cocartesian.
\item An $n$-cube $X\in\D^{\cube{n}}$ is cocartesian if and only if the coherent morphism
\begin{equation}
t_!(X)_\infty\to t_!(X)_{\infty+1}
\end{equation}
is an isomorphism.
\end{enumerate}
\end{prop}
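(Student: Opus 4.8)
The plan is to derive part (i) directly from \autoref{prop:add-colimiting-cocone} and then to read off part (ii) from part (i) together with the characterization of cocartesian $n$-cubes recorded in \autoref{rmk:cocart}. For part (i), the key observation is that $t\colon\cube{n}\to(\cube{n})^\rhd$ is fully faithful with image all of $(\cube{n})^\rhd$ except the single object $\infty=\{1,\ldots,n\}$ (the former terminal object of $\cube{n}$). I would apply \autoref{prop:add-colimiting-cocone} with $A=\cube{n}$, $B=(\cube{n})^\rhd$, $u=t$, $b_0=\infty$, $C=\cube{n}_{\leq n-1}$, $C^\rhd=\cube{n}$, $j=i_C=\iota_{\leq n-1}$, and $k=s$; the requisite square commutes strictly since $s\circ\iota_{\leq n-1}=t\circ\iota_{\leq n-1}$ is the inclusion of the punctured cube, and $k(\infty)=s(\infty)=\infty=b_0$. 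To verify the homotopy exactness of this square I would, following \autoref{rmk:add-colimiting-cocone}, check that the induced functor $\cube{n}_{\leq n-1}\to(t/\infty)$ is homotopy final; this is immediate once one computes $(t/\infty)\cong\cube{n}_{\leq n-1}$ (every object of the punctured cube admits a unique morphism to $\infty$, while $\infty+1=t(\{1,\ldots,n\})$ admits none) and observes that the induced functor is the identity. \autoref{prop:add-colimiting-cocone} then yields that $t_!$ is fully faithful with essential image exactly the $Y\in\D^{(\cube{n})^\rhd}$ for which $s^\ast Y$ is a colimiting cocone on the punctured cube, i.e.\ a cocartesian $n$-cube.

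For part (ii), set $Y=t_!(X)$, so that $t^\ast Y\cong X$ by full faithfulness of $t$, while $s^\ast Y$ is cocartesian by part (i); both restrict to $W:=\iota_{\leq n-1}^\ast X$ over the punctured cube. The coherent morphism $t_!(X)_\infty\to t_!(X)_{\infty+1}$ is $Y$ evaluated on the unique arrow $\infty\to\infty+1$ of $(\cube{n})^\rhd$. Computing the two values pointwise (by the pointwise formula for left Kan extensions) and using the slice identifications $(t/\infty)\cong\cube{n}_{\leq n-1}$ and $(t/\infty+1)\cong\cube{n}$ (the latter because $\infty+1$ is terminal), one obtains canonical isomorphisms $t_!(X)_\infty\cong\colim_{\cube{n}_{\leq n-1}}\iota_{\leq n-1}^\ast X$ and $t_!(X)_{\infty+1}\cong\colim_{\cube{n}}X\cong X_{\{1,\ldots,n\}}$, under which the morphism above becomes the canonical comparison map $\colim_{\cube{n}_{\leq n-1}}\iota_{\leq n-1}^\ast X\to X_{\{1,\ldots,n\}}$ of \autoref{rmk:cocart}. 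Since by that remark $X$ is cocartesian precisely when this comparison map is invertible, part (ii) follows. An alternative, more conceptual packaging of the same argument: the arrow $\infty\to\infty+1$ underlies a $2$-cell $s\Rightarrow t$ that is the identity on the punctured cube, hence induces a morphism of cocones $s^\ast Y\to t^\ast Y$ over $\mathrm{id}_W$; since the source cocone is colimiting this is the unique such morphism, namely (up to the identifications above) the adjunction counit $(\iota_{\leq n-1})_!\iota_{\leq n-1}^\ast X\to X$, whose tip component is the morphism in question, and which is invertible iff $X$ is cocartesian.

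The mechanical content of part (i) is entirely routine given \autoref{prop:add-colimiting-cocone}; the only computation needed is the slice-category identification $(t/\infty)\cong\cube{n}_{\leq n-1}$, which is short. The main obstacle is the identification step in part (ii): one has to check that the coherent morphism $t_!(X)_\infty\to t_!(X)_{\infty+1}$ really coincides with the canonical comparison map of \autoref{rmk:cocart} (equivalently, with the counit of the adjunction $((\iota_{\leq n-1})_!,\iota_{\leq n-1}^\ast)$), which forces one to match up carefully the canonical isomorphisms $t^\ast t_!\cong\mathrm{id}$, the pointwise left-Kan-extension comparison maps, and the unit/counit of $\iota_{\leq n-1}$ over the punctured cube — standard but bookkeeping-heavy manipulations with canonical mates (alternatively one may simply invoke the discussion of canonical comparison maps in \cite[\S2]{groth:revisit}).
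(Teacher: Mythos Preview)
Your proof is correct. The paper itself does not give an argument: it simply records the statement as a special case of \cite[Prop.~3.11]{groth:revisit} and \cite[Prop.~3.14]{groth:revisit}. You instead derive the result internally from \autoref{prop:add-colimiting-cocone} and \autoref{rmk:add-colimiting-cocone}, which is entirely legitimate since those appear earlier (in \S\ref{sec:cotruncated}). The slice computation $(t/\infty)\cong\cube{n}_{\leq n-1}$ and the identification of the coherent morphism $t_!(X)_\infty\to t_!(X)_{\infty+1}$ with the canonical mate of \autoref{rmk:cocart} are exactly right; your alternative packaging via the $2$-cell $s\Rightarrow t$ is also fine and is in fact the viewpoint taken in \cite[\S2--3]{groth:revisit}. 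So your argument is a self-contained unfolding, within the paper's own toolkit, of what the paper outsources to a citation.
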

\begin{proof}
This is a special case of \cite[Prop.~3.11]{groth:revisit} and \cite[Prop.~3.14]{groth:revisit}.
\end{proof}

\begin{defn}\label{defn:tcof}
Let \D be a pointed derivator and let $n\geq 1$. The \textbf{total cofiber} of an $n$-cube $X\in\D^{\cube{n}}$ is the cone of the comparison map~$t_!(X)_\infty\to t_!(X)_{\infty+1},$
\[
\tcof(X)=C\big(t_!(X)_\infty\to t_!(X)_{\infty+1}\big)\in\D.
\]
\end{defn}

\begin{rmk}
Let \D be a pointed derivator and let $n\geq 1$.
\begin{enumerate}
\item For $n=1$ the total cofiber is canonically isomorphic to $C$.
\item There is the dual notion of the \textbf{total fiber} of an $n$-cube.
\item The total cofiber and the total fiber, respectively, define morphisms of derivators
\[
\tcof\colon\D^{\cube{n}}\to\D\qquad\text{and}\qquad\tfib\colon\D^{\cube{n}}\to\D.
\]
\end{enumerate}
\end{rmk}

The duality principle for derivators allows us to focus on total cofibers. 

\begin{cor}\label{cor:cocart-tcof-zero}
Let \D be a pointed derivator, let $n\geq 1$, and let $X\in\D^{\cube{n}}$.
\begin{enumerate}
\item If $X$ is cocartesian, then the total cofiber $\tcof(X)$ vanishes.
\item If \D is stable, then $X$ is cocartesian if and only if the total cofiber $\tcof(X)$ vanishes.
\end{enumerate}
\end{cor}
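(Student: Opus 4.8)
The plan is to obtain both statements as immediate consequences of \autoref{prop:cocart-cocone} together with standard facts about cones in pointed and stable derivators. By \autoref{prop:cocart-cocone} the $n$-cube $X$ is cocartesian if and only if the comparison morphism $f\colon t_!(X)_\infty\to t_!(X)_{\infty+1}$ is an isomorphism, and by \autoref{defn:tcof} the cone of $f$ is precisely $\tcof(X)$. So everything reduces to the relationship between ``$f$ is invertible'' and ``the cone of $f$ vanishes''.

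For the first statement, assume $X$ is cocartesian, so that $f$ is an isomorphism, and recall that the cone of an isomorphism vanishes in any pointed derivator. Indeed, $C(f)$ sits in a cocartesian square with $f$ along the top row and the canonical map to a zero object down the left edge; since a square whose top horizontal map is invertible is cocartesian precisely when its bottom horizontal map is invertible, the map $0\to C(f)$ is an isomorphism, hence $C(f)\simeq 0$. This gives $\tcof(X)\simeq 0$.

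For the second statement the forward implication is the first one, so only the converse uses stability. Assume \D is stable and $\tcof(X)=C(f)\simeq 0$. In a stable derivator a morphism with vanishing cone is an isomorphism --- this is the standard detection property of cones (see \cite{groth:ptstab,gps:mayer,gs:stable}), which follows by rotating the cofiber sequence of $f$ and using that $\Sigma$ is an equivalence. Hence $f$ is an isomorphism and \autoref{prop:cocart-cocone} shows that $X$ is cocartesian. The only input beyond formal manipulation of (co)cartesian squares is this detection property, which is by now routine in the derivator literature, so I expect no genuine obstacle and the proof will be short.
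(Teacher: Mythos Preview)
Your proof is correct and follows essentially the same route as the paper: reduce via \autoref{prop:cocart-cocone} to the comparison morphism $f$, identify $\tcof(X)=C(f)$ by definition, and then invoke that cones of isomorphisms vanish in pointed derivators and that the converse holds in stable ones. The paper's argument is slightly more terse (it cites \cite[Prop.~3.12]{groth:ptstab} for the first fact and states the stable detection property without further elaboration), but the substance is identical.
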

\begin{proof}
By \autoref{prop:cocart-cocone} an $n$-cube $X\in\D^{\cube{n}}$ is cocartesian if and only if the canonical morphism $t_!(X)_\infty\to t_!(X)_{\infty+1}\in\D^{[1]}$ is an isomorphism. Since cones of isomorphisms are trivial \cite[Prop.~3.12]{groth:ptstab}, the first claim follows directly from \autoref{defn:tcof}. The second claim is also immediate since isomorphisms in stable derivators can be characterized by the vanishing of the cone.
\end{proof}

By \autoref{cor:cotr-filtration} there are variants of this for cotruncated $n$-cubes. A systematic approach to a symmetric characterization for stable derivators is obtained through an identification of total cofibers as iterated cones (\autoref{thm:total-cof}), and we come back to this in \S\ref{sec:sse-cotruncated}. We now build towards this result, including some additional tools along the way.

\begin{con}\label{con:cof-comp}
Let \D be a pointed derivator, $n\in\lN$, and $1\leq i\leq n$. For the $n$-tuple $e_i=(\delta_{ij})_{j=1}^n=(0,\ldots,0,1,0,\ldots 0)\in\lN^n$ we denote by
\[
\cof^{e_i}\colon\D^{\cube{n}}\to\D^{\cube{n}}
\]
the morphism which forms the cofiber in the $i$-th coordinate. This is the cofiber morphism with parameters in $\cube{n-1}$. More generally, for $\underline m=(m_1,\ldots,m_n)\in\lN^n$ we make the definition
\begin{equation}\label{eq:cof-comp}
\cof^{\underline m}=(\cof^{e_n})^{m_n}\circ\ldots\circ(\cof^{e_1})^{m_1}\colon\D^{\cube{n}}\to\D^{\cube{n}}.
\end{equation}
\end{con}

\begin{prop}\label{prop:cof-comp}
For every pointed derivator \D and $n\in\lN$ the formula \eqref{eq:cof-comp} extends to a pseudo-action 
\[
\cof^\bullet\colon\lN^n\to\End(\D^{\cube{n}})\colon\underline{m}\mapsto \cof^{\underline m}.
\]
The derivator \D is stable if and only if this pseudo-action extends to a pseudo-action
\[
\cof^\bullet\colon\lZ^n\to\Aut(\D^{\cube{n}}).
\]
\end{prop}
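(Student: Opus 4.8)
The plan is to prove the two assertions separately: the pseudo-action of $\lN^n$ by a coherence argument of $3\times 3$-type, and the stability criterion by reducing to the standard characterization of stability in terms of the suspension.

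For the $\lN^n$-pseudo-action the whole content is that $\cof^{e_i}$ and $\cof^{e_j}$ commute coherently when $i\ne j$. Under the identification $\cube{n}\iso[1]\times\cube{n-1}$ with the distinguished $[1]$ in the $i$-th coordinate, $\cof^{e_i}$ is exactly the one-variable cofiber morphism $\cof\colon\E^{[1]}\to\E^{[1]}$ of the pointed derivator $\E=\D^{\cube{n-1}}$, and that morphism is assembled from left and right Kan extensions and a restriction along the poset maps $[1]\to\ulcorner\to\square$, all living in the single $[1]$-direction. Writing $\cube{n}\iso[1]\times[1]\times\cube{n-2}$ for two distinct coordinates, the operations defining $\cof^{e_i}$ and $\cof^{e_j}$ thus act on disjoint directions, and I would invoke the homotopy exactness of the corresponding product squares of functors — immediate from the pointwise Kan extension formula, axiom (Der4) — to obtain canonical isomorphisms $\cof^{e_i}\circ\cof^{e_j}\iso\cof^{e_j}\circ\cof^{e_i}$. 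Being built from canonical mates, these isomorphisms are automatically coherent by functoriality of mates with respect to pasting; together with the strict relations $(\cof^{e_i})^{m_i}(\cof^{e_i})^{m_i'}=(\cof^{e_i})^{m_i+m_i'}$ and $\cof^{\underline 0}\iso\id$, this upgrades \eqref{eq:cof-comp} to a pseudo-action of the free commutative monoid $\lN^n$. This bookkeeping, a derivator incarnation of the classical $3\times 3$-lemma, is the step I expect to require the most care, although it uses no new ideas.

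For the second assertion I would first record the identity, valid in every pointed derivator, that the threefold partial cofiber is the suspension: $(\cof^{e_i})^3\iso\Sigma$ on $\D^{\cube{n}}$. This follows by pasting three cofiber squares and applying the pasting lemma for cocartesian squares, which identifies $\cof^3(x\to y)$ with $(\Sigma x\to\Sigma y)$ naturally in $(x\to y)$ (cf.\ \cite{groth:ptstab}); parametrizing in the remaining $n-1$ coordinates gives the claim on $\D^{\cube{n}}$. Hence $\cof^{e_i}$ is an autoequivalence of $\D^{\cube n}$ if and only if the suspension $\Sigma\colon\D^{\cube n}\to\D^{\cube n}$ is one — for one direction $\Sigma=(\cof^{e_i})^3$ is a power of $\cof^{e_i}$, and for the other, if $\Sigma=(\cof^{e_i})^3$ is invertible then $(\cof^{e_i})^2\circ\Sigma\inv$ is a two-sided inverse of $\cof^{e_i}$, since $\cof^{e_i}$ commutes with $\Sigma\inv$ — and $\Sigma$ is an autoequivalence if and only if $\D^{\cube n}$ is stable \cite{groth:ptstab}, i.e.\ if and only if $\D$ is stable, stability being preserved and reflected by shifting. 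Assuming $\D$ stable, I then pick quasi-inverses $\cof^{-e_i}$ and set $\cof^{\underline m}=(\cof^{e_n})^{m_n}\circ\ldots\circ(\cof^{e_1})^{m_1}$ for $\underline m\in\lZ^n$; the commutativity isomorphisms for the inverse generators are produced formally from those of the generators, and the coherence data of the $\lN^n$-pseudo-action passes to the group completion $\lZ^n$ because $\lZ^n$ arises from $\lN^n$ by inverting elements which now act invertibly, so $\cof^\bullet$ extends to a pseudo-action $\lZ^n\to\Aut(\D^{\cube n})$. Conversely, if $\cof^\bullet$ extends to such a pseudo-action then each generator $\cof^{e_i}$ is an autoequivalence, whence $\D$ is stable by the equivalences just established. (For $n=0$ there is nothing of substance to prove.)
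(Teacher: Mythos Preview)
Your proposal is correct. The first part is essentially the paper's argument: both boil down to the commutation of Kan extensions and restrictions in independent directions, though the paper packages this more concisely by observing that $\cof\colon\D^{[1]}\to\D^{[1]}$ is a \emph{left adjoint} morphism of pointed derivators (its right adjoint being $\fib$), hence cocontinuous, hence preserves cofibers --- which immediately gives $\cof^{e_i}\circ\cof^{e_j}\iso\cof^{e_j}\circ\cof^{e_i}$ upon parametrization, without unwinding $\cof$ into its constituent Kan extensions.

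For the stability criterion your route is genuinely different. The paper invokes directly the characterization that a pointed derivator is stable if and only if $\cof\colon\D^{[1]}\to\D^{[1]}$ is an equivalence, and then parametrizes. You instead pass through the identity $(\cof^{e_i})^3\cong\Sigma$ (which the paper records separately as \autoref{cor:cof-S}) together with the characterization via invertibility of $\Sigma$. Both are valid; the paper's argument is shorter because the ``$\cof$ is an equivalence'' characterization is exactly tailored to the statement, while your detour through $\Sigma$ requires the extra observation that $\cof^{e_i}$ is invertible iff its cube is. Your route has the minor advantage of making the link to \autoref{cor:cof-S} explicit.
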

\begin{proof}
The cofiber morphism $\cof\colon\D^{[1]}\to\D^{[1]}$ is a left adjoint morphism of pointed derivators, and hence preserves cofibers. Passing to parameterized versions, this shows that for $1\leq i\neq j\leq n$ there are canonical isomorphisms
\[
\cof^{e_i}\circ \cof^{e_j}\toiso\cof^{e_j}\circ\cof^{e_i},
\]
and these induce the desired pseudo-action of $\lN^n$. A pointed derivator \D is stable if and only if $\cof\colon\D^{[1]}\to\D^{[1]}$ is an equivalence, and this property allows us to obtain the extended pseudo-action of $\lZ^n$.
\end{proof}

\begin{notn}\label{notn:cof-comp}
Let \D be a pointed derivator and $n\geq 0$.
\begin{enumerate}
\item For every $k\in\lN$ we denote by $\underline{k}$ the $n$-tuple $(k,\ldots,k)\in\lN^n$ and the corresponding iterated cofiber construction by $\cof^{\underline k}$.
\item In order to emphasize notationally the dimension of the cubes under consideration, we also write
\[
\cof^{\underline{1}_n}=\cof^{\underline{1}}\colon\D^{\cube{n}}\to\D^{\cube{n}}.
\]
\end{enumerate}
\end{notn}

\begin{cor}\label{cor:cof-S}
For every pointed derivator \D and $n\geq 1$ there are canonical isomorphisms
\[
(\cof^{\underline{1}})^3\cong\cof^{\underline 3}\cong \Sigma^n\colon\D^{\cube{n}}\to\D^{\cube{n}}.
\]
\end{cor}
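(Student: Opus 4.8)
The plan is to bootstrap the statement from its one-dimensional instance together with the formal properties of the parametrized cofiber constructions recorded in \autoref{con:cof-comp} and \autoref{prop:cof-comp}. First I would recall the one-dimensional input: for any pointed derivator $\E$ the cofiber endomorphism $\cof\colon\E^{[1]}\to\E^{[1]}$ satisfies $\cof^3\cong\Sigma$, where $\Sigma$ is the suspension of $\E^{[1]}$. This is just the rotation of cofiber sequences, since applying $\cof$ to a coherent morphism $f=(x\to y)$ moves one step along the cofiber sequence: $f\mapsto(y\to C(f))\mapsto(C(f)\to\Sigma x)\mapsto(\Sigma x\to\Sigma y)=\Sigma f$; see \cite{groth:ptstab,gps:mayer}.

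Granting this, the first isomorphism $(\cof^{\underline 1})^3\cong\cof^{\underline 3}$ is immediate from \autoref{prop:cof-comp}: since $\cof^\bullet\colon\lN^n\to\End(\D^{\cube n})$ is a pseudo-action, the threefold composite of $\cof^{\underline 1}$ is canonically isomorphic to $\cof^{\underline 1+\underline 1+\underline 1}=\cof^{\underline 3}$.

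For the second isomorphism I would unwind $\cof^{\underline 3}=(\cof^{e_n})^3\circ\cdots\circ(\cof^{e_1})^3$ as in \eqref{eq:cof-comp} and treat each factor separately. Singling out the $i$-th coordinate identifies $\D^{\cube n}\cong(\D^{\cube{n-1}})^{[1]}$, and under this identification $\cof^{e_i}$ is, by \autoref{con:cof-comp}, precisely the cofiber endomorphism of the pointed derivator $\E:=\D^{\cube{n-1}}$ on $\E^{[1]}$. Hence the one-dimensional fact applies and gives $(\cof^{e_i})^3\cong\Sigma$, the suspension of $\E^{[1]}\cong\D^{\cube n}$; and since suspension commutes with parameter restrictions, this is the suspension $\Sigma\colon\D^{\cube n}\to\D^{\cube n}$, the same morphism for every $i$. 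Composing the resulting $n$ isomorphisms then yields $\cof^{\underline 3}\cong\Sigma\circ\cdots\circ\Sigma=\Sigma^n$.

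The hard part, such as it is, will be the bookkeeping that identifies the parametrized $\cof^3$ with the parametrized suspension and the latter with the suspension $\Sigma$ of $\D^{\cube n}$: one must check that the one-dimensional isomorphism is natural enough to propagate through the parameters in $\cube{n-1}$, and that it is compatible with the coherence isomorphisms of the pseudo-action of \autoref{prop:cof-comp}, so that the two displayed isomorphisms of the corollary fit together consistently. No new idea is needed here --- it is the standard derivator principle that Kan-extension-type constructions are computed pointwise and commute with parameter restrictions --- but it is what would occupy most of a detailed write-up.
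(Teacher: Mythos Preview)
Your proposal is correct and follows essentially the same approach as the paper: the first isomorphism comes from the pseudo-action of \autoref{prop:cof-comp}, and the second from the one-dimensional fact $\cof^3\cong\Sigma$ (which the paper cites as \cite[Lem.~5.13]{gps:mayer}) applied in each coordinate via parametrized Kan extensions. Your write-up simply unpacks what the paper compresses into the phrase ``by passing to parametrized Kan extensions.''
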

\begin{proof}
The first canonical isomorphism is immediate from the pseudo-action of $\lN^n$ (\autoref{prop:cof-comp}). The remaining one follows from \cite[Lem.~5.13]{gps:mayer} by passing to parametrized Kan extensions.
\end{proof}

Instead of encoding the cofiber morphisms in \autoref{con:cof-comp}, we can alternatively keep track of the cofiber objects only, thereby obtaining the following interesting variant.

\begin{notn}\label{notn:cubical-maps}
For every $n\geq 1$ and $\underline{m}=(m_1,\ldots,m_n)\in\{0,1\}^n$ we use the shorthand notation
\[
|\underline m|=m_1+\ldots+m_n.
\]
Associated to every such $\underline{m}$ there are induced cubical structure maps
\[
0=0_{\underline{m}},1=1_{\underline{m}}\colon\cube{n-|\underline{m}|}\to\cube{n}
\]
defined as follows. The functors $0_{\underline{m}},1_{\underline{m}}$ add $|\underline{m}|$ copies of $0,1$, respectively, at every $i$-th coordinates such that $m_i=1$, while they are the identity on the remaining coordinates. Moreover, we denote by $\underline{m}^\vee\in\{0,1\}^n$ the $n$-tuple whose coordinates are given by $1-m_i$. (Considered as objects in $\cube{n}$, we simply pass to the complement.) Finally, using this notation, the functor $\underline{m}\colon\bbone\to\cube{n}$ classifying $\underline{m}$ factors as
\begin{equation}\label{eq:classifying-fun-in-cubes}
\underline{m}=0_{\underline{m}^\vee}\circ\infty\colon\bbone\to\cube{|\underline{m}|}\to\cube{n}.
\end{equation}
\end{notn}

\begin{con}\label{con:C-comp}
Let \D be a pointed derivator and $n\in\lN$. For every $n$-tuple $\underline m=(m_1,\ldots,m_n)\in\{0,1\}^n$ we define the corresponding iterated cone construction as
\begin{equation}\label{eq:C-comp}
C^{\underline m}=(1_{\underline m})^\ast\circ \cof^{\underline m}\colon\D^{\cube{n}}\to\D^{\cube{n-|\underline m|}}.
\end{equation}
In the case of $\underline m=\underline{1}_n$ we simplify notation and write
\[
C^n=C^{\underline{1}_n}=\infty^\ast\circ\cof^{\underline{1}_n}\colon\D^{\cube{n}}\to\D.
\]
In particular, associated to the $n$-tuple $e_i=(\delta_{ij})_{j=1}^n=(0,\ldots,0,1,0,\ldots 0)\in\{0,1\}^n$ for every $1\leq i\leq n$ we obtain the morphism
\[
C^{e_i}\colon\D^{\cube{n}}\to\D^{\cube{n-1}}
\]
which applies the cone $C$ in the $i$-th direction, while dealing the remaining copies of $[1]$ as parameters.
\end{con}

Also in this iterated construction the order does not matter up to canonical isomorphisms.

\begin{cor}\label{cor:C-comp}
For every pointed derivator \D, $n\in\lN$, and $1\leq i<j\leq n$ there is a canonical isomorphism
\[
C^{e_i}\circ C^{e_j}\cong C^{e_{j-1}}\circ C^{e_i}\colon\D^{\cube{n}}\to\D^{\cube{n-2}}.
\]
\end{cor}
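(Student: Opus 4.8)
The plan is to reduce the statement to the commutativity of cofiber morphisms in distinct coordinates (\autoref{prop:cof-comp}) together with the fact that a cofiber morphism, being built with parameters, commutes with restrictions along functors that touch only the parameter coordinates. It is cleanest to work in the set notation \eqref{eq:cube-set}: write $S=\{1,\ldots,n\}$ and, for $a\in S$, let $\cof^{\{a\}}\colon\D^{\cube{S}}\to\D^{\cube{S}}$ be the cofiber in the $a$-th coordinate (with parameters in $\cube{S\setminus\{a\}}$, as in \autoref{con:cof-comp}) and $1_a^\ast\colon\D^{\cube{S}}\to\D^{\cube{S\setminus\{a\}}}$ the restriction along $U\mapsto U\cup\{a\}$, so that $C^{\{a\}}=1_a^\ast\circ\cof^{\{a\}}$ in the notation of \autoref{con:C-comp}. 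Unwinding that construction through the preferred poset isomorphisms $\cube{n-1}\cong\cube{S\setminus\{a\}}$ — and using that deleting coordinate $i$ turns coordinate $j$ into coordinate $j-1$ exactly when $i<j$ — one obtains canonical identifications
\[
C^{e_i}\circ C^{e_j}\;\cong\;1_i^\ast\circ\cof^{\{i\}}\circ 1_j^\ast\circ\cof^{\{j\}}
\]
and
\[
C^{e_{j-1}}\circ C^{e_i}\;\cong\;1_j^\ast\circ\cof^{\{j\}}\circ 1_i^\ast\circ\cof^{\{i\}}
\]
as morphisms $\D^{\cube{S}}\to\D^{\cube{S\setminus\{i,j\}}}$.

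First I would record two elementary commutations, both valid for $a\neq b$ in $S$. On the one hand $1_a^\ast\circ 1_b^\ast=1_b^\ast\circ 1_a^\ast$ on the nose, since both composites are restriction along $U\mapsto U\cup\{a,b\}$. On the other hand $\cof^{\{a\}}\circ 1_b^\ast\cong 1_b^\ast\circ\cof^{\{a\}}$: indeed $\cof^{\{a\}}$ is the morphism of derivators $\cof\colon\D^{[1]}\to\D^{[1]}$ applied with parameters in $\cube{S\setminus\{a\}}$, with the $[1]$-factor being the $a$-th coordinate of $\cube{S}=[1]\times\cube{S\setminus\{a\}}$, and for $b\neq a$ the restriction $1_b^\ast$ has the form $(\id_{[1]}\times u)^\ast$; since morphisms of derivators commute with restrictions, the isomorphism follows.

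Then I would assemble the comparison as a chain of canonical isomorphisms
\begin{align*}
1_i^\ast\cof^{\{i\}}1_j^\ast\cof^{\{j\}}
&\cong 1_i^\ast 1_j^\ast\cof^{\{i\}}\cof^{\{j\}}
\cong 1_j^\ast 1_i^\ast\cof^{\{j\}}\cof^{\{i\}}\\
&\cong 1_j^\ast\cof^{\{j\}}1_i^\ast\cof^{\{i\}},
\end{align*}
where the first and last steps use $\cof^{\{i\}}\circ 1_j^\ast\cong 1_j^\ast\circ\cof^{\{i\}}$ and $\cof^{\{j\}}\circ 1_i^\ast\cong 1_i^\ast\circ\cof^{\{j\}}$, and the middle step combines $1_i^\ast 1_j^\ast=1_j^\ast 1_i^\ast$ with $\cof^{\{i\}}\circ\cof^{\{j\}}\cong\cof^{\{j\}}\circ\cof^{\{i\}}$ from \autoref{prop:cof-comp}. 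Reading the two ends back through the identifications of the first paragraph yields the desired isomorphism $C^{e_i}\circ C^{e_j}\cong C^{e_{j-1}}\circ C^{e_i}$.

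I expect the main obstacle to be purely the bookkeeping: one has to check carefully that the preferred poset isomorphisms identifying $\cube{n-1}$ and $\cube{n-2}$ with the relevant power sets are compatible with the cubical structure maps $0_{\underline m},1_{\underline m}$ of \autoref{notn:cubical-maps}, so that the index shift $e_j\rightsquigarrow e_{j-1}$ is precisely what the two composites produce. This is routine but slightly fiddly, and it is most painlessly handled by staying in the $\cube{S}$-notation throughout and translating back to the $e$-notation only at the very end.
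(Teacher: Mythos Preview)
Your proof is correct and follows precisely the approach the paper intends: the paper's own proof is the single line ``This is immediate from \autoref{prop:cof-comp},'' and what you have written is exactly the unpacking of that line, carefully tracking the index shift via the set notation and using that parameterized cofibers commute with restrictions in parameter directions. Your careful bookkeeping with $\cube{S}$ is a good way to handle the $e_j\rightsquigarrow e_{j-1}$ relabeling.
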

\begin{proof}
This is immediate from \autoref{prop:cof-comp}.
\end{proof}

The classical 3-by-3-lemma in triangulated categories is closely related to this isomorphism as we discuss in the following remark.

\begin{rmk}
Let \D be a pointed derivator and let $\boxbar\in\cCat$ be the poset $[1]\times[2]$. Two iterations of the formation of cofiber squares yield a fully faithful morphism of derivators $\D^{[1]}\to\D^{\boxbar}$ which sends a morphism in \D to its cofiber sequence. Applying this construction in both coordinates independently, we obtain a morphism of derivators
\begin{equation}\label{eq:cof-seq-hyper}
\D^{\cube{2}}\to\D^{\boxbar\times\boxbar}.
\end{equation}
The coherent diagrams in the image of this morphism give rise to underlying incoherent diagrams of the form of the diagrams in the 3-by-3-lemma in triangulated categories (or its unstable analogues). To justify this claim, let us consider $X\in\D^{\cube{2}}$ which looks like
\[
\xymatrix{
x\ar[r]^-f\ar[d]_-g&y\ar[d]^-{g'}\\
z\ar[r]_-{f'}&w.
}
\]
We simplify matters a bit and begin by studying the morphism of pointed derivators
\begin{equation}\label{eq:cof-sq-hyper}
\D^{\cube{2}}\to\D^{\cube{4}}
\end{equation}
which forms \emph{cofiber hypercubes}, i.e., which forms cofiber squares in both directions independently. (This morphism is obtained from \eqref{eq:cof-seq-hyper} by restriction to the first of the four hypercubes.) The image of $X$ under \eqref{eq:cof-sq-hyper} is displayed in \autoref{fig:cof-hypercube}, in which the seven different zero objects are distinguished notationally by additional indices. The large squares in the back and the front are respectively the cofiber squares of $f$ and $f'$, while the upper two diagonal squares are the ones of $g$ and $g'$. The small square in the front is the cofiber square for $Cg\to Cg'$ and the lower right diagonal square is the one of $Cf\to Cf'$. In these two squares, following the notation established in \autoref{con:C-comp}, we denote by $C^2X$ the corresponding iterated cone objects
\[
C(Cf\to Cf')\cong C(Cg\to Cg'),
\]
which are isomorphic by \autoref{cor:C-comp}. Passing to a suitable underlying diagram of shape $[2]\times[2]$, we obtain the corresponding part of the diagram
\[
\xymatrix{
x\ar[r]\ar[d]&y\ar[r]\ar[d]&Cf\ar[d]\ar[r]&\Sigma x\ar[d]\\
z\ar[r]\ar[d]&w\ar[r]\ar[d]&Cf'\ar[d]\ar[r]&\Sigma z\ar[d]\\
Cg\ar[r]\ar[d]&Cg'\ar[r]\ar[d]&C^2X\ar[d]\ar[r]\ar@{}[dr]|{(-1)}&\Sigma Cg\ar[d]\\
\Sigma x\ar[r]&\Sigma y\ar[r]&\Sigma Cf\ar[r]&\Sigma^2x,
}
\]
as we are used to from the 3-by-3-lemma. If we instead apply \eqref{eq:cof-seq-hyper} to $X$, then similar arguments yield the corresponding incoherent diagram of shape $[3]\times[3]$. This diagram consists of eight incoherent cofiber sequences and it commutes with the exception of the lower right square which anticommutes \cite[\S4]{gps:additivity}. (An additional incarnation of this sign shows up in tensor triangulated categories and their refinements (see \cite{margolis:spectra,hps:axiomatic,may:additivity} and the derivatorish perspective on these axioms in \cite{gps:additivity}).)

\begin{figure}
    \centering
    \begin{tikzpicture}[->,xscale=1.6]
      \node (x) at (0.4,7.2) {$x$};
      \node (y) at (6.4,7.2) {$y$};
      \node (z) at (1,6) {$z$};
      \node (w) at (7,6) {$w$};
      \node (0g) at (2.2,5) {$0_g$};
      \node (0g') at (4.2,5) {$0_{g'}$};
      \node (Cg) at (2.8,3.8) {$Cg$};
      \node (Cg') at (4.8,3.8) {$Cg'$};
      \node (00) at (2.2,3) {$0$};
      \node (01) at (4.2,3) {$0_1$};
      \node (02) at (2.8,1.8) {$0_2$};
      \node (CCX) at (4.8,1.8) {$C^2X$};
      \node (0f) at (0.4,1.2) {$0_f$};
      \node (Cf) at (6.4,1.2) {$Cf$};
      \node (0f') at (1,0) {$0_{f'}$};
      \node (Cf') at (7,0) {$Cf'$};
      \draw (x) -- (y); \draw (y) -- (w);
      \draw (x) -- (z);
      \draw (0g) -- (0g'); \draw (Cg) -- (Cg');
      \draw (0g) -- (Cg); \draw (0g') -- (Cg');
      \draw (00) -- (01); \draw (02) -- (CCX);
      \draw (00) -- (02); \draw (01) -- (CCX);
      \draw (0f) -- (Cf); \draw (0f') -- (Cf');
      \draw (0f) -- (0f'); \draw (Cf) -- (Cf');
      \draw (x) -- (0g); \draw (y) -- (0g'); 
      \draw (0f) -- (00); \draw (Cf) -- (01); 
      \draw (x) -- (0f); \draw (y) -- (Cf);
      \draw (w) -- (Cf');
      \draw (0g) -- (00); \draw (0g') -- (01);
      \draw[white,line width=5pt,-] (z) -- (w);     \draw (z) -- (w);  
      \draw[white,line width=5pt,-] (z) -- (Cg);    \draw (z) -- (Cg); 
      \draw[white,line width=5pt,-] (w) -- (Cg');    \draw (w) -- (Cg'); 
      \draw[white,line width=5pt,-] (Cf') -- (CCX);   \draw (Cf') -- (CCX);
      \draw[white,line width=5pt,-] (0f') -- (02);   \draw (0f') -- (02);
      \draw[white,line width=5pt,-] (z) -- (0f');    \draw (z) -- (0f'); 
      \draw[white,line width=5pt,-] (Cg) -- (02);   \draw (Cg) -- (02);
      \draw[white,line width=5pt,-] (Cg') -- (CCX);   \draw (Cg') -- (CCX);
    \end{tikzpicture}
    \caption{The cofiber hypercube of $X$}
    \label{fig:cof-hypercube}
  \end{figure}
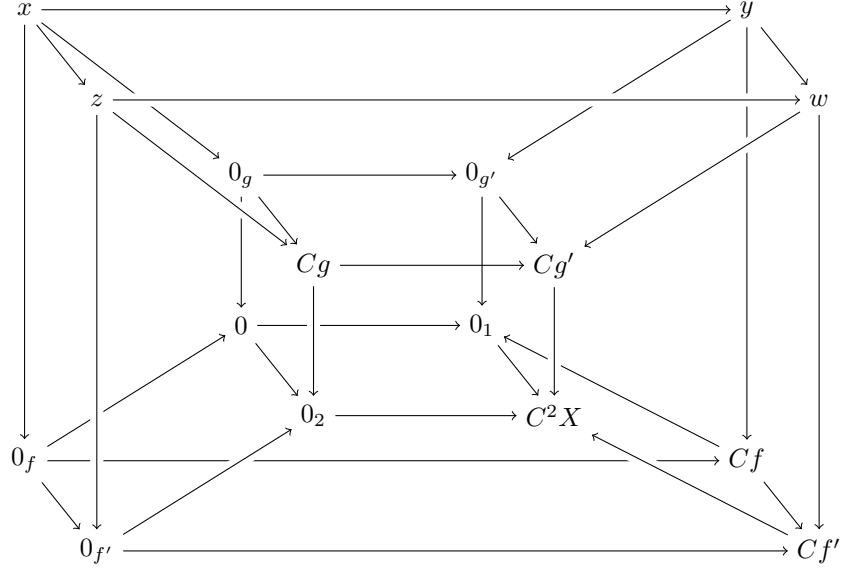
\end{rmk}

We collect a few basic facts on these iterated cone constructions. The formation of cofibers and restrictions in unrelated variables commute with each other. The following lemma describes what happens if both operations are applied to the same variables.

\begin{lem}\label{lem:iter-cones}
Let \D be a pointed derivator and $n\geq 1$.
\begin{enumerate}
\item There are canonical isomorphisms
\[ 
0^\ast\circ\cof\cong 1^\ast\colon\D^{[1]}\to\D\qquad\text{and}\qquad 1^\ast\circ\cof\cong C\colon\D^{[1]}\to\D.
\] 
\item For every $\underline{m}\in\{0,1\}^n$ the following square commutes up to a natural isomorphism
\[
\xymatrix{
\D^{\cube{n}}\ar[r]^-{\cof^{\underline{1}}}\ar[d]_-{(1_{\underline{m}^\vee})^\ast}\ar@{}[dr]|{\cong}&\D^{\cube{n}}\ar[d]^-{(0_{\underline{m}^\vee})^\ast}\\
\D^{\cube{|\underline{m}|}}\ar[r]_-{\cof^{\underline{1}}}&\D^{\cube{|\underline{m}|}}.
}
\]
\item For every $\underline{m}\in\{0,1\}^n$ the following square commutes up to a natural isomorphism
\[
\xymatrix{
\D^{\cube{n}}\ar[r]^-{\cof^{\underline{1}}}\ar[d]_-{(1_{\underline{m}^\vee})^\ast}\ar@{}[rd]|{\cong}&\D^{\cube{n}}\ar[d]^-{\underline{m}^\ast}\\
\D^{\cube{|\underline{m}|}}\ar[r]_-{C^{|\underline{m}|}}&\D.
}
\]
\end{enumerate}
\end{lem}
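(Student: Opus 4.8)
\textbf{Proof proposal for \autoref{lem:iter-cones}.}

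The plan is to prove the three statements in order, with statement (1) essentially unwinding definitions and statements (2) and (3) following by iterating statement (1) one coordinate at a time.

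For statement (1), I would argue directly from the pointed-derivator definition of the cofiber morphism $\cof\colon\D^{[1]}\to\D^{[1]}$. Recall that $\cof$ factors as $\D^{[1]}\xrightarrow{(i)_!}\D^\square\xrightarrow{j^\ast}\D^{[1]}$, where $i\colon[1]\to\square$ is the inclusion of the span $(0,1)\leftarrow(0,0)\to(1,0)$ (with the extra vertex filled by a zero object via right extension by zero along the sieve, or equivalently via the cocartesian square) and $j$ picks out the bottom edge $(1,0)\to(1,1)$; concretely $\cof(X)$ is the morphism $X_1\to C(X)$ where $C(X)=(0,0)$-cofiber. The source of this morphism, obtained by precomposing with $0\colon\bbone\to[1]$, evaluates the cofiber square at the vertex $(1,0)$, which carries $X_1$; hence $0^\ast\circ\cof\cong 1^\ast$. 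The target, obtained by precomposing with $1\colon\bbone\to[1]$, evaluates the cofiber square at the vertex $(1,1)$, which by construction carries $C(X)=C\colon\D^{[1]}\to\D$; hence $1^\ast\circ\cof\cong C$. Both isomorphisms are natural since they are composites of restriction morphisms and canonical mates. This is routine but should be written out carefully since everything else rests on it.

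For statement (2), I would induct on $|\underline{m}^\vee|$, i.e.\ on the number of coordinates in which we restrict to $1$. The key point is that $\cof^{\underline{1}_n}=(\cof^{e_n})\circ\cdots\circ(\cof^{e_1})$ by definition, and by \autoref{prop:cof-comp} these factors commute up to canonical isomorphism, so we may reorder them freely and apply them one coordinate at a time. For a single coordinate $i$ with $m_i=0$ (so we restrict the $i$-th coordinate to $1$), the commutation of restrictions in unrelated variables lets us pull the restriction $(1_{e_i})^\ast$ in the $i$-th direction past all the cofiber morphisms $\cof^{e_j}$ with $j\neq i$ (these are parametrized versions of the one-variable statement that restriction in one variable commutes with cofiber in a disjoint variable), leaving only the interaction with $\cof^{e_i}$ itself. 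For that single interaction, the parametrized version of the second isomorphism in statement (1) — cofiber in the $i$-th variable, then restrict that variable to $1$ — gives exactly a cone $C$ in the $i$-th direction, i.e.\ it replaces $\cof^{e_i}$ followed by $(1_{e_i})^\ast$ by $C^{e_i}$, which on the remaining coordinates still carries a $\cof^{\underline{1}}$ factor. Iterating over all $i$ with $m_i=0$ collapses all those coordinates and leaves precisely $\cof^{\underline{1}}$ on the $|\underline{m}|$ surviving coordinates, which is the content of the square in (2). (Here one uses that $(0_{\underline{m}^\vee})^\ast$ in the statement is simply the restriction picking out the sub-$|\underline m|$-cube at $0$ in the complementary coordinates, which commutes with the surviving $\cof^{\underline{1}}$ by unrelated-variable commutation.)

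For statement (3), I would reduce to statement (2): by \autoref{eq:classifying-fun-in-cubes} the classifying functor $\underline m\colon\bbone\to\cube{n}$ factors as $\underline m = 0_{\underline m^\vee}\circ\infty$, so $\underline m^\ast = \infty^\ast\circ(0_{\underline m^\vee})^\ast$ where now $\infty\colon\bbone\to\cube{|\underline m|}$. Precomposing the square of statement (2) with $\infty^\ast$ on the right-hand vertical, the bottom edge becomes $\infty^\ast\circ\cof^{\underline{1}_{|\underline m|}} = C^{|\underline m|}$ by \autoref{con:C-comp}, giving exactly the square of statement (3). The only subtlety is bookkeeping: checking that $\underline m^\ast$, viewed as a restriction $\D^{\cube{n}}\to\D$, really does decompose as the restriction $(1_{\underline m^\vee})^\ast$ appearing on the left vertical of (3) composed with nothing further — but this is immediate since $\underline m$ as an object of $\cube n$ has $i$-th coordinate $1$ exactly when $m_i=1$, i.e.\ exactly when $i$ is \emph{not} in $\underline m^\vee$, matching the definition of $1_{\underline m^\vee}$ followed by $\underline m\colon\bbone\to\cube{|\underline m|}=\infty$.

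The main obstacle I anticipate is purely notational: keeping the indices $\underline m$, $\underline m^\vee$, $1_{\underline m}$, $0_{\underline m^\vee}$ straight and making sure that "restriction in unrelated variables commutes with cofiber" is invoked with the correct parameter categories at each step. The mathematical content is entirely contained in statement (1) together with the already-established pseudo-action of $\lN^n$ (\autoref{prop:cof-comp}); once those are in hand, (2) and (3) are formal consequences obtained by induction on the number of collapsed coordinates.
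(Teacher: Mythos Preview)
Your overall strategy matches the paper's: part (1) unwinds the definition of $\cof$, part (2) follows by finite induction using (1) together with \autoref{prop:cof-comp}, and part (3) is deduced from (2) via the factorization $\underline m = 0_{\underline m^\vee}\circ\infty$ from \eqref{eq:classifying-fun-in-cubes}. So the architecture is exactly right.

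However, your execution of the inductive step for (2) invokes the wrong half of (1). You write that ``the parametrized version of the \emph{second} isomorphism in statement (1)'' turns $\cof^{e_i}$ followed by $(1_{e_i})^\ast$ into the partial cone $C^{e_i}$. That is true, but it is not what (2) asserts: statement (2) compares $\cof^{\underline 1_{|\underline m|}}\circ(1_{\underline m^\vee})^\ast$ with $(0_{\underline m^\vee})^\ast\circ\cof^{\underline 1_n}$, and no cones appear. The correct move is to start from the right-hand side and, for each coordinate $i$ with $m_i=0$, use the parametrized \emph{first} isomorphism $0^\ast\circ\cof\cong 1^\ast$ to rewrite $(0_{e_i})^\ast\circ\cof^{e_i}$ as $(1_{e_i})^\ast$; then commute $(1_{e_i})^\ast$ past the remaining $\cof^{e_j}$'s ($j\neq i$) in unrelated variables. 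Iterating strips the $\cof^{e_i}$'s for $i$ with $m_i=0$ and moves the corresponding restrictions to the far right, yielding $\cof^{\underline 1_{|\underline m|}}\circ(1_{\underline m^\vee})^\ast$ as required. Your derivation of (3) from (2) is then correct as written; the second isomorphism of (1) enters only implicitly there, through the definition $C^{|\underline m|}=\infty^\ast\circ\cof^{\underline 1_{|\underline m|}}$. This is precisely the index bookkeeping you flagged as the main obstacle---the fix is simply to swap which half of (1) drives the induction for (2).
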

\begin{proof}
The first statement follows from construction of $C$ and $\cof$, and this implies the second one by a finite induction in combination with \autoref{prop:cof-comp}. Finally, for the third statement we recall from \eqref{eq:classifying-fun-in-cubes} that the classifying functor of $\underline{m}$ factors as $\underline{m}=0_{\underline{m}^\vee}\circ\infty,$ and this establishes the first equality in
\begin{align*}
\underline{m}^*\circ\mathsf{cof}^{\underline{1}} &= \infty^*\circ (0_{\underline{m}^{\vee}})^*\circ\mathsf{cof}^{\underline{1}}\\
&\cong \infty^* \circ\mathsf{cof}^{\underline{1}}\circ (1_{\underline{m}^{\vee}})^*\\
&= C^{|\underline{m}|}\circ (1_{\underline{m}^{\vee}})^*.
\end{align*}
The isomorphism follows from the second statement and the second equality holds by construction (\autoref{con:C-comp}).
\end{proof}

As an application of this result, we deduce the following stability of iterated cones under ``small perturbations''.

\begin{cor}\label{cor:cof-stability}
Let \D be a pointed derivator, let $n\geq 1$, and let $0\leq k\leq n$. If $f\colon X\to Y$ is a morphism in $\D^{\cube{n}}$ such that $\iota_{\geq k}^\ast f$ is an isomorphism, then also the restriction $\iota_{\leq n-k}^\ast(\mathsf{cof}^{\underline{1}}f)$ is an isomorphism.
\end{cor}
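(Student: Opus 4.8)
The statement says: if $f\colon X\to Y$ in $\D^{\cube{n}}$ restricts to an isomorphism on $\cube{n}_{\geq k}$, then $\cof^{\underline 1}f$ restricts to an isomorphism on $\cube{n}_{\leq n-k}$. The natural strategy is to work one coordinate at a time, using \autoref{lem:iter-cones}(ii) to trade the full iterated cofiber $\cof^{\underline 1}_n$ against a partial one while pulling back along a $1$-face inclusion, and then to analyze how "being an isomorphism on an up-set of the cube" interacts with a single cofiber morphism $\cof^{e_i}$.

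First I would isolate the one-variable core. Write $\cube{n}=\cube{1}\times\cube{n-1}$ and let $\cof^{e_n}$ be the cofiber in the last coordinate (a morphism of pointed derivators obtained by parametrizing the cofiber $\cof\colon\D^{[1]}\to\D^{[1]}$ over $\cube{n-1}$). The key observation is: if $g$ is a morphism of $[1]$-diagrams (parametrized over anything) whose restriction to the target $\{1\}$ is an isomorphism, then $1^\ast(\cof\, g)\cong C(g)$ sees only the "relative" information and — here I would use that $\cof$ is a left adjoint, hence preserves cofibers, together with the octahedral/2-out-of-3 behavior of cones — one checks directly that $\cof\, g$ restricted to $\{1\}$ is $C(g_1)$ which vanishes when $g_1$ is an iso, while $\cof\, g$ restricted to $\{0\}$ is $C(g_0)\cong g_1$ (by \autoref{lem:iter-cones}(i)), so $\cof\, g$ is an isomorphism on $\{0\}$ whenever $g$ is an isomorphism on $\{1\}$. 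In other words, applying one cofiber morphism $\cof^{e_i}$ to a map which is an iso on $\cube{n}_{=1\text{ in coord }i}\times(\text{anything})$... more precisely: if $\iota_{\geq k}^\ast f$ is an iso, then after applying $\cof^{e_i}$ the new map is an iso on the subposet where coordinate $i$ is $0$ and the remaining coordinates have total cardinality $\geq k$, together with the whole locus where coordinate $i$ is arbitrary but the remaining coordinates already forced an iso. I would phrase the bookkeeping as: $\cof^{e_i}f$ is an isomorphism on $\{x\in\cube{n}: \sum_{j\neq i} x_j \geq k-1\}$ — we "spend" one unit of the cardinality budget in coordinate $i$.

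Then I would run the induction over coordinates $i=1,\dots,n$: after applying $\cof^{e_1}\circ\cdots\circ\cof^{e_i}$ the composite map is an isomorphism on the subposet $\{x: \sum_{j} x_j \geq k - i + \#\{j\leq i: x_j=1\}\}$... this indexing is getting unwieldy, so instead I would track it as: after all $n$ cofibers, $\cof^{\underline 1}f$ is an isomorphism at $x\in\cube{n}$ provided $|x| \leq n-k$, since each of the $n-|x|$ coordinates equal to $0$ contributes one unit toward covering the original deficit of $k$, i.e. we need $n-|x|\geq k$. The cleanest packaging uses \autoref{lem:iter-cones}(iii): the restriction $\underline m^\ast\cof^{\underline 1}$ equals $C^{|\underline m|}\circ(1_{\underline m^\vee})^\ast$, so evaluating $\cof^{\underline 1}f$ at the object $\underline m$ with $|\underline m|\leq n-k$ amounts to applying the $|\underline m|$-fold cone to the map $(1_{\underline m^\vee})^\ast f$, which lives on $\cube{|\underline m|}$ and is obtained by restricting $f$ to the sub-cube where the $n-|\underline m|\geq k$ coordinates in $\underline m^\vee$ are all $1$; every object of that sub-cube, when pushed into $\cube{n}$, has cardinality $\geq n-|\underline m|\geq k$, hence $f$ is already an iso there by hypothesis, so $(1_{\underline m^\vee})^\ast f$ is an iso and therefore so is $C^{|\underline m|}$ applied to it. Running this over all $\underline m\in\{0,1\}^n$ with $|\underline m|\leq n-k$ — equivalently all objects of $\cube{n}_{\leq n-k}$ — gives the claim, since a morphism of coherent diagrams is an isomorphism iff it is so objectwise (axiom (Der2)).

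The main obstacle is the $1$-variable lemma: verifying that applying a single cofiber morphism $\cof^{e_i}$ to a map which is an iso on the "coordinate-$i$-equals-$1$" face produces a map which is an iso on a suitably larger portion — specifically controlling what $\cof^{e_i}f$ does at objects with coordinate $i$ equal to $0$. Here one cannot just cite \autoref{lem:iter-cones} blindly; one needs that $\cof$ is exact (left adjoint of pointed derivators, so preserves cofiber squares and zero objects) to run a $2$-out-of-$3$ argument on the relevant cofiber square, or — cleaner — to observe directly via \autoref{lem:iter-cones}(i) that $0^\ast\cof\cong 1^\ast$ as morphisms $\D^{[1]}\to\D$, which immediately gives $0^\ast(\cof\, g)\cong 1^\ast g$, an iso by hypothesis. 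Once that identity is in hand the rest is bookkeeping, and the \autoref{lem:iter-cones}(iii) repackaging above actually lets one bypass the inductive coordinate-by-coordinate argument entirely, reducing everything to a single application of: "$C^m$ sends isomorphisms to isomorphisms" (true since each $\cof$ and each restriction is a morphism of derivators, hence preserves isos) plus the cardinality count $n-|\underline m|\geq k$.
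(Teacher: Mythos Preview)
Your final argument is correct and is exactly the paper's proof: use (Der2) to check pointwise, invoke \autoref{lem:iter-cones}(iii) to identify $\underline{m}^\ast\cof^{\underline 1}f$ with $C^{|\underline m|}\big((1_{\underline m^\vee})^\ast f\big)$, and observe that $1_{\underline m^\vee}$ factors through $\iota_{\geq k}$ when $|\underline m|\leq n-k$. The exploratory inductive coordinate-by-coordinate argument you sketch first is unnecessary and can be dropped.
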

\begin{proof}
Let $\underline{m} \in \lbrace 0,1 \rbrace^n$ with classifying functor $\underline{m}\colon\bbone\rightarrow\cube{n}.$ By axiom (Der2) it suffices to show that $\underline{m}^*\mathsf{cof}^{\underline{1}}(f)$ is an isomorphism whenever $\vert\underline{m}\vert\leq n-k$.  \autoref{lem:iter-cones} implies that this morphism is isomorphic to $(C^{|\underline{m}|}\circ(1^{\underline{m}^{\vee}})^*)f$, and we show that $(1^{\underline{m}^{\vee}})^*f$ is invertible. In fact, since $|\underline{m}^\vee|=n-|\underline{m}|\geq k$, the inclusion $1^{\underline{m}^{\vee}}$ factors through $\iota_{\geq k}$, and we conclude by assumption on $f$. 
\end{proof}

\begin{rmk}\label{rmk:cof-stability}
\autoref{cor:cof-stability} applies, in particular, to the adjunction morphisms
\[
\eta\colon\id_{\D^{\cube{n}}} \rightarrow (\iota_{\geq k})_*(\iota_{\geq k})^*\qquad\text{and}\qquad \epsilon\colon (\iota_{\geq k})_!(\iota_{\geq k})^* \rightarrow id_{\D^{\cube{n}}}
\]
which are invertible when restricted to $\cube{n}_{\geq k}$. In fact, this follows from the fully faithfulness of the respective Kan extensions and the triangular identities.
\end{rmk}

As a preparation for the applications in \S\ref{sec:sse-cotruncated}, we now collect an alternative description of total cofibers.

\begin{prop}\label{prop:tcof-adjoint}
For every pointed derivator \D and $n\geq 1$ there are adjunctions
\[
(\tcof,\infty_!)\colon\D^{\cube{n}}\rightleftarrows\D\qquad\text{and}\qquad
(\emptyset_\ast,\tfib)\colon\D\rightleftarrows\D^{\cube{n}}.
\]
\end{prop}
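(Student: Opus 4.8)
The plan is to produce both adjunctions simultaneously, deriving one from the other by the duality principle for derivators, so it suffices to construct the first adjunction $(\tcof,\infty_!)\colon\D^{\cube n}\rightleftarrows\D$. The strategy is to factor $\tcof$ through a chain of left adjoint morphisms and then invoke uniqueness/composition of adjoints. Recall from \autoref{defn:tcof} that $\tcof$ is the composite of $t_!\colon\D^{\cube n}\to\D^{(\cube n)^\rhd}$, followed by restriction along the inclusion of the two top objects $\{\infty\to\infty+1\}\hookrightarrow(\cube n)^\rhd$, followed by the cone morphism $C\colon\D^{[1]}\to\D$. Each of these is a left adjoint: $t_!$ is a left Kan extension and hence has right adjoint $t^\ast$; the cone $C\colon\D^{[1]}\to\D$ is a left adjoint (its right adjoint is $\emptyset_\ast$ composed with the appropriate restriction, as recorded in the standard theory of pointed derivators and already used implicitly in \S\ref{sec:total}). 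So $\tcof$ is a composite of left adjoints and therefore is a left adjoint; what remains is to identify the right adjoint with $\infty_!$.

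First I would make the factorization of $\tcof$ completely explicit. Write $w\colon[1]\to(\cube n)^\rhd$ for the functor selecting the arrow $\infty\to\infty+1$, so that $\tcof=C\circ w^\ast\circ t_!$. Then the right adjoint of $\tcof$ is $(t_!)^\ast\circ (w^\ast)^{\mathrm{ra}}\circ C^{\mathrm{ra}} = t^\ast\circ w_\ast\circ C^{\mathrm{ra}}$, where $C^{\mathrm{ra}}$ is the right adjoint of the cone, which takes an object $x\in\D$ to the coherent arrow $0\to x$ sitting in $\D^{[1]}$ (equivalently, $C^{\mathrm{ra}} = \mathrm{cof}^{\mathrm{ra}}$ followed by a restriction; concretely it sends $x$ to the $[1]$-shaped diagram $0\to x$, since the cone of $0\to x$ is $x$). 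Chasing this through: $w_\ast$ of the arrow $0\to x$ is the right Kan extension of that arrow along $w\colon[1]\to(\cube n)^\rhd$; since $\infty+1$ is terminal in $(\cube n)^\rhd$ and $\infty$ is terminal in $\cube n$, the right Kan extension places $x$ at $\infty+1$, the zero object at $\infty$, and — by the pointwise formula for right Kan extensions along a functor whose slices are easy to compute — zero objects at all other vertices of the punctured cube. Finally $t^\ast$ discards the vertex $\infty+1$ and renames $\infty+1\mapsto$ nothing, remembering only $\cube n$; the resulting $n$-cube has $x$ at the terminal object $\infty=\{1,\dots,n\}$ and zeros everywhere else.

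The last step is to recognize that this is precisely $\infty_!(x)$. Since $\infty\colon\bbone\to\cube n$ picks out the terminal object $\{1,\dots,n\}$, which is a cosieve (its inclusion has the property that the comma categories $(\infty/y)$ are empty unless $y=\{1,\dots,n\}$), left Kan extension $\infty_!$ is ``extension by zero objects'' away from $\infty$: by the pointwise formula $(\iota_{\geq n})_!$ and more specifically by the cosieve case of \cite[Prop.~3.6]{groth:ptstab} (or its dual), $\infty_!(x)_y = 0$ for $y\neq\{1,\dots,n\}$ and $\infty_!(x)_{\{1,\dots,n\}}=x$. This matches the description of the right adjoint computed above, so the right adjoint of $\tcof$ is canonically isomorphic to $\infty_!$, establishing the first adjunction. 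The second adjunction $(\emptyset_\ast,\tfib)$ follows by applying this to the opposite derivator $\D\op$ (equivalently, by the duality principle), under which $\tcof$ becomes $\tfib$, $\infty_!$ becomes $\emptyset_\ast$, and the direction of the adjunction reverses; the relevant compatibility of these constructions with passage to $\D\op$ is routine given the definitions in \autoref{defn:tcof} and the accompanying remark. The main obstacle is the bookkeeping in the middle step — carefully identifying the right adjoint of the cone morphism and verifying that the composite $t^\ast w_\ast C^{\mathrm{ra}}$ really does produce the ``$x$ at the tip, zeros elsewhere'' $n$-cube — but this is a direct application of the pointwise Kan extension formulas and the cosieve/cosieve-extension lemmas already available from \cite{groth:ptstab}, so no genuinely new input is required.
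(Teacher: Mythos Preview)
Your proof is correct and follows essentially the same route as the paper: factor $\tcof$ as a composite of left adjoints (the paper writes it as $1^\ast\circ\cof\circ j^\ast\circ t_!$, you group the first two as $C$), compute the composite right adjoint $t^\ast\circ w_\ast\circ(\fib\circ 1_\ast)$ explicitly, and recognize the result as $\infty_!$ via extension by zero along a cosieve. One small slip: your description of $t^\ast$ is garbled---since $t(\infty)=\infty+1$, the restriction $t^\ast$ discards the vertex $\infty\in(\cube n)^\rhd$ (not $\infty+1$) and relabels $\infty+1$ as the terminal object of $\cube n$; your stated conclusion is nonetheless correct.
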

\begin{proof}
Let $j\colon[1]\to (\cube{n})^\rhd$ be the functor classifying the morphism $\infty\to\infty+1$. By \autoref{defn:tcof} the total cofiber morphism is given by the composition
\begin{equation}\label{eq:tcof-defn}
\tcof\colon\D^{\cube{n}}\stackrel{t_!}{\to}\D^{(\cube{n})^\rhd}\stackrel{j^\ast}{\to}\D^{[1]}\stackrel{\cof}{\to}\D^{[1]}\stackrel{1^\ast}{\to}\D.
\end{equation}
Each of these morphisms has a right adjoint and these are respectively given by 
\[
\D\stackrel{1_\ast}{\to}\D^{[1]}\stackrel{\fib}{\to}\D^{[1]}\stackrel{j_\ast}{\to}\D^{(\cube{n})^\rhd}\stackrel{t^\ast}{\to}\D^{\cube{n}}.
\]
It remains to calculate the effect of this chain of right adjoints. For $x\in\D$ we have
\[
(\fib\circ 1_\ast)(x)\cong\fib(x\toiso x)\cong (0\to x).
\]
In order to calculate $j_\ast$ we invoke the pointwise formula (axiom (Der4)). For every object $a\in\cube{n}$ the slice category $(a/j)$ contains $(a,a\to\infty)$ as initial object, and by homotopy initiality we hence obtain
\[
j_\ast(0\to x)_a\cong 0,\quad a\neq\infty+1\in(\cube{n})^\rhd.
\]
As an upshot, the chain of right adjoints sends $x\in\D$ to a coherent $n$-cube supported at $\infty$ only where it takes $x$ as value. But since $\infty\colon\bbone\to\cube{n}$ is a cosieve, also the morphism $\infty_!$ is left extension by zero, yielding the adjunction $(\tcof,\infty_!)$.
\end{proof}

\begin{rmk}\label{rmk:review-exc-im}
Let \D be a pointed derivator and $n\geq 1$. The adjunction
\[
(\tcof,\infty_!)\colon\D^{\cube{n}}\rightleftarrows\D
\]
exhibits $\tcof$ as an \emph{exceptional inverse image morphism}
\begin{equation}\label{eq:tcof-exc}
\tcof\cong\infty^?\colon\D^{\cube{n}}\to\D.
\end{equation}
We recall from \cite[\S3.1]{groth:ptstab} that these are additional left adjoints to left Kan extensions along cosieves which exist for pointed derivators.
\end{rmk}

\begin{thm}\label{thm:total-cof}
For every pointed derivator \D and $n\geq 1$ there is a canonical isomorphism
\[
\tcof\cong C^n\colon\D^{\cube{n}}\to\D.
\]
\end{thm}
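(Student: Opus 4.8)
The plan is to leverage the two adjunction statements established just before the theorem. By \autoref{prop:tcof-adjoint} (or equivalently \autoref{rmk:review-exc-im}), $\tcof$ is a left adjoint, with right adjoint $\infty_!$. First I would check that $C^n$ is also a left adjoint: by \autoref{prop:cof-comp} the cofiber morphism $\cof\colon\D^{[1]}\to\D^{[1]}$ is a left adjoint, with right adjoint the fiber morphism $\fib$, so its iterated parametrized version $\cof^{\underline{1}_n}\colon\D^{\cube n}\to\D^{\cube n}$ is a left adjoint, with right adjoint the corresponding iterated parametrized fiber morphism $\fib^{\underline{1}_n}$ (the order of the factors being immaterial, dually to \autoref{cor:C-comp}). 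Composing with the adjunction $\infty^\ast\dashv\infty_\ast$ then exhibits $C^n=\infty^\ast\circ\cof^{\underline{1}_n}$ as a left adjoint with right adjoint $\fib^{\underline{1}_n}\circ\infty_\ast$.

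Once both $\tcof$ and $C^n$ are presented as left adjoints, by uniqueness of adjoints it suffices to produce a natural isomorphism between their right adjoints, i.e. $\infty_!\cong\fib^{\underline{1}_n}\circ\infty_\ast\colon\D\to\D^{\cube n}$. I would compute the right-hand side directly: since $\infty\colon\bbone\to\cube n$ picks out the terminal object of $\cube n$, all comma categories $(a/\infty)$ are trivial, so the pointwise formula (Der4) identifies $\infty_\ast(x)$ with the constant $n$-cube on $x$. Applying the fiber morphisms $\fib^{e_i}$ one coordinate at a time and using the identity $\fib(x\toiso x)\cong(0\to x)$ already recorded in the proof of \autoref{prop:tcof-adjoint}, a short induction gives that $\fib^{\underline{1}_n}$ sends the constant $n$-cube on $x$ to the $n$-cube supported at $\infty$ with value $x$. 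As $\infty$ is a cosieve, $\infty_!$ is left extension by zero, which is exactly that $n$-cube, so the two right adjoints agree naturally in $x$.

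The argument is mostly bookkeeping once the adjunctions are in place; the one place needing a little care is the inductive computation of $\fib^{\underline{1}_n}$ on the constant cube, where one must track that the intermediate diagrams remain zero outside the relevant subcube and constant on it, so that the next $\fib^{e_i}$ again only sees isomorphisms. Alternatively, one could argue by induction on $n$: the base case is $C^1=1^\ast\circ\cof\cong C\cong\tcof$ by the first part of \autoref{lem:iter-cones}, and the inductive step would rely on a decomposition $\tcof_n\cong C\circ(\tcof_{n-1})^{[1]}$ paralleling the evident $C^n\cong C\circ(C^{n-1})^{[1]}$; but establishing that decomposition looks to cost about as much effort as the adjoint computation, so I would favor the adjoint route.
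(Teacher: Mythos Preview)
Your proof is correct and rests on the same core idea as the paper's: both identify $\tcof$ as the left adjoint to $\infty_!$ (\autoref{prop:tcof-adjoint}) and then invoke uniqueness of adjoints. The difference is only in how the other half is handled. The paper works on the \emph{left-adjoint side}: it factors the cosieve $\infty\colon\bbone\to\cube{n}$ as the composite of the $n$ cosieves $1\colon\cube{k}\to\cube{k+1}$, so that $\infty_!\cong 1_!\circ\cdots\circ 1_!$ and hence $\infty^?\cong 1^?\circ\cdots\circ 1^?$; it then cites the identification $1^?\cong C^{e_{k+1}}$ from \cite[Prop.~3.22]{groth:ptstab} to conclude $\tcof\cong\infty^?\cong C^{e_1}\circ\cdots\circ C^{e_n}=C^n$. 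You instead work on the \emph{right-adjoint side}: you write down the right adjoint $\fib^{\underline{1}_n}\circ\infty_\ast$ of $C^n=\infty^\ast\circ\cof^{\underline{1}_n}$ from the adjunction $\cof\dashv\fib$, and then compute it explicitly to be $\infty_!$. The paper's route is slightly more modular in that it records the intermediate identification $C\cong 1^?$ (used elsewhere, e.g.\ in \eqref{eq:partial-cone-exc}); your route is a touch more self-contained since it rederives that fact from $\cof\dashv\fib$ rather than citing it. Either way the argument is short and the inductive computation you flag as ``needing a little care'' goes through exactly as you describe.
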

\begin{proof}
The cosieve $\infty\colon\bbone\to\cube{n}$ factors as a composition of $n$ cosieves
\[
\infty\colon\bbone\stackrel{1}{\to}[1]\stackrel{1}{\to}\square\stackrel{1}{\to}\ldots\stackrel{1}{\to}\cube{n},
\]
where we invoke the shorthand notation
\[
1=\id_{\cube{k}}\times 1\colon\cube{k}\to\cube{k+1},\quad 0\leq k\leq n-1. 
\]
For each of these cosieves the corresponding exceptional inverse image morphism is given by
\begin{equation}\label{eq:partial-cone-exc}
1^?\cong C^{e_{k+1}}\colon\D^{\cube{k+1}}\to\D^{\cube{k}},
\end{equation}
as is immediate from \cite[Prop.~3.22]{groth:ptstab} by dealing $\cube{k}$ in $\cube{k}\times[1]$ as parameters. By \eqref{eq:tcof-exc} and two applications of the uniqueness of left adjoints (first to left Kan extensions and then to exceptional inverse images) we obtain the first two canonical isomorphisms in 
\begin{align*}
\tcof&\cong \infty^?\\
&\cong 1^?\circ\ldots\circ1^?\\
&\cong C^{e_1}\circ\ldots\circ C^{e_n}\\
&= C^n.
\end{align*}
The remaining isomorphism is simply \eqref{eq:partial-cone-exc}.
\end{proof}

\begin{rmk}\label{rmk:SFCn}
For every pointed derivator \D there is a canonical comparison map
\begin{equation}\label{eq:SFC}
\Sigma\circ F\to C\colon\D^{[1]}\to\D,
\end{equation}
and \D is stable if and only if this comparison map is invertible. If $n\geq 1$ and $\underline{m}\in\{0,1\}^n$, then an application of \eqref{eq:SFC} in $|\underline m|$ coordinates independently yields a canonical comparison map
\[
\Sigma^{|\underline m|}\circ F^{\underline m}\to C^{\underline{m}}\colon\D^{\cube{n}}\to\D^{\cube{n-|\underline m|}},
\]
which is invertible in stable derivators. This applies, in particular, to the canonical comparison maps
\[
\Sigma^n\circ\tfib\to\tcof\colon\D^{\cube{n}}\to\D.
\]
\end{rmk}

\autoref{thm:total-cof} suggests that $\cof^{\underline{1}}$ is more elementary than $\tcof$ in that the latter can be obtained from the former by evaluation. To conclude this section we show that this is only seemingly the case by proving that $\cof^{\underline{1}}$ can be derived from a parametrized version of $\tcof$ (\autoref{prop:tcof-cof1}).

\begin{lem}\label{lem:restr-tcof}
Let \D be a pointed derivator, $n\geq 1$, and $\underline{m}\in\cube{n}$. There is a canonical natural isomorphism $\tcof\cong\tcof\circ (1_{\underline{m}^\vee})_!,$
\[
\xymatrix{
\D^{\cube{|\underline{m}|}}\ar[r]^-{(1_{\underline{m}^\vee})_!}\ar[dr]_-\tcof&\D^{\cube{n}}\ar[d]^-\tcof\\
&\D.
}
\]
\end{lem}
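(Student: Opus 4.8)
The plan is to present both $\tcof\circ(1_{\underline{m}^\vee})_!$ and the total cofiber $\tcof\colon\D^{\cube{|\underline{m}|}}\to\D$ as left adjoints of one and the same morphism of derivators, and then to conclude by uniqueness of adjoints. Throughout, $\infty$ denotes the top vertex of the relevant cube, so there are cosieves $\infty\colon\bbone\to\cube{|\underline{m}|}$ and $\infty\colon\bbone\to\cube{n}$. I would first invoke \autoref{prop:tcof-adjoint}, which for every $k\geq 1$ exhibits the total cofiber as left adjoint to left Kan extension along $\infty\colon\bbone\to\cube{k}$, i.e.\ gives an adjunction $(\tcof,\infty_!)\colon\D^{\cube{k}}\rightleftarrows\D$ (with the convention that for $k=0$, where $\D^{\cube{0}}=\D$, one reads $\tcof=\id=\infty_!$). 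Since $(1_{\underline{m}^\vee})_!\colon\D^{\cube{|\underline{m}|}}\to\D^{\cube{n}}$ is left adjoint to $(1_{\underline{m}^\vee})^\ast$, composing adjunctions shows that $\tcof\circ(1_{\underline{m}^\vee})_!\colon\D^{\cube{|\underline{m}|}}\to\D$ is left adjoint to $(1_{\underline{m}^\vee})^\ast\circ\infty_!\colon\D\to\D^{\cube{n}}$, where here $\infty_!$ is left extension along $\infty\colon\bbone\to\cube{n}$.

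The key step is then to identify this right adjoint with $\infty_!\colon\D\to\D^{\cube{|\underline{m}|}}$. Reading off the construction of the cubical structure maps in \autoref{notn:cubical-maps} (compare \eqref{eq:classifying-fun-in-cubes}), the functor $1_{\underline{m}^\vee}$ sends a subset $T$ of $\{i\mid m_i=1\}$ to $T\cup\{i\mid m_i=0\}\subseteq\{1,\ldots,n\}$; in particular it sends the top vertex of $\cube{|\underline{m}|}$ to the top vertex of $\cube{n}$, i.e.\ $1_{\underline{m}^\vee}\circ\infty=\infty$. Moreover $1_{\underline{m}^\vee}$ is fully faithful, since for $T,T'\subseteq\{i\mid m_i=1\}$ one has $T\cup\{i\mid m_i=0\}\subseteq T'\cup\{i\mid m_i=0\}$ if and only if $T\subseteq T'$. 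Hence, using pseudo-functoriality of left Kan extensions and the fact that Kan extensions along fully faithful functors are fully faithful,
\[
(1_{\underline{m}^\vee})^\ast\circ\infty_!\cong(1_{\underline{m}^\vee})^\ast\circ(1_{\underline{m}^\vee})_!\circ\infty_!\cong\infty_!\colon\D\to\D^{\cube{|\underline{m}|}}.
\]

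Finally, $\tcof\circ(1_{\underline{m}^\vee})_!$ and $\tcof\colon\D^{\cube{|\underline{m}|}}\to\D$ are thus both left adjoint to $\infty_!\colon\D\to\D^{\cube{|\underline{m}|}}$, so they are canonically isomorphic by uniqueness of adjoints, and transporting the triangular identities produces the stated commuting triangle. I do not anticipate a genuine obstacle here: the only point demanding care is the bookkeeping of the two top-vertex cosieves $\infty$ and the verification that $1_{\underline{m}^\vee}$ intertwines them and is fully faithful, after which the argument is entirely formal (and in particular degenerates correctly in the extreme cases $|\underline{m}|=0$ and $|\underline{m}|=n$). An alternative route, should one prefer not to pass through adjoints, is to combine \autoref{thm:total-cof} (that $\tcof\cong C^n$) with \autoref{lem:iter-cones}; but the adjunction argument above seems the most economical.
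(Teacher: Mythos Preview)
Your proof is correct and follows essentially the same approach as the paper: both argue via \autoref{prop:tcof-adjoint} that $\tcof\circ(1_{\underline{m}^\vee})_!$ is left adjoint to $(1_{\underline{m}^\vee})^\ast\circ\infty_!\cong\infty_!$, and then conclude by uniqueness of adjoints. You have merely spelled out in more detail the identification $(1_{\underline{m}^\vee})^\ast\circ\infty_!\cong\infty_!$ (via $1_{\underline{m}^\vee}\circ\infty=\infty$ and full faithfulness of $1_{\underline{m}^\vee}$), which the paper leaves implicit.
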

\begin{proof}
By \autoref{prop:tcof-adjoint} the composition $\tcof\circ (1_{\underline{m}^\vee})_!$ is left adjoint to the composition $(1_{\underline{m}^\vee})^\ast\circ \infty_!\cong \infty_!$. By uniqueness of adjoints the claim follows from a second application of \autoref{prop:tcof-adjoint}.
\end{proof}

As a consequence of \autoref{thm:total-cof} and \autoref{lem:restr-tcof}, the local description for the cofiber cube $\cof^{\underline{1}}$ provided by \autoref{lem:iter-cones} (iii) admits the reformulation
\[
\underline{m}^*\circ\cof^{\underline{1}}\cong\tcof\circ (1_{\underline{m}^\vee})_!,\qquad\underline{m}\in\cube{n}.
\]
To show that these natural isomorphisms are coherently compatible for $\underline{m}\in\cube{n}$ we consider the following construction.

\begin{con}
Let \D be a pointed derivator, let $n\geq 0$, and let $A$ be the full subcategory of $\cube{n}\times\cube{n}$ spanned by the objects
\[
\lbrace(\underline{m}_1,\underline{m}_2)\in\cube{n}\times\cube{n}\vert\;\underline{m}_2^{\vee}\subseteq\underline{m}_1\rbrace.
\]
Furthermore, let $i\colon A\rightarrow\cube{n}\times\cube{n}$ be the inclusion and $p=\pi\circ i$ be the composition with the projection to the \emph{second} component. We define
\[
E=i_!\circ p^*\colon\D^{\cube{n}}\rightarrow\D^{\cube{n}\times\cube{n}}
\]
and observe that $i_!$ is left extension by zero since $i$ is a cosieve. The local description
\begin{equation}\label{eq:counit-cube}
(\id\times\underline{m}_2)^\ast\circ E \cong (1_{\underline{m}_2^{\vee}})_!\circ(1_{\underline{m}_2^{\vee}})^*
\end{equation}
follows directly from the definition and a more careful investigation shows, that for $\underline{m}_2\in\cube{n}$ and $i_{\underline{m}_2}\colon [1]\rightarrow\cube{n}$ the map classifying $\underline{m}_2\rightarrow\infty$ the inverse image
\[
(\id\times i_{\underline{m}_2})^*\circ E\colon\D^{\cube{n}}\rightarrow\D^{\cube{n}\times [1]}
\]
gives a coherent model for the counit of the adjunction $((1_{\underline{m}_2^{\vee}})_!,(1_{\underline{m}_2^{\vee}})^*)$. This also explains the notation $E$, which is meant to be reminiscent of a capital epsilon.
\end{con}

In the following proposition, $\tcof^{\cube{n}}\colon\D^{\cube{n}\times\cube{n}}\rightarrow\D^{\cube{n}}$ denotes the total cofiber morphism for the shifted derivator $\D^{\cube{n}}$ with parameters in the second component in $\cube{n}\times\cube{n}$.

\begin{prop}\label{prop:tcof-cof1}
Let \D be a pointed derivator and $n\geq 1$. There is a natural isomorphism
\[
\tcof^{\cube{n}}\circ E\cong\cof^{\underline{1}}\colon\D^{\cube{n}}\to\D^{\cube{n}}.
\]
\end{prop}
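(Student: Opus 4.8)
The plan is to promote the pointwise identification $\tcof^{\cube{n}}(EX)_{\underline{m}}\cong\cof^{\underline{1}}(X)_{\underline{m}}$, which falls out of the tools already assembled, to a coherent isomorphism of morphisms of derivators; the coherence is the only genuine issue.

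First the pointwise part. Fix $\underline{m}\in\cube{n}$. The total cofiber morphism $\tcof^{\cube{n}}$ is a parametrized construction, hence compatible with restriction in the parameter direction, so $\underline{m}^{*}\circ\tcof^{\cube{n}}\cong\tcof\circ(\id\times\underline{m})^{*}$. Composing with $E$ and using the local description \eqref{eq:counit-cube} gives
\[
\underline{m}^{*}\circ\tcof^{\cube{n}}\circ E\;\cong\;\tcof\circ(1_{\underline{m}^{\vee}})_{!}\circ(1_{\underline{m}^{\vee}})^{*}.
\]
Now \autoref{lem:restr-tcof} cancels the left Kan extension, \autoref{thm:total-cof} rewrites $\tcof$ as $C^{|\underline{m}|}$ on $\D^{\cube{|\underline{m}|}}$, and \autoref{lem:iter-cones}(iii) recognizes the outcome as $\underline{m}^{*}\circ\cof^{\underline{1}}$. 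Thus the two endomorphisms of $\D^{\cube{n}}$ agree on objects, and by axiom (Der2) it suffices to exhibit a single morphism of derivators $\tcof^{\cube{n}}\circ E\to\cof^{\underline{1}}$ inducing these isomorphisms.

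To construct that morphism I would use that $E$ was designed precisely to be the coherent assembly of the counits of the adjunctions $\big((1_{\underline{m}^{\vee}})_{!},(1_{\underline{m}^{\vee}})^{*}\big)$: writing $E=i_{!}\circ i^{*}\circ q^{*}$ with $q\colon\cube{n}\times\cube{n}\to\cube{n}$ the projection through which $p$ factors, the counit $i_{!}i^{*}\to\id$ restricts, over each parameter $\underline{m}$, to the counit $(1_{\underline{m}^{\vee}})_{!}(1_{\underline{m}^{\vee}})^{*}\to\id$ in coherent form --- this is exactly the content of \eqref{eq:counit-cube} and of the remark on $(\id\times i_{\underline{m}})^{*}E$. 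Applying the parametrized total cofiber to this coherent counit and invoking \autoref{lem:restr-tcof}, which says precisely that $\tcof$ sends each such counit to an isomorphism, produces the comparison; concretely it is most cleanly obtained from the exceptional inverse image descriptions $\tcof^{\cube{n}}\cong(\infty\times\id)^{?}$ (\autoref{rmk:review-exc-im}, parametrized) and $C^{e_{i}}\cong 1^{?}$ (proof of \autoref{thm:total-cof}) by base change for left extension by zero along the pullback square whose upper left corner is the inclusion $\cube{n}\hookrightarrow A$, $\underline{m}\mapsto(\infty,\underline{m})$ --- a square that is again of the expected shape because $(\infty,\underline{m})\in A$ for every $\underline{m}$. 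The main obstacle is organizational rather than conceptual: one must check that the parameter-direction structure maps of $E$ are carried by $\tcof^{\cube{n}}$ to the structure maps of $\cof^{\underline{1}}$, which after the identifications above reduces to the compatibility of total cofibers of nested subcubes, itself another instance of \autoref{lem:restr-tcof} together with the pasting calculus for canonical mates.
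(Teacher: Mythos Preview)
Your pointwise computation is correct and matches the paper's identification $\underline{m}^{*}\circ\tcof^{\cube{n}}\circ E\cong\underline{m}^{*}\circ\cof^{\underline{1}}$. The difficulty, as you say, is the coherence, and here your proposal has a genuine gap.

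You suggest applying $\tcof^{\cube{n}}$ to the counit $i_{!}i^{*}\pi^{*}\to\pi^{*}$ (where $\pi$ is your $q$, the second projection). But $\pi^{*}X$ is constant in the first coordinate, hence $\tcof^{\cube{n}}\pi^{*}\cong 0$, so this gives a map $\tcof^{\cube{n}}E\to 0$, not a map to $\cof^{\underline{1}}$. The invocation of \autoref{lem:restr-tcof} does not say that $\tcof$ inverts the counit $(1_{\underline{m}^{\vee}})_{!}(1_{\underline{m}^{\vee}})^{*}X\to X$; it says $\tcof\circ(1_{\underline{m}^{\vee}})_{!}\cong\tcof$ as morphisms out of $\D^{\cube{|\underline{m}|}}$, which is a different statement. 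The alternative ``base change'' description you sketch is too vague to evaluate, and the final paragraph essentially restates the problem rather than solving it: you still need a single coherent transformation whose components are the specific pointwise isomorphisms you have built from several different lemmas.

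The paper's device is worth knowing. It introduces the auxiliary object $E'=(\cof^{\underline{1}}\times\id)\circ E\colon\D^{\cube{n}}\to\D^{\cube{n}\times\cube{n}}$ and observes that \emph{both} sides are restrictions of $E'$: namely $(\infty\times\id)^{*}E'\cong\tcof^{\cube{n}}E$ and $(\id\times\infty)^{*}E'\cong\cof^{\underline{1}}$. The natural transformations $\alpha_{1}\colon\Delta\to(\infty\times\id)$ and $\alpha_{2}\colon\Delta\to(\id\times\infty)$ in $\cCat$ then yield honest natural transformations $\alpha_{i}^{*}E'$ in the derivator, and one checks pointwise (via (Der2)) that each is invertible by showing the relevant restricted cubes are constant. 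The desired isomorphism is the zigzag $(\alpha_{2}^{*}E')\circ(\alpha_{1}^{*}E')^{-1}$. The point is that by placing both morphisms inside a single coherent diagram one gets the comparison map for free from the $2$-functoriality of $\D$, sidestepping the bookkeeping you were trying to do by hand.
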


\begin{proof}
The morphisms in the statement can be obtained from the $2n$-cube
\[
E'=(\cof^{\underline{1}}\times\id)\circ E\colon\D^{\cube{n}}\rightarrow\D^{\cube{n}\times\cube{n}}
\]
via postcomposition with restrictions
\begin{equation}\label{eq:tcof-cof1}
(\infty\times\id)^\ast\circ E'\cong\tcof^{\cube{n}}\circ E \qquad\text{ and }\qquad(\id\times\infty)^\ast\circ E'\cong\cof^{\underline{1}}.
\end{equation}
The first identification follows from \autoref{lem:iter-cones} and \autoref{thm:total-cof} whereas the second one is immediate. To relate these restrictions we consider the natural transformations $\alpha_1\colon\Delta\rightarrow(\infty\times\id)$ and $\alpha_2\colon\Delta\rightarrow(\id\times\infty)$ of functors $\cube{n}\rightarrow\cube{n}\times\cube{n}$. We show that for all $\underline{m}\in\cube{n}$ the natural transformations $\underline{m}^*\circ\alpha_1^*\circ E'$ and $\underline{m}^*\circ\alpha_2^*\circ E'$ are isomorphisms. For this, it is clearly sufficient to prove that the $\underline{m}^{\vee}$-cubes 
\[
Q_1=((1_{\underline{m}})^*\times\underline{m}^*)\circ E' \qquad\text{ and }\qquad Q_2=(\underline{m}^*\times(1_{\underline{m}})^*)\circ E'
\]
are constant, since the natural transformations under consideration describe the passage from the initial to the final vertex in $Q_1$ respectively $Q_2$.
\begin{enumerate}
\item In the case of $Q_1$ we make the identifications
\begin{align*}
&Q_1\\
=\;\; &((1_{\underline{m}})^*\times\underline{m}^*)\circ E'\\
=\;\; &((1_{\underline{m}})^*\times\underline{m}^*)\circ(\cof^{\underline{1}}\times\id)\circ E\\
\cong\;\; & (1_{\underline{m}})^*\circ\cof^{\underline{1}}\circ(\id\times\underline{m}^*)\circ E \\
\cong\;\; & (1_{\underline{m}})^*\circ\cof^{\underline{1}}\circ (1_{\underline{m}^{\vee}})_!\circ (1_{\underline{m}^{\vee}})^*\\
\cong\;\; & C^{\underline{m}}\circ\cof^{\underline{m}^{\vee}}\circ (1_{\underline{m}^{\vee}})_!\circ (1_{\underline{m}^{\vee}})^*\\
\cong\;\; & C^{\underline{m}}\circ(\pi^{\underline{m}^{\vee}})^*\circ (1_{\underline{m}^{\vee}})^*
\end{align*}
where $\pi^{\underline{m}^{\vee}}\colon\cube{n}\rightarrow\cube{\underline{m}}$ is the projection. In the second isomorphism  we use \eqref{eq:counit-cube}, the third isomorphism is \autoref{lem:iter-cones}(i) and the last step follows from the obvious relation 
\[
\cof\circ 1_!\cong\pi^*\colon\D\rightarrow\D^{[1]}.
\]
These identifications show that $Q_1$ is constant.
\item Similarly, in the case of $Q_2$ we calculate
\begin{align*}
&Q_2\\
=\;\; & (\underline{m}^* \times (1_{\underline{m}})^*) \circ E'\\
=\;\; & (\underline{m}^* \times (1_{\underline{m}})^*) \circ (\cof^{\underline{1}}\times\id)\circ E\\
\cong\;\; & (\id\times(1_{\underline{m}})^*)\circ((\underline{m}^*\circ\cof^{\underline{1}})\times\id)\circ E \\
\cong\;\; & (\id\times(1_{\underline{m}})^*)\circ((C^{\underline{m}}\circ(1_{\underline{m}^{\vee}})^*)\times\id)\circ E \\
\cong\;\; & (C^{\underline{m}}\times\id)\circ((1_{\underline{m}^{\vee}})^*\times(1_{\underline{m}})^*)\circ E\\
\cong\;\; & (C^{\underline{m}}\times\id)\circ(\pi_1^{\underline{m}^{\vee}})^*\circ(1_{\underline{m}^{\vee}})^*.
\end{align*}
Here $\pi_1^{\underline{m}^{\vee}}\colon\cube{\underline{m}}\times\cube{{\underline{m}^{\vee}}}\rightarrow\cube{\underline{m}}$ is the projection onto the first coordinate. Moreover, the second isomorphism is \autoref{lem:iter-cones}, the third one is the compatibility of Kan extensions and restrictions in unrelated variables, and the last one follows from the definition of $A$. Thus, these identifications show that also $Q_2$ is constant.
\end{enumerate}
Since isomorphisms in derivators are detected pointwise (axiom (Der2)), we can invoke the identifications \eqref{eq:tcof-cof1} in order to conclude that
\[
(\alpha_2\circ E')\circ(\alpha_1\circ E')^{-1}\colon\tcof^{\cube{n}}\circ E\toiso\cof^{\underline{1}}
\]
yields the desired natural isomorphism.
\end{proof}

\section{Strong stable equivalences for cotruncated $n$-cubes}
\label{sec:sse-cotruncated}

In this short section we apply our previous results on iterated cofibers to derivators of cotruncated $n$-cubes. For stable derivators, we characterize cotruncated $n$-cubes by the vanishing of iterated cone constructions, and this yields the strong stable equivalences of $\cube{n}_{\leq k}$ and $\cube{n}_{\geq n-k}$ (\autoref{thm:sse-chunks-special-case}). In \S\ref{sec:sse-chunks} we generalize this to arbitrary chunks of $n$-cubes.

\begin{cor}\label{cor:coc-iter-cones}
Let \D be a pointed derivator, let $n\geq 1$, and let $X\in\D^{\cube{n}}$.
\begin{enumerate}
\item If $X$ is cocartesian, then $C^n(X)$ vanishes.
\item If \D is stable, then $X$ is cocartesian if and only if $C^n(X)$ vanishes.
\end{enumerate}
\end{cor}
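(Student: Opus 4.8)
The plan is to deduce this corollary directly from \autoref{thm:total-cof} together with \autoref{cor:cocart-tcof-zero}. The key observation is that \autoref{thm:total-cof} provides a canonical isomorphism $\tcof\cong C^n\colon\D^{\cube{n}}\to\D$ of morphisms of derivators, valid for any pointed derivator \D and any $n\geq 1$. Hence, for a fixed $X\in\D^{\cube{n}}$, the object $C^n(X)$ vanishes if and only if $\tcof(X)$ vanishes.

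With this translation in hand, both statements are immediate. For the first statement, suppose $X$ is cocartesian. By \autoref{cor:cocart-tcof-zero}(i) the total cofiber $\tcof(X)$ vanishes, and therefore so does $C^n(X)$ by the canonical isomorphism above. For the second statement, suppose in addition that \D is stable. Then \autoref{cor:cocart-tcof-zero}(ii) gives that $X$ is cocartesian if and only if $\tcof(X)$ vanishes, which by \autoref{thm:total-cof} happens if and only if $C^n(X)$ vanishes.

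Since the entire argument is a direct combination of two already-established results via the isomorphism $\tcof\cong C^n$, there is no substantial obstacle; the only point requiring a word of care is that \autoref{thm:total-cof} is an isomorphism of morphisms of derivators, so it is in particular compatible with evaluation and hence detects vanishing of the relevant objects. One should also note the degenerate case $n=1$: there $C^1\cong C$ and $\tcof\cong C$, so the statement reduces to the familiar fact that a morphism in a pointed (resp.\ stable) derivator is an isomorphism if (resp.\ if and only if) its cone vanishes, consistent with \autoref{conv:filtration}. A short proof along these lines suffices:

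\begin{proof}
By \autoref{thm:total-cof} there is a canonical isomorphism $\tcof\cong C^n\colon\D^{\cube{n}}\to\D$. In particular, for $X\in\D^{\cube{n}}$ the object $C^n(X)$ vanishes if and only if $\tcof(X)$ vanishes. The first claim hence follows from \autoref{cor:cocart-tcof-zero}(i), and the second claim from \autoref{cor:cocart-tcof-zero}(ii).
\end{proof}
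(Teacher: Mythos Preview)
Your proof is correct and takes essentially the same approach as the paper, which simply notes that by \autoref{thm:total-cof} the result is a reformulation of \autoref{cor:cocart-tcof-zero}. Your write-up is just a more explicit unpacking of that one-line observation.
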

\begin{proof}
By \autoref{thm:total-cof} this is merely a reformulation of \autoref{cor:cocart-tcof-zero}.
\end{proof}

This allows us to collect in a uniform way the following obstructions against cotruncatedness.

\begin{prop}\label{prop:cotr-iter-cones}
Let \D be a pointed derivator, $n\geq 1$, $1\leq k\leq n$, and $X\in\D^{\cube{n}}$.
\begin{enumerate}
\item If $X\in\D^{\cotr{n}{k}}$, then $C^{\underline m}(X)$ vanishes for all $|\underline m|=k+1$.
\item If \D is stable, then $X\in\D^{\cotr{n}{k}}$ if and only if $C^{\underline m}(X)$ vanishes for all $|\underline m|=k+1$.
\end{enumerate}
\end{prop}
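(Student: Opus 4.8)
The plan is to reduce the statement to \autoref{cor:cotr-filtration}, which characterizes $k$-cotruncatedness by the cocartesianness of all $(k+1)$-subcubes, together with \autoref{cor:coc-iter-cones}, which relates cocartesianness of an $m$-cube to the vanishing of its total cone $C^m$. The only real work is to match the vanishing of $C^{\underline m}(X)$ for $|\underline m|=k+1$ with the vanishing of the total cofibers of the $(k+1)$-subcubes of $X$.

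First I would fix $\underline m\in\{0,1\}^n$ with $|\underline m|=k+1$ and analyze $C^{\underline m}=(1_{\underline m})^\ast\circ\cof^{\underline m}$. Using the pseudo-action of $\lN^n$ from \autoref{prop:cof-comp} and the commutation of cofibers with restrictions in unrelated variables, one sees that $C^{\underline m}(X)\in\D^{\cube{n-k-1}}$ is pointwise given, at each object $\underline q$ of the parameter cube $\cube{n-k-1}$, by the total cone of the $(k+1)$-subcube of $X$ obtained by fixing the coordinates outside the support of $\underline m$ according to $\underline q$ and letting those in the support range freely. More precisely, each $(k+1)$-subcube $[x,y]\subseteq\cube{n}$ with $d(x,y)=k+1$ has support set $S=y-x$ equal to the support of a unique $\underline m$, and the complementary coordinates are pinned at the values recorded by $x$; this identifies $j_{[x,y]}^\ast X$ with a fiber of $C^{e_{i_1}}\circ\cdots\circ C^{e_{i_k}}$ applied with parameters, so that by \autoref{thm:total-cof} its total cofiber $\tcof(j_{[x,y]}^\ast X)\cong C^{k+1}(j_{[x,y]}^\ast X)$ is exactly the value of $C^{\underline m}(X)$ at the point corresponding to $x$. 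Running over all $\underline m$ with $|\underline m|=k+1$ and all vertices of the corresponding parameter cubes exhausts all $(k+1)$-subcubes of $X$.

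With this dictionary in place, the proposition follows formally. For the first claim, if $X\in\D^{\cotr{n}{k}}$ then by \autoref{cor:cotr-filtration} every $(k+1)$-subcube of $X$ is cocartesian, so by \autoref{cor:coc-iter-cones}(i) each has vanishing total cone; since $C^{\underline m}(X)$ is detected pointwise (axiom (Der2)) and all its pointwise values are such total cones, $C^{\underline m}(X)=0$ for all $|\underline m|=k+1$. For the converse in the stable case, suppose all $C^{\underline m}(X)$ with $|\underline m|=k+1$ vanish; then, again using (Der2) and the dictionary above, every $(k+1)$-subcube of $X$ has vanishing total cone, hence is cocartesian by \autoref{cor:coc-iter-cones}(ii) (here stability is essential), and therefore $X\in\D^{\cotr{n}{k}}$ by \autoref{cor:cotr-filtration}.

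The main obstacle is the bookkeeping in the reduction step: one must carefully check that applying $\cof^{e_i}$ in a subset $S$ of coordinates and then restricting the $S$-coordinates to $1$ produces, as a function of the remaining parameters, precisely the family of total cofibers of the $(k+1)$-subcubes supported on $S$, with the parameter values matching the ``base vertex'' $x$ of the subcube. This is exactly the content of the parametrized version of \autoref{lem:iter-cones}(iii) together with the factorization \eqref{eq:classifying-fun-in-cubes} of the classifying functor $\underline m=0_{\underline m^\vee}\circ\infty$, so it is routine but needs to be spelled out; once it is, nothing beyond (Der2) and the two corollaries is required.
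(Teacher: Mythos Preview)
Your proposal is correct and follows essentially the same route as the paper: reduce via \autoref{cor:cotr-filtration} to cocartesianness of all $(k+1)$-subcubes, group these subcubes by their support $\underline m$, and then invoke \autoref{cor:coc-iter-cones}. The only packaging difference is that the paper applies \autoref{cor:coc-iter-cones} directly in the shifted derivator $\D^{\cube{n-k-1}}$ (using \cite[Cor.~2.6]{groth:ptstab} to pass between cocartesianness with parameters and pointwise cocartesianness), whereas you unwind $C^{\underline m}(X)$ pointwise via (Der2) and apply \autoref{cor:coc-iter-cones} fiberwise in $\D$; these are equivalent.
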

\begin{proof}
The $n$-cube $X$ is $k$-cotruncated if and only if every $(k+1)$-subcube of $X$ is cocartesian (\autoref{thm:cotr-filtration}). Such a $(k+1)$-subcube is obtained by choosing coordinates in the remaining $n-k-1$ directions. For a choice of such directions there are $2^{n-k-1}$ different subcubes of dimension $(k+1)$, which is to say that we simply consider $X$ as living in $(\D^{\cube{n-k-1}})^{\cube{k+1}}$. And by \cite[Cor.~2.6]{groth:ptstab} $X$ defines a cocartesian $(k+1)$-cube in $\D^{\cube{n-k-1}}$ if and only if the $(k+1)$-cubes $X_a, a\in \cube{n-k-1},$ are cocartesian. As an upshot, $X\in\D^{\cube{n}}$ is $k$-cotruncated if and only if all $(k+1)$-cubes of the form $X\in(\D^{\cube{n-k-1}})^{\cube{k+1}}$ are cocartesian. The result hence follows from \autoref{cor:coc-iter-cones}.
\end{proof}

\begin{cor}\label{cor:cotr=tr}
Let \D be a stable derivator, $n\geq 1$, and $1\leq k\leq n$. An $n$-cube $X\in\D^{\cube{n}}$ is $k$-cotruncated if and only if it is $(n-k)$-truncated. In particular, $X$ is strongly cocartesian if and only if $X$ is strongly cartesian, and $X$ is cocartesian if and only if $X$ is cartesian.
\end{cor}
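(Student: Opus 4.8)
The plan is to derive \autoref{cor:cotr=tr} from \autoref{prop:cotr-iter-cones} together with its formal dual, using the duality principle for derivators to transport statements about cones and cotruncated cubes to statements about fibers and truncated cubes.

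First I would spell out the dual of \autoref{prop:cotr-iter-cones}. Applying the duality principle for derivators (passing to $\D\op$ and using that $(\cube{n})\op\cong\cube{n}$ via complementation, which identifies $\cube{n}_{\leq k}$ with $\cube{n}_{\geq n-k}$ and exchanges left with right Kan extensions), one obtains: if $X\in\D^{\tr{n}{k}}$ then the $k$-fold fibers $F^{\underline m}(X)$ vanish for all $|\underline m|=n-k+1$ -- wait, more precisely, $X\in\D^{\tr{n}{n-k}}$ if and only if $F^{\underline m}(X)$ vanishes for all $|\underline m|=k+1$, where $F^{\underline m}$ denotes the iterated fiber construction dual to $C^{\underline m}$. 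The dual of \autoref{cor:coc-iter-cones} similarly characterizes cartesian cubes by vanishing of iterated fibers.

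The key step is then the following chain of equivalences for a stable derivator \D, $n\geq 1$, $1\leq k\leq n$, and $X\in\D^{\cube{n}}$:
\[
X\in\D^{\cotr{n}{k}}
\;\Longleftrightarrow\;
C^{\underline m}(X)=0 \ \text{for all}\ |\underline m|=k+1
\;\Longleftrightarrow\;
F^{\underline m}(X)=0 \ \text{for all}\ |\underline m|=k+1
\;\Longleftrightarrow\;
X\in\D^{\tr{n}{n-k}}.
\]
The first and third equivalences are \autoref{prop:cotr-iter-cones}(ii) and its dual. The middle equivalence is the crucial ingredient: for each fixed $\underline m$ with $|\underline m|=k+1$, the comparison map $\Sigma^{|\underline m|}\circ F^{\underline m}\to C^{\underline m}$ of \autoref{rmk:SFCn} is invertible in the stable case, so $C^{\underline m}(X)$ vanishes if and only if $\Sigma^{k+1}F^{\underline m}(X)$ does; and since $\Sigma$ is an equivalence on a stable derivator, this holds if and only if $F^{\underline m}(X)$ vanishes. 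The final two sentences of the corollary are then just the special cases $k=1$ (strongly cocartesian iff $1$-cotruncated iff $(n-1)$-truncated iff strongly cartesian, by \autoref{rmk:filtration} and its dual) and $k=n-1$ (cocartesian iff $(n-1)$-cotruncated iff $1$-truncated iff cartesian).

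The main obstacle I anticipate is purely bookkeeping rather than conceptual: making the duality principle bookkeeping precise, i.e.\ checking that under $X\mapsto X$ viewed in $(\D\op)^{(\cube{n})\op}$ the notion ``$k$-truncated $n$-cube'' really corresponds to ``$k$-cotruncated $n$-cube'' in $\D\op$ (this rests on the complementation isomorphism $\cube{n}\op\cong\cube{n}$ together with the fact that right Kan extensions in \D are left Kan extensions in $\D\op$), and that the iterated fiber morphism $F^{\underline m}$ is genuinely the dual of $C^{\underline m}$. Both of these are routine given the conventions already set up in \S\ref{sec:total}, so the proof itself will be short, essentially the displayed chain of equivalences plus a sentence invoking duality and a sentence extracting the two stated special cases.
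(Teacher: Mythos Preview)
Your proposal is correct and follows exactly the paper's approach: the paper's proof is precisely the chain of equivalences you display, invoking \autoref{prop:cotr-iter-cones}(ii), the comparison $\Sigma^{|\underline m|}F^{\underline m}\toiso C^{\underline m}$ from \autoref{rmk:SFCn} together with the invertibility of $\Sigma$ in the stable case, and then the dual of \autoref{prop:cotr-iter-cones}. Your anticipated ``bookkeeping obstacle'' is harmless---the paper simply invokes ``the dual of \autoref{prop:cotr-iter-cones}'' without further comment, so your level of detail already exceeds what is needed.
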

\begin{proof}
An $n$-cube $X$ is $k$-cotruncated if and only if for every $\underline m\in\{0,1\}^n$ with $|\underline m|=k+1$ we have $C^{\underline m}(X)\cong 0$. Invoking \autoref{rmk:SFCn} this is the case if and only if $(\Sigma^{|\underline m|}\circ F^{\underline m})X\cong 0$ which is to say that $F^{\underline m}X\cong 0$ for all $\underline m$ with $|\underline m|=k+1$. By the dual of \autoref{prop:cotr-iter-cones} this is the case precisely when $X$ is $(n-k)$-truncated.
\end{proof}

The special case of strongly (co)cartesian $n$-cubes was already dealt with as \cite[Cor.~8.13]{gst:basic}. This result can be reformulated in terms of abstract representation theory (see \cite{gst:basic} and its sequels). Let us recall that two small categories $A,B$ are \emph{strongly stably equivalent} if for every stable derivator \D there is an equivalence of derivators $\D^A\simeq\D^B$ which is pseudo-natural with respect to exact morphisms of derivators (see \cite[Def.~5.1]{gst:basic} for more details). The following is essentially a reformulation of earlier results, but due to its importance to this cubical calculus we formulate the result as a theorem.

\begin{thm}\label{thm:sse-chunks-special-case}
For every $n\geq 1$ and $0\leq k\leq n$ the categories $\cube{n}_{\leq k}$ and $\cube{n}_{\geq n-k}$ are strongly stably equivalent.
\end{thm}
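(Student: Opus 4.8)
The plan is to deduce this from the intrinsic characterizations of cotruncated and truncated $n$-cubes established in the previous section, exactly in the spirit of the proof of \cite[Cor.~8.13]{gst:basic}. The point is that, for a \emph{stable} derivator $\D$, \autoref{cor:cotr=tr} already says that an $n$-cube $X\in\D^{\cube{n}}$ lies in $\D^{\cotr{n}{k}}$ if and only if it lies in $\D^{\tr{n}{n-k}}$; that is, the two full subderivators of $\D^{\cube{n}}$ coincide. Since $(\iota_{\leq k})_!$ and $(\iota_{\geq n-k})_\ast$ are fully faithful (Kan extensions along fully faithful functors), they induce equivalences onto these essential images,
\[
(\iota_{\leq k})_!^\simeq\colon\D^{\cube{n}_{\leq k}}\toiso\D^{\cotr{n}{k}}=\D^{\tr{n}{n-k}}\xleftarrow{\ \simeq\ }\D^{\cube{n}_{\geq n-k}}\colon(\iota_{\geq n-k})_\ast^\simeq,
\]
and composing one with the inverse of the other yields the desired equivalence $\D^{\cube{n}_{\leq k}}\simeq\D^{\cube{n}_{\geq n-k}}$.

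First I would dispose of the degenerate cases. For $k=0$ the category $\cube{n}_{\leq 0}=\bbone$ and $\cube{n}_{\geq n}=\bbone$, so both sides are $\D$ itself; for $k=n$ both posets are the full $n$-cube $\cube{n}$. In both cases the equivalence is (canonically isomorphic to) the identity, so assume $1\leq k\leq n-1$. Then I would invoke \autoref{prop:cotr-iter-cones}: for a stable $\D$, $X\in\D^{\cube{n}}$ lies in $\D^{\cotr{n}{k}}$ iff $C^{\underline m}(X)\cong 0$ for all $\underline m$ with $|\underline m|=k+1$, and dually $X\in\D^{\tr{n}{n-k}}$ iff $F^{\underline m}(X)\cong 0$ for all $\underline m$ with $|\underline m|=k+1$. \autoref{rmk:SFCn} identifies the canonical comparison map $\Sigma^{|\underline m|}\circ F^{\underline m}\to C^{\underline m}$ as an isomorphism in the stable case, so the two vanishing conditions are equivalent, giving $\D^{\cotr{n}{k}}=\D^{\tr{n}{n-k}}$ as full subderivators of $\D^{\cube{n}}$ (this is precisely \autoref{cor:cotr=tr}, which I can cite directly).

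The remaining work is to check that the resulting equivalence is pseudo-natural with respect to exact morphisms of derivators, as required by \cite[Def.~5.1]{gst:basic}. An exact morphism $F\colon\D\to\E$ preserves finite (co)limits, hence preserves left Kan extensions along $\iota_{\leq k}$ and right Kan extensions along $\iota_{\geq n-k}$ (these are homotopy finite Kan extensions, since $\cube{n}$ is a finite poset), and it preserves the vanishing conditions of \autoref{prop:cotr-iter-cones} since it preserves iterated cones and fibers and the zero object. Therefore $F$ carries $\D^{\cotr{n}{k}}$ into $\E^{\cotr{n}{k}}$ and commutes, up to canonical isomorphism, with both $(\iota_{\leq k})_!$ and $(\iota_{\geq n-k})_\ast$; assembling these canonical isomorphisms gives the required pseudo-naturality data. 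I expect the main obstacle to be purely bookkeeping: being careful that the equivalence is assembled from $(\iota_{\leq k})_!^\simeq$ and the \emph{inverse} of $(\iota_{\geq n-k})_\ast^\simeq$ in a way that makes the naturality squares commute coherently, and that "exact morphism preserves homotopy finite Kan extensions" is applied in the correct direction for a \emph{right} Kan extension (one uses that exact morphisms are both left and right exact, equivalently that they commute with the relevant homotopy finite limits; this is where stability of the source and target, not just pointedness, is used). None of this requires new ideas beyond what is already in \S\ref{sec:total} and in \cite{gst:basic}.
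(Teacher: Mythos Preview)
Your proposal is correct and follows essentially the same approach as the paper: both proofs use the fully faithful Kan extensions $(\iota_{\leq k})_!$ and $(\iota_{\geq n-k})_\ast$ to identify $\D^{\cube{n}_{\leq k}}$ and $\D^{\cube{n}_{\geq n-k}}$ with the common subderivator $\D^{\cotr{n}{k}}=\D^{\tr{n}{n-k}}$ (via \autoref{cor:cotr=tr}), and then invoke the compatibility of exact morphisms with homotopy finite Kan extensions for the pseudo-naturality. Your version is slightly more explicit---separating out the degenerate cases $k=0,n$ (note that \autoref{cor:cotr=tr} is stated only for $1\leq k\leq n$) and spelling out the pseudo-naturality argument---but there is no substantive difference.
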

\begin{proof}
For every derivator \D there are equivalences of derivators
\[
\D^{\cube{n}_{\leq k}}\simeq\D^{\cotr{n}{k}}\qquad\text{and}\qquad\D^{\tr{n}{n-k}}\simeq\D^{\cube{n}_{\geq n-k}}
\]
given by left and right Kan extension, respectively. If \D is stable, then the derivators $\D^{\cotr{n}{k}}$ and $\D^{\tr{n}{n-k}}$ are equal (\autoref{cor:cotr=tr}), and the result follows from the compatibility of exact morphisms of stable derivators with homotopy finite Kan extensions.
\end{proof}

The first statement in \autoref{prop:cotr-iter-cones} admits the following generalization that cones decrease the degree of cotruncatedness by one.

\begin{prop}\label{prop:cone-lower-cotr}
Let \D be a pointed derivator, $n\geq 1$, and $1\leq k \leq n$. For every $1\leq j\leq n$ the partial cone $C^{e_j}\colon\D^{\cube{n}}\to\D^{\cube{n-1}}$ restricts to a morphism
\[
C^{e_j}\colon\D^{\cotr{n}{k}}\to\D^{\cotr{n-1}{k-1}}.
\]
\end{prop}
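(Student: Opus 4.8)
The plan is to reduce the statement to the characterization of cotruncated cubes by vanishing of iterated cones (\autoref{prop:cotr-iter-cones}) for \emph{pointed} derivators, using only the first (one-directional) implication there, which does not require stability. First I would note that $C^{e_j}$ lands in $\D^{\cube{n-1}}$ for the correct indexing reasons: identifying $\cube{n}$ with $\cube{n-1}\times[1]$ by singling out the $j$-th coordinate, $C^{e_j}$ is the cone morphism applied in the $[1]$-direction with parameters in $\cube{n-1}$, hence it genuinely produces an $(n-1)$-cube. So the only content is the claim that $C^{e_j}$ sends $k$-cotruncated $n$-cubes to $(k-1)$-cotruncated $(n-1)$-cubes.

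The key step is then: given $X\in\D^{\cotr{n}{k}}$, show that every $k$-subcube of $C^{e_j}(X)$ is cocartesian, equivalently (by \autoref{prop:cotr-iter-cones}(i), or rather by \autoref{cor:coc-iter-cones} applied with parameters) that $C^{\underline{m}'}(C^{e_j}X)$ vanishes for all $\underline{m}'\in\{0,1\}^{n-1}$ with $|\underline{m}'|=k$. Writing $\underline{m}\in\{0,1\}^n$ for the tuple obtained from $\underline{m}'$ by inserting a $1$ in the $j$-th coordinate, we have $|\underline{m}|=k+1$, and by the commutativity of partial cones (\autoref{cor:C-comp}) — applied parametrically, since the formation of cones in unrelated variables commutes — there is a canonical isomorphism
\[
C^{\underline{m}'}\circ C^{e_j}\cong C^{\underline{m}}\colon\D^{\cube{n}}\to\D.
\]
Since $X$ is $k$-cotruncated, \autoref{prop:cotr-iter-cones}(i) gives $C^{\underline{m}}(X)\cong 0$, hence $C^{\underline{m}'}(C^{e_j}X)\cong 0$. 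As this holds for all $\underline{m}'$ of weight $k$, every $k$-subcube of $C^{e_j}(X)$ is cocartesian by \autoref{cor:coc-iter-cones}(1) read backwards through \autoref{prop:cotr-iter-cones} — more precisely, one reruns the reduction in the proof of \autoref{prop:cotr-iter-cones}: an $(n-1)$-cube is $(k-1)$-cotruncated iff all its $k$-subcubes are cocartesian iff all $C^{\underline{m}'}$ of weight $k$ vanish on it, the latter being exactly what we have shown. This identifies $C^{e_j}(X)$ as lying in $\D^{\cotr{n-1}{k-1}}$.

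The one technical point to be careful about — and the only place where real work hides — is that \autoref{prop:cotr-iter-cones} as stated gives vanishing of \emph{absolute} cones $C^{\underline{m}}\colon\D^{\cube{n}}\to\D$, whereas the conclusion "$C^{e_j}(X)$ is $(k-1)$-cotruncated" is a statement about all $k$-subcubes of a fixed $(n-1)$-cube, i.e., about $C^{\underline{m}'}$ applied after restricting to the $2^{(n-1)-k}$ sub-$(k)$-cubes. As in the proof of \autoref{prop:cotr-iter-cones}, this is handled by viewing $C^{e_j}X$ as an object of $(\D^{\cube{(n-1)-k}})^{\cube{k}}$ and invoking \cite[Cor.~2.6]{groth:ptstab} together with \cite[Cor.~2.6]{groth:ptstab}-type compatibilities of $C^{\underline{m}'}$ with the extra parameter directions; concretely, restriction in unrelated directions commutes with cone formation, so $C^{\underline{m}'}$ of a subcube is the subcube of $C^{\underline{m}'}X$ of the appropriate $(k)$-cube, and applying \autoref{cor:C-comp} parametrically as above reduces everything to $C^{\underline{m}}(X)\cong 0$ with $|\underline{m}|=k+1$. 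The dual statement for partial fibers and truncated cubes follows by the duality principle, and I would mention that in a closing sentence. I do not expect any genuine obstacle here; the proof is essentially bookkeeping with the commutativity isomorphisms for iterated partial cones.
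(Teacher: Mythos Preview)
Your argument has a genuine gap in the pointed (non-stable) case. You correctly obtain $C^{\underline{m}'}(C^{e_j}X)\cong 0$ for all $\underline{m}'$ of weight $k$, but the step ``hence $C^{e_j}X$ is $(k-1)$-cotruncated'' uses the implication \emph{vanishing of iterated cones $\Rightarrow$ cocartesian}, which is precisely part~(ii) of \autoref{cor:coc-iter-cones} and \autoref{prop:cotr-iter-cones} --- available only for \emph{stable} derivators. In a merely pointed derivator the cone can vanish without the cube being cocartesian (think of a non-cocartesian square whose comparison map happens to be an epimorphism with trivial cofiber). You flag this spot as ``the only place where real work hides'', then assert the biconditional anyway; that biconditional simply does not hold at this generality. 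Your argument is fine for stable \D, but the proposition is stated for pointed \D.

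The paper's proof avoids this entirely by never invoking the vanishing characterization. Instead it works directly with the defining Kan extensions: the pullback square
\[
\xymatrix{
\cube{n-1}_{\leq k-1}\ar[r]^-{1^{e_j}}\ar[d]_-{\iota}&\cube{n}_{\leq k}\ar[d]^-\iota\\
\cube{n-1}\ar[r]_-{1^{e_j}}&\cube{n}
}
\]
has sieves vertically and cosieves horizontally, hence is homotopy exact. Passing from the resulting isomorphism $\iota_\ast (1^{e_j})^\ast\cong(1^{e_j})^\ast\iota_\ast$ to second left adjoints (using that cosieves admit exceptional inverse images in pointed derivators, and $(1^{e_j})^?\cong C^{e_j}$) yields a canonical isomorphism $\iota_!\circ(1^{e_j})^?\cong(1^{e_j})^?\circ\iota_!$. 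This says directly that $C^{e_j}$ carries the essential image of $(\iota_{\leq k})_!$ into that of $(\iota_{\leq k-1})_!$. No detection criterion for cotruncatedness is needed, and as a bonus one obtains the commuting square \eqref{eq:cone-lower-cotr}, which is strictly more information than the bare restriction statement.
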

\begin{proof}
We denote by $1^{e_j}\colon\cube{n-1}\to\cube{n}$ the functor which includes a $1$ in the $j$-th spot. Associated to this functor, there is the commutative diagram
\[
\xymatrix{
\cube{n-1}_{\leq k-1}\ar[r]^-{1^{e_j}}\ar[d]_-{\iota}&\cube{n}_{\leq k}\ar[d]^-\iota\\
\cube{n-1}\ar[r]_-{1^{e_j}}&\cube{n}\ultwocell\omit{\id}
}
\]
in $\cCat$ (which we on purpose populate by the identity transformation). Note that this square is a pullback square in which the vertical functors are sieves and the horizontal functors are cosieves. Hence, \autoref{lem:opfibration} shows in two different ways that this square is homotopy exact and the resulting canonical mate
\[
\xymatrix{
\D^{\cube{n-1}_{\leq k-1}}\ar[d]_-{\iota_\ast}&\D^{\cube{n}_{\leq k}}\ar[l]_-{(1^{e_j})^\ast}\ar[d]^-{\iota_\ast}\dltwocell\omit{\cong}\\
\D^{\cube{n-1}}&\D^{\cube{n}}\ar[l]^-{(1^{e_j})^\ast}
}
\]
is invertible. Clearly, the vertical morphisms have second left adjoints. This is also the case for the horizontal morphisms (\autoref{rmk:review-exc-im}), hence by passing to second left adjoints everywhere we obtain a canonical isomorphism 
\begin{equation}\label{eq:cone-lower-cotr}
\vcenter{
\xymatrix{
\D^{\cube{n-1}_{\leq k-1}}\ar[d]_-{\iota_!}&\D^{\cube{n}_{\leq k}}\ar[l]_-{(1^{e_j})^?}\ar[d]^-{\iota_!}\dltwocell\omit{\cong}\\
\D^{\cube{n-1}}&\D^{\cube{n}}\ar[l]^-{(1^{e_j})^?},
}
}
\end{equation}
We invoke \eqref{eq:partial-cone-exc} to see that the bottom morphism in this diagram is isomorphic to $C^{e_j}\colon\D^{\cube{n}}\to\D^{\cube{n-1}}$, concluding the proof.
\end{proof}

\begin{rmk}
\begin{enumerate}
\item The natural isomorphism \eqref{eq:cone-lower-cotr} constructed in the proof actually offers a more precise formulation of \autoref{prop:cone-lower-cotr}.
\item The upper horizontal morphisms in \eqref{eq:cone-lower-cotr} will be examined further in the discussion of the level structure in \cite{bg:global}. 
\item By a similar argument, there are variants for $\iota_{l,k}\colon\cube{n}_{\leq k}\to\cube{n}_{\leq l}, 1\leq k\leq l\leq n$. In fact, in that case also
\[
\xymatrix{
\cube{n-1}_{\leq k-1}\ar[r]^-{1^{e_j}}\ar[d]_-{\iota_{l-1,k-1}}&\cube{n}_{\leq k}\ar[d]^-{\iota_{l,k}}\\
\cube{n-1}_{\leq l-1}\ar[r]_-{1^{e_j}}&\cube{n}_{\leq l}\ultwocell\omit{\id}
}
\]
is homotopy exact.
\end{enumerate}
\end{rmk}

The following corollary actually follows immediately from \autoref{prop:cotr-iter-cones}, but we postponed it until now in order to put it into context with \autoref{prop:cone-lower-cotr}.

\begin{cor}\label{cor:cotr-iter-cones-stable}
Let \D be a stable derivator, $n\geq 1$, $1\leq k\leq n$, and $X\in\D^{\cube{n}}$. The following are equivalent.
\begin{enumerate}
\item The $n$-cube $X$ is $k$-cotruncated.
\item The $(n-1)$-cube $C^{e_j}X$ is $(k-1)$-cotruncated for every $1\leq j \leq n$.
\item There exists a subset $N \subset \lbrace 1, \ldots, n \rbrace,\vert N \vert =n-k,$ such that for every $j \in N$ the $(n-1)$-cube $C^{e_j}X$ is $(k-1)$-cotruncated.
\end{enumerate}
\end{cor}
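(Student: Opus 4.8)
The plan is to prove the three equivalences by setting up a chain $(i)\Rightarrow(ii)\Rightarrow(iii)\Rightarrow(i)$, using \autoref{prop:cotr-iter-cones} to translate cotruncatedness into vanishing of iterated cones and \autoref{prop:cone-lower-cotr} for one of the implications. The key bookkeeping device is that an $n$-tuple $\underline m\in\{0,1\}^n$ with $|\underline m|=k+1$ can be split according to whether its $j$-th coordinate is $1$: either $m_j=1$, in which case $\underline m=e_j+\underline m'$ for some $\underline m'\in\{0,1\}^n$ supported off the $j$-th coordinate with $|\underline m'|=k$, or $m_j=0$. Under this splitting, \autoref{cor:C-comp} gives a canonical isomorphism $C^{\underline m}\cong C^{\underline m''}\circ C^{e_j}$, where $\underline m''\in\{0,1\}^{n-1}$ is the $(n-1)$-tuple obtained from $\underline m'$ by deleting the $j$-th (now redundant) coordinate, so $|\underline m''|=k$. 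This is the bridge between $k$-cotruncatedness of $X$ and $(k-1)$-cotruncatedness of the partial cones $C^{e_j}X$.

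First I would observe that $(i)\Rightarrow(ii)$ is exactly \autoref{prop:cone-lower-cotr}: if $X\in\D^{\cotr{n}{k}}$ then $C^{e_j}X\in\D^{\cotr{n-1}{k-1}}$ for every $j$. Then $(ii)\Rightarrow(iii)$ is trivial, taking $N=\{1,\ldots,n\}$ (or any subset of size $n-k$). The substance is in $(iii)\Rightarrow(i)$. Here I would argue by the stable criterion of \autoref{prop:cotr-iter-cones}(ii): $X$ is $k$-cotruncated iff $C^{\underline m}(X)\cong 0$ for every $\underline m\in\{0,1\}^n$ with $|\underline m|=k+1$. Fix such an $\underline m$. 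Since $|\underline m|=k+1$ and $|N|=n-k$, the supports cannot be disjoint — $(k+1)+(n-k)=n+1>n$ — so there exists $j\in N$ with $m_j=1$. Write $\underline m=e_j+\underline m'$ as above and use $C^{\underline m}(X)\cong C^{\underline m''}(C^{e_j}X)$ with $|\underline m''|=k$. By hypothesis $C^{e_j}X$ is $(k-1)$-cotruncated, so by \autoref{prop:cotr-iter-cones}(i) applied in dimension $n-1$ (with truncation degree $k-1$, noting $(k-1)+1=k$), every iterated cone of bidegree $k$ on $C^{e_j}X$ vanishes; in particular $C^{\underline m''}(C^{e_j}X)\cong 0$. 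Hence $C^{\underline m}(X)\cong 0$, and since $\underline m$ was arbitrary, \autoref{prop:cotr-iter-cones}(ii) gives $X\in\D^{\cotr{n}{k}}$.

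The main obstacle I anticipate is making the reindexing $C^{\underline m}\cong C^{\underline m''}\circ C^{e_j}$ precise and checking the degree arithmetic carefully — in particular that after forming the partial cone $C^{e_j}$ the relevant truncation degree drops to $k-1$, so that the ``vanishing of all $k$-fold cones'' statement from \autoref{prop:cotr-iter-cones}(i) is exactly what is needed and no off-by-one error creeps in. One should also be slightly careful that \autoref{cor:C-comp} is stated for $C^{e_i}\circ C^{e_j}$ with $i<j$, so a permutation/induction argument (using \autoref{prop:cof-comp}, which makes the iterated cone construction symmetric up to canonical isomorphism) is needed to reduce a general $C^{\underline m}$ with $m_j=1$ to the form $C^{\underline m''}\circ C^{e_j}$; this is routine but worth a sentence. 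A cleaner alternative, which I would mention as a remark, is to avoid iterated cones entirely and argue directly with subcubes: $X$ is $k$-cotruncated iff every $(k+1)$-subcube is cocartesian (\autoref{thm:cotr-filtration}); each $(k+1)$-subcube $[x,y]$ has $|y-x|=k+1\geq 1$, so it involves some direction $j\in N$ nontrivially, and applying the partial cone $C^{e_j}$ in that direction relates cocartesianness of the $(k+1)$-subcube of $X$ to cocartesianness of the corresponding $k$-subcube of $C^{e_j}X$ via the fact that $C^{e_j}$ on a cocartesian square in direction $j$ yields an isomorphism (a square is cocartesian iff its cofiber in one direction is trivial) — but the iterated-cone route is more uniform, so I would use that as the primary argument.
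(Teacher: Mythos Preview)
Your proof is correct and follows essentially the same route as the paper. The only cosmetic difference is that for $(i)\Rightarrow(ii)$ you invoke \autoref{prop:cone-lower-cotr}, while the paper phrases everything uniformly through \autoref{prop:cotr-iter-cones} (since in the stable case $(k-1)$-cotruncatedness of $C^{e_j}X$ is equivalent to vanishing of all its $k$-fold cones, which are exactly the $(k+1)$-fold cones of $X$ with $m_j=1$); the pigeonhole argument for $(iii)\Rightarrow(i)$ and the use of \autoref{cor:C-comp} for the reindexing are identical to the paper's.
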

\begin{proof}
By \autoref{prop:cotr-iter-cones} it remains to show that (iii) implies (i), and to this end, again by \autoref{prop:cotr-iter-cones}, it is enough to verify that $C^{\underline m}X$ vanishes for all $\vert \underline m \vert=k+1$. For any such $\underline m$ there is an index $j\in N$ with $m_j=1$, and $C^{\underline m}X$ is by \autoref{cor:C-comp} isomorphic to a $k$-fold cone of $C^{e_j}X$. Invoking \autoref{prop:cotr-iter-cones} again we conclude that this iterated cone vanishes.
\end{proof}

As a variant of \autoref{prop:cotr-iter-cones} we obtain the following result.

\begin{cor}\label{cor:cotr-iter-cones-II}
Let \D be a pointed derivator, $n\geq 1$, $1\leq k\leq n$, and $X\in\D^{\cube{n}}$.
\begin{enumerate}
\item If $X\in\D^{\cotr{n}{k}}$, then $C^{\underline m}(X)\in\D^{\cube{n-k}}$ is constant for all $|\underline m|=k$.
\item If \D is stable, then $X\in\D^{\cotr{n}{k}}$ if and only if $C^{\underline m}(X)\in\D^{\cube{n-k}}$ is constant for all $|\underline m|=k$.
\end{enumerate}
\end{cor}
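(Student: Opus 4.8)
The plan is to deduce both parts from the machinery of this section and the previous one, keeping the pointed statement (i) and the stable characterisation in (ii) separate.

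First I would record two elementary reformulations. Fix $\underline m\in\{0,1\}^n$ with $|\underline m|=k$ and support $S=\{i\mid m_i=1\}$. Unwinding \autoref{con:C-comp} together with the pseudo-action of \autoref{prop:cof-comp} and the commutation isomorphisms of \autoref{cor:C-comp}, the morphism $C^{\underline m}\colon\D^{\cube n}\to\D^{\cube{n-k}}$ is canonically isomorphic to a $k$-fold composite of partial cone morphisms $C^{e_i}$, one for each $i\in S$: the restrictions $(1_{e_i})^\ast$ commute with the cofibers in the unrelated coordinates, so each factor $(1_{e_i})^\ast\circ\cof^{e_i}$ is a partial cone on the cube obtained by deleting the coordinates treated so far, the relabelling of indices being exactly the content of \autoref{cor:C-comp}. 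Secondly, by \autoref{lem:cotrmor} and \autoref{rmk:filtration} the derivator $\D^{\cotr{p}{0}}$ is precisely the derivator of constant $p$-cubes, so for $Y\in\D^{\cube p}$ the condition ``$Y$ is constant'' coincides with ``$Y$ is $0$-cotruncated''. These two remarks turn the corollary into a statement about iterated partial cones and cotruncatedness.

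For part (i) I would then apply \autoref{prop:cone-lower-cotr} $k$ times along the composite just described: each partial cone sends $\D^{\cotr{r}{s}}$ into $\D^{\cotr{r-1}{s-1}}$, so starting from $X\in\D^{\cotr{n}{k}}$ and applying the $k$ factors successively lands us in $\D^{\cotr{n-k}{0}}$, i.e.\ $C^{\underline m}(X)$ is constant. (Alternatively one may first use \autoref{cor:swap-filt} to permute the coordinates so that $S=\{1,\dots,k\}$, and decompose the total cofiber in those coordinates into partial cones as in the proof of \autoref{thm:total-cof} applied to the shifted derivator $\D^{\cube{n-k}}$.) This step really uses \autoref{prop:cone-lower-cotr}: the weaker input from \autoref{prop:cotr-iter-cones}(1) only says that the $(k+1)$-fold cones of $X$ vanish, and in a merely pointed derivator vanishing of the cone of a morphism does not force it to be invertible, whereas \autoref{prop:cone-lower-cotr} is precisely the stronger statement that carries us into the essential image of a left Kan extension.

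For part (ii) the ``only if'' direction is part (i). For ``if'', assume $C^{\underline m}(X)$ is constant for every $|\underline m|=k$; by \autoref{prop:cotr-iter-cones}(2) it suffices to prove $C^{\underline m'}(X)\cong 0$ for every $\underline m'$ with $|\underline m'|=k+1$. Writing $\underline m'=\underline m+e_j$ with $|\underline m|=k$ and $m_j=0$, the commutation isomorphisms of \autoref{cor:C-comp} identify $C^{\underline m'}(X)$ with the partial cone of $C^{\underline m}(X)$ taken in the (reindexed) $j$-th direction. Since $C^{\underline m}(X)$ is constant all of its structure maps are isomorphisms, so every value of this partial cone is the cone of an isomorphism and hence vanishes (cf.\ \cite[Prop.~3.12]{groth:ptstab}, or \autoref{cor:cocart-tcof-zero}(1)); thus $C^{\underline m'}(X)\cong 0$, and \autoref{prop:cotr-iter-cones}(2) yields $X\in\D^{\cotr{n}{k}}$. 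The one place that needs care is the first remark --- checking that $C^{\underline m}$ is literally, up to canonical isomorphism, a composite of single partial cones with the coordinate relabelling handled by \autoref{cor:C-comp}; everything else is a direct appeal to results already available, and the degenerate ranges ($k=n$, or $n-k\in\{0,1\}$) are immediate since constancy is then vacuous or means ``isomorphism'' and the relevant instances of \autoref{prop:cotr-iter-cones} are empty or already recorded.
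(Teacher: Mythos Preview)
Your proof is correct and essentially matches the paper's approach. For part~(i) the paper also iterates \autoref{prop:cone-lower-cotr}: it pastes $k$ copies of the square~\eqref{eq:cone-lower-cotr} to obtain $C^{\underline m}\circ\iota_!\cong\iota_!\circ(1^{\underline m})^?$ with $\iota\colon\cube{n-k}_{\leq 0}\to\cube{n-k}$, and then observes that $\iota_!$ forms constant $(n-k)$-cubes; your step-by-step descent through $\D^{\cotr{r}{s}}\to\D^{\cotr{r-1}{s-1}}$ is the same argument unwound. For part~(ii) the paper gives no separate argument, so your explicit reduction to \autoref{prop:cotr-iter-cones}(ii) via $C^{\underline m'}\cong C^{e_j}\circ C^{\underline m}$ and the vanishing of cones of isomorphisms is exactly what is needed to fill in the omitted details.
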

\begin{proof}
By pasting $k$ squares of the type \eqref{eq:cone-lower-cotr}, we obtain a canonical isomorphism
\[
\xymatrix{
\D^{\cube{n-k}_{\leq 0}}\ar[d]_-{\iota_!}&\D^{\cube{n}_{\leq k}}\ar[l]_-{(1^{\underline m})^?}\ar[d]^-{\iota_!}\dltwocell\omit{\cong}\\
\D^{\cube{n-k}}&\D^{\cube{n}}\ar[l]^-{C^{\underline m}},
}
\]
Since $\iota\colon\bbone=\cube{n-k}_{\leq 0}\to\cube{n-k}$ classifies the initial object, the corresponding left Kan extension morphism forms constant $(n-k)$-cubes.
\end{proof}

Of particular interest is the following special case of \autoref{cor:cotr-iter-cones-II} ($k=n-1$).

\begin{rmk}
Let \D be a pointed derivator, $n\geq 1$, and $X\in\D^{\cube{n}}$.
\begin{enumerate}
\item If $X$ is cocartesian, then $C^{\underline m}(X)\in\D^{[1]}$ is an isomorphism for all $|\underline m|=n-1$.
\item If \D is stable, then $X$ is cocartesian if and only if $C^{\underline m}(X)\in\D^{[1]}$ is an isomorphism for all $|\underline m|=n-1$ if and only if this is the case for some $\underline m$ with $|\underline m|=n-1$.
\end{enumerate}
For the last reformulation we invoke \autoref{cor:cotr-iter-cones-stable}. The first statement in the case of $n=2$ is hence a derivatorish version of the following classical fact from ordinary category theory. Given a pushout square
\[
\xymatrix{
x\ar[r]^-f\ar[d]_-{g}&y\ar[d]^-{g'}\\
x'\ar[r]_-{f'}&y'\pushoutcorner
}
\]
in a pointed and finitely cocomplete category, then the maps $\cok(f)\to\cok(f')$ and $\cok(g)\to\cok(g')$ are isomorphisms.
\end{rmk}

\section{Chunks of $n$-cubes}
\label{sec:sse-chunks}

In this section we begin our systematic study of abstract representations of various subposets of $n$-cubes. For this purpose, it is convenient to consider the cardinality filtration of the $n$-cube as summarized in \autoref{fig:mother}. We begin to analyze this diagram in quite some detail in the sections \S\S\ref{sec:sse-chunks}-\ref{sec:global} and we pursue this further in~\cite{bg:parasimplicial}. 

To begin with, in this section we show that for every $0\leq k\leq l\leq n$ the chunk $\cube{n}_{k\leq l}$ has the same abstract representation theory as $\cube{n}_{n-l\leq n-k}$. This generalizes the corresponding result for $\cube{n}_{\leq l}$ and $\cube{n}_{\geq n-l}$ (\autoref{thm:sse-chunks-special-case}) to arbitrary chunks. 

As a preparation, we collect some convenient facts in the following special case.

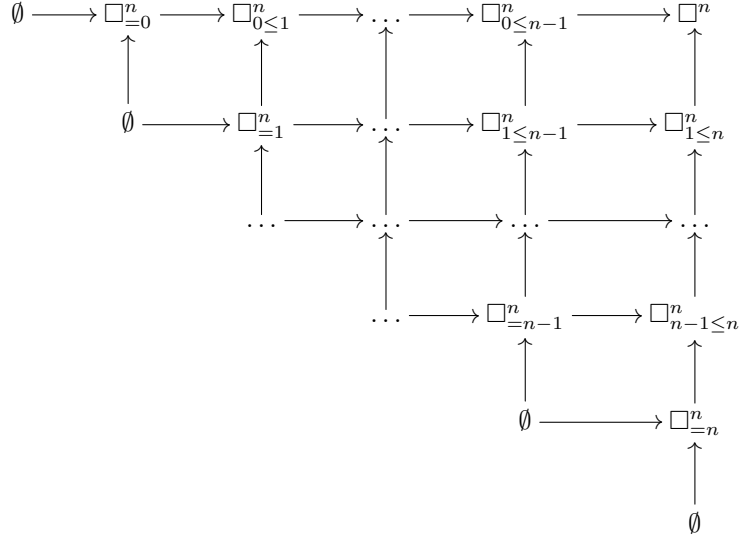
\begin{figure}
\begin{displaymath}
\xymatrix{
\emptyset\ar[r]&\cube{n}_{=0}\ar[r]&\cube{n}_{0\leq 1}\ar[r]&\ldots\ar[r]&\cube{n}_{0\leq n-1}\ar[r]&\cube{n}\\
&\emptyset\ar[r]\ar[u]&\cube{n}_{=1}\ar[r]\ar[u]&\ldots\ar[r]\ar[u]&\cube{n}_{1\leq n-1}\ar[r]\ar[u]&\cube{n}_{1\leq n}\ar[u]\\
&&\ldots\ar[u]\ar[r]&\ldots\ar[u]\ar[r]&\ldots\ar[u]\ar[r]&\ldots\ar[u]\\
&&&\ldots\ar[u]\ar[r]&\cube{n}_{=n-1}\ar[r]\ar[u]&\cube{n}_{n-1\leq n}\ar[u]\\
&&&&\emptyset\ar[r]\ar[u]&\cube{n}_{=n}\ar[u]\\
&&&&&\emptyset\ar[u]
}
\end{displaymath}
\caption{The cardinality filtration of the $n$-cube}
\label{fig:mother}
\end{figure}

\begin{notn}
Let \D be a stable derivator, $n\geq 0$ and $0\leq k\leq n$. By \autoref{cor:cotr=tr} an arbitrary $X\in\D^{\cube{n}}$ is $k$-cotruncated if and only if it is $(n-k)$-truncated. In this case we refer to $X$ as a \textbf{$k$-determined $n$-cube} and write
\[
\D^{\cube{n},k}=\D^{\cotr{n}{k}}=\D^{\tr{n}{n-k}}
\]
for the corresponding derivator. 
\end{notn}

\begin{lem}\label{lem:cof-fib-restrict}
Let \D be a stable derivator, let $n\geq 0$, and $0\leq k\leq n$.
\begin{enumerate}
\item The equivalence $(\cof^{\underline 1},\fib^{\underline 1})\colon\D^{\cube{n}}\simeq\D^{\cube{n}}$ restricts to an equivalence
\[
(\cof^{\underline{1}},\fib^{\underline 1})\colon\D^{\cube{n},k}\simeq\D^{\cube{n},\cube{n}_{\geq k+1}}.
\]
\item The equivalence $(\cof^{\underline 1},\fib^{\underline 1})\colon\D^{\cube{n}}\simeq\D^{\cube{n}}$ restricts to an equivalence
\[
(\cof^{\underline{1}},\fib^{\underline 1})\colon \D^{\cube{n},\cube{n}_{\leq n-(k+1)}}\simeq\D^{\cube{n},k}.
\]
\end{enumerate}
\end{lem}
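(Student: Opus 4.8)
The plan is to reduce everything to the vanishing criteria for $k$-cotruncated and $(n-k)$-truncated $n$-cubes established in \autoref{prop:cotr-iter-cones} (and its dual), together with the fact that in a stable derivator $\cof^{\underline 1}$ is an equivalence with inverse $\fib^{\underline 1}$ (\autoref{prop:cof-comp}). For part (i), I would start by recalling from \autoref{prop:cotr-iter-cones} that $X\in\D^{\cube{n}}$ lies in $\D^{\cube{n},k}$ if and only if $C^{\underline m}(X)\cong 0$ for all $\underline m\in\{0,1\}^n$ with $|\underline m|=k+1$. The target derivator $\D^{\cube{n},\cube{n}_{\geq k+1}}$ is the full subderivator of $\D^{\cube{n}}$ spanned by those $Y$ with $Y|_{\cube{n}_{\geq k+1}}=0$, i.e.\ $Y_{\underline a}\cong 0$ whenever $|\underline a|\geq k+1$; equivalently (by axiom (Der2) and the fact that structure maps between higher vertices are then between zero objects) it suffices that $Y_{\underline a}\cong 0$ for all $\underline a$ with $|\underline a|=k+1$.

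The key computation is then to identify, for $|\underline m|=k+1$, the vertex $(\cof^{\underline 1}X)_{\underline m}$ with $C^{\underline m}(X)$ (up to a canonical suspension or with a sign, but what matters here is only the vanishing). This is precisely the content of \autoref{lem:iter-cones}(iii): $\underline m^\ast\circ\cof^{\underline 1}\cong C^{|\underline m|}\circ(1_{\underline m^\vee})^\ast$, and $C^{|\underline m|}\circ(1_{\underline m^\vee})^\ast$ is exactly $C^{\underline m}$ by \autoref{con:C-comp} and \autoref{lem:iter-cones}(iii) (recall $C^{\underline m}=(1_{\underline m})^\ast\circ\cof^{\underline m}$ and use \autoref{prop:cof-comp} to split off the cofibers in the $\underline m$-directions from those in the $\underline m^\vee$-directions). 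Hence $(\cof^{\underline 1}X)_{\underline m}\cong C^{\underline m}(X)$ for every $\underline m$ with $|\underline m|=k+1$, so $\cof^{\underline 1}X$ vanishes on $\cube{n}_{=k+1}$ precisely when all these cones vanish, i.e.\ precisely when $X$ is $k$-cotruncated. Since $\cof^{\underline 1}$ is an equivalence with inverse $\fib^{\underline 1}$, this shows that the equivalence restricts to an equivalence $\D^{\cube{n},k}\simeq\D^{\cube{n},\cube{n}_{\geq k+1}}$, and pseudo-naturality with respect to exact morphisms is inherited from that of $\cof^{\underline 1}$ and $\fib^{\underline 1}$.

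For part (ii) I would argue dually (or simply apply part (i) to the opposite derivator $\D\op$, under which $\cof^{\underline 1}$ and $\fib^{\underline 1}$, as well as cotruncated and truncated cubes, are interchanged, and the complement $\underline m^\vee$ turns cardinality $k+1$ into cardinality $n-(k+1)$): the equivalence $\fib^{\underline 1}$ (equivalently the inverse of $\cof^{\underline 1}$) restricts to $\D^{\cube{n},\cube{n}_{\leq n-(k+1)}}\simeq\D^{\cube{n},k}$, using that by the dual of \autoref{prop:cotr-iter-cones} (and \autoref{cor:cotr=tr}) $X$ is $(n-k)$-truncated iff all $F^{\underline m}(X)$ with $|\underline m|=k+1$ vanish, iff $\fib^{\underline 1}X$ vanishes on $\cube{n}_{=n-(k+1)}$. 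Inverting, $\cof^{\underline 1}$ carries $\D^{\cube{n},\cube{n}_{\leq n-(k+1)}}$ isomorphically onto $\D^{\cube{n},k}$. I do not anticipate a serious obstacle; the only thing requiring care is the bookkeeping of which coordinates carry $0$ versus $1$ after applying $(1_{\underline m^\vee})^\ast$ versus $\underline m^\ast$, i.e.\ keeping \autoref{lem:iter-cones}(iii) and \eqref{eq:classifying-fun-in-cubes} straight so that the vanishing locus comes out as $\cube{n}_{\geq k+1}$ in (i) and $\cube{n}_{\leq n-(k+1)}$ in (ii) rather than their complements.
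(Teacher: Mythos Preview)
Your argument contains a genuine gap. The claim that for an arbitrary $Y\in\D^{\cube{n}}$, vanishing on $\cube{n}_{=k+1}$ already forces vanishing on all of $\cube{n}_{\geq k+1}$, is false: for $n=2$ and $k=0$ a square that is zero at $\emptyset,\{1\},\{2\}$ but nonzero at $\{1,2\}$ is a counterexample. The justification you offer (``structure maps between higher vertices are then between zero objects'') points the wrong way: the structure maps go from lower to higher cardinality, so a map $0\to Y_{\underline b}$ says nothing about $Y_{\underline b}$. There is also a related type mismatch: you write $(\cof^{\underline 1}X)_{\underline m}\cong C^{\underline m}(X)$, but the left side lies in $\D$ while by definition $C^{\underline m}(X)\in\D^{\cube{n-|\underline m|}}$. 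What \autoref{lem:iter-cones}(iii) actually yields is $(\cof^{\underline 1}X)_{\underline m}\cong C^{|\underline m|}\big((1_{\underline m^\vee})^\ast X\big)$, which is only the \emph{final} vertex of $C^{\underline m}X$, not all of it. Thus knowing these vanish for $|\underline m|=k+1$ is weaker than the hypothesis of \autoref{prop:cotr-iter-cones}.

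The repair is to use all $|\underline m|\geq k+1$ at once, as the paper does. By \autoref{lem:iter-cones}(iii) the vertex $(\cof^{\underline 1}X)_{\underline m}$ is the total cofiber of the $|\underline m|$-dimensional subcube $(1_{\underline m^\vee})^\ast X$. If $X$ is $k$-determined then every subcube of dimension $\geq k+1$ is cocartesian (\autoref{cor:cotr-filtration}), hence all these total cofibers vanish and $\cof^{\underline 1}X$ vanishes on the whole of $\cube{n}_{\geq k+1}$. Conversely, the paper observes that vanishing on $\cube{n}_{\geq k+1}$ amounts to $C^{\underline m}X\cong 0$ for all $|\underline m|\geq k+1$, and then invokes \autoref{cor:cotr-iter-cones-stable} to conclude that $X$ is $k$-determined. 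Part~(ii) then follows by the duality argument you outline.
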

\begin{proof}
The equivalence $\cof^{\underline 1}\colon\D^{\cube{n}}\toiso\D^{\cube{n}}$ induces by restriction an equivalence
\[
\cof^{\underline 1}\colon(\cof^{\underline 1})^{-1}(\D^{\cube{n},\cube{n}_{\geq k+1}})\toiso\D^{\cube{n},\cube{n}_{\geq k+1}}.
\]
For $X\in\D^{\cube{n}}$ we note that $\cof^{\underline 1}(X)$ vanishes on $\cube{n}_{\geq k+1}$ if and only if $C^{\underline m}X\cong 0$ for all $|\underline m|\geq k+1$ (see \autoref{con:cof-comp} and \autoref{con:C-comp}) if and only if $X$ is $k$-determined (\autoref{cor:cotr-iter-cones-stable}).
\end{proof}

\begin{cor}\label{cor:cof-fib-restrict}
Let \D be a stable derivator, let $n\geq 0$, and $0\leq k\leq n$. The equivalence $(\cof^{\underline 1},\fib^{\underline 1})\colon\D^{\cube{n}}\simeq\D^{\cube{n}}$ restricts to an equivalence
\[
(\cof^{\underline{1}},\fib^{\underline 1})\colon\D^{\cube{n},\cube{n}_{\geq k}}\simeq\D^{\cube{n},\cube{n}_{\leq n-k}}.
\]
\end{cor}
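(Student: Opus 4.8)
The plan is to verify directly that the global equivalence $(\cof^{\underline{1}},\fib^{\underline{1}})\colon\D^{\cube{n}}\simeq\D^{\cube{n}}$ carries the full subprederivator $\D^{\cube{n},\cube{n}_{\geq k}}$ into $\D^{\cube{n},\cube{n}_{\leq n-k}}$ via $\cof^{\underline{1}}$, and carries $\D^{\cube{n},\cube{n}_{\leq n-k}}$ back into $\D^{\cube{n},\cube{n}_{\geq k}}$ via $\fib^{\underline{1}}$. Given these two containments, the restrictions of the unit and counit of the global equivalence witness the asserted restricted equivalence (a natural isomorphism restricts to any full subprederivator, and fullness and faithfulness are inherited), so the only real work is checking the containments. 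I note in passing that naively composing the two equivalences of \autoref{lem:cof-fib-restrict} produces $(\cof^{\underline{1}})^{\pm 2}$ rather than $\cof^{\underline{1}}$, so this direct argument — which is essentially the pointwise computation from the proof of \autoref{lem:cof-fib-restrict} — is what is needed.

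For the first containment, let $X\in\D^{\cube{n},\cube{n}_{\geq k}}$, so that $X$ restricted to $\cube{n}_{\geq k}$ is a zero object. By (Der2) it suffices to show $\underline{m}^\ast\cof^{\underline{1}}(X)=0$ for every $\underline{m}\in\{0,1\}^n$ with $|\underline{m}|\leq n-k$. By \autoref{lem:iter-cones}(iii) there is a natural isomorphism $\underline{m}^\ast\circ\cof^{\underline{1}}\cong C^{|\underline{m}|}\circ(1_{\underline{m}^\vee})^\ast$, and since $C^{|\underline{m}|}$ preserves zero objects it is enough to see that $(1_{\underline{m}^\vee})^\ast X=0$. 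But, viewing $\underline{m}^\vee$ as a subset of $\{1,\ldots,n\}$ of cardinality $|\underline{m}^\vee|=n-|\underline{m}|$, the functor $1_{\underline{m}^\vee}\colon\cube{|\underline{m}|}\to\cube{n}$ inserts a $1$ in each coordinate lying in $\underline{m}^\vee$, so every object in its image contains $\underline{m}^\vee$ and hence has cardinality at least $n-|\underline{m}|\geq k$. Thus $1_{\underline{m}^\vee}$ factors through $\cube{n}_{\geq k}$, on which $X$ vanishes, whence $(1_{\underline{m}^\vee})^\ast X=0$.

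The second containment follows from the duality principle: one applies the first containment to the opposite derivator $\D\op$ and transports the conclusion along the self-equivalence of $\cube{n}$ given by complementation $S\mapsto S^c$, under which $\cof^{\underline{1}}$ corresponds to $\fib^{\underline{1}}$ and the full subposet $\cube{n}_{\geq k}$ corresponds to $\cube{n}_{\leq n-k}$; the resulting statement is exactly that $\fib^{\underline{1}}$ maps $\D^{\cube{n},\cube{n}_{\leq n-k}}$ into $\D^{\cube{n},\cube{n}_{\geq k}}$. (Equivalently, one reruns the previous paragraph with the total-fiber analogue of \autoref{lem:iter-cones}(iii), in which the relevant restriction functor $0_{\underline{m}}$ cuts out a face of $\cube{n}$ contained in $\cube{n}_{\leq n-|\underline{m}|}\subseteq\cube{n}_{\leq n-k}$ whenever $|\underline{m}|\geq k$.) I do not anticipate a genuine obstacle; the one step demanding care is the cubical bookkeeping — correctly identifying the face picked out by $1_{\underline{m}^\vee}$ and confirming that each of its vertices has cardinality at least $k$ — after which both containments, and hence the corollary, fall out of \autoref{lem:iter-cones}(iii).
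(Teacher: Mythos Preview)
Your argument is correct, but it is not the paper's. The paper identifies the preimage $(\cof^{\underline 1})^{-1}\big(\D^{\cube{n},\cube{n}_{\leq n-k}}\big)$ by first replacing $\cof^{\underline 1}$ with $\fib^{\underline 2}$ via the periodicity relation $\Omega^n\cof^{\underline 1}\cong\fib^{\underline 2}$ (a consequence of \autoref{cor:cof-S}), and then applying \autoref{lem:cof-fib-restrict} twice in succession to peel off the two copies of $\fib^{\underline 1}$. In other words, the paper does compose the two equivalences of \autoref{lem:cof-fib-restrict}, and repairs the resulting exponent mismatch you flagged by passing through $\Omega^n$. Your route is more elementary: you bypass both \autoref{lem:cof-fib-restrict} and the periodicity relation entirely, appealing only to the pointwise formula \autoref{lem:iter-cones}(iii) and the observation that the face $1_{\underline{m}^\vee}$ lands in $\cube{n}_{\geq k}$. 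What the paper's approach buys is that the result genuinely earns its status as a corollary of the preceding lemma; what your approach buys is a self-contained computation that does not rely on the $\cof^{\underline 3}\cong\Sigma^n$ identification.
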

\begin{proof}
By restriction there is an equivalence of derivators
\[
\cof^{\underline 1}\colon (\cof^{\underline 1})^{-1}(\D^{\cube{n},\cube{n}_{\leq n-k}})\toiso\D^{\cube{n},\cube{n}_{\leq n-k}}
\]
and it suffices to identify this preimage. For $X\in\D^{\cube{n}}$ we note that $\cof^{\underline 1}(X)$ lies in $\D^{\cube{n},\cube{n}_{\leq n-k}}$ if and and only if
\[
\Omega^n \cof^{\underline 1}(X)\cong \fib^{\underline 2}(X)\in\D^{\cube{n},\cube{n}_{\leq n-k}}.
\]
Invoking \autoref{lem:cof-fib-restrict} twice, this is the case if and only if $\fib^{\underline 1}(X)\in\D^{\cube{n},k-1}$ which in turn is equivalent to $X\in\D^{\cube{n},\cube{n}_{\geq k}}$.
\end{proof}

Next, we analyze to some extent the generic rectangles in \autoref{fig:mother}.

\begin{lem}\label{lem:chunk}
For every $n\geq 0$ and $0\leq k\leq k'\leq l'\leq l\leq n$ the fully faithful inclusion
$\cube{n}_{k'\leq l'}\to\cube{n}_{k\leq l}$ factors as a cosieve followed by sieve 
\[
i_{l,l'}\circ j_{k,k'}\colon\cube{n}_{k'\leq l'}\to\cube{n}_{k\leq l'}\to\cube{n}_{k\leq l}
\]
and as a sieve followed by a cosieve
\[
j_{k,k'}\circ i_{l,l'}\colon\cube{n}_{k'\leq l'}\to\cube{n}_{k'\leq l}\to\cube{n}_{k\leq l}.
\]
The commutative square in $\cCat$
\begin{equation}\label{eq:chunk-chunk}
\vcenter{
\xymatrix{
\cube{n}_{k'\leq l'}\ar[d]_-{j_{k,k'}}\ar[r]^-{i_{l,l'}}\pullbackcorner\drtwocell\omit{\id}&\cube{n}_{k'\leq l}\ar[d]^-{j_{k,k'}}\\
\cube{n}_{k\leq l'}\ar[r]_-{i_{l,l'}}&\cube{n}_{k\leq l}
}
}
\end{equation}
is a pullback square and is homotopy exact. 
\end{lem}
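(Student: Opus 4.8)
The statement has three parts: the two factorizations, the pullback claim, and the homotopy exactness. The plan is to handle them in that order, since the later parts rely on the earlier ones.

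First I would establish the factorizations and the sieve/cosieve properties. The objects of $\cube{n}_{k\leq l}$ are the subsets $S\subseteq\{1,\ldots,n\}$ with $k\leq |S|\leq l$, and the inclusion $\cube{n}_{k'\leq l'}\to\cube{n}_{k\leq l}$ is the obvious one (valid since $k\leq k'$ and $l'\leq l$). For the first factorization, $i_{l,l'}\colon\cube{n}_{k\leq l'}\to\cube{n}_{k\leq l}$ loosens the upper bound; it is a \emph{sieve} because if $T\subseteq S$ and $S\in\cube{n}_{k\leq l'}$ then $|T|\leq |S|\leq l'$, so $T\in\cube{n}_{k\leq l'}$ \emph{provided} $|T|\geq k$, i.e.\ $\cube{n}_{k\leq l'}$ is downward closed in $\cube{n}_{k\leq l}$ (a full subposet that is a sieve). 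Dually, $j_{k,k'}\colon\cube{n}_{k'\leq l'}\to\cube{n}_{k\leq l'}$ tightens the lower bound and is a \emph{cosieve}, being upward closed. The second factorization is verified the same way with the roles of the two bounds swapped. That both composites equal the canonical inclusion is immediate on objects, and these are all inclusions of full subposets so there is nothing to check on morphisms. This step is purely combinatorial bookkeeping.

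Next, the square \eqref{eq:chunk-chunk} is a pullback in $\cCat$: all four categories are full subposets of $\cube{n}$, the maps are inclusions, and on the level of underlying sets of objects we have $\cube{n}_{k'\leq l'}=\cube{n}_{k'\leq l}\cap\cube{n}_{k\leq l'}$ inside $\cube{n}_{k\leq l}$ (a subset $S$ satisfies $k'\leq|S|\leq l'$ iff it satisfies both $|S|\geq k'$ and $|S|\leq l'$). A pullback of monomorphisms of posets along inclusions into a common poset is computed by intersection, so the square is a pullback. Being a full subposet on this intersection also confirms it is a pullback of categories, not merely of object-sets.

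Finally, for homotopy exactness I would invoke \autoref{lem:opfibration}: since $j_{k,k'}\colon\cube{n}_{k'\leq l}\to\cube{n}_{k\leq l}$ is a cosieve, hence a Grothendieck opfibration (a cosieve is the opfibration corresponding to a full subcategory that is upward closed, with empty fibers outside and singleton fibers inside), the pullback square \eqref{eq:chunk-chunk} along this opfibration is homotopy exact. Alternatively, and dually, $i_{l,l'}$ is a sieve hence a Grothendieck fibration, and one can apply the dual of \autoref{lem:opfibration} to the pullback along it; either viewpoint gives the conclusion, and this is exactly the ``two different ways'' phrasing used in the proof of \autoref{prop:cone-lower-cotr}. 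The only mild subtlety — and the one point I would state carefully — is that \autoref{lem:opfibration} as quoted concerns pullbacks along opfibrations in $\cCat$, so one needs the observation that a sieve (resp.\ cosieve) inclusion of posets is literally a Grothendieck fibration (resp.\ opfibration); this is standard but worth a sentence. No genuine obstacle is expected here: the whole lemma is a packaging of elementary poset combinatorics together with one citation.
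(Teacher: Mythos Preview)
Your proposal is correct and follows essentially the same approach as the paper: the paper declares the factorizations and the pullback claim ``immediate'' and then invokes \autoref{lem:opfibration} for homotopy exactness, exactly as you do (your explicit verification of the sieve/cosieve and intersection properties just spells out what the paper leaves implicit). Your remark about the two dual ways of applying \autoref{lem:opfibration} is also on target and matches how the paper uses the same square later in \autoref{prop:cone-lower-cotr}.
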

\begin{proof}
The factorizations are immediate as is the fact that the square is a pullback. To conclude the proof it suffices to invoke \autoref{lem:opfibration}.
\end{proof}

This shows that in every derivator there are canonical isomorphisms
\[
(j_{k,k'})_!(i_{l,l'})^\ast\toiso(i_{l,l'})^\ast(j_{k,k'})_!.
\]
More interestingly, there is the following closely related observation.

\begin{lem}
For every $n\geq 0$ and $0\leq k\leq k'\leq l'\leq l\leq n$ and every derivator \D there are canonical isomorphisms
\begin{equation}\label{eq:zero-chunk-chunk}
(j_{k,k'})_!(i_{l,l'})_\ast\toiso(i_{l,l'})_\ast(j_{k,k'})_!\colon\D^{\cube{n}_{k'\leq l'}}\to\D^{\cube{n}_{k\leq l}}.
\end{equation}
\end{lem}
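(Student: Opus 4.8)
The plan is to deduce this ``vanishing compatibility'' statement from the homotopy exactness of the square \eqref{eq:chunk-chunk} together with a base-change argument, exactly parallel in spirit to the passage from \autoref{lem:he-swap-coor} to \eqref{eq:sign-rep}. First I would record the two factorizations of the inclusion $\cube{n}_{k'\leq l'}\to\cube{n}_{k\leq l}$ established in \autoref{lem:chunk}: the fact that $j_{k,k'}$ is a sieve and $i_{l,l'}$ is a cosieve is precisely what makes both $(j_{k,k'})_!$ and $(i_{l,l'})_\ast$ into the ``exceptional'' Kan extensions, but more importantly, it makes \eqref{eq:chunk-chunk} a pullback square of categories in which both of the relevant functors ($i_{l,l'}$ on top, and $j_{k,k'}$ on the side) are of a type whose left resp. right Kan extensions are well understood. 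The key point is that the canonical mate $(j_{k,k'})_!(i_{l,l'})^\ast\to(i_{l,l'})^\ast(j_{k,k'})_!$ is invertible by the displayed consequence of \autoref{lem:chunk} (this is just \autoref{lem:opfibration} applied to the pullback \eqref{eq:chunk-chunk}, noting $j_{k,k'}$ is a Grothendieck opfibration since it is a sieve, or dually).

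Next I would pass to adjoints. Consider the mate square
\[
\xymatrix{
\D^{\cube{n}_{k'\leq l'}}\ar[d]_-{(j_{k,k'})_!}&\D^{\cube{n}_{k'\leq l}}\ar[l]_-{(i_{l,l'})^\ast}\ar[d]^-{(j_{k,k'})_!}\dltwocell\omit{\cong}\\
\D^{\cube{n}_{k\leq l'}}&\D^{\cube{n}_{k\leq l}}\ar[l]^-{(i_{l,l'})^\ast}
}
\]
whose horizontal mate is invertible. Both horizontal morphisms $(i_{l,l'})^\ast$ have right adjoints, namely $(i_{l,l'})_\ast$, and both vertical morphisms $(j_{k,k'})_!$ have left adjoints, namely $(j_{k,k'})^\ast$; passing to these adjoints in the standard way (the functoriality of canonical mates with respect to pasting, as used repeatedly above, e.g.\ in the proof of \autoref{cor:swap-cocart}) turns an invertible mate into an invertible mate of the transposed square. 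Thus one obtains a canonical isomorphism $(j_{k,k'})^\ast(i_{l,l'})_\ast\toiso(i_{l,l'})_\ast(j_{k,k'})^\ast$. However, this is the isomorphism for $(j_{k,k'})^\ast$, not for $(j_{k,k'})_!$, so a further step is needed: since $j_{k,k'}$ is a sieve in a pointed derivator the morphism $(j_{k,k'})_!$ is left extension by zero objects and hence admits a \emph{further} left adjoint (compare the use of ``second left adjoints'' / exceptional inverse images in the proof of \autoref{prop:cone-lower-cotr}). Actually, the cleanest route avoids pointedness: one notes that $i_{l,l'}$ is a cosieve and so $(i_{l,l'})_\ast$ is right extension by terminal objects, while $j_{k,k'}$ is a sieve; applying \autoref{lem:opfibration} directly to the pullback square \eqref{eq:chunk-chunk} viewed through the opfibration side gives that $(j_{k,k'})_!$ commutes with the restriction $(i_{l,l'})^\ast$ \emph{and}, because the square is also a pullback along a sieve, that $(j_{k,k'})_!$ commutes with $(i_{l,l'})_\ast$.

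Concretely, I expect the argument to run as follows. Fix a derivator $\D$ and $X\in\D^{\cube{n}_{k'\leq l'}}$. One has the canonical comparison map $\theta\colon (j_{k,k'})_!(i_{l,l'})_\ast X\to (i_{l,l'})_\ast(j_{k,k'})_! X$, built from the unit $X\to (i_{l,l'})^\ast(i_{l,l'})_\ast X$, the isomorphism $(j_{k,k'})_!(i_{l,l'})^\ast\cong (i_{l,l'})^\ast(j_{k,k'})_!$ (already proved), and the counit of the adjunction $((i_{l,l'})^\ast,(i_{l,l'})_\ast)$ in the larger chunk. To check $\theta$ is an isomorphism one may test pointwise (axiom (Der2)) at objects of $\cube{n}_{k\leq l}$, and $\cube{n}_{k\leq l}=\cube{n}_{k\leq l'}\sqcup(\cube{n}_{l'+1\leq l})$ as a set. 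On $\cube{n}_{k\leq l'}$, restriction along $i_{l,l'}$ is faithful enough: $(i_{l,l'})^\ast(i_{l,l'})_\ast\cong\id$, so on that part $\theta$ becomes the identity on $(j_{k,k'})_! X$ after applying $(i_{l,l'})^\ast$, using the already-established horizontal mate isomorphism. On the complement $\cube{n}_{l'+1\leq l}$, one uses that $(i_{l,l'})_\ast$ is \emph{right} Kan extension along a cosieve, hence (by the dual of \cite[Prop.~3.6]{groth:ptstab} and its unpointed analogue via slice categories) both sides evaluate via a terminal-object formula on the slice categories; since $j_{k,k'}$ is fully faithful and injective on objects, and the objects added in $\cube{n}_{l'+1\leq l}$ relative to $\cube{n}_{k'\leq l}$ are disjoint from those added by $j_{k,k'}$, the relevant slice categories match up and give the isomorphism there. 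The \textbf{main obstacle} I anticipate is precisely this second bookkeeping point: verifying that the Kan-extension formulas for $(j_{k,k'})_!(i_{l,l'})_\ast$ and $(i_{l,l'})_\ast(j_{k,k'})_!$ agree on the ``new'' objects in $\cube{n}_{l'+1\leq l}\smallsetminus\cube{n}_{k'\leq l}$, i.e.\ showing that the two orders of extending—first right-extend in the $l$-direction then left-extend in the $k$-direction, versus the reverse—produce the same value there, which ultimately reduces once more to \autoref{lem:chunk}'s pullback square together with the fact that Kan extensions along sieves and cosieves are computed by strictly local (slice-category) formulas.
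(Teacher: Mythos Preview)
Your proposal has two issues worth flagging, and the paper's proof is considerably simpler than either of your routes.

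First, you have the sieve/cosieve roles reversed throughout: in the paper's conventions $j_{k,k'}\colon\cube{n}_{k'\leq l'}\to\cube{n}_{k\leq l'}$ is a \emph{cosieve} (its image is upward closed) and $i_{l,l'}\colon\cube{n}_{k'\leq l'}\to\cube{n}_{k'\leq l}$ is a \emph{sieve} (its image is downward closed). This matters because the relevant facts are that left Kan extension along a cosieve is extension by initial objects and right Kan extension along a sieve is extension by terminal objects; your statements ``$(i_{l,l'})_\ast$ is right extension by terminal objects because $i_{l,l'}$ is a cosieve'' etc.\ are internally inconsistent.

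Second, your ``pass to adjoints'' step does not do what you want. From the homotopy exactness of \eqref{eq:chunk-chunk} you get the invertible mate $(j_{k,k'})_!(i_{l,l'})^\ast\toiso(i_{l,l'})^\ast(j_{k,k'})_!$. Replacing the horizontal restrictions $(i_{l,l'})^\ast$ by their right adjoints $(i_{l,l'})_\ast$ via the mate calculus produces a transformation in the \emph{opposite} direction involving $(j_{k,k'})^\ast$ rather than $(j_{k,k'})_!$; it does not hand you $(j_{k,k'})_!(i_{l,l'})_\ast\cong(i_{l,l'})_\ast(j_{k,k'})_!$ directly. Your subsequent concrete pointwise argument is closer to correct and can be made to work once the sieve/cosieve roles are fixed, but it is more laborious than necessary.

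The paper's argument bypasses all of this. Since $j_{k,k'}$ is a cosieve, $(j_{k,k'})_!$ is extension by initial objects on $\cube{n}_{k\leq k'-1}$; since $i_{l,l'}$ is a sieve, $(i_{l,l'})_\ast$ is extension by terminal objects on $\cube{n}_{l'+1\leq l}$. Hence \emph{both} compositions $(j_{k,k'})_!(i_{l,l'})_\ast$ and $(i_{l,l'})_\ast(j_{k,k'})_!$ restrict to equivalences
\[
\D^{\cube{n}_{k'\leq l'}}\toiso\D^{\cube{n}_{k\leq l},\exx},
\]
where $\D^{\cube{n}_{k\leq l},\exx}\subseteq\D^{\cube{n}_{k\leq l}}$ is spanned by those $X$ with $X|_{\cube{n}_{k\leq k'-1}}=\emptyset$ and $X|_{\cube{n}_{l'+1\leq l}}=\ast$. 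Both equivalences are inverse to the same restriction morphism, so they are canonically isomorphic. No mate manipulation or pointwise case analysis is needed.
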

\begin{proof}
Left Kan extensions along cosieves are left extensions by initial objects and dually for right Kan extensions along sieves. Let $\D^{\cube{n}_{k\leq l},\exx}\subseteq\D^{\cube{n}_{k\leq l}}$ be spanned by all $X$ such that
\[
X|_{\cube{n}_{l'+1\leq l}}=\ast\qquad\text{and}\qquad X|_{\cube{n}_{k\leq k'-1}}=\emptyset.
\]
Then the above two compositions restrict to equivalences
\[
(j_{k,k'})_!(i_{l,l'})_\ast\colon\D^{\cube{n}_{k'\leq l'}}\toiso\D^{\cube{n}_{k\leq l},\exx},\quad (i_{l,l'})_\ast(j_{k,k'})_!\colon\D^{\cube{n}_{k'\leq l'}}\toiso\D^{\cube{n}_{k\leq l},\exx},
\]
and this exhibits both as inverse equivalences to the restriction morphism.
\end{proof}

We are mostly interested in this result for pointed derivators.

\begin{notn}
For every $n\geq 0$ and $0\leq k\leq k'\leq l'\leq l\leq n$ and every pointed derivator \D we denote by 
\begin{equation}\label{eq:z-def}
z=z_{k'\leq l'}^{k\leq l}\colon\D^{\cube{n}_{k'\leq l'}}\to\D^{\cube{n}_{k\leq l}}
\end{equation}
any of the naturally isomorphic morphisms \eqref{eq:zero-chunk-chunk}, which extends a given $[k',l']$-chunk by zeros on both sides. (The letter $z$ is meant to remind us of the word `zero'.) As a special case we have the equivalence
\begin{equation}\label{eq:z}
z=z_{k\leq l}=z_{k\leq l}^{0\leq n}\colon\D^{\cube{n}_{k\leq l}}\toiso\D^{\cube{n},(\cube{n}_{\leq k-1}\cup\cube{n}_{\geq l+1})}.
\end{equation}

As a variant of this, we can consider only left Kan extensions or only right Kan extensions along the functors in \eqref{eq:chunk-chunk}. By uniqueness of adjoints we hence obtain canonical isomorphisms
\[
(j_{k,k'})_!(i_{l,l'})_!\toiso(i_{l,l'})_!(j_{k,k'})_!\colon\D^{\cube{n}_{k'\leq l'}}\to\D^{\cube{n}_{k\leq l}}.
\]
For later reference, we denote any of these naturally isomorphic compositions by
\begin{equation}\label{eq:l-def}
l=l_{k'\leq l'}^{k\leq l}\colon\D^{\cube{n}_{k'\leq l'}}\to \D^{\cube{n}_{k\leq l}}.
\end{equation}
Similarly, in the case of right Kan extensions there are canonical isomorphisms
\[
(j_{k,k'})_\ast(i_{l,l'})_\ast\toiso(i_{l,l'})_\ast(j_{k,k'})_\ast\colon\D^{\cube{n}_{k'\leq l'}}\to\D^{\cube{n}_{k\leq l}},
\]
and we refer to these composition by the shorthand notation
\begin{equation}\label{eq:r-def}
r=r_{k'\leq l'}^{k\leq l}\colon\D^{\cube{n}_{k'\leq l'}}\to \D^{\cube{n}_{k\leq l}}.
\end{equation}
\end{notn}

With this preparation we now show that $\cube{n}_{k\leq l}$ and $\cube{n}_{n-l\leq n-k}$ are strongly stably equivalent.

\begin{con}\label{con:sse-chunks}
Let \D be a stable derivator, $n\geq 0$, and $0\leq k\leq l\leq n$. Up to equivalence, the codomain of the equivalence \eqref{eq:z} can be written differently. In fact, it follows from \autoref{lem:cof-fib-restrict} and \autoref{cor:cof-fib-restrict} that $\cof^{\underline{1}}\colon\D^{\cube{n}}\toiso\D^{\cube{n}}$ restricts to the following three outer equivalences in 
\[
\xymatrix{
&\D^{\cube{n},(\cube{n}_{0\leq k-1}\cup \cube{n}_{l+1\leq n})}\ar@/^1pc/[dr]^-{\cof^{\underline 1}}_-\sim&\\
\D^{(\cube{n},l),\cube{n}_{\geq n-k+1}}\ar@/^1pc/[ru]^-{\cof^{\underline 1}}_-\sim&\D^{\cube{n}_{k\leq l}}\ar[u]^-\sim_-z&\D^{(\cube{n},n-k),\cube{n}_{\leq n-l-1}}.\ar@/^2pc/[ll]^-{\cof^{\underline 1}}_-\sim
}
\]
By symmetry there is a similar diagram if we start with $\D^{\cube{n}_{n-l\leq n-k}}$ instead, and these two diagrams constitute the upper and the lower part in \autoref{fig:chunks}, respectively. The two additional equivalences pointing to the northeast and the southwest are restricted left Kan extension morphisms, while the two equivalences pointing to the northwest and the southeast are restrictions of right Kan extension morphisms. 
\end{con}

\begin{warn}
Note that we do not claim that \autoref{fig:chunks} commutes, and we come back to a more careful discussion of \autoref{fig:chunks} in \S\ref{sec:SSE-Serre}. 
\end{warn}

\begin{figure}
\begin{displaymath}
\xymatrix{
&\D^{\cube{n},(\cube{n}_{0\leq k-1}\cup \cube{n}_{l+1\leq n})}\ar@/^1pc/[dr]&\\
\D^{(\cube{n},l),\cube{n}_{\geq n-k+1}}\ar@/^1pc/[ru]&\D^{\cube{n}_{k\leq l}}\ar[u]\ar[ddl]\ar[rdd]&\D^{(\cube{n},n-k),\cube{n}_{\leq n-l-1}}\ar@/^2pc/[ll]\\
&&\\
\D^{(\cube{n},l),\cube{n}_{\leq k-1}}\ar@/^2pc/[rr]&\D^{\cube{n}_{n-l\leq n-k}}\ar[d]\ar[luu]\ar[uur]&\D^{(\cube{n},n-k),\cube{n}_{\geq l+1}}\ar@/^1pc/[ld]\\
&\D^{\cube{n},(\cube{n}_{0\leq n-l-1}\cup\cube{n}_{n-k+1\leq n})}\ar@/^1pc/[lu]&
}
\end{displaymath}
\caption{Strong stable equivalences between chunks}
\label{fig:chunks}
\end{figure}
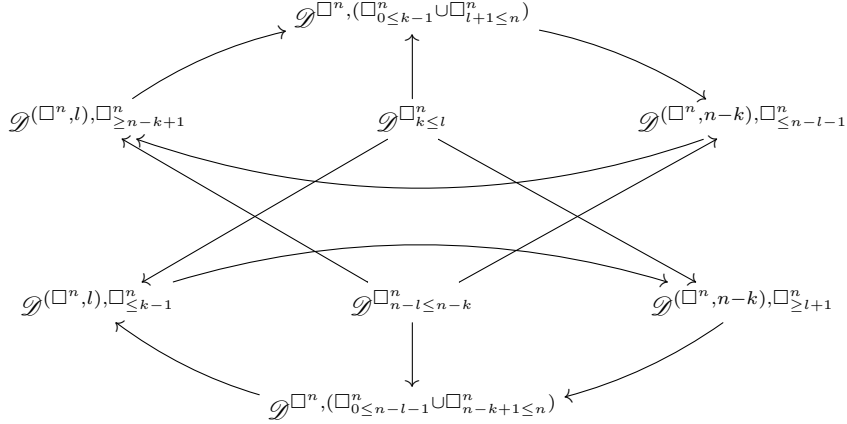

\begin{thm}\label{thm:sse-chunks}
For every $n\geq 0$ and every $0\leq k\leq l\leq n$ the chunks $\cube{n}_{k\leq l}$ and $\cube{n}_{n-l\leq n-k}$ are strongly stably equivalent.
\end{thm}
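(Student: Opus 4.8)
The plan is to combine the zero-extension equivalence \eqref{eq:z} with the restricted cofiber equivalences collected in \autoref{con:sse-chunks}, and then transport the result through \autoref{thm:sse-chunks-special-case} together with the compatibility of exact morphisms with homotopy finite Kan extensions. More precisely, for a stable derivator \D, the equivalence $z=z_{k\leq l}\colon\D^{\cube{n}_{k\leq l}}\toiso\D^{\cube{n},(\cube{n}_{\leq k-1}\cup\cube{n}_{\geq l+1})}$ identifies the representation theory of the chunk $\cube{n}_{k\leq l}$ with the subderivator of $\cube{n}$-cubes vanishing outside the range $[k,l]$. The symmetric statement for $\cube{n}_{n-l\leq n-k}$ gives $\D^{\cube{n}_{n-l\leq n-k}}\toiso\D^{\cube{n},(\cube{n}_{\leq n-l-1}\cup\cube{n}_{\geq n-k+1})}$. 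Thus it suffices to produce a pseudo-natural (in exact morphisms) equivalence between these two subderivators of $\D^{\cube{n}}$.

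First I would run the chain of restricted cofiber equivalences in \autoref{con:sse-chunks}: by \autoref{lem:cof-fib-restrict} and \autoref{cor:cof-fib-restrict}, the equivalence $\cof^{\underline 1}\colon\D^{\cube{n}}\toiso\D^{\cube{n}}$ restricts to equivalences relating $\D^{\cube{n},(\cube{n}_{0\leq k-1}\cup\cube{n}_{l+1\leq n})}$ to $\D^{(\cube{n},l),\cube{n}_{\geq n-k+1}}$ on one side and to $\D^{(\cube{n},n-k),\cube{n}_{\leq n-l-1}}$ on the other. Following the two halves of \autoref{fig:chunks}, one then reaches $\D^{\cube{n},(\cube{n}_{0\leq n-l-1}\cup\cube{n}_{n-k+1\leq n})}$, which is exactly the zero-extended form of $\D^{\cube{n}_{n-l\leq n-k}}$. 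Composing $z$, a suitable number of iterates of $\cof^{\underline 1}$, and $z^{-1}$ for the mirror chunk yields the desired equivalence $\Phi_{k,l}\colon\D^{\cube{n}_{k\leq l}}\toiso\D^{\cube{n}_{n-l\leq n-k}}$. Each constituent is either a parametrized Kan extension along a fully faithful functor (hence an equivalence onto an essential image) or a restriction of the iterated cofiber equivalence, and each is built from operations — $(\cof^{\underline 1},\fib^{\underline 1})$, zero-extensions along (co)sieves, restrictions — that are compatible with exact morphisms of stable derivators, since exact morphisms commute with homotopy finite Kan extensions and with (co)fibers. This gives the required pseudo-naturality, and hence the strong stable equivalence in the sense of \cite[Def.~5.1]{gst:basic}.

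The main obstacle I anticipate is purely bookkeeping: one must check that the indices match up so that the codomain of the cofiber chain starting from $\cube{n}_{k\leq l}$ is literally the zero-extended image of $\cube{n}_{n-l\leq n-k}$, i.e.\ that applying $\cof^{\underline 1}$ the appropriate number of times sends the vanishing locus $\cube{n}_{\leq k-1}\cup\cube{n}_{\geq l+1}$ to $\cube{n}_{\leq n-l-1}\cup\cube{n}_{\geq n-k+1}$. This is where \autoref{cor:cotr-iter-cones-stable}, \autoref{cor:cof-fib-restrict}, and the relation $\Omega^n\cof^{\underline 1}\cong\fib^{\underline 2}$ do the work, and one has to be careful that a single application of $\cof^{\underline 1}$ suffices once the chunk has been zero-extended (the restricted equivalences in \autoref{lem:cof-fib-restrict} are stated for a single $\cof^{\underline 1}$). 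A clean way to organize this is to verify directly, using \autoref{cor:cotr-iter-cones-stable} (characterizing $k$-determinedness by vanishing of iterated cones $C^{\underline m}$ with $|\underline m|=k+1$) and the dual statement, that $\cof^{\underline 1}$ restricts to an equivalence
\[
\cof^{\underline 1}\colon\D^{\cube{n},(\cube{n}_{\leq k-1}\cup\cube{n}_{\geq l+1})}\toiso\D^{\cube{n},(\cube{n}_{\leq n-l-1}\cup\cube{n}_{\geq n-k+1})},
\]
after which the theorem follows by conjugating with the equivalences $z$ and $z$ for the two chunks. Finally, I would note that pseudo-naturality can be phrased exactly as in the proof of \autoref{thm:sse-chunks-special-case}, since every building block preserves homotopy finite colimits and limits.

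\begin{proof}
Let \D be a stable derivator. By \eqref{eq:z} left and right Kan extension provide an equivalence
\[
z=z_{k\leq l}\colon\D^{\cube{n}_{k\leq l}}\toiso\D^{\cube{n},(\cube{n}_{\leq k-1}\cup\cube{n}_{\geq l+1})},
\]
and likewise an equivalence $z_{n-l\leq n-k}\colon\D^{\cube{n}_{n-l\leq n-k}}\toiso\D^{\cube{n},(\cube{n}_{\leq n-l-1}\cup\cube{n}_{\geq n-k+1})}$. It therefore suffices to produce an equivalence between the two subderivators of $\D^{\cube{n}}$ on the right, pseudo-natural with respect to exact morphisms.

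We claim that $\cof^{\underline 1}\colon\D^{\cube{n}}\toiso\D^{\cube{n}}$ restricts to an equivalence
\[
\cof^{\underline 1}\colon\D^{\cube{n},(\cube{n}_{\leq k-1}\cup\cube{n}_{\geq l+1})}\toiso\D^{\cube{n},(\cube{n}_{\leq n-l-1}\cup\cube{n}_{\geq n-k+1})}.
\]
To see this, fix $X\in\D^{\cube{n}}$. By \autoref{cor:cof-fib-restrict} applied with the role of $k$ played by $l+1$, the cube $\cof^{\underline 1}(X)$ vanishes on $\cube{n}_{\geq l+1}$ if and only if $X$ vanishes on $\cube{n}_{\leq n-l-1}$; dually, using the dual of \autoref{cor:cof-fib-restrict}, $\cof^{\underline 1}(X)$ vanishes on $\cube{n}_{\leq n-l-1}$ if and only if $X$ vanishes on $\cube{n}_{\geq l+1}$. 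Similarly, $\cof^{\underline 1}(X)$ vanishes on $\cube{n}_{\leq k-1}$ if and only if $X$ vanishes on $\cube{n}_{\geq n-k+1}$, and conversely. Intersecting these conditions and using that $\cof^{\underline 1}$ is an equivalence proves the claim, as one checks that $X$ vanishes on $\cube{n}_{\leq k-1}\cup\cube{n}_{\geq l+1}$ if and only if $\cof^{\underline 1}(X)$ vanishes on $\cube{n}_{\leq n-l-1}\cup\cube{n}_{\geq n-k+1}$; this also matches the composite of restricted cofiber equivalences displayed in \autoref{fig:chunks}.

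Composing, we obtain an equivalence of derivators
\[
\Phi_{k,l}=z_{n-l\leq n-k}^{-1}\circ\cof^{\underline 1}\circ z_{k\leq l}\colon\D^{\cube{n}_{k\leq l}}\toiso\D^{\cube{n}_{n-l\leq n-k}}.
\]
Each factor is either a parametrized Kan extension along a fully faithful functor (the equivalences $z$) or a restriction of the iterated cofiber equivalence $\cof^{\underline 1}$. Exact morphisms of stable derivators commute with homotopy finite Kan extensions and with (co)fibers, hence with all of these building blocks; therefore $\Phi_{k,l}$ is pseudo-natural with respect to exact morphisms of stable derivators. This is precisely the assertion that $\cube{n}_{k\leq l}$ and $\cube{n}_{n-l\leq n-k}$ are strongly stably equivalent in the sense of \cite[Def.~5.1]{gst:basic}.
\end{proof}
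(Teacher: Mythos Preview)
Your overall strategy is exactly the paper's: traverse \autoref{fig:chunks} and observe that every arrow is built from homotopy finite Kan extensions and (co)fibers, hence is preserved by exact morphisms. The paper's proof simply says this in one sentence (``the graph is path-connected'').

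However, the specific path you write down in the formal proof does not exist. Your central claim, that $\cof^{\underline 1}$ restricts to an equivalence
\[
\cof^{\underline 1}\colon\D^{\cube{n},(\cube{n}_{\leq k-1}\cup\cube{n}_{\geq l+1})}\toiso\D^{\cube{n},(\cube{n}_{\leq n-l-1}\cup\cube{n}_{\geq n-k+1})},
\]
is false. \autoref{cor:cof-fib-restrict} does give that vanishing on $\cube{n}_{\geq l+1}$ is carried to vanishing on $\cube{n}_{\leq n-l-1}$, but there is no ``dual'' of that corollary for $\cof^{\underline 1}$ that would carry vanishing on $\cube{n}_{\leq k-1}$ to vanishing on $\cube{n}_{\geq n-k+1}$. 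What \autoref{lem:cof-fib-restrict}(ii) actually says is that $\cof^{\underline 1}$ carries vanishing on $\cube{n}_{\leq k-1}$ to \emph{$(n-k)$-determinedness}, which is a (co)cartesianness condition, not a vanishing condition. This is precisely why the arrow out of the top-center node in \autoref{fig:chunks} lands in $\D^{(\cube{n},n-k),\cube{n}_{\leq n-l-1}}$ rather than in the image of $z^\vee$.

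A concrete counterexample: take $n=3$, $k=1$, $l=2$, and let $X\in\D^{\cube{3}}$ be supported only at $\{1\}$ with value $x\neq 0$. Then $X$ vanishes on $\cube{3}_{\leq 0}\cup\cube{3}_{\geq 3}$, but a direct computation gives $(\cof^{\underline 1}X)_{\{1,2,3\}}\cong\Sigma^2 x\neq 0$, so $\cof^{\underline 1}X$ does not vanish on $\cube{3}_{\geq n-k+1}=\cube{3}_{\geq 3}$. (It \emph{is} $(n-k)$-determined, i.e.\ cocartesian, as the lemma predicts.) Consequently the composite $z_{n-l\leq n-k}^{-1}\circ\cof^{\underline 1}\circ z_{k\leq l}$ is not well-defined. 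The fix is straightforward: follow the figure honestly and replace $(z^\vee)^{-1}$ by $((r^\vee)^\simeq)^{-1}$ (or $((l^\vee)^\simeq)^{-1}$), landing in $\D^{\cube{n}_{n-l\leq n-k}}$ via the right (or left) Kan extension rather than via zero-extension. After that correction your pseudo-naturality argument goes through unchanged.
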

\begin{proof}
It suffices to note that the graph of \autoref{fig:chunks} is path-connected. In fact, by the details in \autoref{con:sse-chunks}, all morphisms in that diagram are given by combinations of homotopy finite Kan extensions in stable derivators, and exact morphisms of stable derivators are pseudo-natural with respect to such Kan extensions \cite[\S9]{groth:revisit}.
\end{proof}

In the above proof we did not make specific a particular strong stable equivalence, instead we merely invoked the path-connectedness of \autoref{fig:chunks}. In the following two sections we improve on this and, as a preparation, we conclude this section by introducing the following notation for the morphisms in \autoref{fig:chunks}. 

\begin{rmk}\label{rmk:describe-fig-chunks}
Let \D be a stable derivator, $n\geq 0$, and $0\leq k\leq l\leq n$. The six morphisms in the two circles of \autoref{fig:chunks} are restrictions of $\cof^{\underline 1}$ to suitable equivalences. For the remaining six cases, we consider the morphism $i=i_{k\leq l}^{0\leq n}\colon(k,l)\to(0,n)$ in $\chunk{n}$ and its image $i^\vee=i_{n-l\leq n-k}^{0\leq n}$ under the chunk symmetry \eqref{eq:chunk-symm}. Then, using suggestive notation, we have
\begin{align*}
\searrow\colon\quad&i_\ast^\simeq=r^\simeq,\\
\swarrow\colon\quad&i_!^\simeq=l^\simeq,\\
\nwarrow\colon\quad&(i^\vee)_\ast^\simeq=(r^\vee)^\simeq,\\
\nearrow\colon\quad& (i^\vee)_!^\simeq=(l^\vee)^\simeq,\\
\uparrow\colon\quad&z^\simeq,\quad\text{and}\\
\downarrow\colon\quad&(z^\vee)^\simeq.
\end{align*}
\end{rmk}

\section{Strong stable equivalences and Serre equivalences}
\label{sec:SSE-Serre}

In this section we address the lack of commutativity of \autoref{fig:chunks}. Traveling along one of the circles gives rise to the auto-equivalence $\Sigma^n$ (\autoref{cor:cof-S}). There are two other types of elementary subcells in \autoref{fig:chunks}:
\begin{enumerate}
\item There are two triangles in the central part of the diagram.
\item There are two hexagons, one of them is concentrated in the left part and the other one in the right part of \autoref{fig:chunks}.
\end{enumerate}
These respective subcells give rise to Serre equivalences and to two different strong stable equivalences for chunks. Let us begin by studying the two triangles more carefully (see \autoref{rmk:describe-fig-chunks} for an explanation of the notation).

\begin{defn}\label{defn:Serre-mor}
Let $\D$ be a stable derivator, $n \geq 0$, and $0 \leq k \leq l \leq n$.
\begin{enumerate}
\item The \textbf{Serre morphism} $S:=S_{k,l}:=S_{k,l}^{(n)}\colon \D^{\cube{n}_{k\leq l}}\toiso\D^{\cube{n}_{k\leq l}}$ is given by
\[
(r^{\simeq})^{-1}\circ\cof^{\underline{1}}\circ l^{\simeq}\colon \D^{\cube{n}_{k\leq l}}\toiso\D^{\cube{n}_{k\leq l}}.
\]
\item The morphism $\tilde{S}:=\tilde{S}_{k,l}:=\tilde{S}_{k,l}^{(n)}\colon \D^{\cube{n}_{n-l \leq n-k}}\toiso\D^{\cube{n}_{n-l \leq n-k}}$ is given by
\[
((r^{\vee})^{\simeq})^{-1}\circ\cof^{\underline{1}}\circ (l^{\vee})^{\simeq}
\]
and called the \textbf{opposite Serre morphism}. 
\end{enumerate}
\end{defn}

Before we justify the terminology by some examples, we make the following remark.

\begin{rmk}\label{rmk:S-dual-S}
Let $n\geq 0$ and $0\leq k\leq l\leq n$. The category $\cube{n}$ is self-dual, and under this equivalence the chunk $\cube{n}_{k\leq l}$ is identified with the dual chunk
\[
(\cube{n}_{k\leq l})^\vee=\cube{n}_{n-l\leq n-k}.
\]
This duality corresponds in \autoref{fig:chunks} to a rotation by $\pi$, and we obtain a corresponding identification
\[
\tilde{S}_{k,l}=S_{n-l,n-k}.
\]
\end{rmk}

We now relate these Serre morphisms to other instances of Serre morphisms in abstract representation theory. A more detailed discussion of these Serre morphisms and global versions of them will appear in \cite{bg:global}. In the cases where a chunk $\cube{n}_{k\leq l}$ is (strongly stably equivalent to) a product of $A_m$-quivers we can compare the Serre morphisms of \autoref{defn:Serre-mor} with the Serre morphisms discussed in \cite{gst:Dynkin-A}. This shows, in particular, that these Serre equivalences are not necessarily identities.

\begin{eg}\label{eg:products-A2}
For $n\geq 0$ the full cube $\cube{n}$ is the $n$th power of the $A_2$-quiver $[1]$. For $A_2$-quivers Serre equivalences are given by the cofiber morphism $\mathsf{cof}$ (\cite[Lem.~5.16]{gst:Dynkin-A}), and since $S_{0,n}^{(n)}=\mathsf{cof}^{\underline{1}}$, we immediately see that both definitions are compatible.
\end{eg}

\begin{eg}\label{eg:A3}
The chunk $\cube{2}_{\leq 1}$ is an $A_3$-quiver, and in this example we show that the corresponding Serre morphisms from \autoref{defn:Serre-mor} agrees with the one from \cite[\S5]{gst:Dynkin-A} (and hence with the classical one from representation theory). To distinguish these notationally, we temporarily write $\hat{S}$ for these latter Serre morphisms.
\begin{enumerate}
\item We begin with a calculation of $S_{0,1}^{(2)}$, and to this end we consider the diagram
\[
\xymatrix{
x_{\emptyset} \ar[r]^{f} \ar[d]_{g} & x_1 \ar[r] \ar[d] & Cf \ar[d]^-\sim\\
x_2 \ar[r] \ar[d] & x_1 \cup_{x_{\emptyset}} x_2\pushoutcorner \ar[r] \ar[d] & Cf \ar[d]\\
Cg \ar[r]_-\sim & Cg \ar[r] & 0.
}
\]
By definition of $S_{0,1}^{(2)}$, we first complete the span $X=(x_2 \leftarrow x_{\emptyset} \rightarrow x_1)$ to a cocartesian square. We next extend all morphisms to cofiber sequences in order to obtain $\mathsf{cof}^{\underline{1}}l^{\simeq}(X)$ as the lower right square in the above diagram. By abuse of notation we wrote $Cf$ and $Cg$ twice, and this is justified by \autoref{cor:cotr-iter-cones-II}. To conclude the description of $S_{0,1}^{(2)}(X)$ we restrict this square to the span $(Cg \leftarrow x_1 \cup_{x_{\emptyset}} x_2 \rightarrow Cf)$. 
\item On the other hand the Serre morphism $\hat{S}$ for the $A_3$-quiver is up to a suspension constructed by iterated reflections at an admissible sequence of sinks. In our case of the source of valence two this amounts to two reflections at the outer vertices followed by a reflection at the central one. To describe this we consider the following diagram consisting of bicartesian squares:

\[
\xymatrix{
Fg\times_{x_{\emptyset}} Ff \ar[r] \ar[d] & Ff \ar[r] \ar[d] & 0\ar[d]\\
Fg \ar[r] \ar[d] & x_{\emptyset} \ar[r]^{f} \ar[d]^{g} & x_1\\
0 \ar[r] & x_2
}
\]
We obtain $\hat{S}(X)$ as the supension of the span $(Fg \leftarrow Fg\times_{x_{\emptyset}} Ff \rightarrow Ff)$. Up to a flip, this suspension is obtained by extending the previous diagram to the following mesh
\begin{equation}\label{eq:S-square}
\vcenter{
\xymatrix{
Fg\times_{x_{\emptyset}} Ff \ar[r] \ar[d] & Ff \ar[r] \ar[d] & 0 \ar[d]\\
Fg \ar[r] \ar[d] & x_{\emptyset} \ar[r]^{f} \ar[d]^{g} & x_1 \ar[d] \ar[r] & 0 \ar[d]\\
0 \ar[r] & x_2 \ar[r] \ar[d] & x_1 \cup_{x_{\emptyset}} x_2 \ar[r] \ar[d] & Cg \ar[d]\\
& 0 \ar[r] & Cf \ar[r] & \Sigma x_{\emptyset}
}
}
\end{equation}
in turn by adding bicartesian squares (see \cite[\S4]{gst:Dynkin-A}). 
\end{enumerate}
These calculations show that the two Serre morphisms are naturally isomorphic. In particular, recycling the fractionally Calabi--Yau property from \cite[Thm.~5.19]{gst:Dynkin-A} we conclude that $(S_{0,1}^{(2)})^4\cong\Sigma^2$. Moreover, \eqref{eq:S-square} shows that there is a natural isomorphism $(S_{0,1}^{(2)})^2\cong\sigma_{1,2}^*\Sigma$. 
\end{eg}

Dual arguments apply to the case of $\cube{n}_{\geq 1}$. 

\begin{eg}\label{eg:source3}
The chunk $\cube{3}_{\leq 1}$ is the trivalent source and hence strongly stably equivalent to the product of two $A_2$-quivers (\cite[Thm.~9.2]{gst:basic}). Using arguments similar to (but more involved than) the ones in \autoref{eg:A3}, one shows that the Serre morphisms are also compatible in this case. As a consequence of \cite[Thm.~5.19]{gst:Dynkin-A} we obtain the fractionally Calabi--Yau property $(S_{0,1}^{(3)})^3\cong\Sigma^2$. 
\end{eg}

\begin{rmk}
In contrast to the previous cases, it is not always true that suitable powers of the Serre morphisms $S^{(n)}_{k,l}$ and the suspensions are isomorphic. For instance, this is impossible for the chunk $\cube{3}_{1\leq 2}$ as we can already see at the level of underlying diagrams. In fact, let $X \in \D^{\cube{3}_{1\leq 2}}$ be  obtained from $x\in\D$ by applying $\iota_{1\leq 2}^*\circ \cof^{\underline{1}}\circ (\iota_{=2})_!\circ \pi_{\cube{3}_{=2}}^\ast$. The underlying diagram is given by:
\[
X_{\underline{m}}=\begin{cases}
x^2=x\oplus x, \text{ if } \vert \underline{m}\vert=1,\\
x, \text{ if } \vert \underline{m}\vert=2
\end{cases}
\]
Inductively, one can show that
\[
(S^{(3)}_{1,2})^nX_{\underline{m}}=\begin{cases}
\Sigma^nx^{2+2n}, \text{ if } \vert \underline{m}\vert=1,\\
\Sigma^{n}x^{1+2n}, \text{ if } \vert \underline{m}\vert=2.
\end{cases}
\]

We thank Jan {\v S}{\v t}ov{\'\i}{\v c}ek for pointing out to us, that there are also purely representation theoretic arguments to obtain the same conclusion. In fact, over a field~$k$ the poset $\cube{3}_{1\leq 2}$ is derived equivalent to the extended Dynkin quiver of type $\tilde{A}_5$, which has tame infinite representation type and these are not fractionally Calabi--Yau. And by passing to the derivator $\D_k$ of the field $k$, the abstract fractionally Calabi--Yau property would imply the classical one which is impossible.
\end{rmk}

Next, we turn to a discussion of the hexagons in \autoref{fig:chunks} which will lead to two different strong stable equivalences. We again refer to \autoref{rmk:describe-fig-chunks} for the notation.

\begin{con}\label{con:H-H-H-H}
Let \D be a stable derivator, $n\geq 0$, and $0\leq k\leq l\leq n$. The hexagon on the right in \autoref{fig:chunks} gives rise to the two equivalences
\[
H^{r1},H^{r2}\colon \D^{\cube{n}_{k\leq l}}\toiso\D^{\cube{n}_{n-l \leq n-k}}
\]
which are respectively given by
\[
H^{r1}=((l^{\vee})^{\simeq})^{-1}\circ\cof^{\underline{1}}\circ z^{\simeq} \qquad\text{ and }\qquad H^{r2}=((z^{\vee})^{\simeq})^{-1}\circ\cof^{\underline{1}}\circ r^{\simeq}.
\]
Similarly, the hexagon on the left yields the two equivalences
\[
H^{l1},H^{l2}\colon \D^{\cube{n}_{n-l\leq n-k}}\toiso\D^{\cube{n}_{k \leq l}}
\]
defined by the formulas
\[
H^{l1}=(l^{\simeq})^{-1}\circ\cof^{\underline{1}}\circ (z^{\vee})^{\simeq} \qquad\text{ and }\qquad H^{l2}=(z^{\simeq})^{-1}\circ\cof^{\underline{1}}\circ (r^{\vee})^{\simeq}.
\]

To distinguish these equivalences for the various chunks $\cube{n}_{k\leq l}$ notationally, we also write
\[
H^{r1}=H^{r1}_{k,l}=(H^{r1}_{k,l})^{(n)},
\]
and similarly in the remaining three cases. Similar to \autoref{rmk:S-dual-S}, we note that the self-duality of $\cube{n}$ yields the identifications
\[
H^{l1}_{k,l}=H^{r1}_{n-l,n-k}\qquad \text{ and } \qquad H^{l2}_{k,l}=H^{r2}_{n-l,n-k}.
\]
\end{con}

We observe that the compositions $H^{r1}$ and $H^{r2}$ are exactly the boundary of right hexagon in \autoref{fig:chunks}. Thus the following result is equivalent to the commutativity of this hexagon.

\begin{lem}\label{lem:hexagon}
Let \D be a stable derivator, $n\geq 0$, and $0\leq k\leq l\leq n$.  The two strong stable equivalences $H^{r1},H^{r2}\colon\D^{\cube{n}_{k\leq l}}\toiso\D^{\cube{n}_{n-l\leq n-k}}$ are canonically isomorphic.
\end{lem}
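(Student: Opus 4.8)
The plan is to trace both $H^{r1}$ and $H^{r2}$ back to the single autoequivalence $\cof^{\underline{1}}\colon\D^{\cube{n}}\toiso\D^{\cube{n}}$ conjugated by Kan extensions, and then to play the difference between two such Kan extensions off against the stability of iterated cones under small perturbations (\autoref{cor:cof-stability}). Write $i:=i_{k\leq l}^{0\leq n}\colon\cube{n}_{k\leq l}\hookrightarrow\cube{n}$ and $i^\vee:=i_{n-l\leq n-k}^{0\leq n}\colon\cube{n}_{n-l\leq n-k}\hookrightarrow\cube{n}$ for the inclusions, and let $z,r\colon\D^{\cube{n}_{k\leq l}}\to\D^{\cube{n}}$ be, respectively, the zero-extension on both sides and the right Kan extension $i_\ast$, so that $z^\simeq$ and $r^\simeq$ are their corestrictions onto the subderivators $\D^{\cube{n},(\cube{n}_{0\leq k-1}\cup\cube{n}_{l+1\leq n})}$ and $\D^{(\cube{n},n-k),\cube{n}_{\geq l+1}}$.

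First I would do the bookkeeping. Since $i^\vee$ is fully faithful, $(i^\vee)^\ast(i^\vee)_!\cong\id$ and $(i^\vee)^\ast z^\vee\cong\id$, so the inverse equivalences $((l^\vee)^\simeq)^{-1}$ and $((z^\vee)^\simeq)^{-1}$ entering the definitions of $H^{r1}$ and $H^{r2}$ in \autoref{con:H-H-H-H} are both induced by the single restriction morphism $(i^\vee)^\ast$. Feeding in the fact, recorded in \autoref{con:sse-chunks}, that the $\cof^{\underline{1}}$-equivalences there are restrictions of the one autoequivalence $\cof^{\underline{1}}$ of $\D^{\cube{n}}$ (so that $\cof^{\underline{1}}\circ z$ lands in $\D^{(\cube{n},n-k),\cube{n}_{\leq n-l-1}}$ and $\cof^{\underline{1}}\circ r$ in $\D^{\cube{n},(\cube{n}_{0\leq n-l-1}\cup\cube{n}_{n-k+1\leq n})}$), one identifies
\[
H^{r1}\cong (i^\vee)^\ast\circ\cof^{\underline{1}}\circ z\qquad\text{and}\qquad H^{r2}\cong(i^\vee)^\ast\circ\cof^{\underline{1}}\circ r
\]
as morphisms $\D^{\cube{n}_{k\leq l}}\to\D^{\cube{n}_{n-l\leq n-k}}$. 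This step is where I expect the only real friction: one must verify that the corestricted Kan extensions and the restricted $\cof^{\underline{1}}$-equivalences of \autoref{con:sse-chunks} paste into these two clean composites, i.e.\ that $H^{r1}$ and $H^{r2}$ really are the two half-circuits of the right hexagon of \autoref{fig:chunks}.

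Next I would produce a canonical natural transformation $\mu\colon z\Rightarrow r$ with $\iota_{\geq k}^\ast\mu$ invertible. Writing $\iota_{\geq k}\colon\cube{n}_{\geq k}\hookrightarrow\cube{n}$ for the cosieve inclusion, both $z$ and $r$ are obtained by first forming the right Kan extension of $X$ from $\cube{n}_{k\leq l}$ to $\cube{n}_{\geq k}$ — namely $X$ on the chunk and zero on $\cube{n}_{l+1\leq n}$ — and then applying $(\iota_{\geq k})_!$, respectively $(\iota_{\geq k})_\ast$; they differ only on $\cube{n}_{0\leq k-1}$, where $z$ places zeros while $r$ places the limits computed by $(\iota_{\geq k})_\ast$. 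Whiskering this common right-hand factor with the canonical comparison $(\iota_{\geq k})_!\Rightarrow(\iota_{\geq k})_\ast$ produces $\mu$, and $\iota_{\geq k}^\ast\mu$ is an isomorphism because both $\iota_{\geq k}^\ast z$ and $\iota_{\geq k}^\ast r$ are canonically isomorphic to that right Kan extension (the units and counits of the fully faithful $(\iota_{\geq k})_!,(\iota_{\geq k})_\ast$ being isomorphisms) and $\mu$ is compatible with these identifications.

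Finally, \autoref{cor:cof-stability} applied with the index $k$ gives that $\iota_{\leq n-k}^\ast(\cof^{\underline{1}}\mu)$ is an isomorphism; and since every object of $\cube{n}_{n-l\leq n-k}$ has cardinality at most $n-k$, the inclusion $i^\vee$ factors through $\iota_{\leq n-k}$, so $(i^\vee)^\ast(\cof^{\underline{1}}\mu)$ is an isomorphism as well. Hence $(i^\vee)^\ast\circ\cof^{\underline{1}}\circ\mu$ is a natural isomorphism from $(i^\vee)^\ast\circ\cof^{\underline{1}}\circ z\cong H^{r1}$ to $(i^\vee)^\ast\circ\cof^{\underline{1}}\circ r\cong H^{r2}$, which is the assertion (the case $n=0$ being trivial). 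The conceptual content is that $\cof^{\underline{1}}$, read off on a chunk living in the bottom $n-k$ cardinalities, cannot distinguish the zero-extension of a $[k,l]$-chunk from its right Kan extension, the discrepancy between the two being concentrated in cardinality $<k$.
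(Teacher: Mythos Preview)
Your proof is correct and follows essentially the same approach as the paper. Both arguments reduce to the single observation that the comparison map $\mu\colon z\Rightarrow r$ (equivalently, the unit $\eta\colon\id\to(\iota_{\geq k})_\ast(\iota_{\geq k})^\ast$ restricted to the image of $z^\simeq$) is an isomorphism on $\cube{n}_{\geq k}$, so that \autoref{cor:cof-stability} forces $(\iota_{\leq n-k})^\ast\cof^{\underline{1}}(\mu)$ and hence $(i^\vee)^\ast\cof^{\underline{1}}(\mu)$ to be invertible. The only difference is packaging: the paper first rearranges the hexagon identity into the form $\cof^{\underline{1}}\circ r^\simeq\circ(z^\simeq)^{-1}\cong (z^\vee)^\simeq\circ((l^\vee)^\simeq)^{-1}\circ\cof^{\underline{1}}$ between subderivators of $\D^{\cube{n}}$ and identifies the two flanking composites with $(\iota_{\geq k})_\ast(\iota_{\geq k})^\ast$ and $(\iota_{\leq n-k})_\ast(\iota_{\leq n-k})^\ast$, whereas you cut straight to $H^{r1}\cong(i^\vee)^\ast\cof^{\underline{1}}z$ and $H^{r2}\cong(i^\vee)^\ast\cof^{\underline{1}}r$; your formulation is slightly more direct but not a different argument.
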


\begin{proof}
Since all morphisms in \autoref{fig:chunks} are equivalences, it is sufficient to show that the compositions
\begin{equation}\label{eq:hexagon}
\mathsf{cof}^{\underline{1}}\circ(r)^{\simeq}\circ((z)^{\simeq})^{-1}\qquad \text{ and } \qquad(z^{\vee})^{\simeq}\circ((l^{\vee})^{\simeq})^{-1}\circ\mathsf{cof}^{\underline{1}}
\end{equation}
are naturally isomorphic. We see immediately that there are natural isomorphisms
\[
(r)^{\simeq}\circ((z)^{\simeq})^{-1} \cong(\iota_{\geq k})_*(\iota_{\geq k})^* \quad\text{and}\quad (z^{\vee})^{\simeq}\circ((l^{\vee})^{\simeq})^{-1}\cong(\iota_{\leq n-k})_*(\iota_{\leq n-k})^*.
\]
Thus, for $X\in\D^{\cube{n},(\cube{n}_{0\leq k-1}\cup\cube{n}_{l+1\leq n})}$, the unit $\eta$ of the adjunction $((\iota_{\geq k})^*,(\iota_{\geq k})_*)$ can be identified with a morphism $X \xrightarrow{\eta_X}(r)^{\simeq}\circ((z)^{\simeq})^{-1}X$ such that $(\iota_{\geq k})^*(\eta_X)$ is an isomorphism. Then \autoref{cor:cof-stability} implies that $(\iota_{\leq n-k})^*\mathsf{cof}^{\underline{1}}(\eta_X)$
isomorphism, hence so is $(\iota_{\leq n-k})_*(\iota_{\leq n-k})^*\mathsf{cof}^{\underline{1}}(\eta_X)$. Since $(\iota_{\leq n-k})_*$ is right extension by zero, the corresponding unit yields $\id\toiso(\iota_{\leq n-k})_*(\iota_{\leq n-k})^*$ on $\D^{\cube{n},(\cube{n}_{0\leq n-l-1}\cup\cube{n}_{n-k+1\leq n})}$, and this yields the natural isomorphism of the compositions in \eqref{eq:hexagon}.
\end{proof}

\begin{rmk}\label{rmk:hexagon-left}
Let \D be a stable derivator, $n\geq 0$, and $0\leq k\leq l\leq n$.  By precisely the same argument one shows that the equivalences
$H^{l1}$ and $H^{l2}$ constituting the hexagon on the left in \autoref{fig:chunks} are canonically isomorphic. 
\end{rmk}

\begin{defn}\label{defn:Phi-Psi}
Let \D be a stable derivator, $n\geq 0$, and $0\leq k\leq l\leq n$. We write
\[
\Phi_{k,l}=\Phi^{(n)}_{k,l}\colon\D^{\cube{n}_{k\leq l}}\toiso\D^{\cube{n}_{n-l\leq n-k}}
\]
for any of the naturally isomorphic equivalences $H^{r1}\cong H^{r2}$, and refer to it as the \textbf{right strong stable equivalence}. Similarly, the \textbf{left strong stable equivalence}
\[
\Psi_{k,l}=\Psi^{(n)}_{k,l}\colon\D^{\cube{n}_{n-l\leq n-k}}\toiso\D^{\cube{n}_{k\leq l}}
\]
is defined to be any of the naturally isomorphic $H^{l1}\cong H^{l2}$.
\end{defn}

\begin{cor}\label{cor:modif-local}
Let $\D$ be a stable derivator, $n \geq 0$, and $0 \leq k\leq l\leq n$. Then there is a natural isomorphism
\[
\Sigma^n\cong \Psi_{k,l}\circ\tilde{S}_{k,l}\circ\Phi_{k,l}\colon\D^{\cube{n}_{k\leq l}}\toiso\D^{\cube{n}_{k\leq l}}
\]
and
\[
\Sigma^n\cong \Phi_{k,l}\circ S_{k,l}\circ\Psi_{k,l}\colon\D^{\cube{n}_{n-l\leq n-k}}\toiso\D^{\cube{n}_{n-l\leq n-k}}.
\]
\end{cor}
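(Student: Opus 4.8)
The plan is to substitute the explicit formulas for the equivalences appearing in \autoref{fig:chunks}, let the ``bookkeeping'' Kan-extension equivalences cancel in pairs, and recognize the leftover as a threefold iterated cofiber, which is $\Sigma^n$ by \autoref{cor:cof-S}.

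For the first isomorphism, I would use the freedom in \autoref{defn:Phi-Psi} to represent $\Phi_{k,l}$ by $H^{r1}=((l^\vee)^\simeq)^{-1}\circ\cof^{\underline{1}}\circ z^\simeq$ (\autoref{lem:hexagon}) and $\Psi_{k,l}$ by $H^{l2}=(z^\simeq)^{-1}\circ\cof^{\underline{1}}\circ(r^\vee)^\simeq$ (\autoref{rmk:hexagon-left}), together with $\tilde{S}_{k,l}=((r^\vee)^\simeq)^{-1}\circ\cof^{\underline{1}}\circ(l^\vee)^\simeq$ from \autoref{defn:Serre-mor}. Chaining these,
\[
\Psi_{k,l}\circ\tilde{S}_{k,l}\circ\Phi_{k,l}\cong (z^\simeq)^{-1}\circ\cof^{\underline{1}}\circ(r^\vee)^\simeq\circ((r^\vee)^\simeq)^{-1}\circ\cof^{\underline{1}}\circ(l^\vee)^\simeq\circ((l^\vee)^\simeq)^{-1}\circ\cof^{\underline{1}}\circ z^\simeq,
\]
the two inner adjacent pairs of mutually inverse equivalences cancel, and what remains is $(z^\simeq)^{-1}\circ(\cof^{\underline{1}})^3\circ z^\simeq$. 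Here the three copies of $\cof^{\underline{1}}$ are the restrictions, along the sequence of sub-prederivators of \autoref{con:sse-chunks}, of the single morphism $\cof^{\underline{1}}\colon\D^{\cube{n}}\to\D^{\cube{n}}$ (see \autoref{lem:cof-fib-restrict} and \autoref{cor:cof-fib-restrict}); by construction they traverse the upper circle of \autoref{fig:chunks} exactly once.

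To conclude, I would invoke \autoref{cor:cof-S}, which gives a natural isomorphism $(\cof^{\underline{1}})^3\cong\Sigma^n$ on $\D^{\cube{n}}$; this restricts to $\D^{\cube{n},(\cube{n}_{0\leq k-1}\cup\cube{n}_{l+1\leq n})}$, which is closed under $\Sigma^n$ since $X|_B=0$ forces $(\Sigma^nX)|_B\cong\Sigma^n(X|_B)=0$. Finally, as $z$ is a composite of restrictions and of left and right Kan extensions along functors, and $\Sigma^n$ commutes with all of these in a stable derivator (restrictions are exact, left Kan extensions preserve colimits, and right Kan extensions preserve limits with $\Sigma^n\cong(\Omega^n)^{-1}$), the equivalence $z^\simeq$ intertwines the two copies of $\Sigma^n$; hence $(z^\simeq)^{-1}\circ\Sigma^n\circ z^\simeq\cong\Sigma^n$, proving $\Sigma^n\cong\Psi_{k,l}\circ\tilde{S}_{k,l}\circ\Phi_{k,l}$. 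The second isomorphism is obtained by the mirror-image choice of representatives: take $\Psi_{k,l}\cong H^{l1}=(l^\simeq)^{-1}\circ\cof^{\underline{1}}\circ(z^\vee)^\simeq$, $\Phi_{k,l}\cong H^{r2}=((z^\vee)^\simeq)^{-1}\circ\cof^{\underline{1}}\circ r^\simeq$, and $S_{k,l}=(r^\simeq)^{-1}\circ\cof^{\underline{1}}\circ l^\simeq$; now the pairs $l^\simeq,(l^\simeq)^{-1}$ and $r^\simeq,(r^\simeq)^{-1}$ cancel, leaving $((z^\vee)^\simeq)^{-1}\circ(\cof^{\underline{1}})^3\circ(z^\vee)^\simeq$ with the three cofibers tracing the lower circle of \autoref{fig:chunks}, and the same reasoning gives $\Phi_{k,l}\circ S_{k,l}\circ\Psi_{k,l}\cong\Sigma^n$.

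There is no deep obstacle here; the statement is essentially a formal bookkeeping consequence of the definitions. The only genuine points of care are (i) choosing, among the isomorphic representatives $H^{r1}\cong H^{r2}$ and $H^{l1}\cong H^{l2}$, the ones whose ``zero-extension'' and Kan-extension equivalences line up for cancellation — this is read off from the geometry of \autoref{fig:chunks} via \autoref{rmk:describe-fig-chunks} — and (ii) keeping track of which sub-prederivator each occurrence of $\cof^{\underline{1}}$ acts on, so that the three middle cofibers really compose to the restriction of $(\cof^{\underline{1}})^3$, which is legitimate precisely because of the restriction statements \autoref{lem:cof-fib-restrict} and \autoref{cor:cof-fib-restrict}.
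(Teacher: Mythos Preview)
Your proof is correct and is precisely the ``unraveling definitions and \autoref{cor:cof-S}'' that the paper's one-line proof alludes to: choose the representatives $H^{r1},H^{l2}$ (resp.\ $H^{r2},H^{l1}$) so that the Kan-extension equivalences cancel in adjacent pairs, leaving a conjugate of $(\cof^{\underline 1})^3\cong\Sigma^n$ around the upper (resp.\ lower) circle of \autoref{fig:chunks}. Your added justification that $z^\simeq$ intertwines $\Sigma^n$ is the only detail the paper suppresses, and it is handled correctly.
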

\begin{proof}
This follows by unraveling definitions and \autoref{cor:cof-S}.
\end{proof}

\begin{cor}\label{cor:dodecagon}
Let $\D$ be a stable derivator, $n \geq 0$, and $0 \leq k\leq l\leq n$. Then there are natural isomorphisms
\[
S_{k,l}\cong \Psi_{k,l}\circ\tilde{S}_{k,l}\circ \Psi^{-1}_{k,l}\cong \Phi_{k,l}^{-1}\circ\tilde{S}_{k,l}\circ \Phi_{k,l}.
\]
\end{cor}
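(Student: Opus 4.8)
The plan is to read off the two claimed natural isomorphisms directly from Corollary \ref{cor:modif-local}, using only that all the equivalences involved ($\Phi_{k,l}$, $\Psi_{k,l}$, $S_{k,l}$, $\tilde S_{k,l}$, and $\Sigma^n$) are invertible up to canonical isomorphism. For the first isomorphism, I would start from the relation $\Sigma^n\cong\Psi_{k,l}\circ\tilde S_{k,l}\circ\Phi_{k,l}$ of Corollary \ref{cor:modif-local} and conjugate it by $\Phi_{k,l}$: since $\Sigma^n$ is a central auto-equivalence (it commutes with everything up to canonical isomorphism, being an iterate of suspension on a stable derivator) we get $\Sigma^n\cong\Phi_{k,l}^{-1}\circ\Sigma^n\circ\Phi_{k,l}\cong\Phi_{k,l}^{-1}\circ\Psi_{k,l}\circ\tilde S_{k,l}\circ\Phi_{k,l}$; but the second relation of Corollary \ref{cor:modif-local} rearranges to $\Phi_{k,l}^{-1}\circ\Sigma^n\circ\Psi_{k,l}^{-1}\cong S_{k,l}$ after noting $\Sigma^n\cong\Phi_{k,l}\circ S_{k,l}\circ\Psi_{k,l}$ lives on $\D^{\cube{n}_{n-l\leq n-k}}$ and transports across to $\D^{\cube{n}_{k\leq l}}$ via the equivalence $\Psi_{k,l}$. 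Combining these two displays yields $S_{k,l}\cong\Phi_{k,l}^{-1}\circ\tilde S_{k,l}\circ\Phi_{k,l}$ once the stray copies of $\Sigma^n$ and $\Psi_{k,l}^{\pm 1}$ cancel.

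More carefully, I would organize it as follows. From $\Sigma^n\cong\Phi_{k,l}\circ S_{k,l}\circ\Psi_{k,l}$ (the second relation of \autoref{cor:modif-local}) and the invertibility of $\Phi_{k,l},\Psi_{k,l}$ we obtain
\[
S_{k,l}\cong\Phi_{k,l}^{-1}\circ\Sigma^n\circ\Psi_{k,l}^{-1}.
\]
Now substitute $\Sigma^n\cong\Psi_{k,l}\circ\tilde S_{k,l}\circ\Phi_{k,l}$ (the first relation of \autoref{cor:modif-local}) into the right-hand side:
\[
S_{k,l}\cong\Phi_{k,l}^{-1}\circ\Psi_{k,l}\circ\tilde S_{k,l}\circ\Phi_{k,l}\circ\Psi_{k,l}^{-1}.
\]
This is not yet in the desired form; the clean way around is instead to conjugate the \emph{first} relation of \autoref{cor:modif-local} by $\Psi_{k,l}^{-1}$ on the left and $\Psi_{k,l}$ on the right, giving $\Psi_{k,l}^{-1}\circ\Sigma^n\circ\Psi_{k,l}\cong\tilde S_{k,l}\circ\Phi_{k,l}\circ\Psi_{k,l}$, and then to use that $\Phi_{k,l}\circ\Psi_{k,l}\cong\Sigma^n\circ S_{k,l}^{-1}$ — which is exactly the second relation of \autoref{cor:modif-local} rewritten — so that the right-hand side becomes $\tilde S_{k,l}\circ\Sigma^n\circ S_{k,l}^{-1}$, while the left-hand side is just $\Sigma^n$ by centrality of $\Sigma^n$. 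Cancelling $\Sigma^n$ on both sides (legitimate since it is an auto-equivalence) and rearranging yields $S_{k,l}\cong\Psi_{k,l}\circ\tilde S_{k,l}\circ\Psi_{k,l}^{-1}$. A symmetric manipulation starting from the \emph{second} relation of \autoref{cor:modif-local} and conjugating by $\Phi_{k,l}$ produces $\tilde S_{k,l}\cong\Phi_{k,l}\circ S_{k,l}\circ\Phi_{k,l}^{-1}$, i.e.\ $S_{k,l}\cong\Phi_{k,l}^{-1}\circ\tilde S_{k,l}\circ\Phi_{k,l}$, completing the chain of isomorphisms.

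The only genuine ingredient beyond bookkeeping is that $\Sigma^n$ is central among these auto-equivalences — that suspension commutes, up to canonical coherent isomorphism, with every morphism of derivators appearing here (Kan extensions along sieves/cosieves, $\cof^{\underline 1}$, and the assembled equivalences $\Phi,\Psi,S,\tilde S$). This follows because $\Sigma$ is the cofiber construction composed with restrictions and so commutes with all the Kan-extension and exceptional-inverse-image morphisms out of which $\Phi,\Psi,S,\tilde S$ are built, using the functoriality of canonical mates with respect to pasting (as in \autoref{prop:cof-comp} and \autoref{cor:cof-S}). I expect this centrality/coherence point to be the main thing to state carefully; the rest is formal cancellation in the groupoid of auto-equivalences of $\D^{\cube{n}_{k\leq l}}$ and is what "unraveling definitions and \autoref{cor:cof-S}" refers to. Hence the corollary follows.
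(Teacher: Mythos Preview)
Your overall strategy—deriving both isomorphisms from \autoref{cor:modif-local} together with the centrality of $\Sigma^n$—is correct, and it is genuinely different from the paper's argument for the \emph{first} isomorphism $S_{k,l}\cong\Psi_{k,l}\circ\tilde S_{k,l}\circ\Psi_{k,l}^{-1}$. The paper establishes that one by expanding $\Psi\circ\tilde S\circ\Psi^{-1}\circ S^{-1}$ in terms of $\cof^{\underline 1},\fib^{\underline 1}$ and the equivalences $l^\simeq,r^\simeq,z^\simeq,(l^\vee)^\simeq,\ldots$, then simplifying via $\fib^{\underline 3}\cong\Omega^n$ (\autoref{cor:cof-S}) and the hexagon commutativity (\autoref{lem:hexagon} and its dual). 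Your route bypasses that unfolding and instead substitutes one relation of \autoref{cor:modif-local} into the other, which is cleaner and more uniform. For the \emph{second} isomorphism $\Psi\tilde S\Psi^{-1}\cong\Phi^{-1}\tilde S\Phi$ the paper proceeds exactly as you do: insert $\Phi\Phi^{-1}$, apply \autoref{cor:modif-local}, commute $\Sigma^n$ past the exact morphisms, and apply \autoref{cor:modif-local} once more.

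One genuine slip in your write-up: the claim ``$\Phi_{k,l}\circ\Psi_{k,l}\cong\Sigma^n\circ S_{k,l}^{-1}$'' does not type-check, since the left-hand side is an auto-equivalence of $\D^{\cube{n}_{n-l\leq n-k}}$ while $S_{k,l}$ lives on $\D^{\cube{n}_{k\leq l}}$. From $\Sigma^n\cong\Phi\circ S\circ\Psi$ you only get $\Phi\circ\Psi\cong\Sigma^n\circ\Psi^{-1}\circ S^{-1}\circ\Psi$, and it is precisely the pending conjugation by $\Psi$ that your argument is meant to resolve, so you cannot invoke it here. The fix is straightforward: solve the two relations for $\Psi^{-1}$ and $S$ separately, substitute, and let centrality of $\Sigma^n$ cancel the suspensions—e.g.\ $S\cong\Phi^{-1}\Sigma^n\Psi^{-1}$ and $\Psi^{-1}\cong\tilde S\,\Phi\,\Sigma^{-n}$ combine to $S\cong\Phi^{-1}\Sigma^n\tilde S\,\Phi\,\Sigma^{-n}\cong\Phi^{-1}\tilde S\,\Phi$. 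The analogous substitution with the roles of $\Phi$ and $\Psi$ swapped yields $S\cong\Psi\tilde S\Psi^{-1}$. Your identification of ``$\Sigma^n$ is central among exact morphisms'' as the only non-formal ingredient is exactly right; the paper records this step simply as ``by exactness.''
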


\begin{proof}
In the proof we drop indices from notation and use $H^{l1}$ as a model for $\Psi$. We show that $\id\cong H^{l1}\circ\tilde{S}\circ (H^{l1})^{-1}\circ S^{-1}$, and begin by observing that the right hand side is given by  
\[
(l^{\simeq})^{-1}\circ\cof^{\underline{1}}\circ (z^{\vee})^{\simeq}\circ ((r^{\vee})^{\simeq})^{-1}\circ\cof^{\underline{1}}\circ (l^{\vee})^{\simeq} \circ ((z^{\vee})^{\simeq})^{-1}\circ\fib^{\underline{1}}\circ l^{\simeq}\circ (l^{\simeq})^{-1}\circ\fib^{\underline{1}}\circ r^{\simeq}.
\]
The two morphisms $l^{\simeq}$ and $(l^{\simeq})^{-1}$ at the end of this expression cancel and we are left with the first line in
\begin{align*}
&(l^{\simeq})^{-1}\circ\cof^{\underline{1}}\circ (z^{\vee})^{\simeq}\circ ((r^{\vee})^{\simeq})^{-1}\circ\cof^{\underline{1}}\circ (l^{\vee})^{\simeq} \circ ((z^{\vee})^{\simeq})^{-1}\circ\fib^{\underline{1}}\circ\fib^{\underline{1}}\circ r^{\simeq}\\
\cong\;\;&\Omega^n\circ(l^{\simeq})^{-1}\circ\cof^{\underline{1}}\circ (z^{\vee})^{\simeq}\circ ((r^{\vee})^{\simeq})^{-1}\circ\cof^{\underline{1}}\circ (l^{\vee})^{\simeq} \circ ((z^{\vee})^{\simeq})^{-1}\circ\cof^{\underline{1}}\circ r^{\simeq}\\
\cong\;\;&\Omega^n\circ(l^{\simeq})^{-1}\circ\cof^{\underline{1}}\circ (z^{\vee})^{\simeq}\circ ((r^{\vee})^{\simeq})^{-1}\circ\cof^{\underline{1}}\circ (l^{\vee})^{\simeq} \circ ((l^{\vee})^{\simeq})^{-1}\circ\cof^{\underline{1}}\circ z^{\simeq}\\
\cong\;\;&\Omega^n\circ(l^{\simeq})^{-1}\circ\cof^{\underline{1}}\circ (z^{\vee})^{\simeq}\circ ((r^{\vee})^{\simeq})^{-1}\circ\cof^{\underline{1}}\circ\cof^{\underline{1}}\circ z^{\simeq}\\
\cong\;\;&(l^{\simeq})^{-1}\circ\cof^{\underline{1}}\circ (z^{\vee})^{\simeq}\circ ((r^{\vee})^{\simeq})^{-1}\circ\fib^{\underline{1}}\circ z^{\simeq}\\
\cong\;\;& \id.
\end{align*}
Since all morphisms under consideration are exact, we invoke the natural isomorphism $\fib^{\underline{3}}\cong\Omega^n$ (\autoref{cor:cof-S}) in order to obtain the first isomorphism. The second isomorphism is given by the commutativity of the right hexagon (\autoref{lem:hexagon}). The third and the forth isomorphisms are again a cancellation and an application of \autoref{cor:cof-S}, while the remaining one is the commutativity of the hexagon on the left (\autoref{rmk:hexagon-left}). The second natural isomorphism follows from the chain
\begin{align*}
\Psi_{k,l}\circ\tilde{S}_{k,l}\circ \Psi^{-1}_{k,l}&\cong \Psi_{k,l}\circ\tilde{S}_{k,l}\circ \Phi_{k,l}\circ \Phi^{-1}_{k,l}\circ\Psi^{-1}_{k,l}\\
& \cong \Sigma^n\circ \Phi^{-1}_{k,l}\circ\Psi^{-1}_{k,l}\\
& \cong \Phi^{-1}_{k,l}\circ\Psi^{-1}_{k,l}\circ\Sigma^n\\
& \cong \Phi^{-1}_{k,l}\circ\Psi^{-1}_{k,l}\circ\Psi_{k,l}\circ\tilde{S}_{k,l}\circ \Phi_{k,l}\\
& \cong \Phi^{-1}_{k,l}\circ\tilde{S}_{k,l}\circ \Phi_{k,l}.
\end{align*}
Here, the second and fourth natural isomorphisms are by \autoref{cor:modif-local}, the third one is by exactness and the remaining two are trivial.
\end{proof}

\begin{rmk}
Let \D be a stable derivator, $n\geq 0$, and $0\leq k\leq l\leq n$. If $F\colon\D^{\cube{n}_{k\leq l}}\rightarrow\D^{\cube{n}_{k\leq l}}$ is a composition of morphisms appearing in \autoref{fig:chunks}, then there are $p,q \in \mathbb{Z}$ such that $F\cong(\Sigma^n)^p\circ S^q$. The key step to this is \autoref{cor:dodecagon}.
\end{rmk}

In the following section we make precise in which sense these strong stable equivalences and Serre morphisms are compatible for the various chunks of $\cube{n}$.

\section{Global Serre duality}
\label{sec:global}

In this section we show that for every fixed dimension $n$, the Serre equivalences and the strong stable equivalences introduced in the previous section depend pseudo-naturally on the chunks $\cube{n}_{k\leq l}$. To this end we begin by defining the category $\chunk{n}$ of chunks in $\cube{n}$.

\begin{defn}
The \textbf{category} $\chunk{n}\in\cCat$ \textbf{of chunks} in $\cube{n},n\geq 0,$ is the following partially ordered set, considered as a category.
\begin{enumerate}
\item Objects are pairs $(k,l)$ of natural numbers such that $0\leq k\leq l\leq n$.
\item For pairs $(k,l), (k',l')$ we set $(k',l')\leq (k,l)$ if and only if $k\leq k'\leq l'\leq l'$. We denote the unique morphism by
\[
i=i^{k\leq l}_{k'\leq l'}\colon(k',l')\to(k,l).
\]
\end{enumerate}
\end{defn} 

\begin{rmk}
\begin{enumerate}
\item The category $\chunk{n}$ is the \emph{twisted morphism category} of the simplex $[n]=(0<\ldots <n)$. This perspective on $\chunk{n}$ is closely related to the paracyclic $S_\bullet$-construction, and we come back to this in more detail in \cite{bg:parasimplicial}.
\item The category $[n]$ is uniquely isomorphic to its opposite category by means of the assignment $[n]\toiso[n]\op\colon k\mapsto n-k$. Considering this isomorphism as an identification, the functoriality of the twisted morphism category again yields the  \textbf{chunk symmetry}
\begin{equation}\label{eq:chunk-symm}
(-)^{\vee}\colon\chunk{n}\toiso\chunk{n}\colon (k,l)\mapsto(n-l,n-k),
\end{equation}
which already appeared in \autoref{rmk:S-dual-S}.
\end{enumerate}
\end{rmk}

Let us recall from the introduction that $\cDER_{\mathrm{St},\exx}$ denotes the $2$-category of stable derivators, exact morphisms, and natural transformations. 

\begin{prop}\label{prop:Z-R-L}
Let \D be a stable derivator and $n\geq 0$. There are three pseudo-functors 
\[
Z=Z^{(n)}(\D),\;L=L^{(n)}(\D),\; R=R^{(n)}(\D)\colon\chunk{n}\to\cDER_{\mathrm{St},\exx}
\]
defined as follows.
\begin{enumerate}
\item On objects $(k,l)\in\chunk{n}$ they are defined uniformly by 
\[
Z(k,l)=R(k,l)=L(k,l)=\D^{\cube{n}_{k \leq l}}.
\]
\item Invoking \eqref{eq:z-def}, \eqref{eq:l-def}, and \eqref{eq:r-def}, respectively, the images of a morphism $i^{k\leq l}_{k'\leq l'}$ in $\chunk{n}$ are defined by 
\[
Z(i_{k'\leq l'}^{k\leq l})=z_{k'\leq l'}^{k\leq l},\quad
L(i_{k'\leq l'}^{k\leq l})=l_{k'\leq l'}^{k\leq l},\quad\text{and}\quad
R(i_{k'\leq l'}^{k\leq l})=r_{k'\leq l'}^{k\leq l}.
\]
\end{enumerate}
\end{prop}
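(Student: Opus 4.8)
The claim is that the three assignments $Z$, $L$, $R$ are indeed pseudo-functors $\chunk{n}\to\cDER_{\mathrm{St},\exx}$. The plan is to verify the data and coherence conditions for a pseudo-functor in each of the three cases, which are essentially parallel. First I would record that each of $z$, $l$, $r$ lands in $\cDER_{\mathrm{St},\exx}$: the morphisms $z_{k'\le l'}^{k\le l}$, $l_{k'\le l'}^{k\le l}$, $r_{k'\le l'}^{k\le l}$ are built as composites of left and right Kan extension morphisms along the sieves $j_{k,k'}$ and cosieves $i_{l,l'}$ of \autoref{lem:chunk}, and Kan extension morphisms between stable derivators are exact, as is their composition; this also makes each $Z(i)$, $L(i)$, $R(i)$ an honest morphism of stable derivators (not merely of prederivators), using that the relevant Kan extensions along fully faithful functors are fully faithful. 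On identities, $i^{k\le l}_{k\le l}=\id$ is sent to the identity Kan extension, so $Z(\id)$, $L(\id)$, $R(\id)$ are (canonically isomorphic to) the identity morphisms, supplying the unitor $2$-cells.

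The main content is the compositor. Given composable morphisms $i^{k'\le l'}_{k''\le l''}$ and $i^{k\le l}_{k'\le l'}$ in $\chunk{n}$, I must produce a natural isomorphism comparing $Z(i^{k\le l}_{k'\le l'})\circ Z(i^{k'\le l'}_{k''\le l''})$ with $Z(i^{k\le l}_{k''\le l''})$, and similarly for $L$ and $R$. For $L$ this is clean: $l^{k\le l}_{k''\le l''}$ is a composite of left Kan extensions, and the pseudo-functoriality of left Kan extension (uniqueness of left adjoints applied to the composite of the underlying functors, i.e.\ $(vu)_!\cong v_!u_!$) together with the definitions \eqref{eq:l-def} gives the required coherent isomorphism; dually for $R$ using \eqref{eq:r-def} and $(vu)_\ast\cong v_\ast u_\ast$. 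For $Z$ the comparison is slightly more delicate because $z$ mixes a left Kan extension along a sieve with a right Kan extension along a cosieve; here I would use the explicit description of the essential image of $z$ given in the proof of the lemma preceding \eqref{eq:z-def} — namely $z_{k'\le l'}^{k\le l}$ corestricts to an equivalence onto the subderivator $\D^{\cube{n}_{k\le l},\exx}$ of diagrams vanishing outside the $[k',l']$-window — and observe that composing two such ``extend by zero on both sides'' morphisms again extends by zero onto the appropriate window, so both $z^{k\le l}_{k'\le l'}\circ z^{k'\le l'}_{k''\le l''}$ and $z^{k\le l}_{k''\le l''}$ are inverse equivalences to the same restriction morphism $\D^{\cube{n}_{k\le l}}\to\D^{\cube{n}_{k''\le l''}}$, hence canonically isomorphic.

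With unitors and compositors in hand, the associativity (pentagon) and unit (triangle) coherence axioms follow formally: in the $L$ and $R$ cases they reduce to the standard coherence of the pseudo-functoriality isomorphisms for Kan extensions, which in turn is a consequence of the uniqueness of adjoints and the functoriality of mates with respect to pasting; in the $Z$ case they follow from the uniqueness, up to unique isomorphism, of an inverse equivalence to a fixed restriction morphism, which forces all the comparison $2$-cells to be mutually compatible. I expect the only real subtlety to be bookkeeping: making sure the sieve/cosieve factorizations of the inclusions $\cube{n}_{k''\le l''}\to\cube{n}_{k\le l}$ used at the two stages are compatible (this is exactly the content of the pullback-square identities in \autoref{lem:chunk}, which give the interchange isomorphisms $(j_{k,k'})_!(i_{l,l'})^\ast\cong(i_{l,l'})^\ast(j_{k,k'})_!$ and their variants \eqref{eq:zero-chunk-chunk} that justify the well-definedness of $z$, $l$, $r$ in the first place), and checking that these interchange isomorphisms are themselves coherent — but all of this is routine given the homotopy exactness statements already established, so no genuinely new ingredient is needed.
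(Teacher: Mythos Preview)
Your proposal is correct and aligns with the paper's approach: the paper's proof is a two-sentence sketch invoking exactly the two ingredients you spell out, namely that exponentials of stable derivators are stable and Kan extensions between them are exact (so the assignments land in $\cDER_{\mathrm{St},\exx}$), and that pseudo-functoriality comes from uniqueness of adjoints together with the compatibility of canonical mates with pasting. Your treatment of $L$ and $R$ is identical to the paper's intent; for $Z$ you argue via ``both composites are inverse equivalences to the same restriction onto the zero-window subderivator'', which is a perfectly good alternative to the paper's implicit route (swap $(i_{l,l'})_\ast$ past $(j_{k',k''})_!$ using the mate isomorphism \eqref{eq:zero-chunk-chunk}, then collapse the two $j_!$'s and the two $i_\ast$'s separately by uniqueness of adjoints) --- the two arguments are interchangeable and your version makes the coherence step slightly more transparent.
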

\begin{proof}
Stable derivators are closed under exponentials, and compositions of Kan extensions between stable derivators are exact morphisms, showing that the assignments indeed take values in $\cDER_{\mathrm{St},\exx}$. The pseudo-functoriality follows from uniqueness of adjoints and from the compatibility of canonical mates with pasting.
\end{proof}

\begin{notn}
Let \D be a stable derivator and $n\geq 0$. We denote by
\[
Z^\vee=Z^\vee(\D), R^\vee=R^\vee(\D), L^\vee=L^\vee(\D)\colon\chunk{n}\to\cDER_{\mathrm{St},\exx}
\]
the pseudo-functors which are obtained from the corresponding pseudo-functors in \autoref{prop:Z-R-L} by precomposition with the chunk symmetry \eqref{eq:chunk-symm},
\[
Z^\vee(\D)=Z(\D)\circ(-)^\vee,\quad R^\vee(\D)=R(\D)\circ(-)^\vee,\quad L^\vee(\D)=L(\D)\circ(-)^\vee.
\]
\end{notn}

We now show that the equivalences from \S\ref{sec:SSE-Serre} assemble into pseudonatural equivalences and we begin with the Serre morphisms.

\begin{thm}\label{thm:Serre}
Let \D be a stable derivator and $n\geq 0$. The local Serre equivalences $S_{k,l}$ for $(k,l)\in\chunk{n}$ assemble to a pseudo-natural equivalence
\[
S=S^{(n)}\colon L^{(n)}(\D)\toiso R^{(n)}(\D)\colon \chunk{n}\to\cDER_{\mathrm{St},\exx}.
\]
\end{thm}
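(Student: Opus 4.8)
The plan is to show that the collection of Serre equivalences $S_{k,l}\colon\D^{\cube{n}_{k\leq l}}\toiso\D^{\cube{n}_{k\leq l}}$ satisfies the pseudo-naturality squares with respect to every generating morphism of $\chunk{n}$, i.e.\ that for every $i=i^{k\leq l}_{k'\leq l'}\colon(k',l')\to(k,l)$ there is a coherent invertible comparison $2$-cell filling
\[
\xymatrix{
\D^{\cube{n}_{k'\leq l'}}\ar[r]^-{S_{k',l'}}\ar[d]_-{l_{k'\leq l'}^{k\leq l}}&\D^{\cube{n}_{k'\leq l'}}\ar[d]^-{r_{k'\leq l'}^{k\leq l}}\\
\D^{\cube{n}_{k\leq l}}\ar[r]_-{S_{k,l}}&\D^{\cube{n}_{k\leq l}},
}
\]
and that these cells are compatible with composition of morphisms in $\chunk{n}$ and with the coherence isomorphisms of the pseudo-functors $L^{(n)}(\D)$ and $R^{(n)}(\D)$ from \autoref{prop:Z-R-L}. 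Since $\chunk{n}$ is a poset, it suffices to treat the two families of generating morphisms separately: the ``sieve'' inclusions $j_{k,k'}$ which shrink the lower bound, and the ``cosieve'' inclusions $i_{l,l'}$ which shrink the upper bound; a general $i$ is a composite of one of each by \autoref{lem:chunk}, and compatibility with that factorization is then a formal consequence once each generator is handled.

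First I would reduce the problem to a statement about $\cof^{\underline 1}$ and plain Kan extensions. By \autoref{defn:Serre-mor}, $S_{k,l}=(r^{\simeq})^{-1}\circ\cof^{\underline 1}\circ l^{\simeq}$ where $r,l$ here denote $r_{k\leq l}^{0\leq n}$ and $l_{k\leq l}^{0\leq n}$, i.e.\ the total right/left Kan extensions into the full cube identifying $\D^{\cube{n}_{k\leq l}}$ with the appropriate exceptional subderivators of $\D^{\cube{n}}$. Under these identifications, both $l_{k'\leq l'}^{k\leq l}$ and $r_{k'\leq l'}^{k\leq l}$ become, after composing with the relevant equivalences to $\D^{\cube{n}}$, simply the composite idempotent (co)monads/exceptional-image endomorphisms on $\D^{\cube{n}}$ coming from the factorizations in \autoref{lem:chunk} and \eqref{eq:zero-chunk-chunk}. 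So the square above, transported into $\D^{\cube{n}}$, asks that $\cof^{\underline 1}$ commute (up to canonical isomorphism) with the Kan extension morphisms that describe the passage from the $(k,l)$-chunk to the $(k',l')$-chunk inside the full cube. For the sieve direction this is the compatibility of $\cof^{\underline 1}$ with right Kan extensions along sieves in the remaining coordinates, and for the cosieve direction the compatibility with left Kan extensions along cosieves; the needed facts are exactly the canonical-mate calculations of \autoref{prop:cone-lower-cotr} (the homotopy exactness of the square \eqref{eq:cone-lower-cotr} there, and its variant at the end of that proof for $\iota_{l,k}$) together with \autoref{cor:cof-stability} and \autoref{rmk:cof-stability} to control what happens on the boundary strata where things are constant or zero. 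Concretely, I would assemble the comparison cell by pasting: expand $S_{k,l}$ and $S_{k',l'}$ according to their definitions, use uniqueness of adjoints to slide the $l^{\simeq}$, $(r^{\simeq})^{-1}$ equivalences past $l_{k'\leq l'}^{k\leq l}$ resp.\ $r_{k'\leq l'}^{k\leq l}$, and then invoke the commutation of $\cof^{\underline 1}$ with the residual Kan extension.

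Second I would verify the cocycle/coherence condition. Because everything in sight is built from parametrized Kan extensions in a stable derivator, and because exact morphisms of stable derivators are pseudo-natural with respect to homotopy finite Kan extensions \cite[\S9]{groth:revisit}, the comparison $2$-cells are themselves obtained by pasting canonical mates; their compatibility with composition in $\chunk{n}$ then follows from the functoriality of canonical mates with respect to pasting, exactly as in the pseudo-functoriality argument of \autoref{prop:Z-R-L}. I would spell this out by choosing, once and for all, $H^{r1}$-type models (built from $z$ and the exceptional inverse image $\cof^{\underline 1}$, cf.\ \autoref{con:H-H-H-H}) as the representatives, so that the coherence reduces to the associativity constraints already recorded for $Z^{(n)}$, $L^{(n)}$, $R^{(n)}$ and to the cancellation identities $\cof^{\underline 3}\cong\Sigma^n$ of \autoref{cor:cof-S}, which are the only ``nontrivial'' relations that can appear when composing these generators.

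The main obstacle I expect is not the existence of the comparison cells — those follow from the homotopy-exactness lemmas already in place — but pinning down the coherence bookkeeping: there are two genuinely different ways ($l$-then-$\cof$ versus $\cof$-then-$r$, mirroring the two hexagons and two triangles in \autoref{fig:chunks}) to write each structure map, and one must check that the chosen comparison cells are stable under the identifications $H^{r1}\cong H^{r2}$ (\autoref{lem:hexagon}) and the relations of \autoref{cor:dodecagon}. The clean way around this is to first establish, as a lemma, that any two composites of morphisms from \autoref{fig:chunks} with the same source and target that induce naturally isomorphic equivalences do so via a canonical isomorphism determined by $p,q\in\lZ$ with the composite isomorphic to $(\Sigma^n)^p\circ S^q$ (the remark following \autoref{cor:dodecagon}); granting that rigidity statement, the pseudo-naturality square and its coherence become essentially forced, and the proof concludes by citing \autoref{cor:dodecagon} and \cite[\S9]{groth:revisit} for the exactness-compatibility that makes the whole assignment a pseudo-natural transformation.
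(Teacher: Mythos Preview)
Your core strategy---transport both Serre morphisms into $\D^{\cube{n}}$ via the equivalences $l^\simeq,r^\simeq$ and then verify the resulting squares---is exactly what the paper does, but the paper's execution is far simpler than what you outline. The key observation you miss is that after transporting, the horizontal maps become \emph{literal inclusions} of full subderivators of $\D^{\cube{n}}$: since $k\leq k'$ and $l'\leq l$, the image $\D^{(\cube{n},l'),\cube{n}_{\leq k'-1}}$ sits inside $\D^{(\cube{n},l),\cube{n}_{\leq k-1}}$, and similarly for the $r$-images. Because $\cof^{\underline 1}\colon\D^{\cube{n}}\to\D^{\cube{n}}$ is a single globally defined morphism, the middle square commutes \emph{on the nose}---there is no ``residual Kan extension'' to commute $\cof^{\underline 1}$ past, and hence no need for \autoref{prop:cone-lower-cotr} or \autoref{cor:cof-stability}. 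The outer two squares (matching $l_{k'\leq l'}^{k\leq l}$ with the inclusion, and likewise for $r$) commute up to canonical isomorphism purely by uniqueness of adjoints, since composing the left Kan extensions $l_{k'\leq l'}^{k\leq l}$ and $l_{k\leq l}^{0\leq n}$ gives $l_{k'\leq l'}^{0\leq n}$.

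Your concern about coherence is legitimate in principle but again overcomplicated in practice. Every comparison isomorphism in sight---the pseudo-functoriality constraints of $L^{(n)}(\D),R^{(n)}(\D)$ and the naturality cells just constructed---arises from uniqueness of adjoints, so the required compatibilities follow formally from functoriality of mates under pasting, exactly as in \autoref{prop:Z-R-L}; the paper dispatches this in one sentence. There is no need to split into sieve/cosieve generators, and your detour through $H^{r1}$-type models is misplaced: those are representatives for the strong stable equivalences $\Phi,\Psi$ (the hexagons in \autoref{fig:chunks}), not for the Serre morphism $S$ (which is a triangle). The rigidity statement you invoke at the end would work but is a much heavier hammer than needed here.
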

\begin{proof}
We already know that the Serre morphisms are equivalences, and it remains to establish the pseudo-naturality. For this purpose, let $i\colon(k',l')\to(k,l)$ be a morphism in $\chunk{n}$. Unraveling definitions, we have to show that \autoref{fig:Serre-chunks} commutes up to coherent natural isomorphisms, thereby yielding natural isomorphisms
\begin{equation}\label{eq:Serre-conjugate}
S_{k,l}\circ i_!\cong i_\ast\circ S_{k',l'}.
\end{equation}
In this diagram the horizontal morphisms in the middle are the natural inclusions which exist by the assumption that $0\leq k\leq k'\leq l'\leq l\leq n$. Hence, the square in the middle can be chosen to commute on the nose. The upper square and the lower square, respectively, commute up to canonical natural isomorphism by uniqueness of adjoints. Since the pseudo-functoriality constraints of $L(\D)$ and $R(\D)$ are also induced by uniqueness of adjoints (\autoref{prop:Z-R-L}), this establishes the pseudo-naturality of the local Serre equivalences. 
\end{proof}

\begin{figure}
\begin{displaymath}
\xymatrix{
\D^{\cube{n}_{k' \leq l'}} \ar[d]_{(Li)^\simeq} \ar[r]^{Li} & \D^{\cube{n}_{k \leq l}} \ar[d]^{(Li)^\simeq}\\
\D^{(\cube{n},l'),\cube{n}_{\leq k'-1}} \ar[d]_{\cof^{\underline{1}}} \ar[r]^-\subseteq & \D^{(\cube{n},l),\cube{n}_{\leq k-1}} \ar[d]^{\cof^{\underline{1}}}\\
\D^{(\cube{n},n-k'),\cube{n}_{\geq l'+1}} \ar[r]_-\subseteq & \D^{(\cube{n},n-k),\cube{n}_{\geq l+1}} \\
\D^{\cube{n}_{k' \leq l'}}\ar[u]^{(Ri)^\simeq} \ar[r]_{Ri} & \D^{\cube{n}_{k \leq l}}\ar[u]_{(Ri)^\simeq}
}
\end{displaymath}
\caption{Global Serre duality $S^{(n)}$ for $\chunk{n}$}
\label{fig:Serre-chunks}
\end{figure}

\begin{rmk}
Let \D be a stable derivator and $n\geq 0$. Dually, also the opposite local Serre morphisms $\tilde{S}_{k,l}$ for $(k,l)\in\chunk{n}$ assemble to a pseudo-natural equivalence
\[
\tilde{S}\colon L^{\vee}(\D)\toiso R^{\vee}(\D)\colon \chunk{n}\to\cDER_{\mathrm{St},\exx},
\]
and \autoref{cor:dodecagon} immediately implies that $S^{\vee}=\tilde{S}$.
\end{rmk}

The formalism of these compatible Serre equivalences is fairly rich, and we now discuss some immediate consequences. We intend to come back to this more systematically in \cite{bg:global}. By \eqref{eq:Serre-conjugate} left and right Kan extensions along morphisms in $\chunk{n}$ correspond to each other under conjugation by Serre equivalences. This immediately yields the existence of infinite chains of adjunctions.

\begin{notn}
Let $f$ be a morphism in a 2-category, such that all iterated adjoints of $f$ exist. Then $f[n]$ denotes the $n$th iterated right adjoint of $f$, if $n\in\mathbb{Z}$ is positive, or the $-n$th iterated left adjoint of $f$ if $n\in\mathbb{Z}$ is negative.
\end{notn}

\begin{cor}\label{cor:infty-adjoints}
Let \D be a stable derivator, let $n\geq 0$, and let $i\colon(k',l')\to(k,l)$ be a morphism in $\chunk{n}$. The restriction morphism
 $i^*\colon\D^{\cube{n}_{k\leq l}} \rightarrow \D^{\cube{n}_{k'\leq l'}}$ generates an infinite chain of adjunctions which for $n\in\lZ$ is given by
\begin{enumerate}
\item $i^*[2n]=S_{k',l'}^{n}i^*S_{k,l}^{-n},$
\item $i^*[2n+1]=S_{k,l}^{n}i_*S_{k',l'}^{-n}\cong S_{k,l}^{n+1}i_!S_{k',l'}^{-n-1}.$
\end{enumerate}
\end{cor}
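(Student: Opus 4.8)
The plan is to deduce everything from the single natural isomorphism $S_{k,l}\circ i_!\cong i_*\circ S_{k',l'}$ established in \autoref{thm:Serre} (equation \eqref{eq:Serre-conjugate}), combined with two pieces of purely formal $2$-categorical input: composites of adjoint morphisms are again adjoint, and an equivalence — here $S_{k,l}$ and $S_{k',l'}$, whose chosen quasi-inverses we fix — is simultaneously a left and a right adjoint of its inverse. The starting data is the adjoint triple $i_!\dashv i^*\dashv i_*$ supplied by axiom (Der3), so $i^*[-1]=i_!$, $i^*[0]=i^*$, $i^*[1]=i_*$ are already available; and since $S_{k,l},S_{k',l'}$ are equivalences, \eqref{eq:Serre-conjugate} rewrites as $i_*\cong S_{k,l}\circ i_!\circ S_{k',l'}^{-1}$ and as $i_!\cong S_{k,l}^{-1}\circ i_*\circ S_{k',l'}$, which is already the last isomorphism asserted in part (ii) (for $n=0$) and its mirror (for $n=-1$).

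I would then prove by induction on $|m|$ the unified statement that for every $m\in\lZ$ the iterated adjoint $i^*[m]$ exists and
\begin{align*}
i^*[2n] &\cong S_{k',l'}^{\,n}\circ i^*\circ S_{k,l}^{-n},\\
i^*[2n+1] &\cong S_{k,l}^{\,n}\circ i_*\circ S_{k',l'}^{-n}\cong S_{k,l}^{\,n+1}\circ i_!\circ S_{k',l'}^{-n-1},
\end{align*}
which is exactly (i) and (ii). The base case $m\in\{-1,0,1\}$ is the adjoint triple together with the two rewritings of \eqref{eq:Serre-conjugate} noted above. For the step from $m$ to $m+1$ one passes to the right adjoint of a composite, using that this reverses the order of factors and acts on the relevant building blocks by $i^*\mapsto i_*$, $i_!\mapsto i^*$, $S_{k,l}^{\pm1}\mapsto S_{k,l}^{\mp1}$, $S_{k',l'}^{\pm1}\mapsto S_{k',l'}^{\mp1}$; for the step from $m$ to $m-1$ one passes to the left adjoint, where instead $i^*\mapsto i_!$ and $i_*\mapsto i^*$. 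Comparing the two parities and transporting the single extra power of $S$ across via $i_*\cong S_{k,l}\circ i_!\circ S_{k',l'}^{-1}$ then matches the displayed formulas on the nose.

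The one point that is not completely automatic — and which I expect to be the only real content beyond bookkeeping — is the \emph{existence} of all iterated adjoints: in a general derivator $i_!$ has no obvious further left adjoint and $i_*$ has no obvious further right adjoint. This is resolved precisely by \eqref{eq:Serre-conjugate}: when the induction reaches an odd index and must take one more left adjoint, one first re-expresses $i^*[2n+1]$ in the $i_*$-form $S_{k,l}^{\,n}\circ i_*\circ S_{k',l'}^{-n}$, whose innermost factor $i_*$ does have a left adjoint, namely $i^*$; symmetrically, to take one more right adjoint one uses the $i_!$-form $S_{k,l}^{\,n+1}\circ i_!\circ S_{k',l'}^{-n-1}$, and $i_!$ has the right adjoint $i^*$. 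Thus at every stage $i^*[m]$ is exhibited as a composite of morphisms ($i^*$, $i_!$, $i_*$, $S_{k,l}^{\pm1}$, $S_{k',l'}^{\pm1}$) each of which admits the required adjoint, so the chain genuinely continues in both directions and the induction goes through. Beyond keeping the exponents and the direction of conjugation straight, no further obstacle is anticipated; in particular the corollary is a formal consequence of \autoref{thm:Serre} and does not require the finer compatibilities recorded in \autoref{cor:modif-local} or \autoref{cor:dodecagon}.
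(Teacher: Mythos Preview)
Your proposal is correct and takes essentially the same approach as the paper: the paper's proof is the single sentence ``This follows from \autoref{thm:Serre} and the corresponding equation \eqref{eq:Serre-conjugate},'' and you have simply spelled out the induction and the adjoint bookkeeping that this sentence leaves implicit. Your observation that the \emph{existence} of each further adjoint is guaranteed by switching between the $i_!$-form and the $i_*$-form via \eqref{eq:Serre-conjugate} is exactly the point, and you are right that neither \autoref{cor:modif-local} nor \autoref{cor:dodecagon} is needed.
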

\begin{proof}
This follows from \autoref{thm:Serre} and the corresponding equation \eqref{eq:Serre-conjugate}.
\end{proof}

One can see immediately from the definition that the Serre equivalences
\[
S_{0,0}^{(n)}\colon\D^{\cube{n}_{=0}}\toiso\D^{\cube{n}_{=0}}\qquad\text{and}\qquad S_{n,n}^{(n)}\colon\D^{\cube{n}_{=n}}\toiso\D^{\cube{n}_{=n}}
\]
 are naturally isomorphic to the identity morphisms. In fact, we show that this holds for $S_{k,k}^{(n)}$ for all $0\leq k\leq n$.

\begin{prop}\label{prop:discrete-Serre}
Let $\D$ be a stable derivator, $n \geq 0$, and $0\leq k \leq n$. There are natural isomorphisms
\[
S_{k,k}^{(n)}\cong\id\colon\D^{\cube{n}_{=k}}\toiso\D^{\cube{n}_{=k}}.
\]
\end{prop}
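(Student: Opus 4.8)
The plan is to unwind the definition of $S_{k,k}^{(n)}$ and reduce to an explicit computation with iterated cofibers. By \autoref{defn:Serre-mor} and \autoref{rmk:describe-fig-chunks}, for $l=k$ the morphism $i=i^{0\leq n}_{k\leq k}$ is just the inclusion $i\colon\cube{n}_{=k}\hookrightarrow\cube{n}$, the equivalence $l^\simeq$ is the corestriction of $i_!$, and $r^\simeq$ is the corestriction of $i_\ast$. Since the composite $\cof^{\underline 1}(i_!(-))$ lands — by the very definition of $S_{k,l}$ — in the essential image of $i_\ast$, the morphism $(r^\simeq)^{-1}$ acts on it simply as $i^\ast$. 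Hence $S_{k,k}^{(n)}\cong i^\ast\circ\cof^{\underline 1}\circ i_!$, and it suffices to produce a natural isomorphism
\[
i^\ast\circ\cof^{\underline 1}\circ i_!\cong\id\colon\D^{\cube{n}_{=k}}\to\D^{\cube{n}_{=k}}.
\]

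Because $\cube{n}_{=k}$ is discrete, axiom (Der1) identifies $\D^{\cube{n}_{=k}}$ with $\prod_{a\in\cube{n}_{=k}}\D$; in particular every $X$ is the coproduct $\coprod_{a}(\iota_a)_!X_a$ of its ``atoms'', where $\iota_a\colon\bbone\to\cube{n}_{=k}$ picks out $a$. As $i_!$, the equivalence $\cof^{\underline 1}$ (\autoref{prop:cof-comp}), and $i^\ast$ all preserve finite coproducts, it is enough to treat a single atom: I will show $i^\ast\cof^{\underline 1}(a_!x)\cong(\iota_a)_!x$ for $a\colon\bbone\to\cube{n}$ and $x\in\D$, so that summing over $a$ recovers $X$.

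To compute the component of $i^\ast\cof^{\underline 1}(a_!x)$ at $c\in\cube{n}_{=k}$, apply \autoref{lem:iter-cones}(iii) together with \autoref{thm:total-cof}: this component equals $\tcof\big((1_{\underline c^\vee})^\ast(a_!x)\big)$, the total cofiber of a $k$-cube. Now $a_!x$ has a transparent description: the point $a\colon\bbone\to\cube{n}$ factors through the cosieve $[a,\{1,\dots,n\}]\hookrightarrow\cube{n}$ (along which $a_!$ is extension by the initial object) followed by the left Kan extension from the bottom vertex of $[a,\{1,\dots,n\}]\cong\cube{n-k}$, which by \autoref{lem:cotrmor}(3) is the constant cube; thus $a_!x$ is the constant cube on $x$ over $[a,\{1,\dots,n\}]$ and is initial elsewhere. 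Restriction along $1_{\underline c^\vee}$ — whose image is the interval $[\,\underline c^\vee,\{1,\dots,n\}\,]\cong\cube{k}$, with $\underline c^\vee$ the complement of $c$ — commutes with extension by initial objects and with formation of constant cubes and intersects the support $[a,\{1,\dots,n\}]$ in $[a\cup\underline c^\vee,\{1,\dots,n\}]$, which corresponds in $\cube{k}$ to the up-set of $a\cap c$; hence $(1_{\underline c^\vee})^\ast(a_!x)\cong(a\cap c)_!x$, the left Kan extension of $x$ along $\bbone\xrightarrow{a\cap c}\cube{k}$. If $a\cap c$ is the terminal object of $\cube{k}$ — equivalently $a=c$, since $|a|=|c|=k$ — then $(a\cap c)_!x=\infty_!x$ and $\tcof(\infty_!x)\cong x$ because $\infty_!$ is a fully faithful right adjoint of $\tcof$ (\autoref{prop:tcof-adjoint}); otherwise $a\cap c$ lies in $\cube{k}_{\leq k-1}$, so $(a\cap c)_!x$ lies in the essential image of $(\iota_{\leq k-1})_!\colon\D^{\cube{k}_{\leq k-1}}\to\D^{\cube{k}}$, i.e.\ is cocartesian, whence $\tcof((a\cap c)_!x)=0$ by \autoref{cor:cocart-tcof-zero}(1). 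Thus $i^\ast\cof^{\underline 1}(a_!x)$ is concentrated at the coordinate $a$ with value $x$, and summing over $a$ yields $S_{k,k}^{(n)}\cong\id$.

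All the isomorphisms invoked above are canonical and natural in $X$, so they assemble into a natural isomorphism of morphisms of derivators (and one can read off, as a by-product, the coherent statement). The one place requiring care — and the main bookkeeping obstacle — is the identification of the support and coherent structure of the restricted atom $(1_{\underline c^\vee})^\ast(a_!x)$ with the elementary Kan extension $(a\cap c)_!x$: this rests on the compatibility of restrictions with extensions by initial objects and with constant cubes, and on the combinatorial facts $1_{\underline c^\vee}(a\cap c)=a\cup\underline c^\vee$ and $[a,\{1,\dots,n\}]\cap[\underline c^\vee,\{1,\dots,n\}]=[a\cup\underline c^\vee,\{1,\dots,n\}]$.
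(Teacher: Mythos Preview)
Your argument is correct. Both proofs start from the same reduction: rewrite $S_{k,k}\cong i^\ast\circ\cof^{\underline 1}\circ i_!$ and invoke \autoref{lem:iter-cones}(iii) together with \autoref{thm:total-cof} to express the $c$-component of $\cof^{\underline 1}(-)$ as a total cofiber of a restricted $k$-cube. From there the two proofs diverge. The paper keeps a general $X\in\D^{\cube{n}_{=k}}$, works with the full $n$-cube $(Li)^\simeq X$, and uses the cotruncation machinery (notably \autoref{cor:cotr-iter-cones-II}: $k$-fold cones of $k$-cotruncated cubes are constant) together with several further rewritings to collapse the expression to $\underline{m}^\ast$. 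You instead decompose $X$ atomically via (Der1) and the preservation of finite coproducts, reducing to the single computation $\tcof\big((1_{\underline c^\vee})^\ast a_!x\big)$, which you identify combinatorially with $\tcof\big((a\cap c)_!x\big)$ and then evaluate by a case split on whether $a\cap c$ is terminal in $\cube{k}$. Your route is more hands-on and avoids \autoref{cor:cotr-iter-cones-II}; the paper's route stays closer to the general cotruncation calculus developed in \S\S\ref{sec:cotruncated}--\ref{sec:total} and never needs to name individual atoms. The one step you rightly flag---the identification $(1_{\underline c^\vee})^\ast(a_!x)\cong(a\cap c)_!x$---is justified cleanly by the homotopy exactness of the pullback of the cosieve $[a,\{1,\dots,n\}]\hookrightarrow\cube{n}$ along $1_{\underline c^\vee}$ (\autoref{lem:opfibration}), so there is no gap there.
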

\begin{proof}
Since $\cube{n}_{=k}$ is a discrete category, axiom (Der1) exhibits $\D^{\cube{n}_{=k}}$ as a corresponding power of \D,
\begin{equation}\label{eq:disjoint}
\D^{\cube{n}_{=k}}\toiso\prod_{\underline{m}\in \cube{n}_{=k}} \D.
\end{equation}
Hence, it suffices to show that for every $\underline{m}\in\cube{n}_{=k}$ there is a natural isomorphism $\underline{m}^\ast S_{k,k}\cong \underline{m}^\ast\colon\D^{\cube{n}_{=k}}\to\D$. For every such $\underline{m}$ with complement $\underline{m}^\vee$ we make the calculation
\begin{align*}
&\underline{m}^*\circ S_{k,k}\\
=\;\;&\underline{m}^*\circ ((Ri)^{\simeq})^{-1}\circ\mathsf{cof}^{\underline{1}}\circ(Li)^{\simeq}\\
=\;\; &\underline{m}^*\circ \mathsf{cof}^{\underline{1}}\circ(Li)^{\simeq}\\
\cong\;\;& C^{|\underline{m}|}\circ (1_{\underline{m}^\vee})^\ast\circ (Li)^\simeq.
\end{align*}
The first equation is by \autoref{defn:Serre-mor}, the second equation holds since $((Ri)^\simeq)^{-1}$ is a restriction morphism, and the isomorphism comes from \autoref{lem:iter-cones}. Since Kan extensions and restrictions in unrelated variables commute, we can continue the calculation by the first isomorphism in 
\begin{align*}
&C^{|\underline{m}|}\circ (1_{\underline{m}^\vee})^\ast\circ (Li)^\simeq\\
\cong\;\;&(1_{\underline{m}^\vee})^\ast\circ C^{\underline{m}}\circ  (Li)^\simeq\\
\cong\;\;&(0_{\underline{m}^\vee})^\ast\circ C^{\underline{m}}\circ  (Li)^\simeq\\
\cong\;\;& C^{|\underline{m}|}\circ (0_{\underline{m}^\vee})^\ast\circ (Li)^\simeq.
\end{align*}
The morphism $(Li)^\simeq\colon\D^{\cube{n}_{=k}}\toiso\D^{(\cube{n},k),\cube{n}_{\leq k-1}}$ takes, in particular, values in $k$-determined $n$-cubes. Hence, all $k$-fold partial cones of $n$-cubes in the image of $(Li)^\simeq$ are constant (\autoref{cor:cotr-iter-cones-II}), and an application of this to $C^{\underline{m}}$ yields the second isomorphism in the above chain. The third isomorphism holds by the same arguments as the first one. Since $n$-cubes in the image of $(Li)^\simeq$ vanish on $\cube{n}_{\leq k-1}$, the same is true for $k$-cubes in the image of $(0_{\underline{m}^\vee})^\ast\circ (Li)^\simeq$. But $\infty\colon\bbone\to\cube{k}$ is a cosieve, and we conclude that the image of $(0_{\underline{m}^\vee})^\ast\circ (Li)^\simeq$ lies in the essential image of $\infty_!$. Thus, the counit $\infty_!\infty^\ast\to\id$ is invertible on this image, thereby inducing the first isomorphism in
\begin{align*}
&C^{|\underline{m}|}\circ (0_{\underline{m}^\vee})^\ast\circ (Li)^\simeq\\
\cong\;\;&C^{|\underline{m}|}\circ \infty_!\circ \infty^\ast\circ (0_{\underline{m}^\vee})^\ast\circ (Li)^\simeq\\
=\;\;&C^{|\underline{m}|}\circ \infty_!\circ \underline{m}^\ast\circ (Li)^\simeq\\
\cong\;\;& C^{|\underline{m}|}\circ \infty_!\circ \underline{m}^\ast\\
=\;\;&\infty^\ast\circ \cof^{\underline{1}_k}\circ\infty_!\circ\underline{m}^\ast
\end{align*}
The second isomorphism is by fully faithfulness of $i_!$. Since the $k$-cubes in the image of $\infty_!\circ\underline{m}^\ast$ vanish on $\cube{k}_{\leq k-1}$, \autoref{lem:cof-fib-restrict} implies that the image of $\cof^{\underline{1}_k}\circ\infty_!\circ\underline{m}^\ast$ consists of $0$-determined $k$-cubes, which is to say constant $k$-cubes. This gives the first isomorphism in
\begin{align*}
&\infty^\ast\circ \cof^{\underline{1}_k}\circ\infty_!\circ\underline{m}^\ast\\
\cong\;\;&0^\ast\circ \cof^{\underline{1}_k}\circ\infty_!\circ\underline{m}^\ast\\
\cong\;\;&\infty^\ast\circ\infty_!\circ\underline{m}^\ast\\
\cong\;\;& \underline{m}^\ast,
\end{align*}
the second one follows from \autoref{lem:iter-cones} and the third one since $\infty_!$ is fully faithful. To conclude the proof it suffices to put these chains of natural isomorphisms together.
\end{proof}

As a more specific application, we can now use \autoref{cor:infty-adjoints} to prove a relation between limits and colimits of chunks.

\begin{cor}\label{cor:colim-lim}
Let \D be a stable derivator, $n\geq 0$, and $0\leq k\leq l\leq n$. There are natural isomorphisms
\[
\colim_{\cube{n}_{k\leq l}}\toiso\mathrm{lim}_{\cube{n}_{k\leq l}}\circ S_{k,l}^{(n)}\colon\D^{\cube{n}_{k\leq l}}\to\D.
\]
\end{cor}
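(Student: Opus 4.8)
The plan is to deduce the statement from the conjugation relation \eqref{eq:Serre-conjugate} (equivalently, from \autoref{cor:infty-adjoints}) applied to the single chunk inclusion $\cube{n}_{k\leq l}\hookrightarrow\cube{n}$, combined with the elementary computation of colimits and limits over $\cube{n}$ and the identification of the Serre equivalence of the full cube with $\cof^{\underline 1}$.

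First I would record the relevant factorizations. Let $j\colon\cube{n}_{k\leq l}\hookrightarrow\cube{n}$ be the inclusion; this functor realizes the morphism $i=i^{0\leq n}_{k\leq l}\colon(k,l)\to(0,n)$ of $\chunk{n}$, so that $j_!=L(i)$ and $j_\ast=R(i)$ in the notation of \autoref{prop:Z-R-L}. Since $\infty=\{1,\dots,n\}$ is terminal in $\cube{n}$ and $\emptyset$ is initial, the functors $\infty,\emptyset\colon\bbone\to\cube{n}$ are homotopy final resp.\ homotopy initial (see \autoref{rmk:htpy-final}), so $\colim_{\cube{n}}\cong\infty^\ast$ and $\lim_{\cube{n}}\cong\emptyset^\ast$ as morphisms $\D^{\cube{n}}\to\D$. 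As $\pi_{\cube{n}_{k\leq l}}=\pi_{\cube{n}}\circ j$ and Kan extensions compose, this gives
\[
\colim_{\cube{n}_{k\leq l}}\cong\infty^\ast\circ j_!\qquad\text{and}\qquad\lim_{\cube{n}_{k\leq l}}\cong\emptyset^\ast\circ j_\ast.
\]

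Next I would invoke \eqref{eq:Serre-conjugate} for the morphism $i=i^{0\leq n}_{k\leq l}$, which yields $j_\ast\circ S_{k,l}^{(n)}\cong S_{0,n}^{(n)}\circ j_!$, and observe that $S_{0,n}^{(n)}=\cof^{\underline 1}\colon\D^{\cube{n}}\to\D^{\cube{n}}$ --- immediate from \autoref{defn:Serre-mor}, since for the full chunk the equivalences $l^\simeq$ and $r^\simeq$ are identities (cf.\ \autoref{eg:products-A2}). Finally, \autoref{lem:iter-cones}(iii) applied with $\underline m=(0,\dots,0)$, so that $\underline m^\vee=(1,\dots,1)$ and $C^{|\underline m|}=C^0$ is the identity, gives $\emptyset^\ast\circ\cof^{\underline 1}\cong\infty^\ast$. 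Chaining these identifications,
\[
\lim_{\cube{n}_{k\leq l}}\circ\,S_{k,l}^{(n)}\cong\emptyset^\ast\circ j_\ast\circ S_{k,l}^{(n)}\cong\emptyset^\ast\circ\cof^{\underline 1}\circ j_!\cong\infty^\ast\circ j_!\cong\colim_{\cube{n}_{k\leq l}},
\]
which is the desired natural isomorphism.

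I do not anticipate a real obstacle, since every input is already available; the work is purely bookkeeping. The two points that merit a little care are that the inclusion $j$ really does realize the chunk morphism $i^{0\leq n}_{k\leq l}$ (so that $j_!,j_\ast$ are precisely the $L$- and $R$-images to which \eqref{eq:Serre-conjugate} refers, rather than some other Kan extension), and that each isomorphism above is genuinely natural, i.e.\ a modification of morphisms of derivators --- which holds because every step is a composition of previously established natural isomorphisms. If one prefers to avoid \autoref{eg:products-A2}, the identity $S_{0,n}^{(n)}=\cof^{\underline 1}$ can be read off directly from \autoref{defn:Serre-mor}.
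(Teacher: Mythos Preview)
Your argument is correct and is in fact more direct than the paper's. Both proofs start with the identifications $\colim_{\cube{n}_{k\leq l}}\cong\infty^\ast\circ j_!$ and $\lim_{\cube{n}_{k\leq l}}\cong\emptyset^\ast\circ j_\ast$ and apply \eqref{eq:Serre-conjugate} to the inclusion $j\colon\cube{n}_{k\leq l}\hookrightarrow\cube{n}$, which reduces the claim to the identification $\emptyset^\ast\circ S_{0,n}\cong\infty^\ast$. At this point the approaches diverge. You observe directly that $S_{0,n}=\cof^{\underline 1}$ and then use \autoref{lem:iter-cones} (in the degenerate case $\underline m=\underline 0$, where it specializes to $\emptyset^\ast\circ\cof^{\underline 1}\cong\infty^\ast$). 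The paper instead regards $\infty^\ast$ and $\emptyset^\ast$ as iterated adjoints $Lj'[1]$ and $Rj'[1]$ of the Kan extensions along the second chunk inclusion $j'\colon\cube{n}_{=n}\hookrightarrow\cube{n}$, applies the Serre naturality \eqref{eq:Serre-conjugate} once more to~$j'$, passes to total mates, and then closes the argument with \autoref{prop:discrete-Serre}, i.e.\ $S_{n,n}\cong\id$. Your route is shorter and avoids both the mate manipulation and \autoref{prop:discrete-Serre}; the paper's route has the virtue of using only the abstract pseudo-naturality of the Serre equivalences (plus the triviality of $S_{n,n}$) rather than the explicit formula $S_{0,n}=\cof^{\underline 1}$, which makes it more portable to the general framework of \cite{bg:global}.
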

\begin{proof}
Let $i\colon\cube{n}_{k \leq l }\rightarrow \cube{n}$ be the inclusion. By uniqueness of left adjoints and homotopy finality of final objects, there is a canonical isomorphism
\[
\colim_{\cube{n}_{k\leq l}}\cong\infty^*\circ Li.
\]
We note that $\infty^*$ is the right adjoint of $Lj$ for the inclusion $j\colon\cube{n}_{=n}\rightarrow\cube{n}$. Applying \autoref{thm:Serre} to $j$ yields the relation
\[
S_{0,n}\circ Lj \cong Rj\circ S_{n,n}.
\]
By passing to right adjoints we obtain the corresponding invertible total mate 
\[
Lj[1]\circ S_{0,n}^{-1}\cong S_{n,n}^{-1}\circ Rj[1].
\]
Hence precomposition with $S_{0,n}$ and postcomposition with $S_{n,n}$ yields the relation
\[
S_{n,n}\circ Lj[1] \cong Rj[1]\circ S_{0,n},
\]
which is to say that the square on the right in the diagram
\[
\xymatrix{
\D^{\cube{n}_{k\leq l}} \ar[r]^{Li} \ar[d]^{S_{k,l}} & \D^{\cube{n}} \ar[r]^{Lj[1]} \ar[d]^{S_{0,n}} & \D \ar[d]^{S_{n,n}}\\
\D^{\cube{n}_{k\leq l}} \ar[r]^{Ri} & \D^{\cube{n}} \ar[r]^{Rj[1]} & \D
}
\]
commutes up to an invertible natural transformation. The same is true for the left square by an application of \autoref{thm:Serre} to $i$.
We observe that $\lim_{\cube{n}_{k\leq l}}$ is the double right adjoint of $\colim_{\cube{n}_{k\leq l}}$. By an application of \autoref{cor:infty-adjoints} to both subsquares of the diagram, the bottom horizontal morphisms are double right adjoint to the respective top horizontal morphism. By the above, the top row is isomorphic to $\colim_{\cube{n}_{k\leq l}}$, and the pseudo-functoriality of taking adjoints implies that the bottom row is isomorphic to $\lim_{\cube{n}_{k\leq l}}$. In order to conclude the proof we invoke $S_{n,n}\cong\id$ (\autoref{prop:discrete-Serre}).
\end{proof}

\begin{rmk}
Let us recall from \cite{gs:stable} that the defining feature of stability is that the distinction between homotopy finite limits and colimits is blurred. One way to make this precise is as follows: a derivator is stable if and only if homotopy finite colimits are \emph{weighted} limits (and there are variants using homotopy finite Kan extensions). For chunks of $n$-cubes the Serre equivalences can be used to translate these weighted limits back into actual limits. This way the blurring is made even more specific.
\end{rmk}

We illustrate \autoref{cor:colim-lim} in the case of $\cube{3}_{1\leq 2}$.

\begin{eg}
Let \D be a stable derivator and let $X\in\D^{\cube{3}_{1\leq 2}}$ be an object with underlying diagram
\[
\xymatrix{
& X_3 \ar[rr] \ar[dd] & & X_{13}\\
& & X_1 \ar[ur] \ar[dd]\\
& X_{23}\\
X_2 \ar[ur] \ar[rr] & & X_{12}.
}
\]
We can calculate the colimit of $X$ by first left extending by zero to a punctured cube and then applying the inductive formula for colimits of punctured cubes (\autoref{cor:pun-cube}). As an upshot we obtain a cocartesian square
\[
\xymatrix{
X_1\oplus X_2\ar[r]\ar[d]&X_{12}\ar[d]\\
X_{13}\cup_{X_3}X_{23}\ar[r]&\colim_{\cube{3}_{1\leq 2}}X.\pushoutcorner
}
\]
In order to describe this colimit as a limit, we calculate $\lim_{\cube{3}_{1\leq 2}}S_{1,2}^{(3)}X$. For this purpose, we add cofibers in all three coordinates to the $3$-cube $(Li)^{\simeq}(X)$ to obtain a diagram of $[2]^3$-shape (\autoref{fig:eg-lim-colim}).
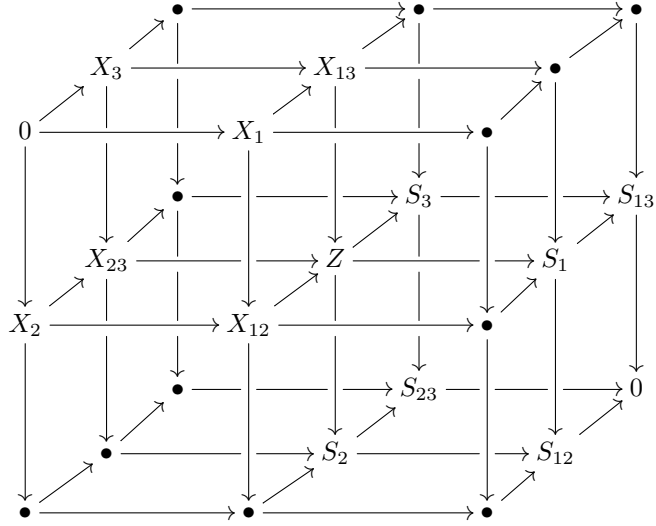
\begin{figure}
\begin{displaymath}
\xymatrix@C=1em@R=1em{
& & \bullet \ar[rrr] \ar'[d]'[dd][ddd] & & & \bullet \ar[rrr] \ar'[d]'[dd][ddd] & & & \bullet \ar[ddd]\\
& X_3 \ar[ur] \ar[rrr] \ar'[d][ddd] & & & X_{13} \ar[ur] \ar[rrr] \ar'[d][ddd] & & & \bullet \ar[ur] \ar[ddd]\\
0 \ar[ur] \ar[rrr] \ar[ddd] & & & X_1 \ar[ur] \ar[rrr] \ar[ddd] & & & \bullet \ar[ur] \ar[ddd]\\
& & \bullet \ar'[r]'[rr][rrr] \ar'[d]'[dd][ddd] & & & S_3 \ar'[r]'[rr][rrr] \ar'[d]'[dd][ddd] & & & S_{13} \ar[ddd]\\
& X_{23} \ar[ur] \ar'[rr][rrr] \ar'[d][ddd] & & & Z \ar[ur] \ar'[rr][rrr] \ar'[d][ddd] & & & S_1 \ar[ur] \ar[ddd]\\
X_2 \ar[ur] \ar[rrr] \ar[ddd] & & & X_{12} \ar[ur] \ar[rrr] \ar[ddd] & & & \bullet \ar[ur] \ar[ddd]\\
& & \bullet \ar'[r]'[rr][rrr] & & & S_{23} \ar'[r]'[rr][rrr] & & & 0\\
& \bullet \ar[ur] \ar'[rr][rrr] & & & S_2 \ar[ur] \ar'[rr][rrr] & & & S_{12} \ar[ur]\\
\bullet \ar[ur] \ar[rrr] & & & \bullet \ar[ur] \ar[rrr] & & & \bullet \ar[ur]
}
\end{displaymath}
\caption{The $[2]^3$-diagram induced by $X$}
\label{fig:eg-lim-colim}
\end{figure}
Hence $(Li)^{\simeq}(X)$ is the front upper left cube in \autoref{fig:eg-lim-colim}, whereas $\cof^{\underline{1}}(Li)^{\simeq}(X)$ is the back lower right cube. Moreover, we see that $\colim_{\cube{3}_{1\leq 2}}X$ is the final vertex of $(Li)^{\simeq}(X)$, hence the central vertex $Z$ of \autoref{fig:eg-lim-colim}. We observe that $S:=S^{(3)}_{1,2}(X)$ is by definition the restriction $i^*\cof^{\underline{1}}(Li)^{\simeq}(X)$ as indicated in \autoref{fig:eg-lim-colim}. Furthermore, we know that the back lower right cube is $2$-determined and that it vanishes at the final vertex, since the cube lies in the image of the morphism
\[
\cof^{\underline{1}}\colon\D^{(\cube{3},2),\cube{3}_{=0}}\toiso\D^{(\cube{3},2),\cube{3}_{=3}}
\]
Consequently, this cube is cartesian and it exhibits the central vertex $Z$ of \autoref{fig:eg-lim-colim} also as the limit $\lim_{\cube{3}_{1\leq 2}}\circ S^{(3)}_{1,2}(X)$. More explicitly, by the dual of \autoref{cor:pun-cube} there is a cartesian square
\[
\xymatrix{
\colim_{\cube{3}_{1\leq 2}}X\ar[r]\ar[d]\pullbackcorner&S_1\times_{S_{12}}S_2\ar[d]\\
S_3\ar[r]&S_{13}\times S_{23}.
}
\]
\end{eg}

We now turn to the pseudo-naturality of the strong stable equivalences from \autoref{defn:Phi-Psi}.

\begin{thm}\label{thm:sse-chunk-global}
Let \D be a stable derivator and $n\geq 0$. 
\begin{enumerate}
\item The right strong stable equivalences $\Phi_{k,l}=\Phi^{(n)}_{k,l}$ for $(k,l)\in\chunk{n}$ assemble to a pseudo-natural equivalence
\[
\Phi=\Phi^{(n)}\colon R(\D)\toiso Z^\vee(\D)\colon\chunk{n}\to\cDER_{\mathrm{St},\exx}.
\]
\item The left strong stable equivalences $\Psi_{k,l}=\Psi^{(n)}_{k,l}$ for $(k,l)\in\chunk{n}$ assemble to a pseudo-natural equivalence
\[
\Psi=\Psi^{(n)}\colon Z^\vee(\D)\toiso L(\D)\colon\chunk{n}\to\cDER_{\mathrm{St},\exx}.
\]
\end{enumerate}
\end{thm}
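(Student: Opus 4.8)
\textbf{Proof proposal for \autoref{thm:sse-chunk-global}.}

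The plan is to run the same argument twice, once for $\Phi$ and once for $\Psi$, since the two statements are interchanged by the chunk symmetry $(-)^\vee$ together with \autoref{rmk:hexagon-left}. I will therefore concentrate on part (i). We already know from \autoref{con:H-H-H-H} and \autoref{lem:hexagon} that each $\Phi_{k,l}$ is a well-defined equivalence, so the only thing to establish is the pseudo-naturality: for every morphism $i=i^{k\leq l}_{k'\leq l'}\colon(k',l')\to(k,l)$ in $\chunk{n}$ we must produce a coherent natural isomorphism relating $\Phi_{k,l}\circ R(i)$ with $Z^\vee(i)\circ\Phi_{k',l'}$, i.e. between $\Phi_{k,l}\circ r_{k'\leq l'}^{k\leq l}$ and $z_{n-l\leq n-k}^{\,(-)^\vee}\circ\Phi_{k',l'}$.

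First I would choose the models that make this transparent. Using \autoref{lem:hexagon}, represent $\Phi_{k,l}$ by $H^{r2}_{k,l}=((z^\vee)^\simeq)^{-1}\circ\cof^{\underline 1}\circ r^\simeq$, so that $\Phi_{k,l}\circ R(i)$ becomes $((z^\vee)^\simeq)^{-1}\circ\cof^{\underline 1}\circ r^\simeq_{k\leq l}\circ r_{k'\leq l'}^{k\leq l}$. By the pseudo-functoriality of $R$ (\autoref{prop:Z-R-L}), the two right-Kan-extension morphisms compose up to a canonical isomorphism to $r_{k'\leq l'}^{0\leq n}$, which is exactly the $R$-image of $i^{0\leq n}_{k'\leq l'}$; similarly, on the source side, $\Phi_{k',l'}=H^{r2}_{k',l'}$ has inner morphism $r^\simeq_{k'\leq l'}$, and the target side $Z^\vee(i)\circ\Phi_{k',l'}$ rebuilds $z^\vee$ for the larger chunk out of $z^\vee$ for the smaller one and the inclusion along $i^\vee$. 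So the claim reduces to: the square whose four sides are the two restricted right Kan extensions $r$, the two restricted zero-extensions $z^\vee$, and $\cof^{\underline 1}$ in the middle — a sub-square of an expanded version of \autoref{fig:chunks} — commutes up to a canonical isomorphism. The commutativity of the $r$-part and the $z^\vee$-part is just pseudo-functoriality of $R$ and $Z^\vee$, and the middle square commutes because $\cof^{\underline 1}$ is an equivalence of $\D^{\cube n}$ with itself that is applied in the same way on both the large and the small chunk; there is nothing to choose there, it literally is the identity functor $\cof^{\underline 1}$ on $\D^{\cube n}$ restricted to two nested subderivators. Concretely I would set up a diagram analogous to \autoref{fig:Serre-chunks}, with vertical composites $H^{r2}_{k',l'}$ on the left and $H^{r2}_{k,l}$ on the right, horizontal maps the natural inclusions, and check that each of the three stacked squares commutes up to canonical iso: the top ($Ri$-square) by uniqueness of adjoints exactly as in the proof of \autoref{thm:Serre}, the middle by the above triviality about $\cof^{\underline 1}$, and the bottom ($Z^\vee i$-square) again by uniqueness of adjoints (the $z$-morphisms are themselves built from Kan extensions along sieves and cosieves, so composing them is coherent). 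Pasting the three isomorphisms yields the required modification, and its coherence (the cocycle condition over composable pairs of morphisms in $\chunk n$) follows because every constituent isomorphism is a mate/pseudo-functoriality constraint, and these satisfy the coherence laws by the functoriality of canonical mates with respect to pasting.

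For part (ii) I would not repeat the computation but instead apply $(-)^\vee$: by \autoref{rmk:describe-fig-chunks} and the identifications $H^{l1}_{k,l}=H^{r1}_{n-l,n-k}$, $H^{l2}_{k,l}=H^{r2}_{n-l,n-k}$ from \autoref{con:H-H-H-H}, the left strong stable equivalences $\Psi_{k,l}$ are the images of the $\Phi$'s under the chunk symmetry, and precomposing the pseudo-natural equivalence $\Phi\colon R(\D)\toiso Z^\vee(\D)$ with $(-)^\vee\colon\chunk n\toiso\chunk n$ and reading off the definitions of $R^\vee,Z^\vee$ (note $(Z^\vee)^\vee=Z$) gives a pseudo-natural equivalence $Z^\vee(\D)\toiso L(\D)$, which is $\Psi$. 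The only subtlety to check is that the self-duality isomorphism $[n]\toiso[n]\op$ used to define $(-)^\vee$ interacts correctly with left/right Kan extensions — but this is precisely the duality principle for derivators, which swaps $(-)_!$ and $(-)_\ast$ and hence turns $R$ into $L$ and $Z^\vee$ into $Z^\vee$ as needed.

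The main obstacle I anticipate is purely bookkeeping rather than conceptual: assembling the pointwise isomorphisms into a genuine \emph{modification} (pseudo-natural transformation) requires verifying the coherence hexagon/cocycle condition, which means tracking how the chosen $\cof^{\underline 1}$-model, the two uniqueness-of-adjoints constraints, and the pseudo-functoriality constraints of $R$, $Z^\vee$ all fit together over a composable pair $(k'',l'')\le(k',l')\le(k,l)$. I expect this to be handled, as in \cite{groth:revisit} and the proof of \autoref{thm:Serre}, by invoking the functoriality of canonical mates with respect to pasting — every 2-cell in sight is a mate, and mate calculus is coherent — so the check, while lengthy to write out in full, presents no real difficulty; I would state it as following from that functoriality and refer back to the proof of \autoref{thm:Serre} for the pattern.
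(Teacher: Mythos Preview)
Your argument for part (i) is essentially the paper's proof: you choose $H^{r2}_{k,l}$ as the model for $\Phi_{k,l}$, set up the three-square ladder (the paper displays this as \autoref{fig:Phi-chunks}), and dispatch the top square by uniqueness of adjoints (pseudo-functoriality of $R$), the middle square by the fact that $\cof^{\underline 1}$ is literally the same endomorphism of $\D^{\cube n}$ restricted to nested subderivators, and the bottom square by pseudo-functoriality of $Z^\vee$. That is exactly what the paper does, and your remarks on coherence via the functoriality of mates are appropriate.

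Your shortcut for part (ii), however, does not work as stated. Precomposing $\Phi\colon R(\D)\toiso Z^\vee(\D)$ with the chunk symmetry $(-)^\vee$ yields a pseudo-natural equivalence $R^\vee(\D)\toiso Z(\D)$; in the paper's notation this is $\tilde\Psi=\Phi^\vee$ (see \autoref{rmk:duality}), not $\Psi\colon Z^\vee(\D)\toiso L(\D)$. The components happen to coincide (both $\Psi_{k,l}$ and $\Phi_{n-l,n-k}$ are represented by $H^{r2}_{n-l,n-k}$), but the pseudo-naturality squares are genuinely different: for a morphism $i$ in $\chunk n$, the source side of the $\Psi$-square is the zero-extension $Z^\vee(i)$ while the source side of the $\tilde\Psi$-square is the right Kan extension $R^\vee(i)$, and similarly $L(i)\neq Z(i)$ on the target side. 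The ``duality principle for derivators'' you invoke swaps $(-)_!$ and $(-)_\ast$ only when passing from $\D$ to $\D\op$; the chunk symmetry is merely a relabelling of the indexing poset $\chunk n$ and does not effect any such swap. The paper therefore does not attempt this reduction and instead says the proof of (ii) ``is very similar'': one reruns the three-square argument with $H^{l2}_{k,l}$ (or $H^{l1}_{k,l}$) as the model, replacing the $R$-column by $Z^\vee$ and the $Z^\vee$-column by $L$. This is the correct fix and costs no more than part (i).
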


\begin{proof}
We prove only (i), the proof of (ii) is very similar, and for definitiveness we choose $H_{k,l}^{r2}$ from \autoref{con:H-H-H-H} as our local models for the components $\Phi^{(n)}_{k,l}$ of the desired pseudo-natural equivalence $\Phi^{(n)}$. For every $0\leq k\leq k'\leq l'\leq l\leq n$ we consider the morphism $j=i_{k'\leq l'}^{k\leq l}$ in $\chunk{n}$. Moreover, let us use the shorthand notation
\[
i'=i_{k'\leq l'}^{0\leq n}\colon(k',l')\to(0,n)\qquad\text{and}\qquad i=i_{k\leq l}^{0\leq n}\colon(k,l)\to(0,n),
\]
so that the relation $i'=i\circ j$ holds in $\chunk{n}$. Unraveling definitions, we have to show that \autoref{fig:Phi-chunks} commutes up to coherent natural isomorphisms. The upper two squares are dealt with as in the case of Serre equivalences, and in the case of the bottom square it suffices to invoke the pseudo-functoriality constraints of $Z^\vee$.
\end{proof}

\begin{figure}
\begin{displaymath}
\xymatrix{
\D^{\cube{n}_{k' \leq l'}} \ar[d]_{(Ri')^\simeq} \ar[r]^{Rj} & \D^{\cube{n}_{k \leq l}} \ar[d]^{(Ri)^\simeq}\\
\D^{(\cube{n},n-k'),\cube{n}_{\geq l'+1}} \ar[d]_{\cof^{\underline{1}}} \ar[r]^-\subseteq & \D^{(\cube{n},n-k),\cube{n}_{\geq l+1}} \ar[d]^{\cof^{\underline{1}}}\\
\D^{\cube{n},\cube{n}_{0\leq n-l'-1}\cup\cube{n}_{n-k'+1\leq n}} \ar[r]_-\subseteq & \D^{\cube{n},\cube{n}_{0\leq n-l-1}\cup\cube{n}_{n-k+1\leq n}} \\
\D^{\cube{n}_{n-l' \leq n-k'}}\ar[u]^{Z((i')^\vee)^\simeq} \ar[r]_{Z(j^\vee)} & \D^{\cube{n}_{n-l \leq n-k}}\ar[u]_{Z(i^\vee)^\simeq}
}
\end{displaymath}
\caption{Global standard equivalence $\Phi^{(n)}$ for $\chunk{n}$}
\label{fig:Phi-chunks}
\end{figure}

\begin{rmk}\label{rmk:duality}
Let \D be a stable derivator and $0\leq n$. For every chunk $\cube{n}_{k\leq l}$ we can invoke the remaining strong stable equivalences $H^{l2}_{k,l}$ (resp. $H^{r1}_{k,l}$).
Arguments dual to the ones in the proof of \autoref{thm:sse-chunk-global} show that these assemble to a pseudo-natural equivalence
\[
\tilde{\Psi}=\tilde{\Psi}^{(n)}\colon R^{\vee}(\D)\toiso Z(\D)\colon\chunk{n}\to\cDER_{\mathrm{St},\exx},
\]
resp.
\[
\tilde{\Phi}=\tilde{\Phi}^{(n)}\colon Z(\D)\toiso L^{\vee}(\D)\colon\chunk{n}\to\cDER_{\mathrm{St},\exx}.
\]
We invoke \autoref{cor:dodecagon} again to see that
\[
\tilde{\Psi}=\Phi^{\vee}\qquad \text{ and } \qquad\tilde{\Phi}=\Psi^{\vee}.
\]
\end{rmk}

\begin{cor}\label{cor:modif}
There are invertible modifications:
\[
\Phi\circ S \circ \Psi \toiso \Sigma^n\colon id_{Z^{\vee}(\D)} \rightarrow id_{Z^{\vee}(\D)} \text{ and } \tilde{\Psi}\circ\tilde{S} \circ\tilde{\Phi} \toiso \Sigma^n\colon id_{Z(\D)} \rightarrow id_{Z(\D)}
\]
\end{cor}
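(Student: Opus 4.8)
The plan is to regard \autoref{cor:modif} as the pseudo-natural (``global'') strengthening of \autoref{cor:modif-local}. By \autoref{thm:Serre} and \autoref{thm:sse-chunk-global} the transformations $\Psi\colon Z^\vee(\D)\toiso L(\D)$, $S\colon L(\D)\toiso R(\D)$ and $\Phi\colon R(\D)\toiso Z^\vee(\D)$ are pseudo-natural equivalences, so their composite $\Phi\circ S\circ\Psi$ is a pseudo-natural auto-equivalence of $Z^\vee(\D)$. I would first record, symmetrically, that objectwise $n$-fold suspension assembles into a pseudo-natural auto-equivalence $\Sigma^n$ of $Z^\vee(\D)$: on each chunk $\D^{\cube{n}_{n-l\leq n-k}}$ it is the exact auto-equivalence $\Sigma^n$, and the transition morphisms of $Z^\vee(\D)$ are combinations of homotopy finite Kan extensions (\autoref{prop:Z-R-L}), which commute with suspension. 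Granting this, the assertion of \autoref{cor:modif} is precisely that the objectwise isomorphisms $\Phi_{k,l}\circ S_{k,l}\circ\Psi_{k,l}\toiso\Sigma^n$ produced in \autoref{cor:modif-local} form an invertible modification.

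The heart of the argument is then the verification of the modification axiom: for every morphism $i\colon(k',l')\to(k,l)$ in $\chunk{n}$ the component isomorphisms must be compatible with the pseudo-naturality $2$-cells of $\Phi$, $S$, $\Psi$ and of $\Sigma^n$. Here I would exploit that all these $2$-cells are of a single type. The constraints for $S$ and $\Phi$ (and, dually, for $\Psi$) were, in the proofs of \autoref{thm:Serre} and \autoref{thm:sse-chunk-global}, induced by uniqueness of adjoints together with the compatibility of canonical mates with pasting (see \autoref{fig:Serre-chunks} and \autoref{fig:Phi-chunks}); the ones for $\Sigma^n$ are the standard commutation of suspension with Kan extensions, again a canonical mate; and the local isomorphism of \autoref{cor:modif-local} itself arises, after cancelling the structural equivalences $z,l,r$ and their duals against their uniquely determined inverses, from \autoref{cor:cof-S}. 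Unwinding the definitions of $H^{r2}$, $S$ and $H^{l1}$ (\autoref{defn:Serre-mor}, \autoref{con:H-H-H-H}), one sees that $\Phi\circ S\circ\Psi$ is, up to these cancellation isomorphisms, $\cof^{\underline 1}$ applied thrice with parameters, sandwiched between mutually inverse combinations of Kan extensions; since suspension commutes with Kan extensions, its pseudo-naturality constraint reduces to that of $\cof^{\underline 1}$ and the comparison with $\Sigma^n$ is supplied uniformly by \autoref{cor:cof-S}. As $\chunk{n}$ is a poset, the modification axiom is one independent pasting identity for each generating inequality $(k',l')\leq (k,l)$, and carrying out this bookkeeping produces the first invertible modification.

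The step I expect to be the main obstacle is exactly this bookkeeping: faithfully tracking the cancellation isomorphisms and mate-pasting identities through a sizeable pasting diagram so that the component isomorphism of \autoref{cor:modif-local} is checked to interact correctly with \emph{all} the structural $2$-cells at once. No genuinely new ingredient is required --- everything has already appeared in \S\ref{sec:SSE-Serre} and in the proofs of \autoref{thm:Serre} and \autoref{thm:sse-chunk-global} --- but the diagram is large; the redeeming feature is the poset structure of $\chunk{n}$, which keeps the verification finite and, on each morphism, essentially a single coherence computation.

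Finally, I would deduce the second invertible modification from the first via the chunk symmetry. Precomposing $\Phi\circ S\circ\Psi\toiso\Sigma^n$ with $(-)^\vee\colon\chunk{n}\toiso\chunk{n}$ and inserting the identifications $\tilde S=S^\vee$ (\autoref{cor:dodecagon}), $\tilde\Psi=\Phi^\vee$, $\tilde\Phi=\Psi^\vee$ (\autoref{rmk:duality}), and $(\Sigma^n)^\vee=\Sigma^n$ converts it into the desired modification $\tilde\Psi\circ\tilde S\circ\tilde\Phi\toiso\Sigma^n$ on $Z(\D)$; alternatively it follows by repeating the above argument read through the duality principle for derivators.
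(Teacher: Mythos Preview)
Your proposal is correct and follows the same route as the paper: reduce the global statement to the componentwise isomorphisms of \autoref{cor:modif-local}. The paper's proof is a single sentence (``immediate consequence from the corresponding local statements''), whereas you spell out in detail the modification-axiom bookkeeping and the duality argument for the second half; your more careful account is entirely compatible with, and indeed fleshes out, what the paper asserts.
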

\begin{proof}
This is an immediate consequence from the corresponding local statements (\autoref{cor:modif-local}).
\end{proof}

\begin{rmk}
Let \D be a stable derivator, $n\geq 0$, and $0\leq l\leq n$ (the case of $k=0$). We have already seen that $\D^{\cube{n}_{\leq l}}$ and $\D^{\cube{n}_{\geq n-l}}$ are both equivalent to the derivator $\D^{\cube{n},l}$ of $l$-determined $n$-cubes (\autoref{thm:sse-chunks-special-case}). Moreover, these equivalences are induced by left Kan extension along $\iota_{\leq l}$ and right Kan extension along $\iota_{\geq n-l}$. More explicitly, in this special case the two circles of \autoref{fig:chunks} collapse to a single one and the diagram simplifies to:
\[
\xymatrix@C=1.8em{
& & & \D^{\cube{n},l} \ar@/_2pc/[ldd]^{\cof^{\underline{1}}}\\
\D^{\cube{n}_{\leq l}} \ar@/^1pc/[rrru]^{(\iota_{\leq l})_!^{\simeq}} \ar@/_1pc/[rrd]^{(\iota_{\leq l})_*^{\simeq}} & & & & & & \D^{\cube{n}_{\geq n-l}} \ar@/_1pc/[lllu]_{(\iota_{\geq n-l})_*^{\simeq}} \ar@/^1pc/[lld]_{(\iota_{\geq n-l})_!^{\simeq}}\\
& & \D^{\cube{n},\cube{n}_{\geq l+1}} \ar@/_2pc/[rr]^{\cof^{\underline{1}}} & & \D^{\cube{n},\cube{n}_{\geq l+1}} \ar@/_2pc/[uul]^{\cof^{\underline{1}}}.
}
\]
Hence, in contrast to the situation of general chunks, here we obtain a natural choice of a strong stable equivalence $\D^{\cube{n}_{\leq l}}\toiso\D^{\cube{n}_{\geq n-l}}$, namely by composing the two previous equivalences. As a consequence we can explicitly describe the adjoint quintuple
\[
\iota_{k\leq l}[-2] \dashv \iota_{k\leq l}[-1] \dashv \iota_{k\leq l} \dashv \iota_{k\leq l}[1] \dashv \iota_{k\leq l}[2]
\]
which is associated to the natural inclusion $\iota_{k\leq l}: \D^{\cube{n},k}\rightarrow\D^{\cube{n},l}$. The adjoint triple $\iota_{k\leq l}[-2] \dashv \iota_{k\leq l}[-1] \dashv \iota_{k\leq l}$ is induced by $(\iota_{n-l,n-k})_! \dashv (\iota_{n-l,n-k})^* \dashv (\iota_{n-l,n-k})_*$ whereas $\iota_{k\leq l} \dashv \iota_{k\leq l}[1] \dashv \iota_{k\leq l}[2]$ is induced by $(\iota_{l,k})_! \dashv (\iota_{l,k})^* \dashv (\iota_{l,k})_*$. Therefore an application of \autoref{thm:Serre} to either
\[
\iota_{l,k}\colon\cube{n}_{\leq k}\rightarrow \cube{n}_{\leq l}\qquad\text{or}\qquad \iota_{n-l,n-k}:\cube{n}_{\geq n-l}\rightarrow\cube{n}_{\geq n-k}
\]
yields a description of the infinite chain of adjunctions generated by $\iota_{k\leq l}$. These are dual to each other but not symmetric by themselves. Alternatively, we can apply \autoref{thm:Serre} to both inclusions of chunks simultaneously. More explicitly, one can show that the diagram
\[
\xymatrix{
\D^{\cube{n},k} \ar[r]^{F_k} \ar[d]_{\iota_{k\leq l}[-2]} & \D^{\cube{n},k} \ar[d]^{\iota_{k\leq l}[2]}\\
\D^{\cube{n},l} \ar[r]^{F_l} & \D^{\cube{n},l}
}
\]
commutes, where $F_k\colon \D^{\cube{n},k}\rightarrow\D^{\cube{n},k}$ is the composition
\[
\xymatrix@C=1.8em{
\D^{\cube{n},k} \ar[r]_{((\iota_{\geq n-k})^{\simeq}_*)^{-1}} & \D^{\cube{n}_{\geq n-k}} \ar[r]_{(\iota_{\geq n-k})^{\simeq}_!} & \D^{\cube{n},\cube{n}_{\leq n-k-1}} \ar[r]_{\mathsf{fib}^{\underline{1}}} & \D^{\cube{n},\cube{n}_{\geq k+1}} \ar[r]_{((\iota_{\leq k})^{\simeq}_*)^{-1}} & \D^{\cube{n}_{\leq k}} \ar[r]_{(\iota_{\leq k})^{\simeq}_!} & \D^{\cube{n},k}.
}
\]
This yields a symmetric description of the infinite chain of adjunctions, which moreover is $4$-periodic, since the morphisms $F_k$ are strongly related to compositions of two iterations of Serre equivalences.
\end{rmk}

\begin{eg}
We now provide some explicit computations of iterated adjoints of morphisms of the form $\iota_{k\leq l}$ in specific cases where we have the fractionally Calabi--Yau property.
\begin{enumerate}
\item
By \autoref{cor:cof-S}, \autoref{eg:products-A2} and \autoref{eg:A3} we can identify
\[
(S^{(2)}_{0,2})^3=\Sigma^2 \qquad \text{ and } \qquad (S^{(2)}_{0,1})^4=\Sigma^2.
\]
Hence, we can compute
\[
\iota_{1\leq 2}^{(2)}[24]=(S^{(2)}_{0,2})^{12}\iota_{1\leq 2}^{(2)}(S^{(2)}_{0,1})^{-12}=\Sigma^8\iota_{1\leq 2}^{(2)}\Sigma^{-6}=\Sigma^2\iota_{1\leq 2}^{(2)}
\]
where the first equality is \autoref{cor:infty-adjoints}.
\item Similarly, invoking additionally \autoref{eg:source3}, we have the identifications
\[
(S^{(3)}_{0,3})^3=\Sigma^3 \qquad \text{ and } \qquad (S^{(3)}_{0,1})^3=\Sigma^2
\]
leading to the computation
\[
\iota_{1\leq 3}^{(3)}[6]=(S^{(3)}_{0,3})^{3}\iota_{1\leq 3}^{(3)}(S^{(3)}_{0,1})^{-3}=\Sigma^3\iota_{1\leq 3}^{(3)}\Sigma^{-2}=\Sigma\iota_{1\leq 3}^{(3)}.
\]
\end{enumerate}
\end{eg}

\section{Universal formulas}
\label{sec:formulas}

In this concluding section we show that the above calculus of $n$-cubes is compatible with morphisms of derivators. More generally, we show that morphisms of derivators enjoy a ``lax-oplax'' compatibility with canonical mates. Essentially by forming compositions of invertible such mates and their inverses, we define formulas relative to a chosen $2$-category $\mathscr{K}$ of derivators, thereby axiomatizing certain natural isomorphisms which exist in all derivators $\D\in\mathscr{K}$ and which are preserved by all morphisms in $\mathscr{K}$. As an important special case we obtain the class of exact formulas in stable derivators, and the cubical calculus provides examples of such formulas. As a related result we show that suitable formulas propagate from a monoidal derivator $\V$ to all $\V$-modules, thereby making precise a universality of formulas in monoidal derivators.

The key to the results in this section is the following, fairly elementary result on the compatibility of the calculus of mates. Given a natural transformation $\alpha$ living in a square of small categories,
\begin{equation}\label{eq:mates-vs-mor}
\vcenter{
\xymatrix{
C\ar[r]^-p\ar[d]_v \drtwocell\omit{\alpha} & A\ar[d]^u\\
D\ar[r]_-q & B,
}
}
\end{equation}
in every derivator \D there is the canonical mate
\[
\alpha_!\colon v_! p^\ast\to q^\ast u_!.
\]
Moreover, associated to every morphism $F\colon\D\to\E$ of derivators with pseudo-naturality constraints $\gamma$ there are the canonical mates
\[
v_!F\to Fv_!\qquad\text{and}\qquad u_!F\to Fu_!.
\]
These various canonical mates are compatible in the following precise sense.

\begin{prop}\label{prop:mates-vs-morphisms}
For every morphism of derivators $F\colon\D\to\E$ and every natural transformation \eqref{eq:mates-vs-mor} in $\cCat$ the following diagram commutes, 
\begin{equation}\label{eq:mates-vs-morphisms}
\vcenter{
\xymatrix{
Fv_! p^\ast\ar[r]^-{\alpha_!}&Fq^\ast u_!\\
v_!Fp^\ast\ar[u]&q^\ast F u_!\ar[u]_-\gamma^-\cong\\
v_!p^\ast F\ar[u]^-\gamma_-\cong\ar[r]_-{\alpha_!}&q^\ast u_!F.\ar[u]
}
}
\end{equation}
\end{prop}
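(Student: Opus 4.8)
The plan is to reduce everything to the defining compatibility of a morphism of derivators with the $2$-categorical structure, namely the compatibility of the pseudo-naturality constraints $\gamma$ of $F$ with horizontal and vertical pasting of natural transformations. First I would recall that for a functor $w\colon E\to E'$ in $\cCat$ the canonical mate $w_!F\to Fw_!$ is, by definition, the mate of the (invertible) constraint $\gamma_w\colon w^\ast F\xrightarrow{\cong} Fw^\ast$ under the adjunctions $(w_!,w^\ast)$ on $\D$ and $\E$; concretely it is obtained by pasting $\gamma_w$ with the unit of $(w_!,w^\ast)$ on $\D$ and the counit of $(w_!,w^\ast)$ on $\E$. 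So each of the four vertical/curved arrows in \eqref{eq:mates-vs-morphisms} is such a pasting, and the two horizontal $\alpha_!$ arrows are pastings of $\alpha$ with units and counits of the relevant adjunctions in $\D$ respectively $\E$.

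The key step is then a diagram-chase purely in terms of $2$-cells. I would expand both composite $2$-cells (going around the square the two ways) into their constituent unit/counit/constraint pieces, and exhibit a common pasting diagram witnessing their equality. The essential input is: (a) the constraints $\gamma$ are a pseudo-natural transformation, hence $\gamma_{q\circ v}$ agrees with the appropriate horizontal paste of $\gamma_q$ and $\gamma_v$ (and likewise $\gamma_{u\circ p}$), together with the interchange law relating these to $\gamma$ evaluated on the $2$-cell $\alpha$; and (b) the triangle identities for the adjunctions $(v_!,v^\ast)$, $(p_!,p^\ast)$, $(u_!,u^\ast)$, $(q_!,q^\ast)$ in both $\D$ and $\E$. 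Concretely, I would first massage the clockwise composite $v_!p^\ast F\to v_!Fp^\ast\to Fv_!p^\ast\to Fq^\ast u_!$ by pushing the constraint $\gamma_p$ past the unit of $(p_!,p^\ast)$ and absorbing it, reducing it to a single mate built from $\alpha$, $\gamma_u$ (equivalently the composite constraint along $u\circ p = q\circ v$), and the adjunction data; then I would do the same to the counterclockwise composite and observe the results coincide.

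The main obstacle — though it is conceptual bookkeeping rather than a genuine difficulty — will be keeping track of where each unit/counit lives ($\D$ versus $\E$) and invoking the pseudo-naturality axiom for $\gamma$ on the $2$-cell $\alpha$ in precisely the right form. A clean way to organize this, which I would follow, is to phrase the whole statement as an instance of ``the mate construction is functorial with respect to morphisms of double categories'' (or simply as ``mates commute with pseudo-natural transformations''): the assignment sending a square \eqref{eq:mates-vs-mor} to its mate $\alpha_!$ is a morphism out of the double category of squares, and $F$ together with its constraints $\gamma$ is a suitably lax morphism, so the square \eqref{eq:mates-vs-morphisms} is exactly the coherence cell of that morphism applied to $\alpha$. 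Once set up this way the verification is a routine, if lengthy, string-diagram manipulation using only the triangle identities and the $2$-functoriality/interchange laws, with no use of any derivator axiom beyond the existence of the adjunctions $(w_!,w^\ast)$ and the definition of $\gamma$. I would present the proof either as this short structural remark or, if a reader-friendly argument is wanted, as an explicit string-diagram calculation split into the two reductions described above.
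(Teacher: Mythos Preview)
Your proposal is correct and is essentially the paper's approach: the paper unravels both composites into their unit/counit/constraint constituents, draws the resulting big diagram, and checks that each subcell commutes by naturality, by a coherence axiom for the pseudo-natural transformation $\gamma$, and by a lemma (\cite[Lem.~3.10]{groth:revisit}) expressing exactly the compatibility of $\gamma$ with adjunction data that you invoke. Your alternative packaging via functoriality of mates / string diagrams is a nice conceptual gloss on the same computation, not a different argument.
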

\begin{proof}
Unraveling definitions we have to show that the clockwise and the counterclockwise boundary paths from $v_!p^\ast F$ to $Fq^\ast u_!$ in \autoref{fig:mates}
\begin{figure}
\begin{equation}
\vcenter{
\xymatrix{
Fv_! p^\ast\ar[r]^-\eta\ar@{}[rd]|{=}&Fv_!p^\ast u^\ast u_!\ar[r]^-{\alpha^\ast}\ar@{}[rd]|{=}&Fv_!v^\ast q^\ast u_!\ar[ddr]^-\varepsilon&\\
v_!Fp^\ast\ar[u]\ar[r]^-\eta\ar@{}[rd]|{=}&v_!Fp^\ast u^\ast u_!\ar[u]\ar[r]^-{\alpha^\ast}&v_!Fv^\ast q^\ast u_!\ar[u]&\\
v_!p^\ast F\ar[u]^-\gamma_-\cong\ar[r]^-\eta\ar[ddr]_-\eta&v_!p^\ast F u^\ast u_!\ar[u]^-\gamma_-\cong&v_!v^\ast F q^\ast u_!\ar[u]_-\gamma^-\cong\ar[r]_-\varepsilon\ar@{}[rd]|{=}&Fq^\ast u_!\\
&v_!p^\ast u^\ast F u_!\ar[u]^-\gamma_-\cong\ar[r]_-{\alpha^\ast}\ar@{}[rd]|{=}&v_!v^\ast q^\ast F u_!\ar[u]_-\gamma^-\cong\ar[r]_-\varepsilon\ar@{}[rd]|{=}&q^\ast F u_!\ar[u]_-\gamma^-\cong\\
&v_!p^\ast u^\ast u_! F\ar[u]\ar[r]_-{\alpha^\ast} &v_!v^\ast q^\ast u_! F\ar[u]\ar[r]_-\varepsilon&q^\ast u_!F\ar[u]
}
}
\end{equation}
\caption{Morphisms and canonical mates}
\label{fig:mates}
\end{figure}
coincide. To this end it suffices to show that this diagram commutes. In this diagram, the six rectangles decorated by an equality sign commute since they are naturality squares. The larger rectangle in the middle is commutative by one of the coherence properties of pseudo-natural transformations. Finally, the remaining two quadrilaterals also commute by \cite[Lem.~3.10]{groth:revisit}.
\end{proof}

The remainder of this section is essentially an exploration of immediate implications of this proposition. It is possible to formalize the results more systematically, but we prefer to present them in a way that allows us to focus more easily on the simple underlying ideas.

\begin{rmk}
Let $F\colon\D\to\E$ be a morphism of derivators and let \eqref{eq:mates-vs-mor} be a natural transformation in $\cCat$.
\begin{enumerate}
\item The duality principle for derivators yields a variant of the proposition for right Kan extensions.
\item The commutative diagram \eqref{eq:mates-vs-morphisms} makes precise a compatibility statement for certain canonical mates. In the special case that $F$ is cocontinuous and that the square \eqref{eq:mates-vs-mor} is homotopy exact the above diagram consists of natural isomorphisms only.
\end{enumerate}
\end{rmk}

The formalism of homotopy exact squares is the technical key tool in the theory of derivators. In fact, in many cases developing the calculus of (co)limits and Kan extensions in derivators essentially amounts to establishing additional classes of homotopy exact squares. 

\begin{rmk}
There are interesting \emph{relative} versions of the second point of the previous remark, and in the background of these variants there are the following two Galois connections.
\begin{enumerate}
\item Let \D be a derivator and let \eqref{eq:mates-vs-mor} be a natural transformation in $\cCat$. Following Maltsiniotis \cite{maltsiniotis:htpy-exact}, we say that \eqref{eq:mates-vs-mor} is \D-\emph{exact} if the canonical mates
\begin{equation}\label{eq:mates-exact}
\alpha_!\colon v_!p^\ast\to q^\ast u_!\qquad\text{and}\qquad\alpha_\ast\colon u^\ast q_\ast\to p_\ast v^\ast
\end{equation}
are invertible in \D. We recall that these mates are conjugate to each other so that one of them is an isomorphism if and only if the other is. This notion extends to a Galois connection between classes of derivators and classes of squares in $\cCat$ populated by (possibly non-invertible) natural transformations as follows.
\begin{enumerate}
\item Given a class $\mathcal{C}$ of derivators, we say that the square \eqref{eq:mates-vs-mor} is \textbf{$\mathcal{C}$-exact} if it is \D-exact for all $\D\in\cC$.
\item Conversely, given a class $\mathcal{S}$ of squares \eqref{eq:mates-vs-mor} in $\cCat$, we associate to it the class of all derivators \D in which the canonical mates \eqref{eq:mates-exact} are invertible.
\end{enumerate}
\item The second Galois connection is closely related to the discussion of \emph{saturation} or \emph{closure} of classes of colimits \cite{albert-kelly:closure,kelly-pare:closure}.
\begin{enumerate}
\item Given a class $\mathcal{M}$ of morphisms of derivators, we associate to it the class $u\colon A\to B$ of functors between small categories such that every morphism $F\in\mathcal{M}$ preserves left Kan extensions along $u$ (or, equivalently, such that every $F$ preserves colimits of shape $(u/b)$ for all $b\in B$).
\item Conversely, given a class $\mathcal{F}$ of functors in $\cCat$, we associate to it the class of \textbf{$\mathcal{F}$-cocontinuous} morphisms of derivators, i.e., of those morphisms which preserve left Kan extensions along all $u\in\mathcal{F}$.
\end{enumerate}
There is an obvious dual version of this Galois connection using right Kan extensions instead.
\end{enumerate}
\end{rmk}

\begin{con}
Let $\mathscr{K}\subseteq\cDER$ be a sub-$2$-category of the $2$-category of derivators. The following basic building blocks will be used to define formulas in $\mathscr{K}$.
\begin{enumerate}
\item Let \eqref{eq:mates-vs-mor} be $\mathscr{K}$-exact ($\D$-exact for all $\D\in\mathscr{K}$) and such that all morphisms in $\mathscr{K}$ preserve left Kan extensions along the vertical functors $u,v$ in \eqref{eq:mates-vs-mor}. In every derivator $\D\in\mathscr{K}$ the canonical mate
\begin{equation}\label{eq:con-formula-L}
\alpha_!\colon v_!p^\ast\toiso q^\ast u_!
\end{equation}
is invertible and these isomorphisms are compatible with morphisms in $\mathscr{K}$ in the sense of \eqref{eq:mates-vs-morphisms}.
\item Let \eqref{eq:mates-vs-mor} be $\mathscr{K}$-exact and such that all morphisms in $\mathscr{K}$ preserve right Kan extensions along the horizontal functors $p,q$ in \eqref{eq:mates-vs-mor}. In every derivator $\D\in\mathscr{K}$ the canonical mate
\begin{equation}\label{eq:con-formula-R}
\alpha_\ast\colon u^\ast q_\ast\toiso p_\ast v^\ast
\end{equation}
is invertible and compatible with morphisms in $\mathscr{K}$.
\end{enumerate}
\end{con}

\begin{defn}
Let $\mathscr{K}\subseteq\cDER$ be a sub-$2$-category of the $2$-category of derivators. A \textbf{formula} in $\mathscr{K}$ is a natural isomorphism which can be written as a finite composition of whiskerings along restriction morphisms of natural isomorphisms of the form \eqref{eq:con-formula-L}, \eqref{eq:con-formula-R} or their respective inverses. 
\end{defn}

\begin{eg}
A \textbf{cocontinuous formula} is a formula in the $2$-category $\cDER_{\mathrm{cc}}$ of derivators, cocontinuous morphisms, and all natural transformations. For instance the isomorphisms $\colim_{\Delta\op}sX\toiso\colim_AX$ provided by the Bousfield--Kan formulas (\autoref{thm:BK-formulas}) are cocontinuous formulas as are the decomposition isomorphisms from \autoref{thm:decompose}. More specific examples are the inductive formulas for colimits of punctured $n$-cubes (\autoref{cor:pun-cube}) and the various isomorphisms in \autoref{eg:colim-of-sources} to calculate colimits of sources of valence $n$ (these latter two are \emph{right exact formulas} for derivators in the obvious sense).
\end{eg}

The formalism of formulas gets richer when we mix left and right Kan extensions more seriously. This is illustrated by the following two examples, and it might be worth to study variants of these as well. We recall that right exact morphisms of pointed derivators preserve right extensions by zero \cite[Cor.~8.2]{groth:revisit}), and we hence obtain \autoref{eg:formulas-pt-rex}. 

\begin{eg}\label{eg:formulas-pt-rex}
Formulas in the $2$-category $\cDER_{\mathrm{Pt},\mathrm{rex}}$ of pointed derivators, right exact morphisms, and all natural transformations are \textbf{right exact formula for pointed derivators}. For every pointed derivator \D and $n\geq 0$,
\begin{enumerate}
\item the isomorphisms $\cof^3\cong\Sigma\colon\D^{[1]}\to\D^{[1]}$ (\cite[Lem.~5.13]{gps:mayer}),
\item the isomorphisms $\Sigma^{n-1}\cong\colim_{\cube{n}_{\leq n-1}}\circ\emptyset_\ast\colon\D\to\D$ (\autoref{eg:higher-susp}),
\item the coherence isomorphisms of the pseudo-actions $\cof^\bullet$ (\autoref{prop:cof-comp}) and their variants in \autoref{cor:C-comp}, and
\item the isomorphisms $\tcof\cong C^n\colon\D^{\cube{n}}\to\D$ (\autoref{thm:total-cof} and \autoref{prop:tcof-cof1})
\end{enumerate}
are instances of such formulas, and all morphisms in $\cDER_{\mathrm{Pt},\mathrm{rex}}$ are hence compatible with these isomorphisms.
\end{eg}

\begin{rmk}
The observation that $\cof^3\cong\Sigma$ is a right exact formula for pointed derivators is the essential ingredient in the proof that exact morphisms of strong, stable derivators induce exact functors between canonical triangulations (see the proof of \cite[Theorem~10.6]{groth:revisit}). A direct verification of this is already a bit cumbersome as witnessed by the proof in \emph{loc.~cit.}, and the formalization of formulas makes such verifications obsolete (see also \cite[Rmk.~10.8]{groth:revisit}).
\end{rmk}

We now turn to a particularly rich class of formulas.
 
\begin{egs}
An \textbf{exact formula for stable derivators} is a formula in the $2$-category $\cDER_{\mathrm{St},\exx}$ of stable derivators, exact morphisms, and natural transformations. Of course, right exact and left exact formulas for derivators or pointed derivators (as in \autoref{eg:formulas-pt-rex}) restrict to exact formulas for stable derivators. For every stable derivator \D and $n\geq 0$,
\begin{enumerate}
\item the isomorphism $\Sigma\circ F\toiso C\colon\D^{[1]}\to\D$ \eqref{eq:SFC},
\item the isomorphisms $\Sigma^n\circ\tfib\toiso\tcof\colon\D^{\cube{n}}\to\D$ (\autoref{rmk:SFCn}),
\item the pseudo-naturality constraints $S_{k,l}\circ i_!\cong i_\ast\circ S_{k',l'}$ of the global Serre equivalence (\autoref{thm:Serre}), and,
\item as a special case, the isomorphisms $\colim_{\cube{n}_{k\leq l}}\toiso\mathrm{lim}_{\cube{n}_{k\leq l}}\circ S_{k,l}^{(n)}\colon\D^{\cube{n}_{k\leq l}}\to\D$ from \autoref{cor:colim-lim}
\end{enumerate}
are exact formulas for stable derivators.
\end{egs}

There is an additional perspective on \autoref{prop:mates-vs-morphisms}. Let \D be a derivator and let the square \eqref{eq:mates-vs-mor} be \D-exact. Given a sufficiently cocontinuous morphism $F\colon\D\to\E$ we know that the canonical mate of \eqref{eq:mates-vs-mor} is invertible on all objects in the essential image of $F$. As an application of this, we can extend formulas from a monoidal derivator \V to \V-enriched derivator. This works more generally for \textbf{cocontinuous \V-modules}, i.e., for derivators \D which are endowed with an associative and unital action $\otimes\colon\V\times\D\to\D$ which preserves colimits in both variables independently.

\begin{prop}\label{prop:propagate}
Let \V be a monoidal derivator. If the square \eqref{eq:mates-vs-mor} is \V-exact, then it is \D-exact for all cocontinuous \V-modules \D (in particular, for all \V-enriched derivators \D).
\end{prop}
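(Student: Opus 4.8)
The plan is to reduce the statement about a general cocontinuous \V-module \D to the hypothesis that \eqref{eq:mates-vs-mor} is \V-exact, using the unit object of \V and the fact that the action morphism is a cocontinuous morphism of derivators in each variable. The key observation is that if $\mathbbm{1}\in\V(\bbone)$ denotes the monoidal unit, then the endomorphism $\mathbbm{1}\otimes(-)\colon\D\to\D$ is isomorphic to $\id_\D$ (by unitality of the action), and it factors through \V in the sense that it is obtained by restricting the action $\otimes\colon\V\times\D\to\D$ along the functor that is constant at $\mathbbm{1}$ in the first variable. More precisely, writing $c_{\mathbbm 1}\colon\bbone\to\V(\bbone)$ for the unit and using the resulting ``tensor with $\mathbbm 1$'' morphism, one gets a morphism of derivators whose essential image (on every category $A$) contains everything, because it is isomorphic to the identity.

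First I would make precise the following: for a fixed coherent diagram $X\in\D(C)$ sitting over the square \eqref{eq:mates-vs-mor}, I want to show the canonical mates $\alpha_!\colon v_!p^\ast X\to q^\ast u_! X$ (and dually $\alpha_\ast$) are invertible. The idea is that $X\cong \mathbbm{1}\otimes X$, so it suffices to prove invertibility on objects of the form $\mathbbm 1\otimes Y$. Now the action is cocontinuous in the \V-variable, so for fixed $Y$ the morphism $(-)\otimes Y\colon\V\to\D$ preserves all colimits and left Kan extensions; in particular it preserves left Kan extensions along the vertical functors $u,v$ of \eqref{eq:mates-vs-mor}. By \autoref{prop:mates-vs-morphisms} applied to $F=(-)\otimes Y$, the canonical mate $\alpha_!$ in \D, evaluated on objects in the essential image of $(-)\otimes Y$, is identified (via the invertible comparison constraints $\gamma$, which are invertible precisely because $F$ is sufficiently cocontinuous) with the image under $(-)\otimes Y$ of the canonical mate $\alpha_!$ in \V. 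Since \eqref{eq:mates-vs-mor} is assumed \V-exact, the latter is an isomorphism, hence so is its image, hence $\alpha_!$ is invertible on $\mathbbm 1\otimes Y\cong Y$. As $Y$ ranges over all of \D this gives \D-exactness; the statement for $\alpha_\ast$ follows since $\alpha_!$ and $\alpha_\ast$ are conjugate (as recalled in the excerpt) so one is invertible iff the other is, or alternatively by the dual of \autoref{prop:mates-vs-morphisms} together with the fact that cocontinuous \V-modules are also suitably continuous after passing to the conjugate mate---but the conjugacy remark is the cleanest route.

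The main obstacle I expect is bookkeeping around ``sufficiently cocontinuous'': to invoke \autoref{prop:mates-vs-morphisms} with all the constraint $2$-cells $\gamma$ invertible, one needs that $(-)\otimes Y$ preserves left Kan extensions along $u$ and $v$, and this is exactly what ``preserves colimits in the \V-variable'' buys us, by \cite[Lem.~3.10]{groth:revisit} / \cite[Prop.~2.3]{groth:ptstab}-type arguments (a morphism preserving all colimits preserves all left Kan extensions). One should also be slightly careful that the isomorphism $\mathbbm 1\otimes X\cong X$ is natural and parametrized (holds at the level of the shifted derivators $\D^C$, not just underlying categories), which is part of the definition of a unital action of derivators. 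Once these points are pinned down, the argument is a direct application of \autoref{prop:mates-vs-morphisms} and carries over verbatim to the right-Kan-extension version by the duality principle, and---as the parenthetical remark indicates---\V-enriched derivators are in particular cocontinuous \V-modules, so no separate argument is needed for them.
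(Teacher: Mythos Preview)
Your overall strategy---exhibit every object of $\D$ in the essential image of a cocontinuous morphism out of a shift of $\V$ and then invoke \autoref{prop:mates-vs-morphisms}---is exactly the paper's, but the specific morphism you choose does not carry the argument. Write $A$ for the upper-right category in \eqref{eq:mates-vs-mor}; the mate $\alpha_!$ in $\D$ is a transformation of morphisms $\D^A\to\D^D$, so you must show it is invertible at each $X\in\D(A)$. For \autoref{prop:mates-vs-morphisms} to yield this you need a cocontinuous $F\colon\E'\to\D$ with \emph{target $\D$}, where $\E'$ is a shift of $\V$, such that $X$ lies in the essential image of the component $F_A\colon\E'(A)\to\D(A)$. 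But for $X\in\D(A)$ your external-product morphism $(-)\otimes X$ lands in $\D^A$, not in $\D$; applying \autoref{prop:mates-vs-morphisms} to it therefore compares $\alpha_!^\V$ with the mate computed in the derivator $\D^A$ (whose inputs live in $(\D^A)^A=\D^{A\times A}$), not with $\alpha_!^\D$ at $X\in\D(A)$. The monoidal unit witnessing your isomorphism lives in $\V(\bbone)$, so it is $\bbone$-shaped and cannot sit in the $A$-component of any such morphism. If instead you fix $Y\in\D(\bbone)$ so that $(-)\otimes Y\colon\V\to\D$ genuinely targets $\D$, then the essential image of its $A$-component consists only of objects $V\otimes Y$ with $V\in\V(A)$, and there is no reason an arbitrary $X\in\D(A)$ is of this form.

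The paper repairs exactly this point by replacing the external product and the monoidal unit with the \emph{canceling} tensor product $\otimes_{[A]}$ and the identity profunctor $\lI_A\in\V(A\times A\op)$. The partial morphism $-\otimes_{[A]}X$ is cocontinuous from the shift $\V^{A\op}$ into $\D$ (the paper displays its $A$-shift $\V^{A\times A\op}\to\D^A$), and the isomorphism $\lI_A\otimes_{[A]}X\cong X$ from \cite[Thm.~5.9]{gps:additivity} places $X\in\D(A)$ in the image of its $A$-component, because $\lI_A\in(\V^{A\op})(A)$ is already an $A$-shaped object---which is precisely what the monoidal unit fails to be. With this replacement \autoref{prop:mates-vs-morphisms} gives invertibility of $\alpha_!^\D$ at $X$, and, as you correctly note, the conjugacy of $\alpha_!$ and $\alpha_\ast$ handles the other mate.
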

\begin{proof}
For every $A\in\cCat$ the action $\otimes\colon\V\times\D\to\D$ gives rise to the canceling tensor product 
\[
\otimes_{[A]}\colon\V^{A\times A\op}\times\D^A\to\D^A,
\]
which is obtained from the pointwise product by means of a coend (\cite[\S8]{gps:additivity}). Moreover, denoting by $\lI_A\in\V(A\times A\op)$ the identity profunctor, for $X\in\D(A)$ there is by \cite[Thm.~5.9]{gps:additivity} an isomorphism
\[
\lI_A\otimes_{[A]}X\cong X.
\]
Put differently, $X\in\D(A)$ lies in the essential image of the canceling tensor product morphism
\[
-\otimes_{[A]}X\colon\V^{A\times A\op}\to\D^A.
\]
Moreover, since $\otimes$ preserves colimits in both variables separately, the partial morphism $-\otimes_{[A]}X$ is cocontinuous. Since the square \eqref{eq:mates-vs-mor} is \V-exact, we can apply \autoref{prop:mates-vs-morphisms} to the morphisms $-\otimes_{[A]}X$ to conclude that the square is also \D-exact.
\end{proof}

\begin{rmk}
This result is closely related to the construction of universal tilting modules in abstract representation theory \cite[\S10]{gst:Dynkin-A}.
\end{rmk}

\begin{eg}
Every derivator is a cocontinuous module over the derivator $\cS$ of spaces \cite{cisinski:derived-kan}. As a consequence, \autoref{prop:propagate} implies that $\cS$-exact squares are homotopy exact and that cocontinuous formulas extend from the derivator of spaces to arbitrary derivators. 
\end{eg}

The fact that $\cS$-exact squares are homotopy exact is not new (see \cite[\S9]{gps:mayer} which relies heavily on \cite{heller:htpythies,cisinski:presheaves}).

\begin{eg}
Every pointed derivator is a cocontinuous module over the derivator $\mathcal{S}_\ast$ of pointed spaces \cite{cisinski:derived-kan}, and we conclude that cocontinuous, pointed formulas or right exact, pointed formulas propagate from the derivator of pointed spaces to arbitrary pointed derivators.
\end{eg}

\begin{eg}
Every stable derivator is enriched over the derivator $\cSp$ of spectra \cite{heller:htpythies,heller:stable,franke:adams,cisinski:derived-kan,tabuada:universal-invariants,cisinski-tabuada:non-connective,cisinski-tabuada:non-commutative}. In particular, an exact formula holds in all stable derivators as soon it is true in spectra.
\end{eg}

\begin{rmk}
There are interesting additional variants to be studied further, such as stable derivators that admit an action of the derivator $\D_\lZ$ of the integers or of the derivator $\D_k$ of a field~$k$. These are analogues in derivator land of \emph{algebraic} or \emph{$k$-linear algebraic} triangulated categories \cite{keller:dg-categories,schwede:alg-versus-top}, and in those cases formulas can be propagated from $\D_\lZ$ or $\D_k$ to the corresponding classes of stable derivators. Of course, by \autoref{prop:propagate} for every square \eqref{eq:mates-vs-mor} there are the implications
\begin{align*}
& \eqref{eq:mates-vs-mor}\text{ is }\cSp\text{-exact}\\
\Rightarrow\quad & \eqref{eq:mates-vs-mor}\text{ is }\D_\lZ\text{-exact}\\
\Rightarrow\quad & \eqref{eq:mates-vs-mor}\text{ is }\D_k\text{-exact},
\end{align*}
and it would be interesting to study to which extent the converse implications fail.
\end{rmk}

We conclude this paper by the following closely related remark.

\begin{rmk}
By \cite[Cor.~4.34]{maltsiniotis:htpy-exact} a square \eqref{eq:mates-vs-mor} is $y_{\mathrm{Set}}$-exact if and only if it is $y_{\cC}$-exact for all bicomplete categories $\cC$. However, clearly, not every $y_{\mathrm{Set}}$-exact square is homotopy exact. For instance, the decomposition theorem (\autoref{thm:decompose}) admits a variant for represented derivators. In that case, one can consider more general left decompositions of categories which are defined by means of $y_{\mathrm{Set}}$-exact squares, which is to say by \emph{final} functors instead of \emph{homotopy final} functors. As a specific case, in represented derivators the Bousfield--Kan formulas (\autoref{thm:BK-formulas}) can be simplified further to the well-known coequalizers
\[
\xymatrix{
\displaystyle\coprod_{a_0\to a_1}X_{a_0}\ar@<1mm>[r] \ar@<-1mm>[r] &
    \displaystyle\coprod_a^{\vphantom{a_n}} X_a
}
\]
calculating colimits of diagrams of shape~$A$, while this does not work in arbitrary derivators. The reason for this is that while the inclusion $\Delta\op_{\mathrm{inj}}\to\Delta\op$ of the wide subcategory of monomorphisms is homotopy final, the inclusion $\Delta\op_{\mathrm{inj},\leq 1}\to\Delta\op_{\mathrm{inj}}$ of the full subcategory spanned by $[0],[1]$ (hence the Kronecker quiver) is final but not homotopy final.

Similarly, coends in derivators are defined by means of twisted morphism categories \cite[\S5]{gps:additivity}. The two-sided bar construction yields a cocontinuous formula to calculate coends by simplicial resolutions \cite[Appendix~A]{gps:additivity}. For represented derivators these formulas can be simplified further by means of subdivision categories \cite[\S~IX.5]{maclane}, but this does not extend to arbitrary derivators.
\end{rmk}

\appendix

\bibliographystyle{alpha}
\bibliography{cubical}

\end{document}